
\PassOptionsToPackage{unicode}{hyperref}
\PassOptionsToPackage{naturalnames}{hyperref}
\documentclass[a4paper,10pt]{amsart}

\usepackage{amscd,amsmath,amsthm,amssymb,amscd}
\usepackage{latexsym,amsbsy,mathrsfs}
\usepackage[enableskew,vcentermath]{youngtab}

\usepackage{txfonts}

\usepackage[bbgreekl]{mathbbol}

\usepackage{amsfonts,amssymb,amscd,verbatim, mathbbol}
\usepackage{bm,multicol}
\usepackage[pdftex,linkcolor=blue,citecolor=blue,backref=page]{hyperref}

\usepackage[misc]{ifsym}



\hoffset=-0.55in
\voffset=-0.8in

\parskip=5pt

\def\NewTheorem#1{%
  \newaliascnt{#1}{equation}%
  \newtheorem{#1}[#1]{#1}%
  \aliascntresetthe{#1}%
  \expandafter\def\csname #1autorefname\endcsname{#1}%
}



\def\cP{\mathcal{P}}

\newcommand{\ve}{\varepsilon}
\newcommand{\fa}{\mathfrak a}

\newcommand{\ft}{\mathfrak t}

\newcommand{\ff}{\mathfrak f}

\newcommand{\fu}{\mathfrak u}
\newcommand{\fs}{\mathfrak s}

\newcommand{\fp}{\mathfrak p}
\newcommand{\fq}{\mathfrak q}

\newcommand{\wh}{\widehat}

\newcommand{\ot}{\otimes}

\newcommand{\N}{\mathbb N}
\newcommand{\Z}{\mathbb Z}
\newcommand{\e}{\epsilon}

\newcommand{\rF}{\mathrm{F}}

\newcommand{\rp}{\mathrm{p}}

\newcommand{\ba}{\begin {eqnarray}}
\newcommand{\ea}{\end {eqnarray}}
\newcommand{\baa}{\begin {eqnarray*}}
\newcommand{\eaa}{\end {eqnarray*}}
\newcommand{\be}{\begin {equation}}
\newcommand{\ee}{\end {equation}}
\newcommand{\bee}{\begin {equation*}}
\newcommand{\eee}{\end {equation*}}

\newcommand{\te}[1]{\textnormal{{#1}}}

\def \<{{\langle}}
\def \>{{\rangle}}

\newcommand\blam{{\boldsymbol\lambda}}

\newcommand\bmu{{\boldsymbol\mu}}
\newcommand\bnu{{\boldsymbol\nu}}

\DeclareMathOperator\Shape{Shape}

\let\<=\langle
\let\>=\rangle
\def\({\big(}
\def\){\big)}

\def\t{\mathfrak{t}}
\def\s{\mathfrak{s}}
\def\u{\mathfrak{u}}
\def\v{\mathfrak{v}}

\def\lam{\lambda}

\def\Sym{\mathfrak{S}}

\newcommand\HH{\mathscr{H}}

\def\P{\mathscr{P}}

\def\bQ{\mathbf{Q}}
\def\bu{\mathbf{u}}
\def\mL{\mathcal{L}}
\def\fm{\mathfrak{m}}

\def\mff{\mathfrak{f}}

\def\mf{\mathfrak{f}}

\def\SStd{\mathop{\rm Std}\nolimits^2}

\DeclareMathOperator\res{res}

\DeclareMathOperator\Std{Std}

\numberwithin{equation}{section}

\title[Standard symmetrizing forms and Murphy bases]{Standard symmetrizing forms and Murphy bases of the cyclotomic Hecke algebras}

\author[]{Jun Hu\textsuperscript{1}}
\author[]{Huansheng Li\textsuperscript{2, \Letter}}

\thanks{\Letter\, (Corresponding author) Huansheng Li,\quad E-mail: 3120225736@bit.edu.cn}

\numberwithin{equation}{section}
\newtheorem{prop}[equation]{Proposition}
\newtheorem{thm}[equation]{Theorem}
\newtheorem{cor}[equation]{Corollary}

\newtheorem{lem}[equation]{Lemma}

\theoremstyle{definition}
\newtheorem{dfn}[equation]{Definition}
\theoremstyle{remark}

\subjclass[2010]{20C08, 20C15, 16G99}
\keywords{Cyclotomic Hecke algebras; Murphy bases; symmetrizing form}

\begin{document}
\maketitle

{\centering{\small \textsuperscript{1}  Key Laboratory of Algebraic Lie Theory and Analysis of Ministry of Education, \\
School of Mathematics and Statistics, Beijing Institute of Technology, Beijing, 100081, P.R.~China\\
E-mail: junhu404@bit.edu.cn\\}}
\vspace{5pt}
	
{\centering{\small \textsuperscript{2} Key Laboratory of Mathematical Theory and Computation in Information Security, \\
 School of Mathematics and Statistics,
	Beijing Institute of Technology,
	Beijing, 100081, P.R.~China\\
E-mail: 3120225736@bit.edu.cn\\}}

\begin{abstract} In this paper we compute the explicit value of the standard symmetrizing form of the cyclotomic Hecke algebra of type $G(\ell,1,n)$ on each Murphy basis element. We do this for both the non-degenerate cyclotomic Hecke algebra of type $G(\ell,1,n)$ and the degenerate cyclotomic Hecke algebra of type $G(\ell,1,n)$. As an application, we obtain some equations on the coefficients
which occur in the transition matrix between the Murphy basis elements and the seminormal basis elements of the cyclotomic Hecke algebra of type $G(\ell,1,n)$.
\end{abstract}

\setcounter{tocdepth}{1}

\section{Introduction}

Let $R$ be an integral domain, $1\neq q\in R^\times$ and ${\mathbf Q}:=(Q_{1},Q_2,\dots, Q_{\ell})\in R^\ell$. The non-degenerate cyclotomic Hecke algebras $\HH_{\ell,n}^R(q;\bQ)$ of type $G(\ell,1,n)$ with Hecke parameter $q$ and cyclotomic parameters $Q_{1},Q_2,\dots, Q_{\ell}$ were first introduced in \cite[Definition 3.1]{AK1}, \cite[Definition 4.1]{BM:cyc} and \cite[before Proposition 3.2]{C} as certain deformations of the group ring $R[W_{\ell,n}]$ of the complex reflection group $W_{\ell,n}$ of type $G(\ell,1,n)$. By definition, $\mathscr{H}_{\ell,n}=\HH_{\ell,n}^R(q;\bQ)$ is the unital associative $R$-algebra with generators $T_0,T_1,\dots,T_{n-1}$ and the following defining relations:
\begin{align}
&(T_0-Q_1)(T_0-Q_2)\cdots(T_0-Q_\ell)=0;\\
&T_0T_1T_0T_1=T_1T_0T_1T_0;\\
\label{nondegquadraticrela}&(T_i-q)(T_i+1)=0,\quad \forall\,1\leq i\leq n-1;\\
\label{nondegbraidrela1}&T_iT_j=T_jT_i,\ \ \forall\,1\leq i<j-1<n-1;\\
\label{nondegbraidrela2}&T_iT_{i+1}T_i=T_{i+1}T_iT_{i+1},\ \ \forall\,1\leq i<n-1.
\end{align}
If $\ell=1$ or $\ell=2$, then the cyclotomic Hecke algebra $\HH_{\ell,n}^R(q;\bQ)$ becomes the Iwahori-Hecke algebra of type $A_{n-1}$ or type $B_n$ respectively.

The symmetric group $\Sym_n$ on $\{1,2,\dots,n\}$ is generated by $\{\sigma_i:=(i,i+1)\,|\,1\leq i<n\}$. A word $w=\sigma_{i_{1}}\sigma_{i_{2}}\cdots\sigma_{i_{k}}$, where $1\leq i_1,i_2,\dots,i_k\leq n-1$, is called a reduced expression of $w$ if $k$ is minimal; in this case we say that $w$ has length $k$ and we write $\ell(w)=k$. Note that $T_w:=T_{i_1}T_{i_2}\cdots T_{i_k}$ depends only on $w$ but not on the choice of the reduced expression of $w$ because of the braid relations holding in $\HH_{\ell,n}$.

The $R$-subalgebra of $\mathscr{H}_{\ell,n}$ generated by $T_1,\dots,T_{n-1}$ is isomorphic to the Iwahori-Hecke algebra of type $A_{n-1}$, which has a standard basis $\{T_w\,|\, w\in\Sym_n\}$. We set $$
\mL_{1}=T_{0},\quad \mL_{k+1}=q^{-1}T_{k}L_{k}T_{k},\ \ \forall\,1\leq k\leq n-1 .
$$
These elements commute with each other, and are called the {\bf Jucys-Murphy operators} of $\HH_{\ell,n}^R(q;\bQ)$.

\begin{lem}\label{AK basis}\te{(}Ariki--Koike \cite[(3.10)]{AK1}\te{)}
The algebra $\mathscr{H}_{\ell,n}$ is free as an $R$-module with basis
$$\{\mL_1^{c_1}\mL_2^{c_2}\cdots \mL_n^{c_n}T_w\mid
       w\in\Sym_n,\ 0\leq c_i\leq \ell-1,\ \forall\,1\leq i\leq n\}.$$
In particular, the $R$-rank of $\mathscr{H}_{\ell,n}$ is equal to $\ell^nn!$.
\end{lem}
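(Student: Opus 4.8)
This is the fundamental basis theorem of Ariki and Koike, and the plan is to establish the two bounds $\rank_R\mathscr{H}_{\ell,n}\leq \ell^nn!$ and $\rank_R\mathscr{H}_{\ell,n}\geq \ell^nn!$ separately: the first says the displayed family of $\ell^nn!$ elements spans $\mathscr{H}_{\ell,n}$ over $R$, the second that it is $R$-linearly independent. For the linear-independence bound it suffices, by base change, to treat the generic case $R=\mathcal{Z}:=\mathbb{Z}[q^{\pm1},Q_1,\dots,Q_\ell]$ (now with $q,Q_1,\dots,Q_\ell$ algebraically independent): for any such $R$ and choice of parameters one has $\mathscr{H}_{\ell,n}^{R}\cong R\otimes_{\mathcal{Z}}\mathscr{H}_{\ell,n}^{\mathcal{Z}}$, since both are defined by the same presentation, so freeness of $\mathscr{H}_{\ell,n}^{\mathcal{Z}}$ on the displayed family descends to $\mathscr{H}_{\ell,n}^{R}$.

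For the spanning bound I would induct on $n$. The case $n=1$ is immediate, since $\mathscr{H}_{\ell,1}$ is generated by $T_0=\mL_1$ subject only to $(T_0-Q_1)\cdots(T_0-Q_\ell)=0$, hence spanned by $1,\mL_1,\dots,\mL_1^{\ell-1}$. For the inductive step I would first record, straight from the defining relations, the interaction of the generators with the Jucys--Murphy operators: $T_i\mL_k=\mL_kT_i$ whenever $i\notin\{k-1,k\}$, together with
\[
T_k\mL_k=\mL_{k+1}T_k+(1-q)\mL_{k+1},\qquad T_k\mL_{k+1}=\mL_kT_k+(q-1)\mL_{k+1},
\]
from which one reads off that $T_k$ commutes with $\mL_k+\mL_{k+1}$ and with $\mL_k\mL_{k+1}$, hence (the $\mL_i$ being pairwise commuting) with every symmetric polynomial in $\mL_k,\mL_{k+1}$. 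Using these identities one shows, by induction on $k$, that each Jucys--Murphy operator satisfies $(\mL_k-Q_1)(\mL_k-Q_2)\cdots(\mL_k-Q_\ell)=0$; this is the one genuinely computational point and it is what lets one cap the exponents of $\mL_n$. Granting it, write $\mathscr{H}_{\ell,n-1}$ for the subalgebra generated by $T_0,\dots,T_{n-2}$ (which contains $\mL_1,\dots,\mL_{n-1}$) and note that $\mathscr{H}_{\ell,n}$ is generated over $\mathscr{H}_{\ell,n-1}$ by $T_{n-1}$. Pushing every occurrence of $T_{n-1}$ to the right with the commutation rules and using $\mL_n=q^{-1}T_{n-1}\mL_{n-1}T_{n-1}$ together with the minimal polynomial of $\mL_n$, one obtains $\mathscr{H}_{\ell,n}=\sum_{c=0}^{\ell-1}\sum_{j=1}^{n}\mathscr{H}_{\ell,n-1}\,\mL_n^{c}\,T_{n-1}T_{n-2}\cdots T_{j}$, the products $T_{n-1}T_{n-2}\cdots T_{j}$ ($1\le j\le n$, the case $j=n$ being the empty word) being the $T_v$ attached to the distinguished minimal-length right coset representatives $v$ of $\Sym_{n-1}$ in $\Sym_n$. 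Feeding in the induction hypothesis for $\mathscr{H}_{\ell,n-1}$ then rewrites every element of $\mathscr{H}_{\ell,n}$ as an $R$-combination of the displayed family, which gives $\rank_R\mathscr{H}_{\ell,n}\leq \ell^nn!$.

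For linear independence I would, over $\mathcal{Z}$, build an explicit module realising $\mathscr{H}_{\ell,n}^{\mathcal{Z}}$ on a free module of rank exactly $\ell^nn!$. Concretely, let $M$ be the free $\mathcal{Z}$-module on symbols $m_{\mathbf{c},w}$ indexed by the putative basis, and define operators $\widehat{T}_0,\widehat{T}_1,\dots,\widehat{T}_{n-1}\in\End_{\mathcal{Z}}(M)$ by sending $m_{\mathbf{c},w}$ to the basis expansion of $T_i\cdot\mL_1^{c_1}\cdots\mL_n^{c_n}T_w$ produced by the straightening procedure of the previous paragraph (equivalently: set up the attendant noncommutative rewriting system and invoke the diamond lemma). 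The content is to verify that the $\widehat{T}_i$ satisfy all the defining relations of $\mathscr{H}_{\ell,n}$; once this is done, $M$ is an $\mathscr{H}_{\ell,n}^{\mathcal{Z}}$-module, and by construction $\mL_1^{c_1}\cdots\mL_n^{c_n}T_w$ acts on the vector $m_{\mathbf{0},1}$ by sending it to $m_{\mathbf{c},w}$. Thus the $\ell^nn!$ elements $\mL_1^{c_1}\cdots\mL_n^{c_n}T_w$ of $\mathscr{H}_{\ell,n}^{\mathcal{Z}}$ act as distinct basis vectors of $M$ on $m_{\mathbf{0},1}$ and are therefore $\mathcal{Z}$-linearly independent. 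Combined with the spanning bound this shows $\mathscr{H}_{\ell,n}^{\mathcal{Z}}$ is free of rank $\ell^nn!$ on the displayed family, and base change completes the proof over an arbitrary $R$.

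The principal obstacle is the linear-independence half: checking that the operators $\widehat{T}_i$ really do satisfy every defining relation — equivalently, that all overlap ambiguities of the rewriting system are confluent — the awkward case being the interaction of the length-four relation $T_0T_1T_0T_1=T_1T_0T_1T_0$ with the degree-$\ell$ cyclotomic relation on $T_0$. On the spanning side everything is formal once the commutation rules are in hand, apart from the inductive verification of $(\mL_k-Q_1)\cdots(\mL_k-Q_\ell)=0$ for $k\geq2$, which takes a short but not wholly routine computation with those rules.
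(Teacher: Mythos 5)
The paper does not actually prove this lemma --- it quotes it from Ariki--Koike \cite[(3.10)]{AK1} --- so your attempt can only be measured against the original argument. Your overall architecture (reduce to the generic ring $\mathbb{Z}[q^{\pm1},Q_1,\dots,Q_\ell]$ and base change; prove spanning by a straightening induction on $n$ over the chain $\mathscr{H}_{\ell,n-1}\subset\mathscr{H}_{\ell,n}$; prove independence by realizing the algebra on a free module of rank $\ell^nn!$ whose operators are checked against the defining relations) is indeed the Ariki--Koike strategy, and the bookkeeping with the coset representatives $T_{n-1}T_{n-2}\cdots T_j$ is fine.

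However, the spanning half rests on a false identity. It is not true that $(\mL_k-Q_1)\cdots(\mL_k-Q_\ell)=0$ for $k\geq 2$. Already for $\ell=1$, $n=2$ one has $T_0=Q_1$ and $\mL_2=q^{-1}Q_1T_1^2=Q_1+(1-q^{-1})Q_1T_1$, so $\mL_2-Q_1=(1-q^{-1})Q_1T_1\neq 0$ generically. (More conceptually, in the semisimple case the eigenvalues of $\mL_k$ are the residues $q^{c-b}Q_l$, not merely $Q_1,\dots,Q_\ell$, so the minimal polynomial of $\mL_k$ has degree growing with $k$.) Consequently you cannot ``cap the exponents of $\mL_n$'' by a polynomial relation in $\mL_n$ alone, and this is precisely where the real content of the spanning step lies. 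The correct reduction shows that $\mL_n^{\ell}$ lies in $\mathscr{H}_{\ell,n-1}T_{n-1}\mathscr{H}_{\ell,n-1}+\sum_{c=0}^{\ell-1}\mathscr{H}_{\ell,n-1}\mL_n^{c}$ --- compare the cyclotomic Mackey decomposition of Lemma \ref{nondegMadecomlem} --- by repeatedly using $\mL_n=q^{-1}T_{n-1}\mL_{n-1}T_{n-1}$ and the commutation rules to trade a high power of $\mL_n$ for terms of the form $T_{n-1}(\cdot)T_{n-1}$ together with lower powers, bottoming out at the genuine cyclotomic relation satisfied by $\mL_1=T_0$ alone; for instance $\mL_n^2=q^{-1}T_{n-1}\mL_{n-1}^2T_{n-1}+(1-q^{-1})\mL_{n-1}\mL_nT_{n-1}$, in which no expression $a\mL_n+b$ with $a,b\in\mathscr{H}_{\ell,n-1}$ appears. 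Your independence half is the right plan (it is what Ariki--Koike carry out), but as you acknowledge it defers the entire verification of the relations for the operators $\widehat{T}_i$, which is where the remaining work lies.
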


\begin{dfn}\label{ssf} For any $w\in \Sym_n$ and integers $0\leq c_1,c_2,\dots,c_n<\ell,$ we define
$$
\tau_{R}^{\rm{MM}}(\mL_1^{c_1}\mL_2^{c_2}\cdots \mL_n^{c_n}T_w):=\begin{cases}
1, &\text{if\ \,$w=1$\ and\ $c_1=c_2=\cdots=c_n=0$},\\
0, &\text{otherwise.}
\end{cases}
$$
We extend $\tau_{R}^{\rm{MM}}$ linearly to an $R$-linear function on $\HH_{\ell,n}$.
\end{dfn}
One can show directly that the $R$-linear function $\tau_{R}^{\rm{MM}}$ is symmetric, i.e., $\tau_{R}^{\rm{MM}}(xy)=\tau_{R}^{\rm{MM}}(yx)$, $\forall\,x,y\in \HH_{\ell,n}$, see \cite{MM} and \cite{HHL}. We call $\tau_R^{\rm{MM}}$ the {\bf{standard symmetrizing form}} on $\HH_{\ell,n}$. If furthermore, $(Q_{1},Q_2,\dots, Q_{\ell})\in (R^\times)^\ell$, then by a result of Malle-Mathas \cite{MM}, the $R$-linear function $\tau_{R}^{\rm{MM}}$ on $\HH_{\ell,n}$ is non-degenerate, which makes $\HH_{\ell,n}$ into a symmetric algebra over $R$.

We use $\P_{\ell,n}$ to denote the set of $\ell$-partitions of $n$. For each $\blam\in\P_{\ell,n}$, we use $\Std(\blam)$ to denote the set of standard $\blam$-tableaux. Each $\t\in\Std(\blam)$ can be viewed as a bijective map
$\t: [\blam]\rightarrow\{1,2,\dots,n\}$ from the Young diagram $[\blam]:=\{(r,c,l)\mid 1\leq l\leq\ell,\ r\geq 1,\ 1\leq c\leq\lam_r^{(l)}\}$ to $\{1,2,\dots,n\}$.

In \cite{DJM}, Dipper-James-Mathas constructed a cellular basis $\{\fm_{\s\t}\,|\, \s,\t\in\Std(\blam),\ \blam\in\P_{\ell,n}\}$ for $\HH_{\ell,n}$
in the sense of \cite{GL}. This basis generalizes the well-known Murphy basis for the Iwahori-Hecke algebra associated to $\Sym_n$ (i.e., the level one case). We call it {\bf the Murphy basis for the non-degenerate cyclotomic Hecke algebra $\HH_{\ell,n}$}. When $\HH_{\ell,n}$ is semisimple, Mathas also constructed a seminormal basis $\{\mf_{\s\t}\,|\,\s,\t\in\Std(\blam),\ \blam\in\P_{\ell,n}\}$ for $\HH_{\ell,n}$. We refer the readers to Sections 2.1 and 3.2 for their precise definitions.

A critical challenge in the study of $\HH_{\ell,n}$ lies in characterizing the transition matrix between the Murphy basis and the seminormal basis. The explicit transformation between the Murphy basis and the seminormal basis—whose entries encode essential structural correlations—lacks a systematic description. This gap persists partly due to the absence of precise computations of trace forms on
 the Murphy basis, as the values of the trace form on seminormal basis elements can be easily expressed in terms of the gamma coefficients and the Schur functions. Without such exact values, the transition matrix remains elusive, obstructing efforts to unify the combinatorial (Murphy basis) and representation-theoretic (seminormal basis) perspectives of Hecke algebras.

Another pressing problem concerns the integral basis of the Hecke algebra's cocenter. In \cite{HS25}, the first author and Lei Shi have constructed an integral basis for the cocenter of $\HH_{\ell,n}$. However, it is very hard to apply that integral basis of the cocenter because it is given by a subset of Bremke-Malle's standard basis $\{T_w\}$ whose definition depends on the choice of the reduced expression of $w$ and is not directly related to the cellular structure of $\HH_{\ell,n}$. We really expect an integral basis for the cocenter of $\HH_{\ell,n}$ which is directly related to the cellular structure of $\HH_{\ell,n}$. The Murphy basis, with its inherent integral and recursive properties, offer a promising avenue for constructing such a basis—but this requires precise knowledge of trace forms on the Murphy basis to ensure integrality and linear independence.

\begin{dfn}\label{keydefn} Let $k\in\N,\ \blam=(\lam^{(1)},\dots,\lam^{(\ell)})\in\P_{\ell,k}$ and $\t\in\Std(\blam)$.
Let $1\leq k(\t)\leq k$ be the unique integer such that
\begin{equation*}\label{ntDef}\t^{-1}(k)=(\t^{\blam})^{-1}(k(\t)).
\end{equation*}
For each integer $1\leq j\leq n$, we let $r(\t,j),\,c(\t,j)$ and $l(\t,j)$ be the integers such that
\begin{equation*}\label{coordinate}
\t^{-1}(j)=(r(\t,j),c(\t,j),l(\t,j))\in [\blam].
\end{equation*}
\end{dfn}

The following theorem is the first main result of this paper.

\begin{thm}\label{mainthm1} Let $\blam\in\P_{\ell,n}$ and $\s,\t\in\te{Std}(\blam).$ Then
$$\tau_R^{\te{MM}}(\fm_{\s\t})=
\begin{cases}
q^{\sum\limits_{k=1}^n(k-k(\t_{\downarrow\leq k}))}\bigg(\prod\limits_{j=1}^n\prod\limits_{i=l(\t,j)+1}^{\ell}(-Q_i)\bigg), & \mbox{if\ \ }\s=\t, \\
0, & \mbox{if\ \ }\s\neq \t.
\end{cases}$$
\end{thm}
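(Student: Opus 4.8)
The plan is to reduce the computation of $\tau_R^{\mathrm{MM}}(\fm_{\s\t})$ to an explicit evaluation inside the Ariki--Koike basis of Lemma~\ref{AK basis}, using the inductive structure of the Murphy basis. Recall that $\fm_{\s\t} = T_{d(\s)}^* m_\blam T_{d(\t)}$ where $m_\blam = u_\blam^+ x_\blam$ with $u_\blam^+$ a product of factors $\prod_{i}(\mL_j - Q_i)$ (encoding the multipartition shape) and $x_\blam = \sum_{w \in \Sym_\blam} T_w$ the symmetrizer of the row stabilizer. Since $\tau_R^{\mathrm{MM}}$ is a symmetric form and $\fm_{\s\t}$ for $\blam$ fixed form a cellular basis, the first reduction is the vanishing statement $\tau_R^{\mathrm{MM}}(\fm_{\s\t}) = 0$ for $\s \neq \t$: I would argue this by a weight/degree argument, showing that $\fm_{\s\t}$ with $\s \neq \t$ lies in the span of Ariki--Koike basis elements $\mL_1^{c_1}\cdots\mL_n^{c_n}T_w$ with $(w, \mathbf{c}) \neq (1, \mathbf{0})$; this should follow from the fact that $T_{d(\s)}^* m_\blam T_{d(\t)}$, after straightening, only produces the identity-coefficient term when $d(\s) = d(\t)$ (a bar-compatibility/triangularity argument using the dominance order on tableaux, or alternatively the observation that $\tau_R^{\mathrm{MM}}$ is, up to normalization, the form used to define the cellular pairing and hence diagonal on a cellular basis — but here we need the precise normalization, so the direct route is safer).

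For the diagonal case $\s = \t$, the strategy is induction on $n$. Writing $\t = \t_{\downarrow \leq n}$ and letting $(r,c,l) = \t^{-1}(n)$, the node occupied by $n$ in $\t$ is removable, and $\fm_{\t\t}$ can be related to $\fm_{\t'\t'}$ for $\t' = \t_{\downarrow \leq n-1} \in \Std(\bmu)$, where $\bmu$ is obtained from $\blam$ by removing that node. The key computational input is: when one passes from $\HH_{\ell,n-1}$ to $\HH_{\ell,n}$, the element $\fm_{\t\t}$ picks up, relative to $\fm_{\t'\t'}$, a factor built from $T_{n-1}, \dots$ moving $n$ into position, together with the new Jucys--Murphy factor $\prod_{i=l+1}^{\ell}(\mL_n - Q_i)$ — actually $u_\blam^+$ contributes exactly the factor $\prod_{i > l}(\mL_n - (\text{lower terms}))$ for the last node — and one must evaluate $\tau_R^{\mathrm{MM}}$ on this enlarged element. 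The clean way is: use that $\tau_R^{\mathrm{MM}}$ on $\HH_{\ell,n}$ restricts to $\tau_R^{\mathrm{MM}}$ on $\HH_{\ell,n-1}$ composed with a "partial trace" map $\HH_{\ell,n} \to \HH_{\ell,n-1}$ sending $x \mL_n^c T_w x'$ (with $x, x' \in \HH_{\ell,n-1}$) to $\delta_{c,0}\delta_{w \in \Sym_{n-1}} x x'$ appropriately — this is the standard "trace down one strand" property of $\tau^{\mathrm{MM}}$ that follows directly from Definition~\ref{ssf} and the multiplicativity of the Jucys--Murphy basis. Applying this partial trace to the explicit form of $\fm_{\t\t}$ in terms of $\fm_{\t'\t'}$ yields $\tau_R^{\mathrm{MM}}(\fm_{\t\t}) = q^{\,n - k(\t)} \cdot \big(\prod_{i=l+1}^{\ell}(-Q_i)\big) \cdot \tau_R^{\mathrm{MM}}(\fm_{\t'\t'})$, where the power $q^{\,n-k(\t)}$ comes from the reduced word moving strand $n$ from position $k(\t)$ to position $n$ in the last row, and the $\prod_{i>l}(-Q_i)$ comes from evaluating the Jucys--Murphy factor $\prod_{i > l}(\mL_n - Q_i)$ at $\mL_n \mapsto 0$ under the trace (noting $l = l(\t,n)$). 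Iterating the recursion over $n, n-1, \dots, 1$ and collecting the exponents gives $q^{\sum_{k=1}^n (k - k(\t_{\downarrow \leq k}))} \prod_{j=1}^n \prod_{i=l(\t,j)+1}^{\ell}(-Q_i)$, as claimed.

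I expect the main obstacle to be bookkeeping in the inductive step: one has to express $\fm_{\t\t}$ (for $\t$ with $n$ at node $(r,c,l)$) precisely enough in terms of $\fm_{\t'\t'}$ and the "new" generators, making sure that all the \emph{extra} terms produced when straightening $m_\blam$ and the $T_{d(\t)}$'s into Ariki--Koike normal form are killed by the partial trace $\tau_R^{\mathrm{MM}}$. Concretely, $T_{d(\t)}$ is not simply $T_{d(\t')}$ times a single $T$-string: one must track that $d(\t) = d(\t') \cdot s_{k(\t)} s_{k(\t)+1} \cdots s_{n-1}$ (the cycle bringing $n$ to the end), verify the length is additive so that $T_{d(\t)} = T_{d(\t')} T_{k(\t)} \cdots T_{n-1}$, and then commute the $\mL_n$-factor appropriately using $\mL_n T_j = T_j \mL_n$ for $j < n-1$ and the defining relation for $\mL_n = q^{-1} T_{n-1} \mL_{n-1} T_{n-1}$. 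The cross-terms from the quadratic relation $(T_{n-1} - q)(T_{n-1}+1) = 0$ will generate elements lying in $\HH_{\ell,n-1}$ but with a trailing $\mL_n^0$ and a non-trivial permutation or nonzero $\mL$-power elsewhere, or elements with $T_w$, $w \notin \Sym_{n-1}$; a careful check (this is the technical heart) shows each such cross-term is annihilated by $\tau_R^{\mathrm{MM}}$, leaving only the single clean contribution. The vanishing for $\s \neq \t$ should then drop out of the same straightening analysis, or alternatively can be deduced a posteriori from the non-degeneracy of $\tau_R^{\mathrm{MM}}$ together with the cellular structure, but I would prove it directly to keep the argument self-contained.
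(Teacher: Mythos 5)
Your overall strategy coincides with the paper's: identify $\tau_R^{\te{MM}}$ with the composition $\ve_1\circ\cdots\circ\ve_n$ of partial traces coming from the cyclotomic Mackey decomposition (Lemma \ref{nondegtauoperation}), show that $\ve_n$ sends $\fm_{\t\t}$ to $q^{n-n(\t)}\bigl(\prod_{i=l(\t,n)+1}^{\ell}(-Q_i)\bigr)\fm_{\t_{\downarrow\leq(n-1)}\t_{\downarrow\leq(n-1)}}$, and iterate; the scalar you predict is exactly the one in Proposition \ref{MainProp1}. Nevertheless there are two genuine gaps. First, your recursion is only set up for diagonal elements, and neither of your suggested routes to the off-diagonal vanishing is viable: the fallback ``deduce it a posteriori from non-degeneracy of $\tau_R^{\te{MM}}$ and the cellular structure'' is false in general --- a symmetrizing form need not be diagonal on a cellular basis, and the identity $\sum_{(\u,\u)\rhd(\s,\t)}b^{\s\t}_{\u\u}c_\u=0$ for $\s\neq\t$ recorded in Corollary \ref{maincor13} shows that this vanishing encodes a nontrivial cancellation among the diagonal seminormal components of $\fm_{\s\t}$ --- while the proposed ``weight/degree argument'' is merely a restatement of the assertion to be proved. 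What is actually needed is the one-step statement for \emph{arbitrary} pairs: $\ve_n(\fm_{\s\t})=0$ whenever $n(\s)\neq n(\t)$, and $\ve_n(\fm_{\s\t})$ equals the stated scalar times $\fm_{\s_{\downarrow\leq(n-1)}\t_{\downarrow\leq(n-1)}}$ when $n(\s)=n(\t)$ (Propositions \ref{nondegvemst} and \ref{nondegvemst2}); the theorem then follows since $\s\neq\t$ forces $k(\s_{\downarrow\leq k})\neq k(\t_{\downarrow\leq k})$ at some level $k$.

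Second, the step you yourself call ``the technical heart'' --- verifying that every cross-term produced by straightening $T_{d(\s)}^*x_\blam\fu_\blam^+T_{d(\t)}$ dies under $\ve_n$ --- is where essentially all of the work lies, and it is not carried out. One must commute the Jucys--Murphy product $\fu_\blam^+$ past $T_{\beta_{n(\t),n-1}}$ (Lemma \ref{JucyMurphyCommutators}), check that the $\mL_n$-degree of each surviving correction term lies in $[1,\ell-1]$ so that nothing contributes to the $\mL_n^0$-component (the observation (\ref{easyobservation})), split into the cases according to whether $n(\s)$ and $n(\t)$ equal $n$, and extract the coefficient from $T_{\gamma_{n-1,n(\t)}}T_{\beta_{n(\t),n-1}}=\mu_{T_{n-1}}(\pi)+q^{n-n(\t)}$ (Lemma \ref{keytools}(3)). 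Your plan points in the right direction, but without these verifications it does not yet constitute a proof.
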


Let $\bu=(u_1,u_2,\dots,u_\ell)\in R^{\ell}$. The degenerate cyclotomic Hecke algebra $H_{\ell,n}=H_{\ell,n}^R(\bu)$ of type $G(\ell,1,n)$ is the unital associative $R$-algebra with generators $s_{1} ,\dots,s_{n-1},L_1, \dots,L_n$ which satisfy certain relations, see Definition \ref{degenerate}. In \cite{AMR}, Ariki, Mathas and Rui constructed a cellular basis $\{m_{\s\t}\,|\, \s,\t\in\Std(\blam),\ \blam\in\P_{\ell,n}\}$ for $H_{\ell,n}$. This basis generalizes the well-known Murphy basis for the symmetric group algebra $R[\Sym_n]$ (i.e., the level one case). We call it {\bf the Murphy basis for the degenerate cyclotomic Hecke algebra $H_{\ell,n}$}. Brundan-Kleshchev \cite{BK08} introduced a symmetrizing form $\tau_R^{\te{BK}}$ on $H_{\ell,n}$ which makes it into a symmetric algebra over $R$. We refer the readers to Section 2.2 for their precise definitions. The following theorem is the second main result of this paper.

\begin{thm}\label{mainthm2} Let $\blam\in\P_{\ell,n}$ and $\s,\t\in\te{Std}(\blam).$ Then
$$\tau_R^{\te{BK}}(m_{\s\t})=
\begin{cases}
1, & \mbox{if\ \,}\s=\t\te{\ \,and\ \,} l(\t,i)=1,\ \forall\,1\leq i\leq n, \\
0, & \mbox{\text{otherwise}}.
\end{cases}$$
\end{thm}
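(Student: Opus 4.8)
The plan is to mirror the strategy used for Theorem \ref{mainthm1} but in the degenerate setting, exploiting the fact that $\tau_R^{\te{BK}}$ is, by construction, the trace picked out by the monomial basis $\{L_1^{c_1}\cdots L_n^{c_n} s_w\}$ analogous to Lemma \ref{AK basis}. First I would recall the explicit form of the Murphy basis element $m_{\s\t}=s_{d(\s)}^{\ast}\, m_{\blam}\, s_{d(\t)}$, where $m_{\blam}$ is a product of an idempotent-type factor coming from the cyclotomic condition and the symmetrizer $\sum_{w\in\Sym_{\blam}} s_w$ over the row stabiliser. Because $\tau_R^{\te{BK}}$ is symmetric and the cellular basis is compatible with the dominance order, the usual argument shows $\tau_R^{\te{BK}}(m_{\s\t})$ depends only on the two-sided cell of $\blam$ and vanishes unless the "straightening" of $s_{d(\t)}s_{d(\s)}^{\ast}m_{\blam}$ has a nonzero component in the top-degree piece; pushing this through the cellular filtration reduces the problem to the case $\s=\t$ and, after peeling off the symmetric-group part (for which the level-one Murphy-basis trace computation is classical and gives the factor $1$), to evaluating $\tau_R^{\te{BK}}$ on $m_{\blam}$ itself.

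The core computation is then to expand $m_{\blam}$ in the monomial basis and read off the coefficient of $1$. The key point is that in the degenerate algebra the relevant cyclotomic factor in $m_{\blam}$ is $\prod_{j=1}^{n}\prod_{i=1}^{l(\t^{\blam},j)-1}(L_j - u_i)$ (the degenerate analogue of the $\prod(-Q_i)$ product appearing in Theorem \ref{mainthm1}), and I would argue by induction on $n$, adding the box containing $n$ last: restricting $m_{\blam}$ to $H_{\ell,n-1}$ and using the branching/recursion for the Murphy basis, together with the commutation relations for the $L_k$'s, one shows that $\tau_R^{\te{BK}}(m_{\blam})$ multiplies by $\tau$ of the single new factor $\prod_{i=1}^{l(\t^{\blam},n)-1}(L_n-u_i)$ acting on the induced module. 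Because $\tau_R^{\te{BK}}(L_n^{c}\,x)=0$ for any $x\in H_{\ell,n-1}$ unless $c=0$, only the constant term of that product survives; and that constant term is $\prod_{i=1}^{l-1}(-u_i)$ when $l=l(\t^{\blam},n)>1$, but equals $1$ precisely when $l=1$ (empty product). Iterating, $\tau_R^{\te{BK}}(m_{\blam})=\prod_{j=1}^{n}\prod_{i=1}^{l(\t^{\blam},j)-1}(-u_i)$, which is $1$ exactly when every box of $\blam$ lies in the first component, i.e.\ $l(\t,i)=1$ for all $i$, and is a (possibly zero) scalar otherwise.

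There is one subtlety I should flag: the degenerate Murphy basis of \cite{AMR} is built using the elements $L_k$ rather than the Jucys–Murphy-type $\mL_k$ of the non-degenerate case, and the "shift" in indices (the product running to $l(\t,j)-1$ rather than from $l(\t,j)+1$ to $\ell$) reflects the different normalisation of the cyclotomic relation $\prod_{i=1}^{\ell}(L_1-u_i)=0$; I would need to check the precise conventions in Definition \ref{degenerate} so that the bookkeeping of which linear factors $(L_j-u_i)$ appear in $m_{\blam}$ is correct. The main obstacle is exactly this straightening step — showing that when one rewrites $s_{d(\t)}\,m_{\blam}$ (or more generally $m_{\s\t}$ with $\s\ne\t$) back into the monomial basis, no contribution to the coefficient of $1$ arises from the lower cellular terms. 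This is handled by the standard dominance-order argument for cellular algebras (any lower term $m_{\u\v}$ with $\bmu\gdom\blam$ contributes $0$ by downward induction on $n$, since such a $\bmu$ necessarily has more boxes outside the first component in the relevant rows), but making it airtight requires carefully tracking the degree filtration, and that is where most of the work lies.
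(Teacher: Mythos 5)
There is a genuine error in the core computation, and it stems from misreading the definition of $\tau_R^{\te{BK}}$. You assert that $\tau_R^{\te{BK}}(L_n^{c}x)=0$ for $x\in H_{\ell,n-1}$ unless $c=0$, so that ``only the constant term survives.'' That is the behaviour of the non-degenerate form $\tau_R^{\te{MM}}$. By definition, $\tau_R^{\te{BK}}$ picks out the coefficient of $L_1^{\ell-1}L_2^{\ell-1}\cdots L_n^{\ell-1}$ (with $w=1$), i.e.\ the \emph{top} power $c=\ell-1$ in each $L_k$, not the constant term. Correspondingly, the paper's descending map is $\e_n(h)=\rp^{(n)}_{\ell-1}(h)$, not $\rp^{(n)}_{0}(h)$. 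With your (incorrect) mechanism you arrive at $\tau_R^{\te{BK}}(m_{\blam})=\prod_{j}\prod_{i}(-u_i)$, a generically \emph{nonzero} scalar whenever some box lies outside the first component --- which contradicts the theorem's claim that the value is $0$ in those cases. So the proposal does not prove the statement; it derives something false.

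The correct mechanism is a degree count in the opposite direction. The cyclotomic factor of $m_{\blam}$ attached to the box numbered $j$ is $\prod_{s=l(\t^{\blam},j)+1}^{\ell}(L_j-u_s)$ (the same index range as in the non-degenerate case --- your guess $\prod_{i=1}^{l-1}(L_j-u_i)$ is also off), which is a monic polynomial in $L_j$ of degree $\ell-l(\t^{\blam},j)$. This degree equals $\ell-1$ exactly when $l(\t^{\blam},j)=1$, in which case the coefficient of $L_j^{\ell-1}$ is $1$; when $l(\t^{\blam},j)>1$ the degree is strictly less than $\ell-1$ and the relevant coefficient is $0$. This is precisely the observation (\ref{easyobservationdeg}) that drives Propositions \ref{degvemst} and \ref{degvemst2}, where one shows via the cyclotomic Mackey decomposition that $\e_n(m_{\s\t})$ equals $m_{\s_{\downarrow\leq(n-1)}\t_{\downarrow\leq(n-1)}}$ when $n(\s)=n(\t)$ and $l(\t,n)=1$, and $0$ otherwise; iterating $\e_1\circ\cdots\circ\e_n$ gives the theorem. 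Your treatment of the $\s\neq\t$ case is also too thin: the vanishing there is not a formal consequence of cellularity and the dominance order (and your claim that dominance-higher multipartitions have more boxes outside the first component is backwards); in the paper it requires the explicit rewriting of $\gamma_{n-1,n(\s)}\cdots\beta_{n(\t),n-1}$ into $H_{\ell,n-1}s_{n-1}H_{\ell,n-1}$ plus terms of $L_n$-degree at most $\ell-2$, using Lemmas \ref{keytoolsdeg} and \ref{JucyMurphyCommutatorsdeg}.
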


The main idea of the proof of Theorems \ref{mainthm1} and \ref{mainthm2} comes from \cite{HLL},
where we identify the standard symmetrizing form on $\HH_{\ell,n}$ (resp., on $H_{\ell,n}$) with the composition of a series of descending map $\ve_k: \HH_{\ell,k}\rightarrow\HH_{\ell,k-1}$  (resp., $\e_k: H_{\ell,k}\rightarrow H_{\ell,k-1}$),
where each descending map $\ve_k$ (resp., $\e_k$) is given by the cyclotomic Mackey decomposition of $\HH_{\ell,n}$ (resp., of $H_{\ell,n}$).
Instead of computing the value of the standard symmetrizing form via its original definition, we shall show that
each $\ve_n(\fm_{\s\t})$ (resp., $\e_n(m_{\s\t})$) is zero or a scalar multiple of
$\fm_{\s_{\downarrow\leq (n-1)}\t_{\downarrow\leq (n-1)}}$ (resp., of $m_{\s_{\downarrow\leq (n-1)}\t_{\downarrow\leq (n-1)}}$),
and we shall precisely compute this scalar.

The content of the paper is organised as follows.
In Section 2, we recall some preliminary results on the non-degenerate cyclotomic Hecke algebra and the degenerate cyclotomic Hecke algebra of type $G(\ell,1,n)$.
In Section 3, we use the cyclotomic Mackey decomposition of the non-degenerate cyclotomic Hecke algebra of type $G(\ell,1,n)$
to show in Proposition \ref{MainProp1}  that each $\ve_n(\fm_{\s\t})$ is zero or a scalar multiple of $\fm_{\s_{\downarrow\leq (n-1)}\t_{\downarrow\leq (n-1)}}$, and thus get a proof of the first main result Theorem \ref{mainthm1} of this paper.
Then we apply Proposition \ref{MainProp1} and Theorem \ref{mainthm1}
to derive some equations in Corollaries \ref{maincor11}, \ref{maincor12} and \ref{maincor13}  on the coefficients
which occur in the transition matrix between the Murphy basis elements and the seminormal basis elements. In Section 4, we generalize all the results obtained in Section 3 to the degenerate cyclotomic Hecke algebra of type $G(\ell,1,n)$.

\bigskip\bigskip
\centerline{Acknowledgements}
\bigskip

The research was support by the National Natural Science Foundation of China (No. 12431002).
\bigskip

\bigskip

\section{Preliminary}\label{preliminary}

In this section we shall recall some preliminary knowledge on the non-degenerate cyclotomic Hecke algebra and the degenerate cyclotomic Hecke algebra of type $G(\ell,1,n)$.

\subsection{Non-degenerate cyclotomic Hecke algebra of type $G(\ell,1,n)$}
In this subsection we shall collect some basic results on the non-degenerate cyclotomic Hecke algebra $\HH_{\ell,n}=\HH_{\ell,n}^R(q;\bQ)$,
whose definition has been given in Section 1. We shall also introduce some basic combinatorial notions.

\begin{lem}\text{\rm (\cite[(1.1)]{Ma})}\label{nondeg-L-comm}
Suppose that $1\le i\le n-1$ and $1\le j\le n$. Then:
\begin{enumerate}
\item $\mL_i$ and $\mL_j$ commute;
\item $T_i$ and $\mL_j$ commute if $i\ne j-1,\, j$;
\item $T_i$ commutes with $\mL_i\mL_{i+1}$ and $\mL_i+\mL_{i+1}$;
\item $T_i\mL_i=\mL_{i+1}T_i-(q-1)\mL_{i+1}$ and $T_{i}\mL_{i+1}=\mL_{i}T_{i}+(q-1)\mL_{i+1}$;
\item If $a\in R$ and $i\ne j$ then $T_i$ commutes with $(\mL_1-a)(\mL_2-a)\cdots(\mL_j-a)$.
\end{enumerate}
\end{lem}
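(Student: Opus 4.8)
The plan is to derive all of (1)--(5) from the recursion $\mL_1=T_0$, $\mL_{k+1}=q^{-1}T_k\mL_kT_k$, the quadratic relation $T_k^2=(q-1)T_k+q$ (equivalently $T_k^{-1}=q^{-1}T_k-(1-q^{-1})$) and the braid and commutation relations among $T_0,\dots,T_{n-1}$ (in particular that $T_0$ commutes with $T_2,\dots,T_{n-1}$). The first step is the closed form
$$\mL_k=q^{-(k-1)}\,T_{k-1}T_{k-2}\cdots T_1\,T_0\,T_1\cdots T_{k-1},$$
which is immediate by induction on $k$; in particular $\mL_k$ lies in the subalgebra generated by $T_0,\dots,T_{k-1}$, so whenever $i\ge k+1$ the element $T_i$ commutes with each of $T_0,\dots,T_{k-1}$, hence with $\mL_k$ --- this is the ``$i\ge j+1$'' half of (2). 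Part (4) is then a one-line computation each: $T_i\mL_i=q\mL_{i+1}T_i^{-1}=\mL_{i+1}T_i-(q-1)\mL_{i+1}$, and $T_i\mL_{i+1}=q^{-1}T_i^2\mL_iT_i=q^{-1}\bigl((q-1)T_i+q\bigr)\mL_iT_i=(q-1)\mL_{i+1}+\mL_iT_i$. Adding the two gives $T_i(\mL_i+\mL_{i+1})=(\mL_i+\mL_{i+1})T_i$, while grouping $T_i\mL_i\mL_{i+1}=T_i\mL_i\bigl(q^{-1}T_i\mL_iT_i\bigr)=q^{-1}(T_i\mL_iT_i)(\mL_iT_i)=\mL_{i+1}\mL_iT_i$ shows that (3) will follow as soon as $\mL_i$ and $\mL_{i+1}$ are known to commute.

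The real content is (1) and the ``$i\le j-2$'' half of (2), both of which I would prove by induction using one and the same device. Write $\mL_k=q^{-1}T_{k-1}\mL_{k-1}T_{k-1}=q^{-2}T_{k-1}T_{k-2}\,\mL_{k-2}\,T_{k-2}T_{k-1}$; by the closed form $\mL_{k-2}$ commutes with $T_{k-1}$, and this lets one slide a $T_{k-2}$ across $\mL_{k-2}$ using only the braid relation $T_{k-2}T_{k-1}T_{k-2}=T_{k-1}T_{k-2}T_{k-1}$ (apply it to the leading $T_{k-2}T_{k-1}T_{k-2}$, commute the freed $T_{k-1}$ past $\mL_{k-2}$, then apply it again to the trailing $T_{k-1}T_{k-2}T_{k-1}$), yielding $T_{k-2}\mL_k=\mL_kT_{k-2}$. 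For (2) with $i\le k-2$: the subcase $i\le k-3$ reduces to $T_i\leftrightarrow T_{k-1}$ together with the inductive hypothesis applied to $\mL_{k-1}$, and the subcase $i=k-2$ is exactly the sliding computation just described. For (1) with $i<j$: if $i\le j-2$ then $\mL_i$ commutes with $T_{j-1}$ and, by induction, with $\mL_{j-1}$, hence with $\mL_j=q^{-1}T_{j-1}\mL_{j-1}T_{j-1}$; the one delicate case is $j=i+1$, where expanding both $\mL_i\mL_{i+1}$ and $\mL_{i+1}\mL_i$ through $\mL_i=q^{-1}T_{i-1}\mL_{i-1}T_{i-1}$ and pushing everything across the braid relations and the commutation $T_i\mL_{i-1}=\mL_{i-1}T_i$ reduces the desired equality to ``$\mL_{i-1}$ commutes with $\mL_i$'', i.e. to the previous stage of the induction --- the base case $\mL_1\mL_2=\mL_2\mL_1$ being precisely the defining relation $T_0T_1T_0T_1=T_1T_0T_1T_0$.

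Finally (5): when $i>j$ it is immediate from (2), since $T_i$ then commutes with each of $\mL_1,\dots,\mL_j$. When $i<j$, part (1) lets us reorder the factors and write
$$(\mL_1-a)\cdots(\mL_j-a)=\Bigl(\prod_{k<i}(\mL_k-a)\Bigr)(\mL_i-a)(\mL_{i+1}-a)\Bigl(\prod_{i+1<k\le j}(\mL_k-a)\Bigr),$$
where $T_i$ commutes with the two outer blocks by (2) and with $(\mL_i-a)(\mL_{i+1}-a)=\mL_i\mL_{i+1}-a(\mL_i+\mL_{i+1})+a^2$ by (3), hence with the whole product.

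The step I expect to be the main obstacle is the case $j=i+1$ of (1): it is the one point where $T$'s cannot merely be commuted past $\mL$'s formally, and it is what forces the two-step peeling $\mL_k=q^{-2}T_{k-1}T_{k-2}\mL_{k-2}T_{k-2}T_{k-1}$ together with careful bookkeeping of braid relations. Once that computation is in hand, everything else is routine rearrangement.
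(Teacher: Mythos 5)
Your proof is correct. The paper itself gives no argument for this lemma (it is quoted verbatim from Mathas, \cite[(1.1)]{Ma}), and your derivation is the standard one: the closed form $\mL_k=q^{-(k-1)}T_{k-1}\cdots T_1T_0T_1\cdots T_{k-1}$ gives the $i\ge j+1$ half of (2), the quadratic relation gives (4) and hence the sum in (3), and the two genuinely nontrivial points --- $T_{k-2}\mL_k=\mL_kT_{k-2}$ via the two-step peeling with two braid moves, and $\mL_i\mL_{i+1}=\mL_{i+1}\mL_i$ by induction down to the defining relation $T_0T_1T_0T_1=T_1T_0T_1T_0$ --- both check out exactly as you describe (the inductive step reduces, after commuting the $T_i$'s past $\mL_{i-1}$ and one braid move on each side, to $\mL_{i-1}T_{i-1}\mL_{i-1}T_{i-1}=T_{i-1}\mL_{i-1}T_{i-1}\mL_{i-1}$, which is the previous stage).
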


\begin{lem}\label{nondegMadecomlem}  \te{(}\cite[Lemma 2.2]{HLL}\te{)}
\begin{enumerate}
  \item There are some canonical $(\mathscr{H}_{\ell,n-1},\mathscr{H}_{\ell,n-1})$-bimodule isomorphisms:
$$\mathscr{H}_{\ell,n-1} \otimes_{\mathscr{H}_{\ell,n-2}}\mathscr{H}_{\ell,n-1}\cong \mathscr{H}_{\ell,n-1} T_{n-1} \mathscr{H}_{\ell,n-1} ,
\quad a\ot b\mapsto aT_{n-1}b ;$$
$$\mathscr{H}_{\ell,n-1}\cong \mathscr{H}_{\ell,n-1} \mL_n^k  ,\quad c\mapsto c \mL_n^k  \quad (0\leq k \leq \ell-1).$$
  \item As $(\mathscr{H}_{\ell,n-1},\mathscr{H}_{\ell,n-1})$-bimodules, we have
$$\mathscr{H}_{\ell,n}=\mathscr{H}_{\ell,n-1} T_{n-1} \mathscr{H}_{\ell,n-1} \bigoplus \bigoplus_{k=0}^{\ell-1} \mathscr{H}_{\ell,n-1} \mL_n^{k}.$$
\end{enumerate}
\end{lem}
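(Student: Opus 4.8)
The plan is to reduce the whole statement to the Ariki--Koike basis (Lemma~\ref{AK basis}) together with the commutation relations of Lemma~\ref{nondeg-L-comm}, and then to a rank count over $R$. Two preliminary observations come for free. Since $\mathscr{H}_{\ell,n-1}$ is generated by $\mL_1=T_0,T_1,\dots,T_{n-2}$, each of which commutes with $\mL_n$ by Lemma~\ref{nondeg-L-comm}(1),(2), the element $\mL_n$ commutes with all of $\mathscr{H}_{\ell,n-1}$; hence $\mathscr{H}_{\ell,n-1}\mL_n^k$ is an $(\mathscr{H}_{\ell,n-1},\mathscr{H}_{\ell,n-1})$-sub-bimodule of $\mathscr{H}_{\ell,n}$ and $c\mapsto c\mL_n^k$ is a surjective bimodule homomorphism $\mathscr{H}_{\ell,n-1}\to\mathscr{H}_{\ell,n-1}\mL_n^k$; it is injective because, after commuting $\mL_n^k$ past the $T_w$ ($w\in\Sym_{n-1}$), the images of the Ariki--Koike basis of $\mathscr{H}_{\ell,n-1}$ are themselves distinct Ariki--Koike basis elements of $\mathscr{H}_{\ell,n}$. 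This gives the second isomorphism in (1). Likewise $\mathscr{H}_{\ell,n-2}=\langle T_0,\dots,T_{n-3}\rangle$ commutes with $T_{n-1}$, so $a\ot b\mapsto aT_{n-1}b$ is a well-defined surjective $(\mathscr{H}_{\ell,n-1},\mathscr{H}_{\ell,n-1})$-bimodule homomorphism $\varphi\colon\mathscr{H}_{\ell,n-1}\ot_{\mathscr{H}_{\ell,n-2}}\mathscr{H}_{\ell,n-1}\twoheadrightarrow\mathscr{H}_{\ell,n-1}T_{n-1}\mathscr{H}_{\ell,n-1}$. It then remains to prove the spanning part of (2) and the injectivity of $\varphi$.

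For the spanning part I would first record that $\mathscr{H}_{\ell,n}$ is free as a left $\mathscr{H}_{\ell,n-1}$-module with basis $\{\mL_n^kT_d\mid 0\le k\le\ell-1,\ d\in D_n\}$, where $D_n=\{1,\sigma_{n-1},\sigma_{n-1}\sigma_{n-2},\dots,\sigma_{n-1}\sigma_{n-2}\cdots\sigma_1\}$ is the standard set of minimal-length representatives of $\Sym_{n-1}\backslash\Sym_n$; this drops out of Lemma~\ref{AK basis} on writing $w=ud$ with $u\in\Sym_{n-1}$, $d\in D_n$, $\ell(w)=\ell(u)+\ell(d)$, and commuting $\mL_n$ past $T_u$. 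Set $M:=\mathscr{H}_{\ell,n-1}T_{n-1}\mathscr{H}_{\ell,n-1}+\sum_{k=0}^{\ell-1}\mathscr{H}_{\ell,n-1}\mL_n^k$, a left $\mathscr{H}_{\ell,n-1}$-submodule of $\mathscr{H}_{\ell,n}$; it suffices to check $\mL_n^kT_d\in M$ for all $k,d$. For $d=1$ this is clear, and for $d=\sigma_{n-1}\sigma_{n-2}\cdots\sigma_j$ I would start from $\mL_nT_{n-1}=T_{n-1}\mL_{n-1}+(q-1)\mL_n$ (Lemma~\ref{nondeg-L-comm}(4)), iterate it to $\mL_n^kT_{n-1}=T_{n-1}\mL_{n-1}^k+(q-1)\sum_{i=1}^k\mL_n^i\mL_{n-1}^{k-i}$ with every power of $\mL_n$ at most $k\le\ell-1$, and then multiply on the right by $T_{n-2}\cdots T_j$, which commutes with $\mL_n$ by Lemma~\ref{nondeg-L-comm}(2): the leading term becomes $T_{n-1}(\mL_{n-1}^kT_{n-2}\cdots T_j)\in\mathscr{H}_{\ell,n-1}T_{n-1}\mathscr{H}_{\ell,n-1}$ and the remaining terms become $(\mL_{n-1}^{k-i}T_{n-2}\cdots T_j)\mL_n^i\in\mathscr{H}_{\ell,n-1}\mL_n^i$. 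Hence $\mL_n^kT_d\in M$, so $M=\mathscr{H}_{\ell,n}$.

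The proof is then finished by a rank count over $R$. By Lemma~\ref{AK basis}, $\rank_R\mathscr{H}_{\ell,m}=\ell^m m!$; and $\mathscr{H}_{\ell,n-1}$ is free as a right $\mathscr{H}_{\ell,n-2}$-module of rank $\ell(n-1)$ (the $(n-1)$-case of the left-module freeness above, transported by the standard anti-automorphism $T_i\mapsto T_i$). Hence $\mathscr{H}_{\ell,n-1}\ot_{\mathscr{H}_{\ell,n-2}}\mathscr{H}_{\ell,n-1}$ is $R$-free of rank $\ell^n(n-1)(n-1)!$, while $\bigoplus_{k=0}^{\ell-1}\mathscr{H}_{\ell,n-1}\mL_n^k$ is an $R$-free direct summand of $\mathscr{H}_{\ell,n}$ of rank $\ell^n(n-1)!$. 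Using that $\varphi$ is surjective and $M=\mathscr{H}_{\ell,n}$ we get
$$\ell^n n!=\rank_R\mathscr{H}_{\ell,n}\ \le\ \rank_R\!\big(\mathscr{H}_{\ell,n-1}T_{n-1}\mathscr{H}_{\ell,n-1}\big)+\ell^n(n-1)!\ \le\ \ell^n(n-1)(n-1)!+\ell^n(n-1)!=\ell^n n!,$$
so every inequality is an equality. Tensoring with the fraction field $K$ of $R$, equality of ranks forces $\big(\mathscr{H}_{\ell,n-1}T_{n-1}\mathscr{H}_{\ell,n-1}\big)\cap\big(\bigoplus_k\mathscr{H}_{\ell,n-1}\mL_n^k\big)$ to be a torsion submodule of the free module $\mathscr{H}_{\ell,n}$, hence zero, which gives the direct sum in (2); and it makes $\varphi\ot_R K$ a surjection between $K$-spaces of equal dimension, hence an isomorphism, so $\ker\varphi$ is a torsion submodule of the free module $\mathscr{H}_{\ell,n-1}\ot_{\mathscr{H}_{\ell,n-2}}\mathscr{H}_{\ell,n-1}$, hence zero. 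Thus $\varphi$ is an isomorphism, completing (1) and (2).

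I expect the only real work to be the spanning step: the inclusions $\mL_n^kT_d\in M$ require pushing $T_{n-1}$ leftwards through $\mathscr{H}_{\ell,n-1}$ while keeping every power of $\mL_n$ strictly below $\ell$, and it is there that Lemma~\ref{nondeg-L-comm}(2),(4) do the work; once the spanning and the two preliminary observations are in place, the decomposition and both isomorphisms follow formally from the rank count.
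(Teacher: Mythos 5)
Your proof is correct. The paper itself gives no argument for this lemma (it is quoted from \cite{HLL}), but your route --- the Ariki--Koike basis, the left-coset decomposition $w=ud$ giving freeness of $\mathscr{H}_{\ell,n}$ over $\mathscr{H}_{\ell,n-1}$ with basis $\{\mL_n^kT_d\}$, the commutation $\mL_n^kT_{n-1}=T_{n-1}\mL_{n-1}^k+(q-1)\sum_{i=1}^k\mL_n^i\mL_{n-1}^{k-i}$ for the spanning step, and the rank count over the fraction field to force directness and injectivity --- is exactly the standard proof of this Mackey-type decomposition. The only step you leave implicit is that the sum $\sum_{k=0}^{\ell-1}\mathscr{H}_{\ell,n-1}\mL_n^k$ is itself direct, but this is immediate from your own injectivity argument, since the images of the Ariki--Koike basis elements of $\mathscr{H}_{\ell,n-1}$ under $c\mapsto c\mL_n^k$ are pairwise distinct Ariki--Koike basis elements of $\mathscr{H}_{\ell,n}$ as $k$ varies.
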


For any ring homomorphism $\varphi: B\rightarrow A$ between two rings $A,B$, we regard $A$ as an $(B,B)$-bimodule via $\varphi$. For any $f\in\{a\in A\mid ab=ba,\ \forall\,b\in B\}$, we define $$
\mu_f: A\otimes_{B}A\rightarrow A,\ \ \sum_{i}a_i\otimes b_i\mapsto \sum_{i}a_ifb_i,\ \ \text{where\ \,$a_i,b_i\in A,\ \forall\,i$.}
$$

By Lemma \ref{nondegMadecomlem}, we see that for any $h\in \mathscr{H}_{\ell,n},$ there are unique elements $$
\pi^{(n)}(h)\in \mathscr{H}_{\ell,n-1} \otimes_{\mathscr{H}_{\ell,n-2}} \mathscr{H}_{\ell,n-1},\quad p^{(n)}_k(h)\in\mathscr{H}_{\ell,n-1},\,\,k=0,1,\ldots,\ell-1, $$  such that
\begin{align}\label{nondegMadecomequation}
h=\mu_{T_{n-1}}(\pi^{(n)}(h))+\sum_{k=0}^{\ell-1}p^{(n)}_k(h)\mL_n^k.
\end{align}
We call (\ref{nondegMadecomequation}) {\bf the cyclotomic Mackey decomposition} of $\mathscr{H}_{\ell,n}$.

Let $\ast$ be the unique $R$-algebra anti-automorphism of $\HH_{\ell,n}$ which fixes all of its Hecke generators $T_0,T_1,\dots,T_{n-1}$. It is easy to verify that $\mL_j^\ast=\mL_j$ for any $1\leq j\leq n$. For $1\leq k\leq n$, we define the following map:
$$\ve_k:\mathscr{H}_{\ell,k}\rightarrow\mathscr{H}_{\ell,k-1},\ \ h\mapsto p^{(k)}_{0}(h). $$

\begin{lem}\label{veAst} For any $h\in \mathscr{H}_{\ell,k}$ and $h_1\in\mathscr{H}_{\ell,k-1}$, we have that $$
\ve_k(hh_1)=\ve_k(h)h_1,\quad \ve_k(h_1h)=h_1\ve_k(h),\quad \ve_k(h^*)=\ve_k(h)^* .
$$
\end{lem}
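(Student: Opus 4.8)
The plan is to deduce everything from the fact that the cyclotomic Mackey decomposition (\ref{nondegMadecomequation}) is the decomposition attached to a direct sum of $(\mathscr{H}_{\ell,k-1},\mathscr{H}_{\ell,k-1})$-bimodules, namely the one in Lemma \ref{nondegMadecomlem}(2) with $n$ replaced by $k$. Since $\mL_k^0=1$, the summand $\mathscr{H}_{\ell,k-1}\mL_k^0$ is literally the subalgebra $\mathscr{H}_{\ell,k-1}$, and the component of $h$ in this summand is exactly $p^{(k)}_0(h)\mL_k^0=p^{(k)}_0(h)=\ve_k(h)$. Thus $\ve_k$ is nothing but the projection of $\mathscr{H}_{\ell,k}$ onto this bimodule summand.

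With this in hand, the first two identities follow formally: in any direct sum decomposition of bimodules each summand is stable under left and right multiplication, so the projection onto a summand is a homomorphism of bimodules, which gives $\ve_k(h_1hh_2)=h_1\ve_k(h)h_2$ for all $h_1,h_2\in\mathscr{H}_{\ell,k-1}$ and $h\in\mathscr{H}_{\ell,k}$; specializing $h_2=1$ or $h_1=1$ yields $\ve_k(hh_1)=\ve_k(h)h_1$ and $\ve_k(h_1h)=h_1\ve_k(h)$. The only point to check here is that each $\mathscr{H}_{\ell,k-1}\mL_k^j$ really is an $(\mathscr{H}_{\ell,k-1},\mathscr{H}_{\ell,k-1})$-bimodule, that is, that $\mathscr{H}_{\ell,k-1}=\langle T_0,\dots,T_{k-2}\rangle$ commutes with $\mL_k$: the generators $T_1,\dots,T_{k-2}$ commute with $\mL_k$ by Lemma \ref{nondeg-L-comm}(2), and $T_0=\mL_1$ commutes with $\mL_k$ by Lemma \ref{nondeg-L-comm}(1).

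For the third identity I would apply the anti-automorphism $\ast$ to the decomposition $h=\mu_{T_{k-1}}(\pi^{(k)}(h))+\sum_{j=0}^{\ell-1}p^{(k)}_j(h)\mL_k^j$. Observe that $\ast$ restricts to an anti-automorphism of $\mathscr{H}_{\ell,k-1}$, since it fixes $T_0,\dots,T_{k-2}$; that $\mathscr{H}_{\ell,k-1}T_{k-1}\mathscr{H}_{\ell,k-1}$ is $\ast$-stable because $(aT_{k-1}b)^\ast=b^\ast T_{k-1}a^\ast$; and that, using $\mL_k^\ast=\mL_k$ and the commutation of $\mL_k^j$ with $p^{(k)}_j(h)^\ast\in\mathscr{H}_{\ell,k-1}$, one has $(p^{(k)}_j(h)\mL_k^j)^\ast=p^{(k)}_j(h)^\ast\mL_k^j\in\mathscr{H}_{\ell,k-1}\mL_k^j$. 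Hence
$$h^\ast=\mu_{T_{k-1}}(\pi^{(k)}(h))^\ast+\sum_{j=0}^{\ell-1}p^{(k)}_j(h)^\ast\mL_k^j$$
is again an expression of the form (\ref{nondegMadecomequation}), so by uniqueness of that decomposition $p^{(k)}_j(h^\ast)=p^{(k)}_j(h)^\ast$ for all $j$; taking $j=0$ gives $\ve_k(h^\ast)=\ve_k(h)^\ast$.

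I do not expect a genuine obstacle: the whole statement is a formal consequence of the bimodule decomposition of Lemma \ref{nondegMadecomlem} together with the uniqueness in (\ref{nondegMadecomequation}). The single point that needs a moment's attention is the commutation of $\mathscr{H}_{\ell,k-1}$ with $\mL_k$ and its powers, which is what makes the summands $\mathscr{H}_{\ell,k-1}\mL_k^j$ honest $(\mathscr{H}_{\ell,k-1},\mathscr{H}_{\ell,k-1})$-bimodules and what validates $(p\,\mL_k^j)^\ast=p^\ast\mL_k^j$; this is supplied by Lemma \ref{nondeg-L-comm}.
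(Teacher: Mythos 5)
Your proof is correct and follows essentially the same route as the paper, which simply invokes the commutation relations of Lemma \ref{nondeg-L-comm} and the uniqueness of the cyclotomic Mackey decomposition (\ref{nondegMadecomequation}); you have merely written out the details (the bimodule-projection argument for the first two identities and the $\ast$-stability of each summand for the third). No gaps.
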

\begin{proof} This follows from Lemmas \ref{AK basis} and \ref{nondeg-L-comm} and the uniqueness of the cyclotomic Mackey decomposition (\ref{nondegMadecomequation}).
\end{proof}

We define an $R$-linear map
$$\tau_n^R:=\varepsilon_1 \circ \varepsilon_2 \circ \cdots\circ \varepsilon_n: \mathscr{H}_{\ell,n} \rightarrow  R.$$

\begin{lem}\label{nondegtauoperation} \te{(}\cite[Lemma 2.6]{HLL}\te{)} Let $w\in\Sym_n$ and $\mathbf{c}:=(c_1,c_2,\dots,c_n)\in\Z^{n},$ where
$0\leq c_i \leq \ell-1$ for each $i$. Then we have
$$\tau_n^R(\mL_1^{c_1}\mL_2^{c_2}\cdots \mL_n^{c_n}T_w)=\delta_{(\mathbf{c},w),(\underline{0},1)},$$
where $\underline{0}:=(0,0,\dots,0)\in\Z^{n}$.
In particular, the map $\tau_n^R$ coincides with the standard symmetrizing form $\tau_{R}^{\te{MM}}$.
\end{lem}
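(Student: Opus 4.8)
The plan is to argue by induction on $n$, using that by construction $\tau_n^R=\tau_{n-1}^R\circ\ve_n$, together with the bimodule identities of Lemma~\ref{veAst}. The base case $n=1$ is immediate, since $\HH_{\ell,1}=\bigoplus_{c=0}^{\ell-1}R\mL_1^{c}$ and $\ve_1(\mL_1^{c})=\delta_{c,0}$. So fix $n\ge 2$, assume the statement for $n-1$, and observe that everything will follow from the single vanishing
$$\ve_n(\mL_n^{j}T_{n-1})=0\quad\text{for all }0\le j\le\ell-1.$$

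To prove this, I would first read off two facts from the cyclotomic Mackey decomposition~(\ref{nondegMadecomequation}): since $T_{n-1}=\mu_{T_{n-1}}(1\ot 1)$ lies in the summand $\HH_{\ell,n-1}T_{n-1}\HH_{\ell,n-1}$, we have $\ve_n(T_{n-1})=p_0^{(n)}(T_{n-1})=0$; and since for $0\le j\le\ell-1$ the element $\mL_n^{j}$ lies in the summand $\HH_{\ell,n-1}\mL_n^{j}$, we have $\ve_n(\mL_n^{j})=\delta_{j,0}$. The first fact settles $j=0$. For $1\le j\le\ell-1$ I would induct on $j$: Lemma~\ref{nondeg-L-comm}(4) gives $\mL_nT_{n-1}=T_{n-1}\mL_{n-1}+(q-1)\mL_n$, hence
$$\mL_n^{j}T_{n-1}=(\mL_n^{j-1}T_{n-1})\,\mL_{n-1}+(q-1)\mL_n^{j};$$
applying $\ve_n$ and Lemma~\ref{veAst} (with $\mL_{n-1}\in\HH_{\ell,n-1}$), the first summand becomes $\ve_n(\mL_n^{j-1}T_{n-1})\,\mL_{n-1}=0$ by the inductive hypothesis on $j$, and the second becomes $(q-1)\delta_{j,0}=0$ since $j\ge1$.

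Granting the vanishing, let $b=\mL_1^{c_1}\mL_2^{c_2}\cdots\mL_n^{c_n}T_w$ be a basis element as in Lemma~\ref{AK basis}. Since $\mL_n$ commutes with $\mL_1,\dots,\mL_{n-1}$ and with each $T_i$ for $i\le n-2$ (Lemma~\ref{nondeg-L-comm}(1),(2)), it commutes with all of $\HH_{\ell,n-1}$. If $w\in\Sym_{n-1}$, then moving $\mL_n^{c_n}$ to the far right gives $b=(\mL_1^{c_1}\cdots\mL_{n-1}^{c_{n-1}}T_w)\,\mL_n^{c_n}$ with the first factor in $\HH_{\ell,n-1}$, so $\ve_n(b)=\delta_{c_n,0}\,\mL_1^{c_1}\cdots\mL_{n-1}^{c_{n-1}}T_w$, and the inductive hypothesis for $n-1$ gives $\tau_n^R(b)=\tau_{n-1}^R(\ve_n(b))=\delta_{(\mathbf{c},w),(\underline{0},1)}$. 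If $w\notin\Sym_{n-1}$, then the parabolic coset decomposition of $\Sym_n$ over $\Sym_{n-1}$ lets me write $w=u\,\sigma_{n-1}\sigma_{n-2}\cdots\sigma_a$ with $u\in\Sym_{n-1}$, $1\le a\le n-1$ and $\ell(w)=\ell(u)+(n-a)$, so that $T_w=T_u\,T_{n-1}\,(T_{n-2}\cdots T_a)$ with both $T_u$ and $T_{n-2}\cdots T_a$ (the empty product when $a=n-1$) in $\HH_{\ell,n-1}$. Moving $\mL_n^{c_n}$ past $T_u$ yields $b=(\mL_1^{c_1}\cdots\mL_{n-1}^{c_{n-1}}T_u)\,(\mL_n^{c_n}T_{n-1})\,(T_{n-2}\cdots T_a)$, whence $\ve_n(b)=0$ by Lemma~\ref{veAst} and the vanishing, so $\tau_n^R(b)=0=\delta_{(\mathbf{c},w),(\underline{0},1)}$ since $w\ne1$. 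As $b$ runs over the basis of Lemma~\ref{AK basis}, this establishes the displayed formula; comparing with Definition~\ref{ssf} then identifies $\tau_n^R$ with $\tau_R^{\te{MM}}$.

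The only substantive step is the vanishing $\ve_n(\mL_n^{j}T_{n-1})=0$; everything else is bookkeeping with the $(\HH_{\ell,n-1},\HH_{\ell,n-1})$-bimodule structure and the coset decomposition of $\Sym_n$. Within that step, the delicate point is the induction on $j$: one must rewrite $\mL_n^{j}T_{n-1}$ so that $\mL_n^{j-1}T_{n-1}$ — and not merely a bare $\mL_n^{j-1}$ — reappears, which is exactly what makes Lemma~\ref{veAst} applicable, and the bound $j\le\ell-1$ is what keeps $\mL_n^{j}$ a literal summand of the Mackey decomposition. The edge cases $a=n-1$ and $n=1$ are routine.
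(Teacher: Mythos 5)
Your argument is correct. The paper does not prove this lemma itself (it is quoted from \cite[Lemma 2.6]{HLL}), but your proof supplies a valid argument using exactly the machinery the paper sets up: the identity $\tau_n^R=\tau_{n-1}^R\circ\ve_n$, the $(\HH_{\ell,n-1},\HH_{\ell,n-1})$-bimodule property of $\ve_n$ from Lemma \ref{veAst}, the distinguished right coset decomposition $w=u\cdot\gamma_{n-1,a}$, and the commutation $\mL_nT_{n-1}=T_{n-1}\mL_{n-1}+(q-1)\mL_n$ from Lemma \ref{nondeg-L-comm}(4) to establish the key vanishing $\ve_n(\mL_n^{j}T_{n-1})=0$. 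All steps check out, including the edge cases.
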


A composition of $m$ is a sequence $\lam=(\lam_{1},\lam_{2},\dots)$ of non-negative integers such that $|\lam|:=\sum_{i\geq 1}\lam_{i}=m$. We set $\ell(\lam):=\max\{j\geq 1\,|\,\lam_j\neq 0\}$.
Given a composition $\lam=(\lam_{1},\lam_{2},\dots,\lam_k)$ of $m$ with $\lam_i>0,\ \forall\,1\leq i\leq k$, the corresponding Young subgroup of $\Sym_m$ is defined to be \begin{equation}\label{Young110}
\Sym_\lam:=\Sym_{\{1,2,\dots,\lam_1\}}\times\Sym_{\{\lam_1+1,\lam_1+2,\dots,\lam_1+\lam_2\}}
\times\cdots\times\Sym_{\{(\sum_{j=1}^{k-1}\lam_j)+1,\dots,m\}} .
\end{equation}
A composition $\lam=(\lam_{1},\lam_{2},\dots)$ is called a partition if $\lam_{1}\geq\lam_{2}\geq\cdots$.

An $\ell$-partition of $n$ is a sequence $\blam=(\lam^{(1)},\dots,\lam^{(\ell)})$ of partitions such that $|\lam^{(1)}|+\cdots+|\lam^{(\ell)}|=n$. 
Let $\P_{\ell,n}$ be the set of $\ell$-partitions of $n$. The Young diagram of an $\ell$-partition $\blam\in\P_{\ell,n}$ is the set
$$[\blam]:=\bigl\{(r,c,l)\bigm|1\leq c\leq \lam_{r}^{(l)},\ 1\leq l\leq \ell\bigr\}.$$
The Young diagram $[\blam]$ of an $\ell$-partition is viewed as a sequence of Young diagrams $[\lam^{(l)}],\ 1\leq l\leq \ell$ of partitions. The element $(r,c,l)$ is called a node. A node $(r,c,l)\in [\blam]$ is called a removable node of $\blam$ if $[\blam]\setminus \{(r,c,l)\}$ is the Young diagram of an $\ell$-partition $\bmu$. In this case, we call $(r,c,l)$ an addable node of $\bmu$. We use ${\te{Rem}}(\blam)$ and ${\te{Add}}(\blam)$ to denote the set of removable nodes of $\blam$ and the set of addable nodes of $\blam$ respectively.

A $\blam$-tableau is a bijection $\t: [\blam]\rightarrow\{1,2,\dots,n\}$. If $\t$ is a $\blam$-tableau, we write $\Shape(\t):=\blam$. A $\blam$-tableau $\t$ is standard, if $\t(i,j,l)\leq\t(a,b,l)$ whenever $i\leq a$, $j\leq
b$ and $1\leq l\leq \ell$. Let $\Std(\blam)$ be the set of standard $\blam$-tableaux.
For any $\t\in\Std(\blam)$, we set ${\te{Add}}(\t):={\te{Add}}(\blam)$ and ${\te{Rem}}(\t):={\te{Rem}}(\blam)$. For any $\blam\in\P_{\ell,n}$, we define $\SStd(\blam):=\{(\s,\t)\,|\,\s,\t\in\Std(\blam)\}$. We set
$\Std(\P_{\ell,n}):=\sqcup_{\blam\in\P_{\ell,n}}\Std(\blam)$ and $\SStd(\P_{\ell,n}):=\sqcup_{\blam\in\P_{\ell,n}}\SStd(\blam)$.

Let $\blam=(\lam^{(1)},\lam^{(2)},\dots,\lam^{(\ell)})$ be an $\ell$-partition of $n$. Let $\fa=(\fa_1,\fa_2,\dots,\fa_\ell),$ where $\fa_1:=0,\  \fa_s=|\lam^{(1)}|+|\lam^{(2)}|+\cdots+|\lam^{(s-1)}|$ for $2\leq s\leq \ell.$ We identify $\blam$ with the composition $$
(\lam_1^{(1)},\lam_2^{(1)},\dots,\lam_{\ell(\lam^{(1)})}^{(1)},\lam_1^{(2)},\lam_2^{(2)},\dots,\lam_{\ell(\lam^{(2)})}^{(2)},\dots) $$ of $n$, and define
the Young subgroup $\Sym_{\blam}$ as before.

Let $\blam\in\P_{\ell,n}$ and $\t\in\Std(\blam)$.
We let $d(\t)$ be the unique element in $\Sym_n$ such that $\t=\t^\blam d(\t)$. Following \cite{Mur95} and \cite{DJM}, we define $$
x_{\blam}:=\sum_{w\in\Sym_\blam}T_w,\quad
\fu_\blam^+:=\prod_{s=2}^{\ell}\prod_{k=1}^{\fa_s}(\mL_k-Q_s). $$
\begin{dfn}[{\cite{Mur95}, \cite{DJM}, \cite{Ma}}] Let $\blam\in\P_{\ell,n}$ and $\s,\t\in\Std(\blam)$. We define $$
\fm_\blam:=x_\blam \fu_\blam^+,\quad \fm_{\s\t}:=T_{d(\s)}^*\fm_\blam T_{d(\t)}.$$
\end{dfn}

Let $\blam\in\P_{\ell,n}$ and $\s,\t\in\Std(\blam)$. It follows from Lemma \ref{nondeg-L-comm} that
$x_\blam \fu_\blam^+=\fu_\blam^+ x_\blam,$ hence
\begin{align}\label{nondegmst*}
\fm_{\s\t}^*=\fm_{\t\s}.
\end{align}

For any $\blam,\bmu\in\P_{\ell,n}$, we write $\blam\unrhd\bmu$ if for any $1\leq s\leq\ell$ and $i\geq 1$ we have $$\sum_{t=1}^{s-1}|\lam^{(t)}|+\sum_{j=1}^{i}\lam^{(s)}_{j}\geq \sum_{t=1}^{s-1}|\mu^{(t)}|+\sum_{j=1}^{i}\mu^{(s)}_{j}.$$ We write $\blam\rhd\bmu$ if $\blam\unrhd\bmu$ and $\blam\neq\bmu$. Then $\P_{\ell,n}$ becomes a poset under the dominance order ``$\unrhd$''.

Let $\t\in\te{Std}(\cP_{\ell,n}).$ We use $\t_{\downarrow\leq k}$ to denote the subtableau of $\t$ that contains the numbers $\{1,2,\dots,k\}$.  Note that for each $0\leq k\leq n$, $\Shape(\t_{\downarrow\leq k})$ is an $\ell$-partition. For any $\blam\in\P_{\ell,n}$, we set $\blam_{\downarrow\leq(n-1)}:=\te{Shape}((\ft^\blam)_{\downarrow\leq (n-1)})$.

For any $\s, \t\in\Std(\cP_{\ell,n})$, we write $\s\unrhd\t$ if for each $1\leq k\leq n$,
${\te{Shape}}(\s_{\downarrow\leq k})\unrhd{\te{Shape}}(\t_{\downarrow\leq k})$. We write $\s\rhd\t$ if $\s\unrhd\t$ and $\s\neq\t$. The dominance ordering can be extend to $\Std^2(\P_{\ell,n})$ as follows: $(\s,\t)\unrhd(\u,\v)$ if $\s\unrhd\u$ and $\t\unrhd\v$. We write $(\s,\t)\rhd(\u,\v)$ if $(\s,\t)\unrhd(\u,\v)$ and $(\s,\t)\neq(\u,\v)$.

\begin{lem}[\cite{Mur95}, \cite{DJM}]\label{cellular1} The set $\{\fm_{\s \t}\,|\, \s, \t \in \Std(\bm{\lambda}),\ \bm{\lambda} \in \P_{\ell,n}\}$, together with the poset $(\P_{\ell,n},\unrhd)$ and the anti-involution ``$\ast$'', form a cellular $R$-basis of $\HH_{\ell,n}$ in the sense of \cite{GL}.
\end{lem}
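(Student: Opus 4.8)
The plan is to verify the three axioms for a cellular basis in the sense of Graham--Lehrer for the data consisting of the poset $(\P_{\ell,n},\unrhd)$, the anti-involution $\ast$, and the family $\{\fm_{\s\t}\}$. One axiom is already available: the compatibility $\fm_{\s\t}^{\ast}=\fm_{\t\s}$ with the anti-involution is precisely (\ref{nondegmst*}). So the two assertions I would still have to prove are (a) that $\{\fm_{\s\t}:(\s,\t)\in\SStd(\blam),\ \blam\in\P_{\ell,n}\}$ is an $R$-basis of $\HH_{\ell,n}$, and (b) the multiplication axiom: for all $h\in\HH_{\ell,n}$, all $\blam\in\P_{\ell,n}$, and all $\s,\t\in\Std(\blam)$,
$$h\,\fm_{\s\t}\ \equiv\ \sum_{\u\in\Std(\blam)}r_{\u}(h,\s)\,\fm_{\u\t}\pmod{\check{\HH}^{\blam}},$$
where $\check{\HH}^{\blam}$ is the $R$-span of the $\fm_{\mathfrak a\mathfrak b}$ with $\Shape(\mathfrak a)=\Shape(\mathfrak b)\rhd\blam$ and the scalars $r_{\u}(h,\s)\in R$ do not depend on $\t$.

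I would reduce both (a) and (b) to a single \emph{straightening lemma}, to be proved by descending induction along the dominance order, \emph{simultaneously} with the claim that each $\check{\HH}^{\blam}$ is a two-sided ideal: for every $\blam\in\P_{\ell,n}$ and every $h\in\HH_{\ell,n}$,
$$\fm_{\blam}\,h\ \in\ \sum_{\v\in\Std(\blam)}R\,\fm_{\t^{\blam}\v}\ +\ \check{\HH}^{\blam}.\qquad(\star)$$
Granting $(\star)$ and the ideal property, axiom (b) becomes formal: one writes $\fm_{\s\t}h=T_{d(\s)}^{\ast}\bigl(\fm_{\blam}\,T_{d(\t)}h\bigr)$, applies $(\star)$ to $T_{d(\t)}h$ to get $\fm_{\blam}T_{d(\t)}h\equiv\sum_{\v}r_{\v}\,\fm_{\t^{\blam}\v}=\sum_{\v}r_{\v}\,\fm_{\blam}T_{d(\v)}$ modulo $\check{\HH}^{\blam}$ with the $r_{\v}$ depending only on $\t$ and $h$, and then left-multiplies by $T_{d(\s)}^{\ast}$ (which preserves $\check{\HH}^{\blam}$ by the ideal property); applying $\ast$, which interchanges left and right multiplication and swaps the two tableau indices, puts the resulting relation into the form of (b). For (a), $(\star)$ together with its image under $\ast$ shows, by a second downward induction on $\blam$, that the two-sided ideal generated by $\{\fm_{\bmu}:\bmu\unrhd\blam\}$ is spanned by $\{\fm_{\s\t}:\Shape(\s)=\Shape(\t)\unrhd\blam\}$; taking $\blam$ dominance-minimal, where $\fm_{\blam}=1$, this ideal is all of $\HH_{\ell,n}$, so the $\fm_{\s\t}$ span $\HH_{\ell,n}$, and since their number $\sum_{\blam}|\Std(\blam)|^{2}$ equals $\ell^{n}n!$, which by Lemma \ref{AK basis} is the $R$-rank of $\HH_{\ell,n}$, a spanning set of that size in a free $R$-module of that rank is automatically a basis.

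So the real content is $(\star)$, which I would establish in two stages. In the \textbf{type-$A$ stage} one uses Murphy's straightening for $x_{\blam}$ inside the Iwahori--Hecke subalgebra $\langle T_{1},\dots,T_{n-1}\rangle$: the right ideal $x_{\blam}\langle T_{1},\dots,T_{n-1}\rangle$ is, modulo the analogous objects attached to the type-$A$ shapes strictly dominating $\blam$, spanned by $\{x_{\blam}T_{d(\v)}:\v\in\Std(\blam)\}$, so $x_{\blam}T_{w}$ for $w\in\Sym_{n}$ rewrites, up to such higher terms, as $\sum_{\v}a_{\v}\,x_{\blam}T_{d(\v)}$. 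In the \textbf{Jucys--Murphy stage} one must commute every remaining $\mL_{j}$ --- those occurring in $\fu_{\blam}^{+}$, together with those produced from $h$ after the previous stage --- past $x_{\blam}$ and either absorb it or discard it, using Lemma \ref{nondeg-L-comm}: the plain commutations of $\mL_{j}$ with $T_{i}$ for $i\neq j-1,j$, the twisted relations $T_{i}\mL_{i}=\mL_{i+1}T_{i}-(q-1)\mL_{i+1}$ and $T_{i}\mL_{i+1}=\mL_{i}T_{i}+(q-1)\mL_{i+1}$, and above all part (5), that $T_{i}$ commutes with $(\mL_{1}-a)\cdots(\mL_{j}-a)$ for $i\neq j$; together with the cyclotomic relation $(\mL_{1}-Q_{1})\cdots(\mL_{1}-Q_{\ell})=0$. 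The goal of this stage is the \emph{absorption lemma} $\fm_{\blam}\,\mL_{k}\in\sum_{\v}R\,\fm_{\t^{\blam}\v}+\check{\HH}^{\blam}$ for each $k$: roughly, an $\mL_{k}$ is absorbed harmlessly into $\fu_{\blam}^{+}$ when the node of $\t^{\blam}$ carrying $k$ sits in a component $l$ all of whose ``later'' factors $\mL_{k}-Q_{l+1},\dots,\mL_{k}-Q_{\ell}$ already appear in $\fu_{\blam}^{+}$, and otherwise it produces a term supported on strictly more dominant shapes, hence lying in $\check{\HH}^{\blam}$.

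I expect the absorption lemma --- and making it interlock with Murphy's straightening so that the induction along the dominance order really closes --- to be the main obstacle. Concretely one must control exactly which products $(\mL_{1}-Q_{s_{1}})(\mL_{2}-Q_{s_{2}})\cdots$ can be rewritten, via the quadratic and cyclotomic relations together with Lemma \ref{nondeg-L-comm}(5), as elements of $\sum_{\bmu\rhd\blam}x_{\bmu}\HH_{\ell,n}+\check{\HH}^{\blam}$, and this is the step where the precise shape of $\blam$ genuinely enters. The remaining ingredients --- the counting identity on standard tableaux, the $\ast$-bookkeeping, the reduction of axiom (b) to $(\star)$, and the identification of the span of the higher $\fm_{\mathfrak a\mathfrak b}$ with $\check{\HH}^{\blam}$ once the straightening is available --- then follow the classical pattern of Murphy and of Dipper--James--Mathas.
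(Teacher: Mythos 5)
The paper does not prove this lemma at all: it is quoted from Murphy \cite{Mur95} and Dipper--James--Mathas \cite{DJM}, so there is no internal argument to compare against. Measured against the actual proof in those references, your outline reproduces the correct architecture, and the purely formal parts are sound: the reduction of the Graham--Lehrer multiplication axiom to the one-sided statement $(\star)$ via $\fm_{\s\t}h=T_{d(\s)}^{\ast}(\fm_{\blam}T_{d(\t)}h)$ followed by an application of $\ast$, the use of (\ref{nondegmst*}) for the involution axiom, and the derivation of linear independence from spanning together with the count $\sum_{\blam}|\Std(\blam)|^{2}=\ell^{n}n!$ and Lemma \ref{AK basis} (a generating set of cardinality equal to the rank of a free module over a commutative ring is a basis). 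Your observation that $\fm_{\blam}=1$ for the dominance-minimal $\ell$-partition $(\emptyset,\dots,\emptyset,(1^{n}))$ is also correct and is exactly how the spanning induction is seeded.

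The genuine gap is that $(\star)$ itself --- and in particular the ``absorption lemma'' $\fm_{\blam}\mL_{k}\in\sum_{\v}R\,\fm_{\t^{\blam}\v}+\check{\HH}^{\blam}$ --- is asserted and motivated but never proved, and you say so yourself. This is not a routine verification: it is the entire technical content of \cite{DJM}. One must show precisely that when the extra factors $(\mL_{k}-Q_{s})$ produced by commuting Jucys--Murphy elements past $x_{\blam}$ do not already occur in $\fu_{\blam}^{+}$, the resulting element lies in $\sum_{\bmu\rhd\blam}\HH_{\ell,n}\fm_{\bmu}\HH_{\ell,n}$; this requires an induction that interleaves the cyclotomic relation on $\mL_{1}$, Lemma \ref{nondeg-L-comm}(5), and Murphy's type-$A$ straightening, and it is exactly the step where the dominance order on $\ell$-partitions (rather than on partitions) is designed to make things close up. You have also tacitly assumed, rather than established, that $\check{\HH}^{\blam}$ is a two-sided ideal and that it coincides with the span of the $\fm_{\mathfrak{a}\mathfrak{b}}$ of strictly more dominant shape; in \cite{DJM} these facts are proved simultaneously with $(\star)$ in the same downward induction, and cannot be separated from it. As it stands your text is an accurate roadmap of the Dipper--James--Mathas proof, not a proof: every step you have written out is correct, but the one step you defer is the one carrying all the difficulty.
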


\subsection{Degenerate cyclotomic Hecke algebra of type $G(\ell,1,n)$}
In this subsection we shall collect some definitions and results on the degenerate cyclotomic Hecke algebra $H_{\ell,n}$.

We let $R$ be an integral domain and let $\bu=(u_1,u_2,\dots,u_\ell)\in R^{\ell}$.
\begin{dfn}\label{degenerate} The degenerate cyclotomic Hecke algebra $H_{\ell,n}=H_{\ell,n}^R(\bu)$ of type $G(\ell,1,n)$ is the unital associative $R$-algebra with generators $s_{1},s_2,\dots,s_{n-1},L_1,L_2,\dots,L_n$ and the following defining relations:
\begin{align}
	&(L_1-u_1)(L_1-u_2)\cdots(L_1-u_\ell)=0;\\
\label{degquadraticrela}	&s_{i}^{2}=1,\quad \forall\,1\leq i\leq n-1;\\
\label{degbraidrela1}	&s_{i}s_{j}=s_{j}s_{i},\ \ \forall\,1\leq i<j-1<n-1;\\
\label{degbraidrela2}	&s_is_{i+1}s_i=s_{i+1}s_is_{i+1},\ \ \forall\,1\leq i<n-1;\\
\label{deg-L-comm1} &L_iL_k=L_kL_i,\ \  s_iL_l=L_ls_i,\ \ 1\leq i<n,\ 1\leq k,l\leq n,\ l\neq i,i+1;\\
\label{deg-L-comm2} &L_{i+1}=s_iL_is_i+s_i,\ \ 1\leq i<n .
\end{align}
\end{dfn}
The elements $L_1,L_2,\dots,L_n$ are called the Jucys-Murphy elements of $H_{\ell,n}$. The following results are easy consequences of Definition \ref{degenerate}.

\begin{lem}\label{deg-L-comm3}
Suppose that $1\le i\le n-1$ and $1\le j\le n$. Then:
\begin{enumerate}
\item $s_iL_{i+1}=L_is_i+1$ and $ L_{i+1}s_i=s_iL_i+1;$
\item If $a\in R$ and $i\ne j$ then $s_i$ commutes with $(L_1-a)(L_2-a)\cdots(L_j-a)$.
\end{enumerate}
\end{lem}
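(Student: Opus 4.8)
The plan is to derive both parts directly from the defining relations of Definition~\ref{degenerate}, letting part~(1) feed into part~(2).

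For part~(1), I would start from relation~\eqref{deg-L-comm2}, namely $L_{i+1}=s_iL_is_i+s_i$. Multiplying on the left by $s_i$ and using $s_i^2=1$ from~\eqref{degquadraticrela} gives $s_iL_{i+1}=s_i^2L_is_i+s_i^2=L_is_i+1$; multiplying instead on the right by $s_i$ gives $L_{i+1}s_i=s_iL_is_i^2+s_i^2=s_iL_i+1$. Thus both identities in part~(1) are immediate.

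For part~(2), write $P_j(a):=(L_1-a)(L_2-a)\cdots(L_j-a)$ and split into two cases, $j<i$ and $j>i$ (the case $j=i$ being excluded by hypothesis). If $j<i$, then every factor $L_k-a$ of $P_j(a)$ has $k\le j<i$, so $k\ne i,i+1$ and $s_i$ commutes with each factor by~\eqref{deg-L-comm1}, hence with $P_j(a)$. If $j>i$, then $j\ge i+1$ and I would write
$$P_j(a)=\Bigl(\prod_{k=1}^{i-1}(L_k-a)\Bigr)(L_i-a)(L_{i+1}-a)\Bigl(\prod_{k=i+2}^{j}(L_k-a)\Bigr);$$
every factor in the first and third bracketed products has index $\ne i,i+1$, so $s_i$ commutes with those by~\eqref{deg-L-comm1}, and it remains only to check that $s_i$ commutes with the middle block $(L_i-a)(L_{i+1}-a)$.

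This last commutation is the one genuine computation, and the (modest) crux of the argument; I would deduce it from part~(1) together with the commutativity of $L_i$ and $L_{i+1}$ in~\eqref{deg-L-comm1}. Using $(L_{i+1}-a)s_i=L_{i+1}s_i-as_i=s_i(L_i-a)+1$ from part~(1) gives $(L_i-a)(L_{i+1}-a)s_i=(L_i-a)s_i(L_i-a)+(L_i-a)$, and using $(L_i-a)s_i=L_is_i-as_i=s_i(L_{i+1}-a)-1$ from part~(1) gives $(L_i-a)s_i(L_i-a)=s_i(L_{i+1}-a)(L_i-a)-(L_i-a)$; adding these, the two copies of $(L_i-a)$ cancel and we obtain $(L_i-a)(L_{i+1}-a)s_i=s_i(L_{i+1}-a)(L_i-a)=s_i(L_i-a)(L_{i+1}-a)$, the final equality because $L_i$ and $L_{i+1}$ commute. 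Combining the three cases completes the proof; there is no real obstacle beyond bookkeeping with the correction terms, which is why these are flagged as ``easy consequences''.
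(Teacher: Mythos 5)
Your proof is correct; the paper gives no argument at all for this lemma, simply calling it an easy consequence of Definition~\ref{degenerate}, and your derivation (part~(1) from relation~\eqref{deg-L-comm2} via $s_i^2=1$, then part~(2) by isolating the block $(L_i-a)(L_{i+1}-a)$ and cancelling the two correction terms) is exactly the intended routine verification. Nothing is missing.
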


Recall the cyclotomic Mackey decomposition of $H_{\ell,n}$ given in \cite[Lemma 7.6.1]{K}.

\begin{lem}\te{(}\cite[Lemma 7.6.1]{K}\te{)}\label{degMadecomlem}
\begin{enumerate}
 \item  $H_{\ell,n}$ is a free right $H_{\ell,n-1}$-module with basis
\begin{align*}
\{ L_n^c,\ L_j^{c}s_js_{j+1} \cdots s_{n-1}\mid 0\leq c \leq \ell-1,\ 1\leq j \leq n-1 \}.
\end{align*}
In particular, the set $\{L_1^{c_1}L_2^{c_2}\cdots L_n^{c_n}w\,|\,w\in\Sym_n,\ 0\leq c_j\leq \ell-1,\ \forall\, 1\leq j \leq n-1\}$ gives an $R$-basis of $H_{\ell,n}$.
  \item There are some canonical $(H_{\ell,n-1},H_{\ell,n-1})$-bimodule isomorphisms:
$$H_{\ell,n-1} \otimes_{H_{\ell,n-2}}H_{\ell,n-1}\cong H_{\ell,n-1} s_{n-1} H_{\ell,n-1} ,
\quad a\ot b\mapsto as_{n-1}b ;$$
$$H_{\ell,n-1}\cong H_{\ell,n-1} L_n^k  ,\quad c\mapsto c L_n^k  \quad (0\leq k \leq \ell-1).$$
\item   As $(H_{\ell,n-1},H_{\ell,n-1})$-bimodules, we have
$$H_{\ell,n}=H_{\ell,n-1} s_{n-1} H_{\ell,n-1} \bigoplus \bigoplus_{k=0}^{\ell-1} H_{\ell,n-1} L_n^{k}.$$
\end{enumerate}
\end{lem}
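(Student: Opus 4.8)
The plan is to split the statement: part (1) is the substantive input, being the degenerate analogue of the Ariki--Koike basis theorem (Lemma~\ref{AK basis}), while parts (2) and (3) are formal consequences obtained exactly as in the proof of the non-degenerate Lemma~\ref{nondegMadecomlem} in \cite{HLL}.

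For part (1) I would invoke \cite[Lemma 7.6.1]{K} (equivalently the basis theorem of Ariki--Mathas--Rui \cite{AMR}); if one wants to reconstruct it, the route is as follows. Present $H_{\ell,n}$ as the quotient of the degenerate affine Hecke algebra $\mathcal{H}_n$ --- on generators $s_1,\dots,s_{n-1}$, $y_1,\dots,y_n$, with $L_i\mapsto y_i$ --- by the two-sided ideal generated by $\prod_{i=1}^{\ell}(y_1-u_i)$. The PBW theorem for $\mathcal{H}_n$, proved by means of its faithful action on $R[\Sym_n]\otimes R[y_1,\dots,y_n]$, supplies the $R$-basis $\{y_1^{a_1}\cdots y_n^{a_n}w\mid a_i\ge 0,\ w\in\Sym_n\}$. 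One checks by induction on $j$, using \eqref{deg-L-comm2} and \eqref{deg-L-comm1}, that $\prod_{i=1}^{\ell}(y_j-u_i)$ already lies in the cyclotomic ideal for every $j$; feeding this into the PBW basis and reducing exponents one variable at a time shows that the monomials with $0\le a_i\le\ell-1$ span the quotient. Linear independence then follows from a module that is $R$-free of rank $\ell^n n!$ on which these $\ell^n n!$ monomials act by $R$-linearly independent operators (for instance the tensor-space module of \cite{AMR}, or the direct sum of the cell modules). Finally, passing from the $R$-basis $\{L_1^{c_1}\cdots L_n^{c_n}w\}$ to the stated free right $H_{\ell,n-1}$-module basis $\{L_n^c\}\cup\{L_j^{c}s_js_{j+1}\cdots s_{n-1}\}$ is the usual rewriting via the distinguished right coset representatives $1,\ s_js_{j+1}\cdots s_{n-1}$ ($1\le j\le n-1$) of $\Sym_{n-1}$ in $\Sym_n$, together with the observation --- immediate from \eqref{deg-L-comm1} --- that $L_n$ commutes with $H_{\ell,n-1}$.

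Granting (1), here is how (2) and (3) follow. Two centralizer facts are read off from the relations: by \eqref{degbraidrela1} and \eqref{deg-L-comm1} the element $s_{n-1}$ commutes with $s_1,\dots,s_{n-3}$ and with $L_1,\dots,L_{n-2}$, so $s_{n-1}$ centralizes $H_{\ell,n-2}$; and by \eqref{deg-L-comm1} the element $L_n$ commutes with $L_1,\dots,L_{n-1}$ and with $s_1,\dots,s_{n-2}$, so $L_n$, and each $L_n^k$, centralizes $H_{\ell,n-1}$. Because $s_{n-1}$ centralizes $H_{\ell,n-2}$, the assignment $a\otimes b\mapsto as_{n-1}b$ is a well-defined $(H_{\ell,n-1},H_{\ell,n-1})$-bimodule homomorphism $H_{\ell,n-1}\otimes_{H_{\ell,n-2}}H_{\ell,n-1}\to H_{\ell,n}$ with image $H_{\ell,n-1}s_{n-1}H_{\ell,n-1}$; because $L_n^k$ centralizes $H_{\ell,n-1}$, the assignment $c\mapsto cL_n^k$ is a bimodule homomorphism $H_{\ell,n-1}\to H_{\ell,n-1}L_n^k$, and it is bijective since $L_n^k$ is one of the free right $H_{\ell,n-1}$-basis elements of (1), so that $H_{\ell,n-1}L_n^k=L_n^kH_{\ell,n-1}$ is $R$-free of rank $\dim_R H_{\ell,n-1}=\ell^{n-1}(n-1)!$. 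The same basis of (1) also gives
$$H_{\ell,n}=H_{\ell,n-1}s_{n-1}H_{\ell,n-1}+\sum_{k=0}^{\ell-1}H_{\ell,n-1}L_n^k,$$
since $L_n^cH_{\ell,n-1}=H_{\ell,n-1}L_n^c$ and $L_j^{c}s_js_{j+1}\cdots s_{n-1}H_{\ell,n-1}=\big(L_j^{c}s_js_{j+1}\cdots s_{n-2}\big)s_{n-1}H_{\ell,n-1}\subseteq H_{\ell,n-1}s_{n-1}H_{\ell,n-1}$, using $L_j^{c}s_js_{j+1}\cdots s_{n-2}\in H_{\ell,n-1}$. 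Finally, by (1) applied to $n-1$ the module $H_{\ell,n-1}$ is $R$-free of rank $\ell(n-1)$ over $H_{\ell,n-2}$, so $H_{\ell,n-1}\otimes_{H_{\ell,n-2}}H_{\ell,n-1}$ is $R$-free of rank $\ell^n(n-1)(n-1)!$, and therefore
\begin{align*}
\dim_R\big(H_{\ell,n-1}s_{n-1}H_{\ell,n-1}\big)+\sum_{k=0}^{\ell-1}\dim_R\big(H_{\ell,n-1}L_n^k\big) &\le \ell^n(n-1)(n-1)!+\ell^n(n-1)! \\
&=\ell^n n!=\dim_R H_{\ell,n}.
\end{align*}
Since the sum on the left already exhausts $H_{\ell,n}$, all inequalities are equalities: the sum in (3) is direct, and the bimodule homomorphism of (2) is injective, hence an isomorphism.

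The only hard ingredient is part (1): the PBW theorem for the degenerate affine Hecke algebra, plus the straightening argument pinning the $R$-rank of the cyclotomic quotient to exactly $\ell^n n!$ --- no larger (spanning) and no smaller (a faithful module of that rank). Once (1) is available --- and in the paper it can simply be quoted from \cite{AMR} or \cite{K} --- parts (2) and (3) are a routine transcription of the non-degenerate argument, resting only on the two centralizer relations above and a comparison of $R$-ranks.
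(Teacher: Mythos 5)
The paper gives no proof of this lemma at all --- it is quoted verbatim from \cite[Lemma 7.6.1]{K} --- so your primary move (cite the reference for (1), then derive (2) and (3) formally) matches the paper exactly, and your derivation of (2) and (3) from (1) is correct: the two centralizer observations, the spanning identity $L_j^{c}s_j\cdots s_{n-1}H_{\ell,n-1}\subseteq H_{\ell,n-1}s_{n-1}H_{\ell,n-1}$, and the rank count $\ell^n(n-1)(n-1)!+\ell^n(n-1)!=\ell^n n!$ forcing directness and injectivity are all sound (a surjection between free $R$-modules of equal finite rank is an isomorphism).

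However, your optional reconstruction of part (1) contains a genuinely false step: it is \emph{not} true that $\prod_{i=1}^{\ell}(y_j-u_i)$ lies in the cyclotomic ideal for $j\geq 2$. Take $\ell=1$ and $u_1=0$: the quotient is $R[\Sym_n]$ with $L_1=0$, and $L_2=s_1L_1s_1+s_1=s_1\neq 0$, so $L_2-u_1\neq 0$. In general the Jucys--Murphy element $L_j$ does not satisfy the degree-$\ell$ polynomial $\prod_i(x-u_i)$ in $H_{\ell,n}$ (its eigenvalues on modules are the contents $u_l+(c-b)$, which range over far more than $\ell$ values), so you cannot ``reduce exponents one variable at a time'' by that relation. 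The correct straightening, which is how \cite[Lemma 7.6.1]{K} is actually proved, is an induction on $n$: one writes $L_n^{\ell}=(s_{n-1}L_{n-1}s_{n-1}+s_{n-1})^{\ell}$, expands, and uses the inductive basis of $H_{\ell,n-1}$ over $H_{\ell,n-2}$ to push everything into $\sum_{c<\ell}H_{\ell,n-1}L_n^{c}+H_{\ell,n-1}s_{n-1}H_{\ell,n-1}$; only $L_1$ satisfies the degree-$\ell$ relation. Since you do fall back on citing \cite{K} (as the paper does), this does not invalidate your overall plan, but the sketch as written would not compile into a proof.
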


By Lemma \ref{degMadecomlem}, we see that for any $h\in H_{\ell,n},$ there are unique elements $\varpi^{(n)}(h)\in H_{\ell,n-1} \otimes_{H_{\ell,n-2}} H_{\ell,n-1},$ and ${\rm p}^{(n)}_k(h)\in H_{\ell,n-1},$ $k=0,1,\ldots,\ell-1$  such that
\begin{align}\label{degMadecomequation}
h=\mu_{s_{n-1}}(\varpi^{(n)}(h))+\sum_{k=0}^{\ell-1}{\rm p}^{(n)}_k(h)L_n^k.
\end{align}
For $1\leq k\leq n$, we define the following map:
$$\e_k:H_{\ell,k}\rightarrow H_{\ell,k-1},\ \ h\mapsto \rp^{(k)}_{\ell-1}(h). $$

Let $\ast$ be the unique $R$-algebra anti-automorphism of $H_{\ell,n}$ which fixes all of its defining generators $s_1,\dots,s_{n-1},L_1,\dots,L_n$. Similar to the non-degenerate case, we have the following result.

\begin{lem}\label{veAst2} For any $h\in H_{\ell,k}$ and $h_1\in H_{\ell,k-1}$, we have that $$
\e_k(hh_1)=\e_k(h)h_1,\quad \e_k(h_1h)=h_1\e_k(h),\quad \e_k(h^*)=\e_k(h)^* .
$$
\end{lem}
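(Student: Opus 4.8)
The plan is to deduce Lemma \ref{veAst2} from the corresponding facts in the non-degenerate case by mimicking the proof of Lemma \ref{veAst}, so the real work is extracting the needed commutation behaviour from the cyclotomic Mackey decomposition (\ref{degMadecomequation}). First I would record the key structural input: by Lemma \ref{degMadecomlem}, every $h\in H_{\ell,k}$ has a \emph{unique} expression $h=\mu_{s_{k-1}}(\varpi^{(k)}(h))+\sum_{j=0}^{\ell-1}\rp^{(k)}_j(h)L_k^j$ with $\varpi^{(k)}(h)\in H_{\ell,k-1}\otimes_{H_{\ell,k-2}}H_{\ell,k-1}$ and $\rp^{(k)}_j(h)\in H_{\ell,k-1}$, and $\e_k(h)=\rp^{(k)}_{\ell-1}(h)$. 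Uniqueness is what drives everything.

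For the first identity $\e_k(hh_1)=\e_k(h)h_1$ with $h_1\in H_{\ell,k-1}$: I would write out $h$ in its Mackey form, multiply on the right by $h_1$, and observe that $\mu_{s_{k-1}}(\varpi^{(k)}(h))h_1=\mu_{s_{k-1}}(\varpi^{(k)}(h)(1\otimes h_1))$ still lies in the $H_{\ell,k-1}s_{k-1}H_{\ell,k-1}$ summand, while each term $\rp^{(k)}_j(h)L_k^j h_1$ equals $\rp^{(k)}_j(h)h_1 L_k^j$ because $L_k$ commutes with $H_{\ell,k-1}$ by relation (\ref{deg-L-comm1}) (indeed $s_i L_k=L_k s_i$ for $i\le k-2$ and $L_i L_k=L_k L_i$ for all $i$). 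Hence $hh_1$ is already in Mackey normal form with $\rp^{(k)}_j(hh_1)=\rp^{(k)}_j(h)h_1$, and by uniqueness $\rp^{(k)}_{\ell-1}(hh_1)=\rp^{(k)}_{\ell-1}(h)h_1$, i.e. $\e_k(hh_1)=\e_k(h)h_1$. The left-module identity $\e_k(h_1h)=h_1\e_k(h)$ is completely symmetric: left multiplication by $h_1\in H_{\ell,k-1}$ preserves both summands in Lemma \ref{degMadecomlem}(3) and commutes past $L_k^j$ for the same reason, so uniqueness again gives $\rp^{(k)}_j(h_1h)=h_1\rp^{(k)}_j(h)$.

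For the third identity $\e_k(h^*)=\e_k(h)^*$ I would use that $\ast$ fixes every generator, so $s_{k-1}^*=s_{k-1}$ and $L_k^*=L_k$, hence $(L_k^j)^*=L_k^j$ and $\ast$ sends $H_{\ell,k-1}$ to itself. Applying $\ast$ to the Mackey expression of $h$ gives
\[
h^*=\Big(\mu_{s_{k-1}}(\varpi^{(k)}(h))\Big)^*+\sum_{j=0}^{\ell-1}L_k^j\,\rp^{(k)}_j(h)^*.
\]
Now $\big(\mu_{s_{k-1}}(\sum_i a_i\otimes b_i)\big)^*=\sum_i b_i^* s_{k-1} a_i^*=\mu_{s_{k-1}}(\sum_i b_i^*\otimes a_i^*)$, which still lies in $H_{\ell,k-1}\otimes_{H_{\ell,k-2}}H_{\ell,k-1}$ (note $H_{\ell,k-2}$ is also $\ast$-stable), so the first term sits in the $s_{k-1}$-summand; and $L_k^j\rp^{(k)}_j(h)^*=\rp^{(k)}_j(h)^*L_k^j$ by the commutation relation again. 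Thus $h^*$ is in Mackey normal form with $\rp^{(k)}_j(h^*)=\rp^{(k)}_j(h)^*$, and in particular $\e_k(h^*)=\rp^{(k)}_{\ell-1}(h^*)=\rp^{(k)}_{\ell-1}(h)^*=\e_k(h)^*$.

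The only point requiring genuine care — the "main obstacle" such as it is — is checking that the $\ast$-twisted element $\sum_i b_i^*\otimes a_i^*$ is legitimately an element of the balanced tensor product $H_{\ell,k-1}\otimes_{H_{\ell,k-2}}H_{\ell,k-1}$, i.e. that $\ast$ descends correctly through the $H_{\ell,k-2}$-balancing; this is where one uses that $\ast$ restricts to an anti-automorphism of the subalgebra $H_{\ell,k-2}$, so that the relation $ax\otimes b=a\otimes xb$ for $x\in H_{\ell,k-2}$ is respected after applying $\ast$ componentwise (it becomes $b^*\otimes x^*a^*=b^*x^*\otimes a^*$). Everything else is a direct transcription of the uniqueness argument used for Lemma \ref{veAst}, with $T_{n-1}$ replaced by $s_{n-1}$, $\mL_n$ by $L_n$, and $p^{(n)}_0$ by $\rp^{(n)}_{\ell-1}$.
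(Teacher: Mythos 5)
Your argument is correct and is exactly the proof the paper intends: the paper leaves this lemma to the reader as "similar to the non-degenerate case," where Lemma \ref{veAst} is proved by combining the basis/Mackey decomposition, the commutation of $L_k$ (resp.\ $\mL_k$) with the subalgebra on $k-1$ strands, and the uniqueness of the decomposition (\ref{degMadecomequation}). You have simply written out that one-line argument in full, including the (correct, if slightly more careful than necessary) check that $\ast$ preserves the summand $H_{\ell,k-1}s_{k-1}H_{\ell,k-1}$.
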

We define an $R$-linear map$$ t_n^R:=\epsilon_1 \circ \epsilon_2 \circ \cdots\circ \epsilon_n: H_{\ell,n} \rightarrow R.$$

\begin{dfn}\te{(}\cite{BK08}\te{)}
Let $\tau_R^{\te{BK}}$  be the unique $R$-linear map $\tau_R^{\te{BK}}: H_{\ell,n} \rightarrow  R$ which is defined on its $R$-basis by
\begin{align*}
\tau_R^{\te{BK}}(L_1^{c_1}L_2^{c_2}\cdots L_n^{c_n}w)=
\begin{cases}
1, & \mbox{if }c_1=c_2=\cdots=c_n=\ell-1\te{ and }w=1, \\
0, & \mbox{otherwise},
\end{cases}
\end{align*}
where $w\in\Sym_n $ and $0\leq c_i\leq \ell-1$ for $i=1,2,\dots,n$.
\end{dfn}

The $R$-linear function $\tau_{R}^{\rm{BK}}$ is a symmetrizing form on $H_{\ell,n}$, which makes $H_{\ell,n}$ into a symmetric algebra over $R$. We call $\tau_R^{\rm{BK}}$ the {\bf{standard symmetrizing form}} on $H_{\ell,n}$.

\begin{lem}\text{\rm (\cite[Lemma 2.11]{HLL})}\label{degtoperation}
Let $w\in\Sym_n$ and $\mathbf{c}:=(c_1,c_2,\dots,c_n)\in\Z^{n},$ where
$0\leq c_i \leq \ell-1$ for each $i$. Then we have
$$t_{n}^R(L_1^{c_1} L_2^{c_2}\cdots L_n^{c_n}w)=\delta_{(\mathbf{c},w),(\underline{\ell-1},1)},$$
where $\underline{\ell-1}:=(\ell-1,\ell-1,\dots,\ell-1)\in\Z^n$. In particular, the map $t_n^R$ coincides with the standard symmetrizing form $\tau_{R}^{\te{BK}}$.
\end{lem}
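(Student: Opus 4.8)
The final statement is Lemma~\ref{degtoperation}, the degenerate analogue of Lemma~\ref{nondegtauoperation}. The plan is to mimic the proof of Lemma~\ref{nondegtauoperation} from \cite{HLL}, working inductively down the tower $H_{\ell,n}\supseteq H_{\ell,n-1}\supseteq\cdots\supseteq H_{\ell,1}$ via the descending maps $\e_k$. The key structural input is the cyclotomic Mackey decomposition of Lemma~\ref{degMadecomlem}(3): every $h\in H_{\ell,n}$ decomposes uniquely as $\mu_{s_{n-1}}(\varpi^{(n)}(h))+\sum_{k=0}^{\ell-1}\rp^{(n)}_k(h)L_n^k$, and $\e_n$ picks out the coefficient $\rp^{(n)}_{\ell-1}(h)\in H_{\ell,n-1}$ of the top power $L_n^{\ell-1}$. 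So the first step is to compute $\e_n$ on a basis element $L_1^{c_1}\cdots L_n^{c_n}w$. Writing $w=w'\cdot v$ where $w'\in\Sym_{n-1}$ and $v$ is a distinguished coset representative of $\Sym_{n-1}\backslash\Sym_n$, i.e. $v\in\{1,\,s_{n-1},\,s_{n-2}s_{n-1},\dots,s_1s_2\cdots s_{n-1}\}$, one uses the right-module basis from Lemma~\ref{degMadecomlem}(1) to read off the $L_n^{\ell-1}H_{\ell,n-1}$-component.

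The crucial case analysis is the following. If $v\neq 1$, then $L_1^{c_1}\cdots L_n^{c_n}w$ (after moving the $L_n^{c_n}$ past nothing, since $L_n$ commutes with everything to its left) is $L_1^{c_1}\cdots L_{n-1}^{c_{n-1}}w'\cdot L_n^{c_n}v$; since $v$ starts with some $s_j$ ($j\le n-1$) on the left end adjacent to $L_n^{c_n}$ actually one should write $v=s_j s_{j+1}\cdots s_{n-1}$ so that $L_n^{c_n}v$ lies in the $\bigoplus_{1\le j\le n-1}L_j^{c}s_js_{j+1}\cdots s_{n-1}H_{\ell,n-1}$ part — here I would use relation \eqref{deg-L-comm2} repeatedly, $L_{i+1}=s_iL_is_i+s_i$, to rewrite $L_n^{c_n}s_{n-1}\cdots$ and confirm it has no $L_n^{\ell-1}H_{\ell,n-1}$-component in the sense of the decomposition in Lemma~\ref{degMadecomlem}(3), hence $\e_n$ kills it. If $v=1$, then the element is $L_1^{c_1}\cdots L_{n-1}^{c_{n-1}}L_n^{c_n}w'$ with $w'\in\Sym_{n-1}$, which already sits inside $H_{\ell,n-1}L_n^{c_n}$; by uniqueness $\e_n$ of it equals $L_1^{c_1}\cdots L_{n-1}^{c_{n-1}}w'$ if $c_n=\ell-1$ and $0$ otherwise. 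This gives
\[
\e_n(L_1^{c_1}\cdots L_n^{c_n}w)=\delta_{c_n,\ell-1}\,\delta_{v,1}\,L_1^{c_1}\cdots L_{n-1}^{c_{n-1}}w'.
\]
Iterating this identity down from $k=n$ to $k=1$, and noting that at the bottom $H_{\ell,1}=R[L_1]/\prod(L_1-u_i)$ with $\e_1$ extracting the coefficient of $L_1^{\ell-1}$, yields $t_n^R(L_1^{c_1}\cdots L_n^{c_n}w)=\delta_{(\mathbf c,w),(\underline{\ell-1},1)}$, as claimed. The final sentence follows because $t_n^R$ and $\tau_R^{\te{BK}}$ then agree on the $R$-basis of Lemma~\ref{degMadecomlem}(1), hence are equal.

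The main obstacle I anticipate is the bookkeeping in the $v\neq 1$ case: one must carefully track, using \eqref{deg-L-comm2} and Lemma~\ref{deg-L-comm3}, how powers of $L_n$ interact with the transposition $s_{n-1}$ when rewriting an arbitrary basis element of the form $L_1^{c_1}\cdots L_n^{c_n}w$ in terms of the mixed basis $\{L_n^c,\,L_j^c s_j\cdots s_{n-1}\}$ of $H_{\ell,n}$ over $H_{\ell,n-1}$, and to verify that the only contribution to the $L_n^{\ell-1}H_{\ell,n-1}$ summand in Lemma~\ref{degMadecomlem}(3) comes from the pure-$L_n^{\ell-1}$ term. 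This is precisely the computation carried out in \cite[Lemma 2.11]{HLL}, so in practice one can cite it; the conceptual content is entirely the uniqueness of the cyclotomic Mackey decomposition \eqref{degMadecomequation} together with Lemma~\ref{veAst2}, which guarantees the $\e_k$ are well-behaved with respect to left and right multiplication by elements of $H_{\ell,k-1}$.
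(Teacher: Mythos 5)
Your overall strategy is the intended one (the paper itself gives no proof of Lemma~\ref{degtoperation}, only the citation to \cite[Lemma 2.11]{HLL}): compute $\e_n$ on each basis element $L_1^{c_1}\cdots L_n^{c_n}w$ by locating it inside the cyclotomic Mackey decomposition (\ref{degMadecomequation}), then iterate down the tower. The case $w\in\Sym_{n-1}$, the iteration, and the final identification of $t_n^R$ with $\tau_R^{\te{BK}}$ on the basis of Lemma~\ref{degMadecomlem}(1) are all correct.

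There is, however, one concrete error in your case $v\neq 1$: you factor $w=w'\cdot v$ with $w'\in\Sym_{n-1}$ and let $v$ range over $\{1,\,s_{n-1},\,s_{n-2}s_{n-1},\dots,s_1s_2\cdots s_{n-1}\}$. The elements $\beta_{j,n-1}=s_js_{j+1}\cdots s_{n-1}$ are the minimal-length \emph{left} coset representatives of $\Sym_{n-1}$ in $\Sym_n$ (as the paper records right after defining $\beta_{c,k}$ and $\gamma_{k,d}$); they all lie in one and the same \emph{right} coset $\Sym_{n-1}w$, so the factorization $w=w'v$ with $w'\in\Sym_{n-1}$ and $v$ from your list does not exist for most $w$ --- e.g.\ $w=s_1s_2s_1\in\Sym_3$ is not of the form $w'v$ with $w'\in\Sym_2$, $v\in\{1,s_2,s_1s_2\}$. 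The correct right coset representatives are $\gamma_{n-1,d}=s_{n-1}s_{n-2}\cdots s_d$. Fortunately the repair is immediate and your computation survives essentially verbatim: writing $w=w'\gamma_{n-1,d}$ and using that $L_n$ commutes with $H_{\ell,n-1}$, one gets $L_1^{c_1}\cdots L_n^{c_n}w=\bigl(L_1^{c_1}\cdots L_{n-1}^{c_{n-1}}w'\bigr)L_n^{c_n}s_{n-1}(s_{n-2}\cdots s_d)$, and $L_n^{c_n}s_{n-1}=s_{n-1}L_{n-1}^{c_n}+\sum_{a+b=c_n-1}L_{n-1}^{b}L_n^{a}$ with $a\le c_n-1\le\ell-2$, so the whole term lies in $H_{\ell,n-1}s_{n-1}H_{\ell,n-1}+\sum_{a\le\ell-2}H_{\ell,n-1}L_n^{a}$ and is annihilated by $\e_n=\rp^{(n)}_{\ell-1}$. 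With that correction the argument is complete.
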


Let $\blam=(\lam^{(1)},\lam^{(2)},\dots,\lam^{(\ell)})$ be an $\ell$-partition of $n$. Following \cite{Mur95} and \cite{AMR}, we define $$
{\rm{x}}_{\blam}:=\sum_{w\in\Sym_\blam}w,\quad u_\blam^+:=\prod_{s=2}^{\ell}\prod_{k=1}^{\fa_s}(L_k-u_s),\quad m_\blam:={\rm{x}}_\blam u_\blam^+. $$
For any $\s,\t\in\te{Std}(\blam)$, we define $m_{\s\t}:=d(\s)^*m_\blam d(\t)$.
It follows from Lemma \ref{deg-L-comm3} that ${\rm{x}}_\blam u_\blam^+=u_\blam^+ {\rm{x}}_\blam,$ hence
\begin{align}\label{degmst*}
m_{\s\t}^*=m_{\t\s}.
\end{align}

\begin{lem}[\cite{Mur95}, \cite{AMR}]\label{cellular2} The set $\{m_{\s \t}\,|\, \s, \t \in \Std(\bm{\lambda}),\ \bm{\lambda} \in \P_{\ell,n}\}$, together with the poset $(\P_{\ell,n},\unrhd)$ and the anti-involution ``$\ast$'', form a cellular $R$-basis of $H_{\ell,n}$ in the sense of \cite{GL}.
\end{lem}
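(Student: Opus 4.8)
Recall that a cellular basis in the sense of Graham--Lehrer must satisfy three conditions: (C1) $\{m_{\s\t}\mid\s,\t\in\Std(\blam),\ \blam\in\P_{\ell,n}\}$ is an $R$-basis of $H_{\ell,n}$; (C2) $m_{\s\t}^\ast=m_{\t\s}$; and (C3) for every $h\in H_{\ell,n}$ and every $\blam\in\P_{\ell,n}$, $\s,\t\in\Std(\blam)$,
\[
m_{\s\t}h\equiv\sum_{\v\in\Std(\blam)}r_{\t\v}(h)\,m_{\s\v}\pmod{H_{\ell,n}^{\rhd\blam}},
\]
where $H_{\ell,n}^{\rhd\blam}$ is the $R$-span of all $m_{\u\v}$ with $\Shape(\u)=\Shape(\v)\rhd\blam$ and the $r_{\t\v}(h)\in R$ are independent of $\s$. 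The plan is to verify (C1)--(C3) by transporting the Dipper--James--Mathas proof of Lemma~\ref{cellular1} to the degenerate generators $s_i,L_j$, which is the route of Ariki--Mathas--Rui. Condition (C2) is immediate: ${\rm{x}}_\blam^\ast={\rm{x}}_\blam$ since $\Sym_\blam$ is a group, $(u_\blam^+)^\ast=u_\blam^+$ since the $L_k$ commute and satisfy $L_k^\ast=L_k$, and ${\rm{x}}_\blam$ commutes with $u_\blam^+$ (recorded just after the definition of $m_\blam$, via Lemma~\ref{deg-L-comm3}); hence $m_\blam^\ast=m_\blam$ and $m_{\s\t}^\ast=(d(\s)^\ast m_\blam d(\t))^\ast=d(\t)^\ast m_\blam d(\s)=m_{\t\s}$, which is \eqref{degmst*}.

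For (C1) I would separate a cardinality count from the real work. By Lemma~\ref{degMadecomlem}(1), $H_{\ell,n}$ is $R$-free of rank $\ell^n n!$, while $\sum_{\blam\in\P_{\ell,n}}|\Std(\blam)|^2=\ell^n n!$ by the sum-of-squares-of-irreducible-dimensions identity for the complex reflection group $W_{\ell,n}$ (its irreducible labelled by $\blam$ having dimension $|\Std(\blam)|=\binom{n}{|\lam^{(1)}|,\dots,|\lam^{(\ell)}|}\prod_l f^{\lam^{(l)}}$), or equivalently by an RSK-type bijection. So it suffices to show the $m_{\s\t}$ span $H_{\ell,n}$, and this is the substantive step. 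I would establish it by the Dipper--James--Mathas ``straightening'' method: using the relations of Definition~\ref{degenerate} and Lemma~\ref{deg-L-comm3}, together with the Mackey decomposition of Lemma~\ref{degMadecomlem}(3) to handle the cyclotomic direction, one rewrites an arbitrary product $x^\ast m_\blam y$ with $x,y\in\Sym_n$ as an $R$-combination of the standard-tableau elements $m_{\s\t}=d(\s)^\ast m_\blam d(\t)$ modulo $H_{\ell,n}^{\rhd\blam}$, the non-standard tableaux being removed by Garnir-type relations inside ${\rm{x}}_\blam$ (and ${\rm{x}}_\blam s_i={\rm{x}}_\blam$ for $s_i\in\Sym_\blam$); an induction on the dominance order then gives that the span of the $m_{\s\t}$ is all of $H_{\ell,n}$. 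The level-one case $\ell=1$ is Murphy's classical theorem for $R[\Sym_n]$.

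For (C3), let $H_{\ell,n}^{\unrhd\blam}$ be the $R$-span of the $m_{\u\v}$ with $\Shape(\u)=\Shape(\v)\unrhd\blam$. The same straightening, performed one generator at a time, yields the sharper one-sided statement that $m_\blam h\in\sum_{\v\in\Std(\blam)}R\,m_\blam d(\v)+H_{\ell,n}^{\rhd\blam}$ for every $h\in H_{\ell,n}$; it suffices to check this for $h=s_i$ (where it reduces to the type-$A$ Murphy combinatorics together with ${\rm{x}}_\blam s_i={\rm{x}}_\blam$) and for $h=L_j$ (where one commutes $L_j$ leftward past $d(\t)$ via Lemma~\ref{deg-L-comm3}(1) and then absorbs the resulting $(L_k-u_s)$ factors into $u_\blam^+$). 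Applying $\ast$ gives the mirror statement on the left, so $H_{\ell,n}^{\unrhd\blam}$ is a two-sided ideal; writing $m_{\s\t}h=d(\s)^\ast(m_\blam d(\t)h)$ and using that $d(\s)^\ast$ preserves the left ideal $H_{\ell,n}^{\rhd\blam}$ then yields (C3), with coefficients manifestly independent of $\s$.

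I expect the spanning/straightening step to be the main obstacle: one must show that the ``remove a non-standard tableau'' rewriting terminates and that its correction terms genuinely lie in the span indexed by strictly more dominant shapes, which requires building the degenerate analogue of the Garnir relations and tracking the dominance order through them --- this is the part that does not come for free from the general cellular-algebra formalism. A secondary point needing care is the bootstrap between (C1) and the ideal property used in (C3): both emerge from the same straightening computation, so the induction must be organised so that neither is invoked before it is established.
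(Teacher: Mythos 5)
The paper does not prove this lemma at all --- it is quoted from \cite{Mur95} and \cite{AMR}, and the only ingredient the paper itself records is the identity $m_{\s\t}^\ast=m_{\t\s}$ of \eqref{degmst*}, which you derive in exactly the same way. Your outline of (C1)--(C3) is a faithful reconstruction of the Ariki--Mathas--Rui argument (the degenerate transplant of Dipper--James--Mathas), with the genuinely hard step --- the Garnir/straightening computation and its compatibility with the dominance order --- correctly identified as the content that must be imported from those references rather than rederived here.
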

This basis $\{m_{\s\t}\}$ generalizes the well-known Murphy basis for the symmetric group algebra $R[\Sym_n]$ (i.e., the level one case). So we shall call it {\bf the Murphy basis for the degenerate cyclotomic Hecke algebra $H_{\ell,n}$}.

\subsection{Some key facts}

In this subsection, we will give some basic facts, which will be frequently used in the following two sections.

\begin{dfn} For any integers $1\leq k,c,d\leq n-1,$ we set
$$\beta_{c,k}:=
\begin{cases}
\sigma_c\sigma_{c+1}\cdots \sigma_{k}, & \mbox{if\ \ } c\leq k, \\
1, & \mbox{if\ \ }c>k,
\end{cases}\quad
  \gamma_{k,d}:=
  \begin{cases}
 \sigma_k\sigma_{k-1}\cdots \sigma_{d} , & \mbox{if\ \ }d\leq k,\\
   1, &\mbox{if\ \ } d>k.\\
  \end{cases} $$
\end{dfn}
Note that the elements $\beta_{c,n-1},\ 1\leq c\leq n-1$ and $1$ are minimal length left coset representatives of $\Sym_{n-1}$ in $\Sym_n$, and the elements $\gamma_{n-1,d},\ 1\leq d\leq n-1$ and $1$ are minimal length right coset representatives of $\Sym_{n-1}$ in $\Sym_n$.
For any $u,w,y\in\Sym_n$, we write $w=u\cdot y$ if $w=uy$ and $\ell(w)=\ell(u)+\ell(y)$.

Let $\blam\in \P_{\ell,n}$ and $\t\in\Std(\blam)$. Recall Definition \ref{keydefn} that
$1\leq n(\t)\leq n$ is the unique integer such that $\t^{-1}(n)=(\t^\blam)^{-1}(n(\t))$.
By definition, we have that \begin{align}\label{decompositionofdt}
d(\t)=\begin{cases}
d(\t_{\downarrow\leq (n-1)}), & \mbox{if\ \ } n(\t)=n, \\
 \beta_{n(\t),n-1}\cdot d(\t_{\downarrow\leq (n-1)}), & \mbox{if\ \ }n(\t)<n.
      \end{cases}
\end{align}

Recall that we have identified the $\ell$-partition $\blam\in\P_{\ell,n}$ with the composition  $$
(\lam_1^{(1)},\lam_2^{(1)},\dots,\lam_{\ell(\lam^{(1)})}^{(1)},\lam_1^{(2)},\lam_2^{(2)},\dots,\lam_{\ell(\lam^{(2)})}^{(2)},\dots) $$
of $n$. For each $1\leq j\leq\ell$, we define $$
\Sym_{\blam^{[j]}}:=\Sym_{\{\fa_j+1,\dots,\fa_j+\lam_1^{(j)}\}}
\times\cdots\times\Sym_{\{\fa_{j+1}-\lam_{\ell(\lam^{(j)})}^{(j)}+1,\dots,\fa_{j+1}\}},
$$
which is naturally isomorphic to $\Sym_{\lam^{(j)}}$. Let $w\in \Sym_\blam.$ By definition we can write
\begin{align}\label{decompositionofwinSymlam}
w=w_{1}\cdot w_{2}\cdot\ldots\cdot w_{\ell},\quad
\te{where\ \ }w_{i}\in\Sym_{\blam^{[i]}},\ \forall\,1\leq i\leq \ell.
\end{align}
For each $1\leq i\leq \ell$, we set $r_i:=\ell(\lam^{(i)})$. For each $1\leq i\leq \ell$ and $1\leq j\leq r_i,$ we define $a_{i,j}:=\fa_{i-1}+\lam^{(i)}_1+\cdots+\lam^{(i)}_{j}$ and $a_{i,0}:=\fa_{i-1}.$
Then for each $1\leq i\leq \ell,$ we can write
\begin{align}\label{decompositionofwinSymlam2}
w_{i}=w_{i,1}\cdot w_{i,2}\cdot\ldots \cdot w_{i,r_i},\quad
\te{where\ \ }w_{i,j}\in\Sym_{\{a_{i,j-1}+1,\dots,a_{i,j}\}},\ \forall\,1\leq j\leq r_i.
\end{align}
For each $1\leq i\leq \ell$ and $1\leq j\leq r_i,$ we consider the distinguished left $\Sym_{\{a_{i,j-1}+1,\dots, a_{i,j}-1\}}$-coset decomposition and the distinguished right $\Sym_{\{a_{i,j-1}+1,\dots, a_{i,j}-1\}}$-coset decomposition of $w_{i,j}$:
\begin{align}\label{decompositionofwlamr}
w_{i,j}\in \Sym_{\{a_{i,j-1}+1,\dots, a_{i,j}-1\}} \te{\quad or\quad}
w_{i,j}=\beta_{b_{i,j},a_{i,j}-1}\cdot w_{i,j}^{(1)}=w_{i,j}^{(2)}\cdot\gamma_{a_{i,j}-1,c_{i,j}},
\end{align}
where $w_{i,j}^{(1)},w_{i,j}^{(2)}\in \Sym_{\{a_{i,j-1}+1,\dots, a_{i,j}-1\}}$ and $a_{i,j-1}+1\leq b_{i,j},c_{i,j}\leq a_{i,j}-1.$ The notations $\{a_{ij},b_{ij},c_{ij}\}$ will be frequently used in the following two sections without further explanation.

\begin{dfn}
For any $1\leq k\leq n-1,\ 1\leq m\leq n-k$ and $k\leq i_1<i_2<\dots<i_m\leq n-1,$ we set
$$\beta^{i_1,\dots,i_m}_{k,n-1}:=\sigma_{k}\cdots \wh{\sigma_{i_1}}\cdots \wh{\sigma_{i_m}}\cdots \sigma_{n-1},$$
where the notation ``$\wh{\sigma_{i_1}},\dots,\wh{\sigma_{i_m}}$'' means to remove the simple reflections $\sigma_{i_1},\dots, \sigma_{i_m}$ from $\sigma_k\sigma_{k+1}\cdots\sigma_{n-1}$.
\end{dfn}

\begin{dfn}
For each $w\in\Sym_{\{2,\dots,n\}}$ with reduced expression
$w=\sigma_{i_1}\sigma_{i_2}\cdots \sigma_{i_m},$ where $2\leq i_1,i_2,\dots,i_m\leq n-1,$ we define
$$w_{\downarrow}:=\begin{cases}
\sigma_{i_1-1}\sigma_{i_2-1}\cdots \sigma_{i_m-1}, & \mbox{if\ \,}m\geq 1, \\
1, & \mbox{if\ \,}m=0,\ \te{i.e.},\ w= 1,
                  \end{cases}$$
which is independent of the choice of the reduced expression of $w$ because $\Sym_{\{2,3,\dots,n\}}\cong\Sym_{\{1,2,\dots,n-1\}}$.
\end{dfn}

\begin{lem}\label{keytools}
Let $1\leq k\leq n-1$. In $\HH_{\ell,n},$ we have the following statements.
\begin{itemize}
\item[(1)] If $k<i\leq n-1$, then we have
$T_{\gamma_{n-1,k}}T_i=T_{i-1}T_{\gamma_{n-1,k}}. $
\item[(2)] If $1\leq m\leq n-k$ and $k\leq i_1<i_2<\dots<i_m\leq n-1,$ then we have
$$T_{\gamma_{n-1,k}}T_{\beta_{k,n-1}^{i_1,\dots,i_m}}=\mu_{T_{n-1}}(\pi_{k,n-1}^{i_1,\dots,i_m})
\te{\quad for some\quad} \pi_{k,n-1}^{i_1,\dots,i_m}\in \HH_{\ell,n-1}\ot_{\HH_{\ell,n-2}}\HH_{\ell,n-1}.$$
\item[(3)] $T_{\gamma_{n-1,k}}T_{\beta_{k,n-1}}=\mu_{T_{n-1}}(\pi_{k,n-1})+q^{n-k}$
for some $\pi_{k,n-1} \in \HH_{\ell,n-1}\ot_{\HH_{\ell,n-2}}\HH_{\ell,n-1}.$
\end{itemize}
\end{lem}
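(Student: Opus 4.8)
\textbf{Proof proposal for Lemma~\ref{keytools}.}

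The plan is to prove the three statements by a direct computation in $\HH_{\ell,n}$ using only the braid relations \eqref{nondegbraidrela1}--\eqref{nondegbraidrela2}, the quadratic relation \eqref{nondegquadraticrela}, and the cyclotomic Mackey decomposition \eqref{nondegMadecomequation} together with the bimodule direct sum decomposition of Lemma~\ref{nondegMadecomlem}(2). Recall that $\gamma_{n-1,k}=\sigma_{n-1}\sigma_{n-2}\cdots\sigma_k$ and $\beta_{k,n-1}=\sigma_k\sigma_{k+1}\cdots\sigma_{n-1}$. For part (1), I would argue that $\gamma_{n-1,k}\cdot\sigma_i=\sigma_{i-1}\cdot\gamma_{n-1,k}$ holds in $\Sym_n$ whenever $k<i\le n-1$, and that both sides of this equation have length $\ell(\gamma_{n-1,k})+1$; indeed, $\gamma_{n-1,k}$ sends $j\mapsto j-1$ for $k<j\le n$ and fixes everything else, so conjugating $\sigma_i=(i,i+1)$ by $\gamma_{n-1,k}^{-1}$ yields $(i-1,i)=\sigma_{i-1}$, and a length count (or direct verification that no braid cancellation occurs) shows the product is reduced. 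Then $T_{\gamma_{n-1,k}}T_i=T_{\gamma_{n-1,k}\sigma_i}=T_{\sigma_{i-1}\gamma_{n-1,k}}=T_{i-1}T_{\gamma_{n-1,k}}$ since $T_w$ depends only on $w$.

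For part (2): by definition $\beta_{k,n-1}^{i_1,\dots,i_m}=\sigma_k\cdots\widehat{\sigma_{i_1}}\cdots\widehat{\sigma_{i_m}}\cdots\sigma_{n-1}$ with $m\ge 1$, so since at least one simple reflection $\sigma_{i_1}$ (with $k\le i_1\le n-1$) is omitted, the word $\beta_{k,n-1}^{i_1,\dots,i_m}$ does not involve $\sigma_{n-1}$ only if $i_m=n-1$; in general $T_{\gamma_{n-1,k}}T_{\beta_{k,n-1}^{i_1,\dots,i_m}}$ is a product of $T_j$'s. The strategy is to commute the $T_{\sigma_{n-1}}$ factor coming from $T_{\gamma_{n-1,k}}=T_{n-1}T_{n-2}\cdots T_k$ through to a position where it sits between elements of $\HH_{\ell,n-1}$, using part (1) repeatedly to push indices down, so that the total product lies in $\HH_{\ell,n-1}T_{n-1}\HH_{\ell,n-1}$ rather than in any of the summands $\HH_{\ell,n-1}\mL_n^k$. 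Concretely, one checks that after moving all occurrences of $\sigma_{n-1}$ via braid/commutation relations, exactly one $T_{n-1}$ survives (because no $\sigma_{n-1}^2$ can appear — there is only one $\sigma_{n-1}$ from $T_{\gamma_{n-1,k}}$ and, since some $\sigma_{i_j}$ is omitted, the reduced word for the product still has at most one letter equal to $n-1$), hence the element lies in the image of $\mu_{T_{n-1}}$ by Lemma~\ref{nondegMadecomlem}(1), giving the desired $\pi_{k,n-1}^{i_1,\dots,i_m}$.

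For part (3): here $T_{\gamma_{n-1,k}}T_{\beta_{k,n-1}}=(T_{n-1}\cdots T_k)(T_k\cdots T_{n-1})$, and the key is the identity in the Iwahori--Hecke algebra of type $A$ that $(T_{n-1}\cdots T_k)(T_k\cdots T_{n-1})=\mL_n^{?}$-type expansions — more precisely, I would prove by induction on $n-k$ that this product equals $q^{n-k}$ plus a sum of terms each of which has exactly one factor $T_{n-1}$ in ``bimodule position'', i.e. lies in $\HH_{\ell,n-1}T_{n-1}\HH_{\ell,n-1}$. The base case $n-k=1$ is $T_{n-1}^2=(q-1)T_{n-1}+q$, with $q$ the scalar term and $(q-1)T_{n-1}\in\HH_{\ell,n-1}T_{n-1}\HH_{\ell,n-1}$. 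For the inductive step, write $T_{\gamma_{n-1,k}}T_{\beta_{k,n-1}}=T_{n-1}\cdots T_{k+1}(T_kT_k)T_{k+1}\cdots T_{n-1}=T_{n-1}\cdots T_{k+1}\bigl((q-1)T_k+q\bigr)T_{k+1}\cdots T_{n-1}$; the $q$-part gives $q\cdot T_{\gamma_{n-1,k+1}}T_{\beta_{k+1,n-1}}$, to which induction applies yielding $q\cdot q^{n-k-1}=q^{n-k}$ plus bimodule terms, and the $(q-1)T_k$-part is handled by part (1)/part (2) type manipulations to show it contributes only to $\HH_{\ell,n-1}T_{n-1}\HH_{\ell,n-1}$ — one pushes the single $T_k$ past the $T_{n-1}\cdots T_{k+1}$ on the left and $T_{k+1}\cdots T_{n-1}$ on the right via Lemma~\ref{keytools}(1), landing in a configuration covered by (2). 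The main obstacle will be the bookkeeping in parts (2) and (3): one must carefully track that \emph{exactly} one $T_{n-1}$ remains (so the element is genuinely in the $\mu_{T_{n-1}}$-image and not accidentally in some $\HH_{\ell,n-1}\mL_n^k$ summand), and that the scalar in (3) is precisely $q^{n-k}$ with no lower-order scalar corrections, which requires disciplined use of the quadratic relation at each stage of the induction.
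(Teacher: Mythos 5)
Your parts (1) and (3) are essentially the paper's argument: part (1) is the same commutation identity (the paper verifies it with braid relations rather than a length count in $\Sym_n$, but both are fine), and your downward induction on $k$ for part (3), splitting $T_kT_k=(q-1)T_k+q$ and sending the $q$-term to the inductive hypothesis while disposing of the $(q-1)$-term via part (1), is exactly what the paper does.

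Part (2), however, has a genuine gap. The whole content of (2) is that \emph{no} term $T_w$ with $w\in\Sym_{n-1}$ survives in the expansion of $T_{\gamma_{n-1,k}}T_{\beta_{k,n-1}^{i_1,\dots,i_m}}$, since such a term would land in the summand $\HH_{\ell,n-1}\mL_n^0$ of the Mackey decomposition rather than in $\HH_{\ell,n-1}T_{n-1}\HH_{\ell,n-1}$. Your justification --- ``no $\sigma_{n-1}^2$ can appear \dots the reduced word for the product still has at most one letter equal to $n-1$'' --- does not establish this. First, when $i_m<n-1$ the concatenated word contains \emph{two} occurrences of $\sigma_{n-1}$ (one at the far left of $\gamma_{n-1,k}$, one at the far right of $\beta_{k,n-1}^{i_1,\dots,i_m}$), so the premise is false; second, ``at most one'' is compatible with \emph{zero} occurrences, which is precisely the case that must be excluded; third, when $i_1>k$ the concatenation is not reduced (the trailing $\sigma_k$ of $\gamma_{n-1,k}$ meets the leading $\sigma_k$ of $\beta_{k,n-1}^{i_1,\dots,i_m}$), so the product is a linear combination of several $T_w$ and one must control every term, not a single reduced word. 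The paper closes this gap with a downward induction on $k$: if $i_1=k$ then every letter of $\beta_{k,n-1}^{i_1,\dots,i_m}$ has index $>k$ and repeated use of part (1) gives $T_{w'}T_{n-1}T_{\gamma_{n-2,k}}$ with $w'\in\Sym_{n-1}$; if $i_1>k$ one applies the quadratic relation to $T_kT_k$, the $(q-1)$-term is handled by part (1) and the $q$-term is $qT_{\gamma_{n-1,k+1}}T_{\beta_{k+1,n-1}^{i_1,\dots,i_m}}$, covered by the inductive hypothesis. This is the same induction you already carry out in part (3); transplanting it into part (2) (with the case split on whether $i_1=k$) repairs the proof.
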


\begin{proof}
First, using (\ref{nondegbraidrela1}) and (\ref{nondegbraidrela2}) we have
\begin{align*}
T_{\gamma_{n-1,k}}T_i&=T_{n-1}T_{n-2}\cdots T_{i+1}T_{i}T_{i-1}T_{i}T_{i-2}\cdots T_k\\
&=T_{n-1}T_{n-2}\cdots T_{i+1}T_{i-1}T_{i}T_{i-1}T_{i-2}\cdots T_k\\
&=T_{i-1}T_{n-1}T_{n-2}\cdots T_{i+1}T_{i}T_{i-1}T_{i-2}\cdots T_k=T_{i-1}T_{\gamma_{n-1,k}},
\end{align*}
proving the part (1).

For proving (2), we use the downward induction on $k$.
If $k=n-1$, the result is obvious.
Now we suppose that $k<n-1$. In this case, if $i_1=k$, then the result follows from the repeated applications of the part (1).
On the other hand, if $i_1>k$, then using (\ref{nondegquadraticrela}), the part (1) and the induction hypothesis, we have
\begin{align*}
T_{\gamma_{n-1,k}}T_{\beta_{k,n-1}^{i_1,\dots,i_m}}&=T_{\gamma_{n-1,k}}T_k T_{\beta_{k+1,n-1}^{i_1,\dots,i_m}}
=T_{\gamma_{n-1,k+1}}((q-1)T_k+q) T_{\beta_{k+1,n-1}^{i_1,\dots,i_m}}\\
&=(q-1)T_{\gamma_{n-1,k}} T_{\beta_{k+1,n-1}^{i_1,\dots,i_m}}+qT_{\gamma_{n-1,k+1}} T_{\beta_{k+1,n-1}^{i_1,\dots,i_m}}\\
&=(q-1)T_{(\beta_{k+1,n-1}^{i_1,\dots,i_m})_{\downarrow}}T_{\gamma_{n-1,k}}+q \mu_{T_{n-1}}(\pi_{k+1,n-1}^{i_1,\dots,i_m}),
\end{align*}
where $\pi_{k+1,n-1}^{i_1,\dots,i_m}\in \HH_{\ell,n-1}\ot_{\HH_{\ell,n-2}}\HH_{\ell,n-1}.$ This proves (2).

For proving (3), we still use the downward induction on $k$.
If $k=n-1$, then the result follows from (\ref{nondegquadraticrela}).
When $k<n-1$, using (\ref{nondegquadraticrela}), the part (1) and the induction hypothesis, we have
\begin{align*}
T_{\gamma_{n-1,k}}T_{\beta_{k,n-1}}&=T_{\gamma_{n-1,k+1}}((q-1)T_{k}+q)T_{\beta_{k+1,n-1}}\\
&=(q-1)T_{\gamma_{n-1,k}} T_{\beta_{k+1,n-1}}+qT_{\gamma_{n-1,k+1}} T_{\beta_{k+1,n-1}}\\
&=(q-1)T_{\beta_{k,n-2}}T_{\gamma_{n-1,k}}+q(\mu_{T_{n-1}}(\pi_{k+1,n-1})+q^{n-k-1}),
\end{align*}
where $\pi_{k+1,n-1} \in \HH_{\ell,n-1}\ot_{\HH_{\ell,n-2}}\HH_{\ell,n-1}.$
This completes the proof.
\end{proof}

\begin{lem}\label{JucyMurphyCommutators}
Let $j_1,j_2\dots,j_n$ and $k$ be some integers such that
$ j_1,\dots,j_{n-1}\geq 0$, $0\leq j_n\leq \ell-1$ and $1\leq k\leq n-1$. Then, in $\HH_{\ell,n}$ we have
\begin{align*}
\Biggr(\prod_{r=1}^{n}\mL_r^{j_r}\Biggr) T_{\beta_{k,n-1}}
=T_{\beta_{k,n-1}} \Biggr(\prod_{r=1}^{n}\mL_{\beta_{k,n-1}^{-1}(r)}^{j_r}\Biggr)
+\sum_{\substack{1\leq m\leq n-k\\k\leq i_1<i_2<\dots<i_m\leq n-1}} T_{\beta^{i_1,\dots,i_m}_{k,n-1}}f_{i_1,\dots,i_m},
\end{align*}
where $f_{i_1,\dots,i_m}=\sum_{a=0}^{\ell-1}f_{i_1,\dots,i_m}^{(a)}\mL^a_n$
for some $f_{i_1,\dots,i_m}^{(a)}\in R[\mL_1,\dots,\mL_{n-1}]$.
\end{lem}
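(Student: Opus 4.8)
The plan is to induct on $k$, moving downward from $k = n-1$ to $k=1$, using the commutation relations in Lemma \ref{nondeg-L-comm} to push the product of Jucys–Murphy operators past the sequence of simple transpositions constituting $\beta_{k,n-1} = \sigma_k\sigma_{k+1}\cdots\sigma_{n-1}$. The base case $k=n-1$ is essentially relation (4) of Lemma \ref{nondeg-L-comm}: since $T_{\beta_{n-1,n-1}} = T_{n-1}$ interacts nontrivially only with $\mL_{n-1}$ and $\mL_n$, and commutes with $\mL_{n-1}\mL_n$ and $\mL_{n-1}+\mL_n$, one can commute $\prod_r \mL_r^{j_r}$ past $T_{n-1}$ at the cost of ``error terms'' in which $T_{n-1}$ has been absorbed --- and $T_{n-1}$ itself is $T_{\beta^{i_1}_{n-1,n-1}}$ with $i_1 = n-1$, $m=1$ (reading $\beta^{n-1}_{n-1,n-1}=1$), matching the claimed error shape. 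One must be careful that only $j_n$ is restricted to $0\le j_n\le \ell-1$, so the cyclotomic relation is never needed to rewrite $\mL_n$-powers on the left; the restriction is there only so that the error coefficients $f^{(a)}_{i_1,\dots,i_m}$ can be taken in $R[\mL_1,\dots,\mL_{n-1}]$ with $\mL_n$-exponent $a$ running $0$ to $\ell-1$.

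For the inductive step I would write $\beta_{k,n-1} = \sigma_k \cdot \beta_{k+1,n-1}$, so $T_{\beta_{k,n-1}} = T_k T_{\beta_{k+1,n-1}}$. First commute $\prod_r \mL_r^{j_r}$ past $T_k$ using Lemma \ref{nondeg-L-comm}(4): only the $\mL_k^{j_k}\mL_{k+1}^{j_{k+1}}$-part is affected, and $T_k \mL_k^{a}\mL_{k+1}^{b}$ expands into $\mL_{k+1}^{a}\mL_k^{b} T_k$ plus lower terms involving $(q-1)$ and no $T_k$ (these can be made explicit by repeated use of $T_k\mL_k = \mL_{k+1}T_k - (q-1)\mL_{k+1}$ and $T_k\mL_{k+1} = \mL_k T_k + (q-1)\mL_{k+1}$, or more cleanly by using that $T_k$ commutes with $\mL_k\mL_{k+1}$ and $\mL_k+\mL_{k+1}$ together with Lemma \ref{nondeg-L-comm}(5)). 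After this step one has
$$\Bigl(\prod_{r=1}^n \mL_r^{j_r}\Bigr) T_k = T_k\Bigl(\prod_{r=1}^n \mL_{\sigma_k(r)}^{j_r}\Bigr) + g,$$
where $g$ is a polynomial in $\mL_1,\dots,\mL_n$ (no $T_k$) with $\mL_n$-degree unchanged. Then apply $T_{\beta_{k+1,n-1}}$ on the right of both sides and invoke the induction hypothesis on the main term (with the permuted exponent sequence $j'_r = j_{\sigma_k(r)}$, noting $\beta_{k+1,n-1}^{-1}\sigma_k = \beta_{k,n-1}^{-1}$, so the composite substitution is exactly $\beta_{k,n-1}^{-1}$). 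The main term produces $T_k T_{\beta_{k+1,n-1}}(\cdots) = T_{\beta_{k,n-1}}(\cdots)$ plus error terms $T_k T_{\beta^{i_1,\dots,i_m}_{k+1,n-1}} f_{i_1,\dots,i_m}$; since $T_k T_{\beta^{i_1,\dots,i_m}_{k+1,n-1}} = T_{\beta^{i_1,\dots,i_m}_{k,n-1}}$ whenever $k < i_1$, these already have the asserted form, while the term $T_k g T_{\beta_{k+1,n-1}}$ must be moved so that $g$ (which commutes with nothing a priori) is pushed to the right past $T_{\beta_{k+1,n-1}}$ --- but $T_k \cdot 1 \cdot T_{\beta_{k+1,n-1}}$-type monomials in $g$ contribute to $m \ge 1$ with $i_1 = k$ (i.e. $\sigma_k$ is among the removed reflections), again matching $\beta^{i_1,\dots,i_m}_{k,n-1}$ with $i_1 = k$.

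The main obstacle is bookkeeping: one must verify that every error term generated --- whether from commuting past $T_k$, or from the inductively supplied errors, or from reordering $g T_{\beta_{k+1,n-1}}$ --- can be reindexed into the single family $\{T_{\beta^{i_1,\dots,i_m}_{k,n-1}} f_{i_1,\dots,i_m}\}$ with coefficients $f_{i_1,\dots,i_m} = \sum_{a=0}^{\ell-1} f^{(a)}_{i_1,\dots,i_m}\mL_n^a$, $f^{(a)}_{i_1,\dots,i_m}\in R[\mL_1,\dots,\mL_{n-1}]$. To keep the $\mL_n$-degree bounded by $\ell-1$ one uses that $j_n \le \ell-1$ throughout and that none of the commutation relations raises the $\mL_n$-exponent (relation (4) applied at $T_{n-1}$ only moves $\mL_{n-1}\leftrightarrow\mL_n$ and the cyclotomic relation is applied at the very end if needed to reduce any $\mL_n$-power to degree $< \ell$ without affecting the structure of the indexing set). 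For the terms where $\sigma_k$ appears among the removed reflections but $\beta_{k+1,n-1}$ also contributes removed reflections, one checks that $T_k\cdot T_{\beta^{i_1,\dots,i_m}_{k+1,n-1}}$ (with $i_1 > k$) genuinely equals $T_{\beta^{k,i_1,\dots,i_m}_{k,n-1}}$ by a length count, so the union over all error sources is still indexed by strictly increasing tuples $k \le i_1 < \cdots < i_m \le n-1$. No braid-collision subtleties arise because all the transpositions occurring are among consecutive $\sigma_j$'s with controlled overlaps, exactly as in the proof of Lemma \ref{keytools}.
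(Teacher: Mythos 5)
Your proposal follows essentially the same route as the paper's proof: downward induction on $k$, with the base case $k=n-1$ given by the commutation formula of Lemma \ref{nondeg-L-comm}(4) (equivalently \cite[Lemma 3.3]{MM}), and the inductive step obtained by writing $T_{\beta_{k,n-1}}=T_kT_{\beta_{k+1,n-1}}$, commuting the $\mL$-monomial past $T_k$ into a main term plus a $T_k$-free remainder, and feeding both into the induction hypothesis. Your bookkeeping of the error terms --- the $i_1>k$ terms arising from $T_kT_{\beta^{i_1,\dots,i_m}_{k+1,n-1}}=T_{\beta^{i_1,\dots,i_m}_{k,n-1}}$ and the $i_1=k$ terms arising from the remainder multiplied by $T_{\beta_{k+1,n-1}}$ --- matches the paper's computation, so the argument is correct.
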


\begin{proof}
We use downward induction on $k.$
First, if $k=n-1$, then using \cite[Lemma 3.3]{MM}, we have that \begin{align}
\label{commcompu} &\quad\, \Biggr(\prod_{r=1}^{n}\mL_r^{j_r}\Biggr) T_{n-1}\\
\notag &=\Biggr(\prod_{r=1}^{n-2}\mL_r^{j_r}\Biggr)T_{n-1}\mL_{n-1}^{j_{n}}\mL_n^{j_{n-1}}
\pm (q-1)\Biggr(\prod_{r=1}^{n-2}\mL_r^{j_r}\Biggr)\sum_{c=\min\{j_{n-1},j_n\}+1}^{\max\{j_{n-1},j_n\}}\mL_{n}^{j_{n-1}+j_n-c}\mL_{n-1}^{c}\\
\notag &=T_{n-1}\Biggr(\prod_{r=1}^{n-2}\mL_r^{j_r}\Biggr)\mL_{n-1}^{j_{n}}\mL_n^{j_{n-1}}
\pm (q-1)\Biggr(\prod_{r=1}^{n-2}\mL_r^{j_r}\Biggr)\sum_{c=\min\{j_{n-1},j_n\}+1}^{\max\{j_{n-1},j_n\}}\mL_{n}^{j_{n-1}+j_n-c}\mL_{n-1}^{c},
\end{align}
hence the result holds in this case. Now we suppose that $k<n-1$. Then using Lemma \ref{nondeg-L-comm},
a similar computation as in (\ref{commcompu}) and the induction hypothesis, we have
\begin{align*}
\notag &\quad\,\Biggr(\prod_{r=1}^{n}\mL_r^{j_r}\Biggr) T_{\beta_{k,n-1}}\\
&=\Biggr(\prod_{r=1}^{k+1}\mL_r^{j_r}\Biggr)T_k \Biggr(\prod_{p=k+2}^{n}\mL_p^{j_p}\Biggr) T_{\beta_{k+1,n-1}}\\
&=\Bigg(T_{k}\Biggr(\prod_{r=1}^{k-1}\mL_r^{j_r}\Biggr)\mL_{k+1}^{j_{k}}\mL_{k}^{j_{k+1}}
\pm (q-1)\Biggr(\prod_{r=1}^{k-1}\mL_r^{j_r}\Biggr)\sum_{c=\min\{j_k,j_{k+1}\}+1}^{\max\{j_{k},j_{k+1}\}} \mL_{k+1}^{j_{k}+j_{k+1}-c}\mL_{k}^{c}\Biggr)\\
&\qquad \times\Biggr(\prod_{p=k+2}^{n}\mL_p^{j_p}\Biggr) T_{\beta_{k+1,n-1}}\\
&=T_{k}\Biggr(\prod_{r=1}^{k-1}\mL_r^{j_r}\Biggr)\mL_{k}^{j_{k+1}}\Biggr(\mL_{k+1}^{j_{k}}\prod_{p=k+2}^{n}\mL_p^{j_p}\Biggr) T_{\beta_{k+1,n-1}}\\
&\qquad \pm (q-1)\Biggr(\prod_{r=1}^{k-1}\mL_r^{j_r}\Biggr)\sum_{c=\min\{j_k,j_{k+1}\}+1}^{\max\{j_{k},j_{k+1}\}} \mL_{k+1}^{j_{k}+j_{k+1}-c}
 \mL_{k}^{c}\Biggr(\prod_{p=k+2}^{n}\mL_p^{j_p}\Biggr) T_{\beta_{k+1,n-1}}\\
&=T_{\beta_{k,n-1}} \Biggr(\prod_{r=1}^{n}\mL_{\beta_{k,n-1}^{-1}(r)}^{j_r}\Biggr)
+\sum_{\substack{1\leq m\leq n-k\\k\leq i_1<\dots<i_m\leq n-1}} T_{\beta^{i_1,\dots,i_m}_{k,n-1}}f_{i_1,\dots,i_m}
\end{align*}
for some $f_{i_1,\dots,i_m}=\sum_{a=0}^{\ell-1}f_{i_1,\dots,i_m}^{(a)}\mL^a_n$
with $f_{i_1,\dots,i_m}^{(a)}\in R[\mL_1,\dots,\mL_{n-1}]$.
This completes the proof.
\end{proof}

\begin{lem}\label{keytoolsdeg}
Let $1\leq k\leq n-1$. In $H_{\ell,n},$ we have the following statements.
\begin{itemize}
\item[(1)] If $k<i\leq n-1$, then we have
$\gamma_{n-1,k}s_i=s_{i-1}\gamma_{n-1,k}. $
\item[(2)] If $1\leq m\leq n-k$ and $k\leq i_1<i_2<\dots<i_m\leq n-1,$ then we have
$$\gamma_{n-1,k}\beta_{k,n-1}^{i_1,\dots,i_m}=\mu_{s_{n-1}}(\varpi_{k,n-1}^{i_1,\dots,i_m})
\te{\quad for some\quad} \varpi_{k,n-1}^{i_1,\dots,i_m}\in H_{\ell,n-1}\ot_{H_{\ell,n-2}}H_{\ell,n-1}.$$
\end{itemize}
\end{lem}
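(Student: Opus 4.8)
The plan is to mimic the proof of Lemma~\ref{keytools}, which becomes considerably simpler in the degenerate setting because the quadratic relation $(\ref{degquadraticrela})$ now reads $s_i^2=1$; in particular there is no analogue of Lemma~\ref{keytools}(3), since the product $\gamma_{n-1,k}\beta_{k,n-1}$ collapses completely to $1$. Throughout I would use that the subalgebra of $H_{\ell,n}$ generated by $s_1,\dots,s_{n-1}$ is a copy of the group algebra $R\Sym_n$, so that $\gamma_{n-1,k}$, $\beta_{k,n-1}$ and the elements $\beta_{k,n-1}^{i_1,\dots,i_m}$ are genuine elements of $\Sym_n\subseteq H_{\ell,n}$ and every identity below is an identity in $R\Sym_n$.

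For part~(1) I would carry out the same word manipulation as in the proof of Lemma~\ref{keytools}(1): writing $\gamma_{n-1,k}=s_{n-1}s_{n-2}\cdots s_k$, the trailing $s_i$ commutes past $s_{i-2},\dots,s_k$ by $(\ref{degbraidrela1})$, then the braid relation $s_is_{i-1}s_i=s_{i-1}s_is_{i-1}$ of $(\ref{degbraidrela2})$ applies, and finally the new leading $s_{i-1}$ commutes past $s_{n-1},\dots,s_{i+1}$ by $(\ref{degbraidrela1})$, yielding $\gamma_{n-1,k}s_i=s_{i-1}\gamma_{n-1,k}$. Since no quadratic relation intervenes, this is literally the non-degenerate computation with all occurrences of $q$ deleted.

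For part~(2) I would argue by downward induction on $k$, exactly as for Lemma~\ref{keytools}(2). In the base case $k=n-1$ the hypotheses force $m=1$, $i_1=n-1$ and $\beta_{n-1,n-1}^{n-1}=1$, so the left-hand side is $\gamma_{n-1,n-1}=s_{n-1}=\mu_{s_{n-1}}(1\ot1)$. For $k<n-1$ I would record the two factorisations $\gamma_{n-1,k}=\gamma_{n-1,k+1}s_k$ and $\gamma_{n-1,k}=s_{n-1}\gamma_{n-2,k}$ with $\gamma_{n-2,k}\in\Sym_{n-1}\subseteq H_{\ell,n-1}$, and split into two cases. If $i_1>k$, then $\beta_{k,n-1}^{i_1,\dots,i_m}=s_k\cdot\beta_{k+1,n-1}^{i_1,\dots,i_m}$ with $k+1\le i_1$ and $m\le n-1-k$, so by $s_k^2=1$
$$\gamma_{n-1,k}\,\beta_{k,n-1}^{i_1,\dots,i_m}=\gamma_{n-1,k+1}\,s_ks_k\,\beta_{k+1,n-1}^{i_1,\dots,i_m}=\gamma_{n-1,k+1}\,\beta_{k+1,n-1}^{i_1,\dots,i_m},$$
which has the required form by the induction hypothesis at level $k+1$. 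If $i_1=k$, then $\beta_{k,n-1}^{k,i_2,\dots,i_m}$ is a product of generators $s_j$ with $j>k$, so repeated use of part~(1) moves $\gamma_{n-1,k}$ rightward past it, giving $\gamma_{n-1,k}\,\beta_{k,n-1}^{k,i_2,\dots,i_m}=w\,\gamma_{n-1,k}$ for an element $w\in\Sym_{n-1}$ (a product of the $s_{j-1}$ with $k\le j-1\le n-2$); substituting $\gamma_{n-1,k}=s_{n-1}\gamma_{n-2,k}$ then exhibits the product as $ws_{n-1}\gamma_{n-2,k}=\mu_{s_{n-1}}(w\ot\gamma_{n-2,k})$, completing the induction.

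The argument is essentially routine, and the only point that needs attention is the bookkeeping in part~(2): one must verify that in the case $i_1>k$ the parameter $m$ is at most $n-(k+1)$, so that the induction hypothesis at level $k+1$ genuinely applies (the excluded extreme $m=n-k$ would force $\{i_1,\dots,i_m\}=\{k,\dots,n-1\}$, hence $i_1=k$), and that in both cases the final expression is visibly of the form $a\,s_{n-1}\,b$ with $a,b\in H_{\ell,n-1}$, which is precisely what the factorisation $\gamma_{n-1,k}=s_{n-1}\gamma_{n-2,k}$ provides. As a sanity check one may instead argue non-inductively: everything lies in $R\Sym_n$, deleting at least one simple reflection from $\beta_{k,n-1}$ breaks the chain that would carry $n$ to $k$, so $\gamma_{n-1,k}\beta_{k,n-1}^{i_1,\dots,i_m}$ does not fix $n$, hence does not lie in $\Sym_{n-1}$, and therefore lies in $H_{\ell,n-1}s_{n-1}H_{\ell,n-1}$ by the distinguished right $\Sym_{n-1}$-coset decomposition of $\Sym_n$; the inductive proof is preferable only because it produces $\varpi_{k,n-1}^{i_1,\dots,i_m}$ explicitly.
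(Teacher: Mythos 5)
Your proof is correct and follows exactly the route the paper intends: the paper's own proof of Lemma \ref{keytoolsdeg} is a one-line reference to the argument for Lemma \ref{keytools}, and your write-up is precisely that adaptation, with the correct observation that $s_k^2=1$ kills the $(q-1)$-term and hence the analogue of part (3). Your bookkeeping in the case $i_1>k$ (checking $m\le n-(k+1)$ so the induction applies) and the optional coset-theoretic sanity check are both sound.
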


\begin{proof}
The lemma follows from (\ref{degquadraticrela})-(\ref{degbraidrela2}) and a similar argument as in the proof of Lemma \ref{keytools}.
\end{proof}

\begin{lem}\label{JucyMurphyCommutatorsdeg}
Let $j_1,j_2\dots,j_n$ and $k$ be some integers such that
$ j_1,\dots,j_{n-1}\geq 0$, $0\leq j_n\leq \ell-1$ and $1\leq k\leq n-1$. Then, in $H_{\ell,n}$ we have
\begin{align*}
\Biggr(\prod_{r=1}^{n}L_r^{j_r}\Biggr) \beta_{k,n-1}
=\beta_{k,n-1} \Biggr(\prod_{r=1}^{n}L_{\beta_{k,n-1}^{-1}(r)}^{j_r}\Biggr)
+\sum_{\substack{1\leq m\leq n-k\\k\leq i_1<i_2<\dots<i_m\leq n-1}} \beta^{i_1,\dots,i_m}_{k,n-1}f_{i_1,\dots,i_m},
\end{align*}
where $f_{i_1,\dots,i_m}=\sum_{a=0}^{\ell-1}f_{i_1,\dots,i_m}^{(a)}L^a_n$
for some $f_{i_1,\dots,i_m}^{(a)}\in R[L_1,\dots,L_{n-1}]$.
\end{lem}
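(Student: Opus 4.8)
The plan is to argue by downward induction on $k$, in exact parallel to the proof of Lemma~\ref{JucyMurphyCommutators}, with the degenerate relations \eqref{deg-L-comm1}, \eqref{deg-L-comm2} and Lemma~\ref{deg-L-comm3} taking over the role of Lemma~\ref{nondeg-L-comm}. I work inside $H_{\ell,n}$, where $\sigma_i$ is identified with $s_i$, so $\beta_{k,n-1}=\sigma_k\sigma_{k+1}\cdots\sigma_{n-1}$ and likewise for the $\beta^{i_1,\dots,i_m}_{k,n-1}$. The one computational input I would isolate first is the following elementary fact: if $X,Y$ are commuting elements and $\sigma$ satisfies $X\sigma=\sigma Y-1$ and $Y\sigma=\sigma X+1$, then iterating these relations gives, for all $a,b\ge0$, an identity $X^aY^b\sigma=\sigma X^bY^a+g_{a,b}(X,Y)$ with $g_{a,b}\in R[X,Y]$ whose top-degree term cancels, so that $\deg_Y g_{a,b}\le\max(a,b)-1$. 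By \eqref{deg-L-comm2} and Lemma~\ref{deg-L-comm3}(1), the triple $(L_j,L_{j+1},\sigma_j)$ satisfies exactly these relations.

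For the base case $k=n-1$ one has $\beta_{n-1,n-1}=\sigma_{n-1}$, and the sum on the right collapses to its single term with $m=1$, $i_1=n-1$ (where $\beta^{n-1}_{n-1,n-1}=1$). Since $L_1,\dots,L_{n-2}$ commute with $\sigma_{n-1}$ by \eqref{deg-L-comm1}, the left-hand side equals $\bigl(\prod_{r=1}^{n-2}L_r^{j_r}\bigr)L_{n-1}^{j_{n-1}}L_n^{j_n}\sigma_{n-1}$, and applying the straightening fact to $(L_{n-1},L_n,\sigma_{n-1})$ rewrites this as $\sigma_{n-1}\bigl(\prod_{r=1}^{n-2}L_r^{j_r}\bigr)L_{n-1}^{j_n}L_n^{j_{n-1}}+f_{n-1}$ with $f_{n-1}=\bigl(\prod_{r=1}^{n-2}L_r^{j_r}\bigr)g_{j_{n-1},j_n}(L_{n-1},L_n)$. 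As $\beta_{n-1,n-1}^{-1}$ transposes $n-1$ and $n$, the first summand is exactly $\sigma_{n-1}\prod_{r=1}^n L_{\beta_{n-1,n-1}^{-1}(r)}^{j_r}$, and $f_{n-1}$ is a polynomial in $L_1,\dots,L_n$ with $\deg_{L_n}f_{n-1}=\deg_{L_n}g_{j_{n-1},j_n}\le\ell-1$ in view of the hypotheses; this settles the base case.

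For the inductive step ($k<n-1$, assuming the statement for $k+1$) I would write $\beta_{k,n-1}=\sigma_k\cdot\beta_{k+1,n-1}$. Using that $L_r$ commutes with $\sigma_k$ for $r\ne k,k+1$ and the straightening fact for $(L_k,L_{k+1},\sigma_k)$, one obtains $\bigl(\prod_{r=1}^n L_r^{j_r}\bigr)\sigma_k=\sigma_k\bigl(\prod_{r=1}^n L_r^{j''_r}\bigr)+h$, where $j''$ is $j$ with its $k$-th and $(k+1)$-th entries interchanged and $h=\bigl(\prod_{r\ne k,k+1}L_r^{j_r}\bigr)g_{j_k,j_{k+1}}(L_k,L_{k+1})$; in particular $L_n$ occurs in every monomial of $h$ only to the power $j_n\le\ell-1$. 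Multiplying by $\beta_{k+1,n-1}$ on the right and feeding $\bigl(\prod_r L_r^{j''_r}\bigr)\beta_{k+1,n-1}$ into the inductive hypothesis (legitimate since $j''_n=j_n\le\ell-1$), three things happen: (i) via $\sigma_k\beta_{k+1,n-1}=\beta_{k,n-1}$ and the reindexing $j''_{\beta_{k+1,n-1}(s)}=j_{\beta_{k,n-1}(s)}$, the leading term becomes exactly $\beta_{k,n-1}\prod_{r=1}^n L_{\beta_{k,n-1}^{-1}(r)}^{j_r}$; (ii) since $\sigma_k\cdot\beta^{i_1,\dots,i_m}_{k+1,n-1}=\beta^{i_1,\dots,i_m}_{k,n-1}$ for $i_1>k$, the correction terms supplied by the inductive hypothesis are precisely the summands with $i_1>k$; and (iii), using $\beta_{k+1,n-1}=\beta^{k}_{k,n-1}$, the remaining piece $h\,\beta_{k+1,n-1}$ — after expanding $h$ into monomials $\prod_r L_r^{e_r}$ and applying the inductive hypothesis for $\beta_{k+1,n-1}$ to each $\bigl(\prod_r L_r^{e_r}\bigr)\beta_{k+1,n-1}$ — contributes precisely the summands $\beta^{k,i_2,\dots,i_m}_{k,n-1}(\cdots)$ in which $\sigma_k$ is among the deleted reflections. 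Assembling (i)--(iii) gives the asserted identity.

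The routine parts are the two applications of the straightening fact. The step I expect to be the main obstacle is the $L_n$-degree bookkeeping implicit in (iii): one must verify that the recursive reduction of $h\,\beta_{k+1,n-1}$ never produces a correction coefficient involving $L_n$ to a power exceeding $\ell-1$. As in the non-degenerate case this hinges on the cancellation of the top-degree term in each $g_{a,b}$ (which keeps the $L_k$- and $L_{k+1}$-degrees of the monomials of $h$ small enough) together with the bound $0\le j_n\le\ell-1$, so that the inductive hypothesis for $\beta_{k+1,n-1}$ is always invoked with a bounded $n$-th exponent and each $f_{i_1,\dots,i_m}$ comes out in the form $\sum_{a=0}^{\ell-1}f^{(a)}_{i_1,\dots,i_m}L_n^a$. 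Indeed, the whole argument is simply the degenerate transcription of the proof of Lemma~\ref{JucyMurphyCommutators}, much as Lemma~\ref{keytoolsdeg} was deduced from Lemma~\ref{keytools}.
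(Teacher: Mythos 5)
Your proposal follows the paper's proof of this lemma essentially verbatim: downward induction on $k$, with your ``straightening fact'' playing the role of the degenerate analogue of \cite[Lemma 3.3]{MM}, and the same three-way sorting of the output into the leading term, the corrections inherited from the inductive hypothesis (those with $i_1>k$), and the corrections generated by $h\,\beta_{k+1,n-1}$ (those with $i_1=k$). The combinatorial identities $\sigma_k\beta^{i_1,\dots,i_m}_{k+1,n-1}=\beta^{i_1,\dots,i_m}_{k,n-1}$ for $i_1>k$ and $\beta^{i_1,\dots,i_m}_{k+1,n-1}=\beta^{k,i_1,\dots,i_m}_{k,n-1}$ are used correctly.

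However, the step you yourself single out as the main obstacle --- the $L_n$-degree bound on the $f_{i_1,\dots,i_m}$ --- is a genuine gap, and it already appears in your base case. Your straightening fact gives $\deg_Y g_{a,b}\le\max(a,b)-1$, so with $a=j_{n-1}$, $b=j_n$ the correction $f_{n-1}$ has $L_n$-degree up to $\max(j_{n-1},j_n)-1$; only $j_n$ is bounded by $\ell-1$, while $j_{n-1}$ is arbitrary, so the assertion that $\deg_{L_n}f_{n-1}\le\ell-1$ ``in view of the hypotheses'' does not follow. The same problem recurs in your step (iii): the leading terms $\beta_{k+1,n-1}\prod_rL^{e_r}_{\beta_{k+1,n-1}^{-1}(r)}$ supplied by the inductive hypothesis land in $f_k$ with $L_n$-exponent $e_{k+1}$ (since $\beta_{k+1,n-1}$ sends $n$ to $k+1$), i.e.\ the $L_{k+1}$-exponent of a monomial of $h$, which is bounded only by $\max(j_k,j_{k+1})-1$; your observation that the inductive hypothesis is always invoked with $n$-th exponent $\le\ell-1$ guarantees its applicability but says nothing about the degree of these outputs. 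In fact the statement is false for unbounded exponents: take $n=2$, $\ell=2$, $k=1$, $j_1=3$, $j_2=0$. Then $L_1^3s_1-s_1L_2^3=-(L_1^2+L_1L_2+L_2^2)$, and expressing $L_2^2=(u_1+u_2)L_2-u_1u_2+(L_1+L_2-u_1-u_2)s_1$ in the basis of Lemma \ref{degMadecomlem}(1) shows this element has a nonzero coefficient on $L_1s_1$, hence is not of the form $f^{(0)}+f^{(1)}L_2$ with $f^{(a)}\in R[L_1]$. To be fair, the paper's own proof has exactly the same lacuna; both arguments, and the statement itself, become correct once one adds the hypothesis $j_r\le\ell-1$ for all $r$ (then every monomial fed into the induction has all exponents $\le\ell-1$, so $e_{k+1}\le\ell-2$ and the bound propagates), and this is the only case in which the lemma is applied, namely to the monomials of $u_\blam^+$. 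You should either impose that hypothesis or carry out the degree bookkeeping explicitly under it rather than asserting it for arbitrary $j_1,\dots,j_{n-1}$.
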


\begin{proof} We use downward induction on $k.$
First, if $k=n-1$, then similarly to \cite[Lemma 3.3]{MM}, we have that \begin{align}
\label{commcompudeg} &\quad\, \Biggr(\prod_{r=1}^{n}L_r^{j_r}\Biggr) s_{n-1}\\
\notag &=\Biggr(\prod_{r=1}^{n-2}L_r^{j_r}\Biggr)s_{n-1}L_{n-1}^{j_{n}}L_n^{j_{n-1}}
\pm \Biggr(\prod_{r=1}^{n-2}L_r^{j_r}\Biggr)\sum_{c=\min\{j_{n-1},j_n\}}^{\max\{j_{n-1},j_n\}-1}L_{n-1}^{j_{n-1}+j_n-c-1}L_{n}^{c}\\
\notag &=s_{n-1}\Biggr(\prod_{r=1}^{n-2}L_r^{j_r}\Biggr)L_{n-1}^{j_{n}}L_n^{j_{n-1}}
\pm \Biggr(\prod_{r=1}^{n-2}L_r^{j_r}\Biggr)\sum_{c=\min\{j_{n-1},j_n\}}^{\max\{j_{n-1},j_n\}-1}L_{n-1}^{j_{n-1}+j_n-c-1}L_{n}^{c},
\end{align}
hence the result holds in this case. Now we suppose that $k<n-1$. Then using Lemma \ref{deg-L-comm3},
a similar computation as in (\ref{commcompudeg}) and the induction hypothesis, we have
\begin{align*}
\notag &\quad\,\Biggr(\prod_{r=1}^{n}L_r^{j_r}\Biggr) \beta_{k,n-1}\\
&=\Biggr(\prod_{r=1}^{k+1}L_r^{j_r}\Biggr)s_k \Biggr(\prod_{p=k+2}^{n}L_p^{j_p}\Biggr) \beta_{k+1,n-1}\\
&=\Biggr(s_{k}\Biggr(\prod_{r=1}^{k-1}L_r^{j_r}\Biggr)L_{k+1}^{j_{k}}L_{k}^{j_{k+1}}
\pm \Biggr(\prod_{r=1}^{k-1}L_r^{j_r}\Biggr)\sum_{c=\min\{j_k,j_{k+1}\}}^{\max\{j_{k},j_{k+1}\}-1} L_{k+1}^{j_{k}+j_{k+1}-c-1}L_{k}^{c}\Biggr)\Biggr(\prod_{p=k+2}^{n}L_p^{j_p}\Biggr) {\beta_{k+1,n-1}}\\
&=s_{k}\Biggr(\prod_{r=1}^{k-1}L_r^{j_r}\Biggr)L_{k}^{j_{k+1}}\Biggr(L_{k+1}^{j_{k}}\prod_{p=k+2}^{n}L_p^{j_p}\Biggr) {\beta_{k+1,n-1}}\\
&\qquad \pm \Biggr(\prod_{r=1}^{k-1}L_r^{j_r}\Biggr)\sum_{c=\min\{j_k,j_{k+1}\}}^{\max\{j_{k},j_{k+1}\}-1} L_{k+1}^{j_{k}+j_{k+1}-c-1}
 L_{k}^{c}\Biggr(\prod_{p=k+2}^{n}L_p^{j_p}\Biggr) \beta_{k+1,n-1}\\
&={\beta_{k,n-1}} \Biggr(\prod_{r=1}^{n}L_{\beta_{k,n-1}^{-1}(r)}^{j_r}\Biggr)
+\sum_{\substack{1\leq m\leq n-k\\k\leq i_1<\dots<i_m\leq n-1}} \beta^{i_1,\dots,i_m}_{k,n-1}f_{i_1,\dots,i_m}
\end{align*}
for some $f_{i_1,\dots,i_m}=\sum_{a=0}^{\ell-1}f_{i_1,\dots,i_m}^{(a)}L^a_n$
with $f_{i_1,\dots,i_m}^{(a)}\in R[L_1,\dots,L_{n-1}]$.
This completes the proof.
\end{proof}

\bigskip
\section{Proof of Theorem \ref{mainthm1}}

In this section, we shall give the proof of the first main result in this paper---Theorem \ref{mainthm1}. That is, we shall compute the explicit value of the standard symmetrizing form $\tau_{R}^{\rm{MM}}$ of the non-degenerate cyclotomic Hecke algebra $\HH_{\ell,n}$ on each Murphy basis element $\fm_{\s\t}$ of $\HH_{\ell,n}$. Throughout this section, unless otherwise stated, $R$ is an integral domain, $1\neq q\in R^\times$ and ${\mathbf Q}:=(Q_{1},Q_2,\dots, Q_{\ell})\in R^\ell$.

\subsection{Computing the explicit value of $\tau_R^{\rm{MM}}$ on $\fm_{\s\t}$}
In this subsection, we shall use the cyclotomic Mackey decomposition to compute the explicit value of $\tau_R^{\rm{MM}}$ on each Murphy basis element $\fm_{\s\t}$.

Let $\blam\in\mathscr{P}_{\ell,n}$ and $\t\in\te{Std}(\blam)$. Recall Definition \ref{keydefn} that
$1\leq n(\t)\leq n$ is the unique integer such that $\t^{-1}(n)=(\t^\blam)^{-1}(n(\t))$, and
for each integer $1\leq j\leq n$,  $$
\t^{-1}(j)=(r(\t,j),c(\t,j),l(\t,j))\in [\blam].$$

We use ``$\leq $'' to denote the Bruhat order on $\Sym_n$.

\begin{prop}\label{nondegvemst}
Let $\blam\in\mathscr{P}_{\ell,n}$ and $\s,\t\in\te{Std}(\blam)$. Suppose that either $n(\s)=n$ or $n(\t)=n$. Then
$$\ve_n(\fm_{\s\t})=\begin{cases} \bigg(\prod\limits_{i=l(\t,n)+1}^{\ell}(-Q_i)\bigg)\fm_{\s_{\downarrow\leq (n-1)}\t_{\downarrow\leq (n-1)}}, &\text{if\ \ $n(\s)=n(\t),$}\\
0, &\text{if\ \ $n(\s)\neq n(\t).$}\end{cases}$$
\end{prop}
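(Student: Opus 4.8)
The plan is to analyze $\fm_{\s\t} = T_{d(\s)}^* \fm_\blam T_{d(\t)}$ under $\ve_n$ by exploiting the coset decomposition \eqref{decompositionofdt} of $d(\t)$ (and symmetrically of $d(\s)$), together with the product formula $\fm_\blam = x_\blam \fu_\blam^+$. The key structural input is that $\ve_n$ is a bimodule map over $\HH_{\ell,n-1}$ (Lemma \ref{veAst}) and that $\ve_n(h^*) = \ve_n(h)^*$, so once the pieces of $d(\s)$ and $d(\t)$ lying in $\Sym_{n-1}$ are split off they simply pass through $\ve_n$ and, via \eqref{nondegmst*}, reassemble into $\fm_{\s_{\downarrow\leq(n-1)}\t_{\downarrow\leq(n-1)}}$. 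So the whole computation reduces to understanding $\ve_n$ applied to the ``new'' factors: the factor $T_{\beta_{n(\t),n-1}}$ on the right (when $n(\t)<n$), the factor $T_{\beta_{n(\s),n-1}}^* = T_{\gamma_{n-1,n(\s)}}$ on the left (when $n(\s)<n$), and the portion of $x_\blam \fu_\blam^+$ that genuinely involves the index $n$.

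First I would handle the contribution of $x_\blam$ and $\fu_\blam^+$ at position $n$. Write $x_\blam = x_{\blam_{\downarrow\leq(n-1)}} \cdot y_n$ where $y_n$ is the sum over the Young-subgroup factor $\Sym_{\{a_{i,j-1}+1,\dots,a_{i,j}\}}$ containing $n$; using the coset decomposition \eqref{decompositionofwlamr} this $y_n$ is (a sum of terms) $\beta_{b,n-1}\cdot(\text{element of }\Sym_{n-1})$, i.e. it behaves exactly like the $\beta_{k,n-1}$ factors. Likewise $\fu_\blam^+ = \fu_{\blam_{\downarrow\leq(n-1)}}^+ \cdot \prod_{s>l(\t,n)}(\mL_n - Q_s)$ if $n$ lies in the first row of its component, and $\fu_\blam^+ = \fu_{\blam_{\downarrow\leq(n-1)}}^+$ exactly (no $\mL_n$ factor) otherwise — but in the latter case $l(\t,n)$ equals the level $l$ of the component and the product $\prod_{i=l(\t,n)+1}^\ell(-Q_i)$ is the correct answer precisely because $\mL_n \mapsto$ nothing under $\ve_n$ picks out the constant term, which is $\prod_{i>l(\t,n)}(-Q_i)$. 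The crucial point is that $\ve_n$ kills $\mL_n^k$ for $k\geq 1$ and fixes $\mL_n^0$: combined with Lemma \ref{nondeg-L-comm}, conjugating $\fu_\blam^+$ past the various $T_\beta$'s produces only terms $T_{\beta}\cdot(\text{polynomial in }\mL_1,\dots,\mL_{n-1})\cdot\mL_n^a$, and applying $\ve_n$ retains only the $a=0$ part. This is where Lemmas \ref{keytools} and \ref{JucyMurphyCommutators} do the heavy lifting: moving $T_{\gamma_{n-1,n(\s)}}$ rightward past everything produces a sum indexed by subsets $\{i_1<\dots<i_m\}$; by Lemma \ref{keytools}(2) every term with $m\geq 1$ lies in $\mu_{T_{n-1}}(\HH_{\ell,n-1}\ot\HH_{\ell,n-1})$, hence has $p_0^{(n)}$-component zero, i.e. $\ve_n$ annihilates it; only the ``straight-through'' term survives, and it carries the scalar $q^{n-n(\s)}$ from Lemma \ref{keytools}(3) when it meets the matching $T_{\beta_{n(\s),n-1}}$ — but here, since we only need $\ve_n$ and not the full $\tau_n^R$, that straight-through term simply reproduces $\fm_{\s_{\downarrow\leq(n-1)}\t_{\downarrow\leq(n-1)}}$ without the $q$-power (the $q$-powers accumulate later when iterating $\ve_{n-1},\dots,\ve_1$).

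The case split on $n(\s)$ versus $n(\t)$ is then the heart of the argument. If $n(\s) = n(\t) = n$, then neither $d(\s)$ nor $d(\t)$ acquires a $\beta$-factor, $\fm_{\s\t} = \fm_{\s_{\downarrow\leq(n-1)}\t_{\downarrow\leq(n-1)}}\cdot(\mL_n\text{-part})$ up to the $\fu_\blam^+$ tail, and $\ve_n$ reads off the constant term $\prod_{i>l(\t,n)}(-Q_i)$ directly. If $n(\s)=n(\t)<n$, both $d(\s)$ and $d(\t)$ carry a $\beta_{n(\t),n-1}$; after pulling the left one through as a $T_{\gamma_{n-1,n(\t)}}$ it cancels against the right $T_{\beta_{n(\t),n-1}}$ in the manner of Lemma \ref{keytools}(3), leaving a genuine element of $\HH_{\ell,n-1}$ (the $\mu_{T_{n-1}}$-part dies under $\ve_n$), and one checks it is the asserted scalar times $\fm_{\s_{\downarrow\leq(n-1)}\t_{\downarrow\leq(n-1)}}$. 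If $n(\s)\neq n(\t)$ — say WLOG $n(\t)=n$, $n(\s)<n$ by the hypothesis — then $\fm_{\s\t}$ contains a single unmatched $T_{\gamma_{n-1,n(\s)}}$; by Lemma \ref{keytools}(2) (with $k=n(\s)$, all of $\sigma_{n(\s)},\dots,\sigma_{n-1}$ present as a $\beta$-word only after moving past $x_{\blam}$-pieces, which contribute further $\beta$'s but never the full straight word because there is nothing on the left to supply the missing reduced letters) every surviving term lies in $\mu_{T_{n-1}}(\HH_{\ell,n-1}\ot_{\HH_{\ell,n-2}}\HH_{\ell,n-1})$, so $p_0^{(n)} = 0$ and $\ve_n(\fm_{\s\t}) = 0$. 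The main obstacle I anticipate is bookkeeping: tracking how $x_\blam$'s internal coset representatives interleave with the single extra $\beta$ from $d(\t)$ so as to be sure that in the $n(\s)\neq n(\t)$ case no accidental cancellation of $T_{n-1}$-letters yields a nonzero $p_0^{(n)}$-component. This is controlled by the reduced-word / Bruhat-order considerations behind Lemma \ref{keytools}, and by the fact that $\ell(d(\t))=\ell(\beta_{n(\t),n-1})+\ell(d(\t_{\downarrow\leq(n-1)}))$, which guarantees the relevant products are reduced and the $\beta$-expansions of Lemmas \ref{keytools}--\ref{JucyMurphyCommutators} apply verbatim.
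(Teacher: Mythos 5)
Your strategy is essentially the paper's: decompose $d(\s)$, $d(\t)$ and $x_\blam$ into coset representatives, push every term into the canonical decomposition $\HH_{\ell,n-1}T_{n-1}\HH_{\ell,n-1}\oplus\bigoplus_{k}\HH_{\ell,n-1}\mL_n^{k}$, and read off the $\mL_n^0$-component; the nonzero case $n(\s)=n(\t)=n$ and the vanishing case $n(\s)\neq n(\t)$ (reduced by the anti-involution to a single unmatched $T_{\gamma_{n-1,n(\s)}}$) are treated exactly as in the paper's Cases 1--3. Two points need repair, however. First, your parenthetical claim that the matched product contributes ``without the $q$-power'' and that ``the $q$-powers accumulate later'' is false: Lemma \ref{keytools}(3) gives $T_{\gamma_{n-1,k}}T_{\beta_{k,n-1}}=\mu_{T_{n-1}}(\pi_{k,n-1})+q^{n-k}$, so the scalar $q^{n-n(\t)}$ appears already at the level of $\ve_n(\fm_{\s\t})$ --- this is precisely Proposition \ref{nondegvemst2}. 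It does no harm here only because the hypothesis forces $n(\s)=n(\t)=n$ in the matched case, so no $\gamma$--$\beta$ pairing occurs and $q^{n-n(\t)}=1$; but as stated the remark would make the companion proposition, and hence Theorem \ref{mainthm1}, come out wrong. Second, it is not accurate that every killed term ``lies in $\mu_{T_{n-1}}(\HH_{\ell,n-1}\ot_{\HH_{\ell,n-2}}\HH_{\ell,n-1})$'': commuting the residual $T_{n-1}$ past $\prod_{s=\ell_0+1}^{\ell}(\mL_n-Q_s)$ produces cross terms lying in $\sum_{k\geq 1}\HH_{\ell,n-1}\mL_n^{k}$, and one must verify, as in (\ref{easyobservation}), that each such term carries a factor $(q-1)\mL_n$ and has total $\mL_n$-degree in $[1,\ell-1]$, so that it sits inside the canonical decomposition with zero $\mL_n^0$-component and no reduction via the cyclotomic relation can reintroduce one. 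Finally, the ``first row of its component'' dichotomy is a red herring: when $n(\t)=n$ the $\mL_n$-dependence of $\fu_\blam^+$ is always exactly $\prod_{s=\ell_0+1}^{\ell}(\mL_n-Q_s)$, where $\ell_0=l(\t,n)$ is the last nonempty component. With these corrections your argument coincides with the paper's proof.
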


\begin{proof} We use $\ell_0$ to denote the largest integer such that $1\leq \ell_0\leq\ell$ and $\lam^{(\ell_0)}\neq\emptyset$. We prove the proposition by dividing it into three cases.

\smallskip\noindent
{\it Case 1.} Suppose $n(\s)=n=n(\t)$. Then we have $d(\s)=d(\s_{\downarrow\leq (n-1)})$,
$d(\t)=d(\t_{\downarrow\leq (n-1)})$, $l(\t,n)= \ell_0$, and $\te{Shape}(\s_{\downarrow\leq (n-1)})=\te{Shape}(\t_{\downarrow\leq (n-1)})=\blam_{\downarrow\leq (n-1)}.$
First we let $w\in\Sym_\blam$ with $\sigma_{n-1}\leq w$.
Recalling (\ref{decompositionofwlamr}), we have $w_{\blam,{\ell_0},r_{\ell_0}}=\beta_{b_{{\ell_0},r_{\ell_0}},n-1}\cdot w_{\blam,{\ell_0},r_{\ell_0}}^{(1)}$.
Thus using (\ref{decompositionofdt})-(\ref{decompositionofwlamr}) and Lemma \ref{nondeg-L-comm}
we deduce that
\begin{align*}
&T_{d(\s)}^*T_w \fu_\blam^+ T_{d(\t)}\\
 =\ &T_{d(\s_{\downarrow\leq (n-1)})}^* \Biggr(\prod_{i=1}^{\ell_0-1} T_{w_{i}}\Biggr)\Biggr(\prod_{j=1}^{r_{\ell_0}-1}T_{w_{\ell_0,j}}\Biggr)
T_{\beta_{b_{\ell_0,r_{\ell_0}},n-1}}T_{w_{\ell_0,r_{\ell_0}}^{(1)}} \Biggr(\prod_{s=2}^{\ell}\prod_{k=1}^{\fa_s}(\mL_k-Q_s)\Biggr) T_{d(\t_{\downarrow\leq (n-1)})}\\
 =\ &\pi_{\s,\t,w}^{(1,1)} T_{n-1}\pi^{(1,2)}_{\s,\t,w} + \rho^{(1)}_{\s,\t,w},
\end{align*}
where
$$\begin{aligned}
\pi_{\s,\t,w}^{(1,1)}&=
T_{d(\s_{\downarrow\leq (n-1)})}^* \Biggr(\prod\limits_{i=1}^{{\ell_0}-1} T_{w_{i}}\Biggr)\Biggr(\prod\limits_{j=1}^{r_{\ell_0}-1}T_{w_{{\ell_0},j}}\Biggr)
 T_{\beta_{b_{{\ell_0},r_{\ell_0}},n-2}}\Biggr(\prod\limits_{s=\ell_0+1}^{{\ell}}(\mL_{n-1}-Q_s)\Biggr)\in\HH_{{\ell},n-1},\\
\pi_{\s,\t,w}^{(1,2)}&=
T_{w_{{\ell_0},l_{\ell_0}}^{(1)}}
\Biggr(\prod\limits_{s=2}^{{\ell}}\prod\limits_{\substack{1\leq k\leq \fa_s\\k\neq n}}(\mL_k-Q_s)\Biggr) T_{d(\t_{\downarrow\leq (n-1)})}\in\HH_{{\ell},n-1},\end{aligned}$$ and  $$\begin{aligned}
\rho^{(1)}_{\s,\t,w}=
\begin{cases}\begin{matrix}
 T_{d(\s_{\downarrow\leq (n-1)})}^* \bigg(\prod\limits_{i=1}^{{\ell_0}-1} T_{w_{i}}\bigg)\bigg(\prod\limits_{j=1}^{r_{\ell_0}-1}T_{w_{\ell_0,j}}\bigg)
T_{\beta_{b_{{\ell_0},r_{\ell_0}},n-2}}\\
 \times \bigg(\sum\limits_{i=\ell_0+1}^{{\ell}}
\bigg( \prod\limits_{\ell_0+1\leq s<i}(\mL_{n-1}-Q_s)
\prod\limits_{i< t\leq {\ell}}(\mL_{n}-Q_t)\bigg)\bigg)\\
 \times (q-1)\mL_n T_{w_{{\ell_0},r_{\ell_0}}^{(1)}} \bigg(\prod\limits_{s=2}^{{\ell}}\prod\limits_{\substack{1\leq k\leq \fa_s\\k\neq n}}(\mL_k-Q_s)\bigg)
 T_{d(\t_{\downarrow\leq (n-1)})},\end{matrix} & \mbox{if\ \ } \ell_0<{\ell},\\
  0, & \mbox{if\ \ } \ell_0={\ell}.
\end{cases}
\end{aligned}$$
Then, noticing that
\begin{align}\label{easyobservation}
\forall\,\ell_0+1\leq i\leq \ell,\ \te{the degree of\ }\mL_n\te{ in }\prod\limits_{i< t\leq {\ell}}(\mL_{n}-Q_t)(q-1)\mL_n\te{\ lies in\ }[1,{\ell}-1],
\end{align}
we see that in this case \begin{equation}\label{Case1Ven}
\ve_n\bigl(T_{d(\s)}^*T_w \fu_\blam^+ T_{d(\t)}\bigr)=p_0^{(n)}(\rho^{(1)}_{\s,\t,w})=0.
\end{equation}
On the other hand, for each $w\in\Sym_\blam$ such that $\sigma_{n-1}\not\leq w$, using Lemma \ref{nondeg-L-comm} we have
\begin{align}\label{nondegcompu2}
 T_{d(\s)}^*T_w u_\blam^+ T_{d(\t)}
 &=T_{d(\s_{\downarrow\leq (n-1)})}^*T_w
\Biggl(\prod_{s=2}^{{\ell}}\prod_{\substack{1\leq k\leq \fa_s\\ k\neq n}}(\mL_k-Q_s)\Biggl)T_{d(\t_{\downarrow\leq (n-1)})}
\prod\limits_{i=\ell(\t,n)+1}^{{\ell}}(\mL_{n}-Q_i)\\
\notag &=T_{d(\s_{\downarrow\leq (n-1)})}^*T_w  \fu_{\blam_{\downarrow\leq (n-1)}}^+T_{d(\t_{\downarrow\leq (n-1)})}
\prod\limits_{i=\ell(\t,n)+1}^{{\ell}}(\mL_{n}-Q_i).
\end{align}
Therefore, combining (\ref{Case1Ven}) and (\ref{nondegcompu2}), we get that
\begin{align*}
\ve_n(\fm_{\s\t})&=p_0^{(n)}(\fm_{\s\t})
=\sum_{\substack{w\in\Sym_\blam\\ \sigma_{n-1}\not\leq w}}\Biggr(T_{d(\s_{\downarrow\leq (n-1)})}^*T_w  \fu_{\blam_{\downarrow\leq (n-1)}}^+T_{d(\t_{\downarrow\leq (n-1)})}\Biggr(\prod\limits_{i=\ell(\t,n)+1}^{{\ell}}(-Q_i)\Biggr)\Biggr)\\
\notag&=T_{d(\s_{\downarrow\leq (n-1)})}^*\Biggr(\sum_{w\in\Sym_{\blam_{\downarrow\leq (n-1)}}}T_w\Biggr)  \fu_{\blam_{\downarrow\leq (n-1)}}^+T_{d(\t_{\downarrow\leq (n-1)})}\Biggr(\prod\limits_{i=\ell(\t,n)+1}^{{\ell}}(-Q_i)\Biggr)\\
\notag &=T_{d(\s_{\downarrow\leq (n-1)})}^*x_{\blam_{\downarrow\leq (n-1)}}  \fu_{\blam_{\downarrow\leq (n-1)}}^+T_{d(\t_{\downarrow\leq (n-1)})}\Biggr(\prod\limits_{i=\ell(\t,n)+1}^{{\ell}}(-Q_i)\Biggr)\\
\notag&=\Biggr(\prod\limits_{i=\ell(\t,n)+1}^{{\ell}}(-Q_i)\Biggr) \fm_{\s_{\downarrow\leq (n-1)}\t_{\downarrow\leq (n-1)}}.
\end{align*}

\smallskip\noindent
{\it Case 2.} Suppose $n(\s)\neq n=n(\t)$. Then $d(\s)\neq d(\s_{\downarrow\leq (n-1)})$ and $d(\t)=d(\t_{\downarrow\leq (n-1)}).$
Then for each $w\in\Sym_\blam$ such that $\sigma_{n-1}\not\leq w,$
using (\ref{decompositionofdt}) and Lemma \ref{nondeg-L-comm} we deduce that
\begin{align}\label{nondegcompu3}
T_{d(\s)}^*T_w \fu_\blam^+ T_{d(\t)}
&=T_{d(\s_{\downarrow\leq (n-1)})}^*T_{\gamma_{n-1, n(\s)}} T_w
\Biggr(\prod_{s=2}^{{\ell}}\prod_{k=1}^{\fa_s}(\mL_k-Q_s)\Biggr)T_{d(\t_{\downarrow\leq (n-1)})}\\
\notag &=\pi_{\s,\t,w}^{(2,1)} T_{n-1}\pi^{(2,2)}_{\s,\t,w} + \rho^{(2)}_{\s,\t,w},
\end{align}
where
$$\begin{aligned}
\pi_{\s,\t,w}^{(2,1)}&=
T_{d(\s_{\downarrow\leq (n-1)})}^*
\Biggr(\prod\limits_{s=\ell_0+1}^{{\ell}}(\mL_{n-1}-Q_s)\Biggr)\in\HH_{{\ell},n-1},\\
\pi_{\s,\t,w}^{(2,2)}&=T_{\gamma_{n-2,n(\s)}}T_{w}
\Biggr(\prod\limits_{s=2}^{{\ell}}\prod\limits_{\substack{1\leq k\leq \fa_s\\k\neq n}}(\mL_k-Q_s)\Biggr) T_{d(\t_{\downarrow\leq (n-1)})}\in\HH_{{\ell},n-1},\end{aligned}$$ and  $$\begin{aligned}
\rho^{(2)}_{\s,\t,w}=
\begin{cases}\begin{matrix}
T_{d(\s_{\downarrow\leq (n-1)})}^* \bigg(\sum\limits_{i=\ell_0+1}^{{\ell}}
\bigg( \prod\limits_{\ell_0+1\leq s<i}(\mL_{n-1}-Q_s)
\prod\limits_{i< t\leq {\ell}}(\mL_{n}-Q_t)\bigg)\bigg) \\
\times (q-1)\mL_n T_{\gamma_{n-2,n(\s)}}T_w\bigg(\prod\limits_{s=2}^{{\ell}}\prod\limits_{\substack{1\leq k\leq \fa_s\\k\neq n}}(\mL_k-Q_s)\bigg)T_{d(\t_{\downarrow\leq (n-1)})},\end{matrix} & \mbox{if\ \ } \ell_0<{\ell},\\
  0, & \mbox{if\ \ } \ell_0={\ell}.
\end{cases}
\end{aligned}$$
As in Case 1, using (\ref{easyobservation}) we see that in this case, \begin{equation}\label{Case2Ven1}
\ve_n\bigl(T_{d(\s)}^*T_w \fu_\blam^+ T_{d(\t)}\bigr)=p_0^{(n)}(\rho^{(2)}_{\s,\t,w})=0.
\end{equation}
On the other hand, let $w\in\Sym_\blam$ such that $\sigma_{n-1} \leq w$.
Note that the assumption $\sigma_{n-1} \leq d(\s)$ implies that $n(\s)$ does not lie in the last row of $\s$, hence
$b_{{\ell_0},r_{\ell_0}}\geq a_{\ell_0,r_{\ell_0}-1}+1>n(\s)$. Using (\ref{nondegbraidrela1}), (\ref{decompositionofdt})-(\ref{decompositionofwlamr}), Lemma \ref{keytools} (1) and Lemma \ref{nondeg-L-comm} we can deduce that
\begin{align*}
T_{d(\s)}^*T_w\fu_\blam^+ T_{d(\t)}
\notag&= T_{d(\s_{\downarrow\leq (n-1)})}^*T_{\gamma_{n-1,n(\s)}}
\Biggr(\prod_{i=1}^{{\ell_0}-1} T_{w_{i}}\Biggr)\Biggr(\prod_{j=1}^{r_{\ell_0}-1}T_{w_{{\ell_0},j}}\Biggr)
T_{\beta_{b_{{\ell_0},r_{\ell_0}},n-1}}T_{w_{{\ell_0},r_{\ell_0}}^{(1)}} \\
\notag &\quad\ \times \Biggr(\prod_{s=2}^{{\ell}}\prod_{k=1}^{\fa_s}(\mL_k-Q_s)\Biggr)T_{d(\t_{\downarrow\leq (n-1)})}\\
\notag &= T_{d(\s_{\downarrow\leq (n-1)})}^*T_{\gamma_{n-1,n(\s)}} T_{\beta_{b_{{\ell_0},r_{\ell_0}},n-1}}
\Biggr(\prod_{i=1}^{{\ell_0}-1} T_{w_{i}}\Biggr)\Biggr(\prod_{j=1}^{r_{\ell_0}-1}T_{w_{{\ell_0},j}}\Biggr) \\
\notag&\quad\ \times T_{w_{{\ell_0},r_{\ell_0}}^{(1)}}
\Biggr(\prod_{s=2}^{{\ell}}\prod_{k=1}^{\fa_s}(\mL_k-Q_s)\Biggr)T_{d(\t_{\downarrow\leq (n-1)})}\\
\notag&= T_{d(\s_{\downarrow\leq (n-1)})}^*T_{\beta_{b_{{\ell_0},r_{\ell_0}}-1,n-2}}T_{n-1}T_{\gamma_{n-2,n(\s)}}
\Biggr(\prod_{i=1}^{{\ell_0}-1} T_{w_{i}}\Biggr)\Biggr(\prod_{j=1}^{r_{\ell_0}-1}T_{w_{{\ell_0},j}}\Biggr) \\
\notag&\quad\ \times T_{w_{{\ell_0},r_{\ell_0}}^{(1)}}
\Biggr(\prod_{s=2}^{{\ell}}\prod_{k=1}^{\fa_s}(\mL_k-Q_s)\Biggr)T_{d(\t_{\downarrow\leq (n-1)})}\\
\notag &=\pi_{\s,\t,w}^{(3,1)} T_{n-1}\pi^{(3,2)}_{\s,\t,w} + \rho^{(3)}_{\s,\t,w},
\end{align*}
where
$$\begin{aligned}
\pi_{\s,\t,w}^{(3,1)}&=
T_{d(\s_{\downarrow\leq (n-1)})}^*T_{\beta_{b_{{\ell_0},r_{\ell_0}}-1,n-2}}
\Biggr(\prod\limits_{s=l(\blam,n)+1}^{{\ell}}(\mL_{n-1}-Q_s)\Biggr)\in\HH_{{\ell},n-1},\\
\pi_{\s,\t,w}^{(3,2)}&=T_{\gamma_{n-2,n(\s)}}
\Biggr(\prod_{i=1}^{{\ell_0}-1} T_{w_{i}}\Biggr)\Biggr(\prod_{j=1}^{r_{\ell_0}-1}T_{w_{{\ell_0},j}}\Biggr) T_{w_{{\ell_0},r_{\ell_0}}^{(1)}}
\Biggr(\prod\limits_{s=2}^{{\ell}}\prod\limits_{\substack{1\leq k\leq \fa_s\\k\neq n}}(\mL_k-Q_s)\Biggr)\\
&\qquad \times T_{d(\t_{\downarrow\leq (n-1)})}\in\HH_{{\ell},n-1},\end{aligned}$$ and  $$\begin{aligned}
\rho^{(3)}_{\s,\t,w}=
\begin{cases}\begin{matrix}
 T_{d(\s_{\downarrow\leq (n-1)})}^* T_{\beta_{b_{{\ell_0},r_{\ell_0}}-1,n-2}}
 \bigg(\sum\limits_{i=\ell_0+1}^{{\ell}}\bigg( \prod\limits_{\ell_0+1\leq s<i}(\mL_{n-1}-Q_s)\\
 \times \prod\limits_{i< t\leq {\ell}}(\mL_{n}-Q_t)\bigg)\bigg)
 (q-1)\mL_n T_{\gamma_{n-2,n(\s)}}\bigg(\prod\limits_{i=1}^{{\ell_0}-1} T_{w_{i}}\bigg) \\
 \times\bigg(\prod\limits_{j=1}^{r_{\ell_0}-1}T_{w_{{\ell_0},j}}\bigg) T_{w_{{\ell_0},r_{\ell_0}}^{(1)}} \bigg(\prod\limits_{s=2}^{{\ell}}\prod\limits_{\substack{1\leq k\leq \fa_s\\k\neq n}}(\mL_k-Q_s)\bigg)T_{d(\t_{\downarrow\leq (n-1)})},\end{matrix} & \mbox{if\ \ }\ell_0<{\ell},\\
  0, & \mbox{if\ \ } \ell_0={\ell}.
\end{cases}
\end{aligned}$$
Then still using (\ref{easyobservation}) we see that in this case, \begin{equation}\label{Case2Ven2}
\ve_n\bigl(T_{d(\s)}^*T_w \fu_\blam^+ T_{d(\t)}\bigr)=p_0^{(n)}(\rho^{(3)}_{\s,\t,w})=0.
\end{equation}
Therefore, combining (\ref{Case2Ven1}) and (\ref{Case2Ven2}), we get
$\ve_n(\fm_{\s\t})=0.$

\smallskip\noindent
{\it Case 3.} Suppose that $n(\s)=n\neq n(\t)$. Then it follows from Lemma \ref{veAst}, (\ref{nondegmst*}) and the result of Case 2 that
$$\ve_n(\fm_{\s\t})=\ve_n(\fm_{\s\t}^*)=\ve_n(\fm_{\t\s})=0.$$
This completes the proof of the proposition.
\end{proof}

\begin{prop}\label{nondegvemst2}
Let $\blam\in\mathscr{P}_{\ell,n}$ and $\s,\t\in\te{Std}(\blam)$. Suppose that $n(\s)\neq n\neq n(\t)$. Then
$$\ve_n(\fm_{\s\t})=\begin{cases} q^{n-n(\t)}\bigg(\prod\limits_{i=l(\t,n)+1}^{\ell}(-Q_i)\bigg)\fm_{\s_{\downarrow\leq (n-1)}\t_{\downarrow\leq (n-1)}}, &\text{if\ \,$n(\s)=n(\t),$}\\
0, &\text{if\ \,$n(\s)\neq n(\t).$}\end{cases}$$
\end{prop}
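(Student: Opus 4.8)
The plan is to follow the pattern of the proof of Proposition \ref{nondegvemst}; the new point is that, since now $n(\s)<n$ and $n(\t)<n$, \emph{both} $\beta$-factors of (\ref{decompositionofdt}) are present, and this is precisely what produces the extra power of $q$, via part (3) of Lemma \ref{keytools}. Set $a:=n(\s)$ and $b:=n(\t)$. By (\ref{decompositionofdt}) together with $(T_{\beta_{a,n-1}})^{*}=T_{\gamma_{n-1,a}}$ we have $T_{d(\s)}^{*}=T_{d(\s_{\downarrow\leq(n-1)})}^{*}T_{\gamma_{n-1,a}}$ and $T_{d(\t)}=T_{\beta_{b,n-1}}T_{d(\t_{\downarrow\leq(n-1)})}$, with $T_{d(\s_{\downarrow\leq(n-1)})}^{*}$ and $T_{d(\t_{\downarrow\leq(n-1)})}$ in $\HH_{\ell,n-1}$. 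Hence, by Lemma \ref{veAst},
\[
\ve_n(\fm_{\s\t})=T_{d(\s_{\downarrow\leq(n-1)})}^{*}\,\ve_n\!\bigl(T_{\gamma_{n-1,a}}\,\fm_\blam\,T_{\beta_{b,n-1}}\bigr)\,T_{d(\t_{\downarrow\leq(n-1)})},
\]
so it suffices to compute $\ve_n(T_{\gamma_{n-1,a}}\fm_\blam T_{\beta_{b,n-1}})$. Note that $a=b$ holds iff $\s^{-1}(n)=\t^{-1}(n)$, i.e.\ iff $\Shape(\s_{\downarrow\leq(n-1)})=\Shape(\t_{\downarrow\leq(n-1)})=:\bmu$, in which case $[\bmu]=[\blam]\setminus\{\t^{-1}(n)\}$ and $T_{d(\s_{\downarrow\leq(n-1)})}^{*}\fm_\bmu T_{d(\t_{\downarrow\leq(n-1)})}=\fm_{\s_{\downarrow\leq(n-1)}\t_{\downarrow\leq(n-1)}}$; moreover the subcase $a\neq b$ reduces to $a<b$ by means of $\fm_{\s\t}^{*}=\fm_{\t\s}$ and $\ve_n(h^{*})=\ve_n(h)^{*}$.

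Next one analyses $\fm_\blam T_{\beta_{b,n-1}}=x_\blam\fu_\blam^{+}T_{\beta_{b,n-1}}$. Since $(\t^\blam)^{-1}(b)$ is a removable node, $b$ is the last entry of a row of $\blam$, so $\fa_s\geq b$ exactly when $l(\t,n)<s\leq\ell$, and the only factors of $\fu_\blam^{+}$ in which $\mL_b$ occurs are $(\mL_b-Q_s)$ for those $s$; write $\fu_\blam^{+}=g\cdot\prod_{s=l(\t,n)+1}^{\ell}(\mL_b-Q_s)$. Commuting $T_{\beta_{b,n-1}}=T_bT_{b+1}\cdots T_{n-1}$ to the left with the help of Lemma \ref{nondeg-L-comm}(4),(5), the associated conjugation sends $\mL_b\mapsto\mL_n$ and $\mL_j\mapsto\mL_{j-1}$ for $b<j\leq n$; it therefore carries the index set $\{1,\dots,\fa_s\}\setminus\{b\}$ bijectively onto $\{1,\dots,\fa_s-1\}$, turning $g$ into $\fu_\bmu^{+}$ and $\prod_{s=l(\t,n)+1}^{\ell}(\mL_b-Q_s)$ into $\prod_{i=l(\t,n)+1}^{\ell}(\mL_n-Q_i)$, while every error term produced is of $\mu_{T_{n-1}}$-type or carries $\mL_n$ to a degree in $[1,\ell-1]$ (as with (\ref{easyobservation})) and hence lies in $\Ker\ve_n$. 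Thus, modulo $\Ker\ve_n$,
\[
T_{\gamma_{n-1,a}}\,\fm_\blam\,T_{\beta_{b,n-1}}\ \equiv\ T_{\gamma_{n-1,a}}\,x_\blam\,T_{\beta_{b,n-1}}\;\fu_\bmu^{+}\;\prod_{i=l(\t,n)+1}^{\ell}(\mL_n-Q_i),
\]
and, since the product $\prod_{i=l(\t,n)+1}^{\ell}(\mL_n-Q_i)$ has $\mL_n$-degree at most $\ell-1$, applying $p^{(n)}_{0}$ will extract from it the scalar $\prod_{i=l(\t,n)+1}^{\ell}(-Q_i)$.

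It remains to compute $\ve_n(T_{\gamma_{n-1,a}}x_\blam T_{\beta_{b,n-1}})$. Expand $x_\blam=\sum_{w\in\Sym_\blam}T_w$ and decompose each $w$ across the row ending in $b$ as in (\ref{decompositionofwinSymlam})-(\ref{decompositionofwlamr}); using Lemma \ref{keytools}(1) and the braid relations, move the pieces of $w$ past $T_{\gamma_{n-1,a}}$ and $T_{\beta_{b,n-1}}$. Exactly as in the three cases of the proof of Proposition \ref{nondegvemst}, every $w$ with $\sigma_{n-1}\leq w$, and every resulting summand of the form $\mu_{T_{n-1}}(\,\cdot\,)$, is annihilated by $p^{(n)}_{0}$. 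For $w$ with $\sigma_{n-1}\not\leq w$ the factors $T_{\gamma_{n-1,a}}$ and $T_{\beta_{b,n-1}}$ become adjacent: if $a\neq b$ then (assuming $a<b$, as we may) $\beta_{b,n-1}=\beta_{a,n-1}^{a,a+1,\dots,b-1}$, so Lemma \ref{keytools}(2) gives $T_{\gamma_{n-1,a}}T_{\beta_{b,n-1}}\in\mu_{T_{n-1}}(\HH_{\ell,n-1}\otimes_{\HH_{\ell,n-2}}\HH_{\ell,n-1})$, whence $\ve_n(\fm_{\s\t})=0$; if $a=b$ then Lemma \ref{keytools}(3) gives $T_{\gamma_{n-1,a}}T_{\beta_{b,n-1}}=\mu_{T_{n-1}}(\pi_{a,n-1})+q^{\,n-a}$, whose $\mu_{T_{n-1}}$-summand dies while the scalar $q^{\,n-a}=q^{\,n-n(\t)}$ survives. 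Summing the contributions of the $w$ with $\sigma_{n-1}\not\leq w$ rebuilds $x_\bmu$, and altogether
\[
\ve_n(\fm_{\s\t})=q^{\,n-n(\t)}\Bigl(\prod_{i=l(\t,n)+1}^{\ell}(-Q_i)\Bigr)T_{d(\s_{\downarrow\leq(n-1)})}^{*}x_\bmu\fu_\bmu^{+}T_{d(\t_{\downarrow\leq(n-1)})}=q^{\,n-n(\t)}\Bigl(\prod_{i=l(\t,n)+1}^{\ell}(-Q_i)\Bigr)\fm_{\s_{\downarrow\leq(n-1)}\t_{\downarrow\leq(n-1)}}.
\]
We expect the main obstacle to be the error-term bookkeeping in the previous two paragraphs: one must verify that every term arising when commuting $T_{\beta_{b,n-1}}$ through $\fu_\blam^{+}$, and when moving the pieces of $w$ past $T_{\gamma_{n-1,a}}$ and $T_{\beta_{b,n-1}}$, is either of $\mu_{T_{n-1}}$-type or carries $\mL_n$ to a positive degree at most $\ell-1$ — precisely the two sorts of term that $p^{(n)}_{0}$ kills — apart from the single $q^{\,n-n(\t)}$-term occurring when $a=b$. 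As in Proposition \ref{nondegvemst} this is most cleanly organised through explicit $\pi$/$\rho$-type decompositions of $T_{d(\s)}^{*}T_w\fu_\blam^{+}T_{d(\t)}$.
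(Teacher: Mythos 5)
Your overall architecture is the same as the paper's: reduce to $\ve_n\bigl(T_{\gamma_{n-1,a}}\fm_\blam T_{\beta_{b,n-1}}\bigr)$ via Lemma \ref{veAst}, push $\fu_\blam^+$ through $T_{\beta_{b,n-1}}$ with Lemma \ref{JucyMurphyCommutators} to peel off $\fu_\bnu^+\prod_{i=l(\t,n)+1}^{\ell}(\mL_n-Q_i)$, and extract the scalar $q^{n-n(\t)}$ from $T_{\gamma_{n-1,a}}T_{\beta_{a,n-1}}$ via Lemma \ref{keytools}(3) (your use of Lemma \ref{keytools}(2) with $\beta_{b,n-1}=\beta_{a,n-1}^{a,\dots,b-1}$ for the case $a<b$ is a fine variant of the paper's argument). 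However, the combinatorial step deciding which $w\in\Sym_\blam$ survive is carried out with the wrong dichotomy, and as written the computation gives the wrong answer. You split $x_\blam$ according to whether $\sigma_{n-1}\leq w$, importing this from Case 1 of Proposition \ref{nondegvemst}; that is correct there because $n(\t)=n$ and the row obstructing the $\gamma$--$\beta$ contraction ends at position $n$. Here $a=n(\s)<n$, the obstructing factor of $w$ is $w_{l(\s,n),r(\s,n)}$ (supported on the row of $\blam$ ending at position $a$), and the correct dichotomy — the one the paper uses — is whether $\sigma_{a-1}\leq w_{l(\s,n),r(\s,n)}$. With your condition both halves of the argument fail: a $w$ with $\sigma_{n-1}\leq w$ is \emph{not} annihilated, since its last-row factor involves only $\sigma_i$ with $i>a$ and Lemma \ref{keytools}(1) merely shifts it down into $\HH_{\ell,n-1}$, so it feeds the main term; and for $w$ with $\sigma_{n-1}\not\leq w$ the factors $T_{\gamma_{n-1,a}}$ and $T_{\beta_{b,n-1}}$ do \emph{not} become adjacent when $\sigma_{a-1}\leq w_{l(\s,n),r(\s,n)}$, because that piece sits between them and does not commute with $\gamma_{n-1,a}$.

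A concrete failure: take $\ell=1$, $\blam=(2,1)$, $n=3$, $\s=\t$ with $\t^{-1}(3)=(1,2,1)$, so $a=b=2$ and $\fm_{\s\t}=T_2(1+T_1)T_2$. Here $\sigma_2\notin\Sym_\blam$, so your recipe treats both $w=1$ and $w=\sigma_1$ as "adjacent" and predicts $\ve_3(\fm_{\s\t})=2q$; but $T_2T_1T_2=T_1T_2T_1\in\HH_{1,2}T_2\HH_{1,2}$ is killed by $\ve_3$, and the correct value is $q=q^{n-n(\t)}$. More structurally, your surviving set $\{w\in\Sym_\blam:\sigma_{n-1}\not\leq w\}$ has cardinality $|\Sym_\blam|/\lam^{(\ell_0)}_{r_{\ell_0}}$ while $|\Sym_\bnu|=|\Sym_\blam|/\lam^{(l(\t,n))}_{r(\t,n)}$; these differ already for $\blam=(3,2)$ with $n(\t)=3$, so that sum cannot rebuild $x_\bnu$. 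The repair is exactly the paper's: isolate $w_{l(\s,n),r(\s,n)}$, kill the terms with $\sigma_{a-1}\leq w_{l(\s,n),r(\s,n)}$ by writing $w_{l(\s,n),r(\s,n)}=w^{(2)}_{l(\s,n),r(\s,n)}\cdot\gamma_{a-1,c}$, contracting $\gamma_{n-1,a}\gamma_{a-1,c}=\gamma_{n-1,c}$ and applying Lemma \ref{keytools}(2) together with (\ref{propertyTLcomm}) (this same mechanism, not a degree count alone, is also what disposes of the error terms of Lemma \ref{JucyMurphyCommutators} after left multiplication by $T_{\gamma_{n-1,a}}$ and the leftover piece of $w$), and then verify via the bijection (\ref{Sbnu}) that the remaining $w$ reassemble into $\Sym_\bnu$.
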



\begin{proof}  We use $\ell_0$ to denote the largest integer such that $1\leq \ell_0\leq\ell$ and $\lam^{(\ell_0)}\neq\emptyset$.
By assumption, $\sigma_{n-1} \leq d(\s)$ and $\sigma_{n-1} \leq d(\t)$. In view of Lemma \ref{veAst} and (\ref{nondegmst*}), we may assume that $n(\s)\leq n(\t)$.

For each $w\in\Sym_\blam$, using
(\ref{decompositionofdt})-(\ref{decompositionofwlamr}) and Lemma \ref{keytools} (1) we deduce that
\begin{align}\label{nondegcompu5}
&T_{d(\s)}^*T_w\fu_\blam^+ T_{d(\t)}\\
\notag =\ &T_{d(\s_{\downarrow\leq (n-1)})}^*T_{\gamma_{n-1,n(\s)}}\Biggr(\prod_{j=1}^{\ell_0} T_{w_{j}}\Biggr)
\Biggr(\prod_{s=2}^{\ell}\prod_{i=1}^{\fa_s}(\mL_i-Q_s)\Biggr)
T_{\beta_{n(\t),n-1}} T_{d(\t_{\downarrow\leq (n-1)})}\\
\notag =\ &T_{d(\s_{\downarrow\leq (n-1)})}^*\Biggr(\prod_{a=1}^{l(\s,n)-1} T_{w_{a}} \Biggr)
\Biggr(\prod_{b=1}^{r(\s,n)-1} T_{w_{l(\s,n),b}}\Biggr)
\Biggr(\prod_{c=r(\s,n)+1}^{r_{l(\s,n)}} T_{(w_{l(\s,n),c})_{\downarrow}} \Biggr)\\
\notag &\times \Biggr(\prod_{d=l(\s,n)+1}^{\ell_0} T_{(w_{d})_{\downarrow}} \Biggr)
 T_{\gamma_{n-1,n(\s)}}T_{w_{l(\s,n),r(\s,n)}}  \Biggr(\prod_{s=2}^{\ell}\prod_{i=1}^{\fa_s}(\mL_i-Q_s)\Biggr)T_{\beta_{n(\t),n-1}}T_{d(\t_{\downarrow\leq (n-1)})}.
\end{align}
We want to compute $\ve_n(T_{d(\s)}^*T_w\fu_\blam^+ T_{d(\t)})$. Since both the elements $$
T_{d(\s_{\downarrow\leq (n-1)})}^*\Biggr(\prod_{a=1}^{l(\s,n)-1} T_{w_{a}} \Biggr)
\Biggr(\prod_{b=1}^{r(\s,n)-1} T_{w_{l(\s,n),b}}\Biggr)
\Biggr(\prod_{c=r(\s,n)+1}^{r_{l(\s,n)}} T_{(w_{l(\s,n),c})_{\downarrow}} \Biggr)
 \Biggr(\prod_{d=l(\s,n)+1}^{\ell_0} T_{(w_{d})_{\downarrow}} \Biggr)$$
and $T_{d(\t_{\downarrow\leq (n-1)})}$ lie in $\HH_{\ell,n-1}$, in view of Lemma \ref{veAst}, it suffices to consider the cyclotomic Mackey decomposition of
\begin{equation*}
T_{\gamma_{n-1,n(\s)}}T_{w_{l(\s,n),r(\s,n)}}  \Biggr(\prod_{s=2}^{\ell}\prod_{i=1}^{\fa_s}(\mL_i-Q_s)\Biggr)T_{\beta_{n(\t),n-1}}
\end{equation*}
and its value under $\ve_n.$

Denote that $\bnu:=\te{Shape}(\t_{\downarrow\leq (n-1)})$.
By definition, $$
\begin{aligned}
\fu_{\bnu}^+=\Biggr(\prod_{s=2}^{l(\t,n)}\prod_{i=1}^{\mathfrak{a}_s}(\mL_i-Q_s)\Biggr)
\Biggr(\prod_{k=l(\t,n)+1}^{\ell}\prod_{j=1}^{\mathfrak{a}_s-1}(\mL_j-Q_k)\Biggr)
=\prod_{s=2}^{\ell}\prod_{\substack{1\leq i\leq\fa_s\\ i\neq n(\t)}}\(\mL_{\beta_{n(\t),n-1}^{-1}(i)}-Q_s\).
\end{aligned}$$
Thus applying Lemma \ref{JucyMurphyCommutators}, we get that
\begin{align*}
& \Biggr(\prod_{s=2}^{\ell}\prod_{i=1}^{\fa_s}(\mL_i-Q_s)\Biggr)T_{\beta_{n(\t),n-1}}\\
=\ &T_{\beta_{n(\t),n-1}}\prod_{s=2}^{\ell}\prod_{i=1}^{\fa_s}(\mL_{\beta_{n(\t),n-1}^{-1}(i)}-Q_s)
+\sum_{\substack{1\leq m\leq n-k\\k\leq i_1<i_2<\dots<i_m\leq n-1}} T_{\beta^{i_1,\dots,i_m}_{k,n-1}}f_{i_1,\dots,i_m}\\
=\ &T_{\beta_{n(\t),n-1}}\prod_{s=2}^{\ell}\prod_{\substack{1\leq i\leq\fa_s\\ i\neq n(\t)}}(\mL_{\beta_{n(\t),n-1}^{-1}(i)}-Q_s)
\Biggr(\prod_{j=l(\t,n)+1}^{\ell}(\mL_{n}-Q_j)\Biggr)\\
& +\sum_{\substack{1\leq m\leq n-k\\k\leq i_1<i_2<\dots<i_m\leq n-1}} T_{\beta^{i_1,\dots,i_m}_{k,n-1}}f_{i_1,\dots,i_m}\\
=\ &T_{\beta_{n(\t),n-1}}\fu_\bnu^+\Biggr(\prod_{i=l(\t,n)+1}^{\ell}(\mL_{n}-Q_i)\Biggr)
+\sum_{\substack{1\leq m\leq n-k\\k\leq i_1<i_2<\dots<i_m\leq n-1}} T_{\beta^{i_1,\dots,i_m}_{k,n-1}}f_{i_1,\dots,i_m},
\end{align*}
where $f_{i_1,\dots,i_m}=\sum_{a=0}^{\ell-1}f_{i_1,\dots,i_m}^{(a)}\mL^a_n$
for some $f_{i_1,\dots,i_m}^{(a)}\in R[\mL_1,\dots,\mL_{n-1}]$.

Suppose that $\sigma_{n(\s)-1} \leq w_{l(\s,n),r(\s,n)}$. Note that Lemma \ref{nondeg-L-comm} implies that
\begin{align}\label{propertyTLcomm}
T_{n-1}\mL_n^k\in \HH_{\ell,n-1}T_{n-1}\HH_{\ell,n-1}+\sum_{j=1}^{\ell-1}\HH_{\ell,n-1}\mL_n^{j},\ \ \forall\,0\leq k\leq \ell-1.
\end{align}
Noticing that $c_{l(\s,n),r(\s,n)}\leq n(\s)-1< n(\t),$
and using (\ref{decompositionofwlamr}), (\ref{nondegbraidrela1}),  Lemma 2.17 (1) and (\ref{propertyTLcomm}),
we deduce that: for each $1\leq m\leq n-n(\t)$ and $n(\t)\leq i_1<\dots<i_m\leq n-1$,
\begin{align}\label{nondegcompu7}
&\quad\,T_{\gamma_{n-1,n(\s)}}T_{w_{l(\s,n),r(\s,n)}}
T_{\beta^{i_1,\dots,i_m}_{n(\t),n-1}}f_{i_1,\dots,i_m}\\
\notag & =T_{\gamma_{n-1,n(\s)}}T_{w_{l(\s,n),r(\s,n)}^{(2)}}T_{\gamma_{n(\s)-1,c_{l(\s,n),r(\s,n)}}}
T_{\beta^{i_1,\dots,i_m}_{n(\t),n-1}}f_{i_1,\dots,i_m}\\
\notag & =T_{w_{l(\s,n),r(\s,n)}^{(2)}}
T_{\gamma_{n-1,c_{l(\s,n),r(\s,n)}}}T_{\beta^{i_1,\dots,i_m}_{n(\t),n-1}}
f_{i_1,\dots,i_m} \\
\notag & =T_{w_{l(\s,n),r(\s,n)}^{(2)}}
T_{(\beta^{i_1,\dots,i_m}_{n(\t),n-1})_{\downarrow}}T_{\gamma_{n-1,c_{l(\s,n),r(\s,n)}}}
f_{i_1,\dots,i_m}\\
\notag &= \mu_{T_{n-1}}(\pi^{(4,i_1,\dots,i_m)}_{\s,\t,w})+\sum_{j=1}^{\ell-1}g_j^{i_1,\dots,i_m} \mL_n^j
\end{align}
for some $\pi^{(4,i_1,\dots,i_m)}_{\s,\t,w}\in \HH_{\ell,n-1}\ot_{\HH_{\ell,n-2}}\HH_{\ell,n-1}$
and $g_j^{i_1,\dots,i_m}\in \HH_{\ell,n-1},\ 1\leq j\leq \ell-1$. Similarly,
\begin{align}\label{nondegcompu8}
&\quad\, T_{\gamma_{n-1,n(\s)}}T_{w_{l(\s,n),r(\s,n)}}T_{\beta_{n(\t),n-1}}\fu_\bnu^+ \Biggr(\prod_{i=l(\t,n)+1}^{\ell}(\mL_{n}-Q_i)\Biggr) \\
\notag &=T_{\gamma_{n-1,n(\s)}}T_{w_{l(\s,n),r(\s,n)}^{(2)}}T_{\gamma_{n(\s)-1,c_{l(\s,n),r(\s,n)}}}T_{\beta_{n(\t),n-1}}\fu_\bnu^+ \Biggr(\prod_{i=l(\t,n)+1}^{\ell}(\mL_{n}-Q_i)\Biggr)\\
 \notag &= T_{w_{l(\s,n),r(\s,n)}^{(2)}}
 T_{\gamma_{n-1,c_{l(\s,n),r(\s,n)}}}T_{\beta_{n(\t),n-1}}\fu_\bnu^+ \Biggr(\prod_{i=l(\t,n)+1}^{\ell}(\mL_{n}-Q_i)\Biggr)\\
\notag &= T_{w_{l(\s,n),r(\s,n)}^{(2)}}T_{\beta_{n(\t)-1,n-2}}
 T_{\gamma_{n-1,c_{l(\s,n),r(\s,n)}}}\fu_\bnu^+ \Biggr(\prod_{i=l(\t,n)+1}^{\ell}(\mL_{n}-Q_i)\Biggr)\\
\notag &=T_{w_{l(\s,n),r(\s,n)}^{(2)}}T_{\beta_{n(\t)-1,n-2}}
T_{n-1}\Biggr(\prod_{i=l(\t,n)+1}^{\ell}(\mL_{n}-Q_i)\Biggr)T_{\gamma_{n-2,c_{l(\s,n),r(\s,n)}}}\fu_\bnu^+\\
\notag &= \mu_{T_{n-1}}(\pi^{(5)}_{\s,\t,w})+\sum_{j=1}^{\ell-1}g_j \mL_n^j
\end{align}
for some $\pi^{(5)}_{\s,\t,w}\in \HH_{\ell,n-1}\ot_{\HH_{\ell,n-2}}\HH_{\ell,n-1}$
and $g_j \in \HH_{\ell,n-1},\ 1\leq j\leq \ell-1$.
Thus for (\ref{nondegcompu7}) and (\ref{nondegcompu8}), their values under $\ve_n$ are both zero by the cyclotomic Mackey decomposition (\ref{nondegMadecomequation}).
Therefore, if $\sigma_{n(\s)-1} \leq w_{l(\s,n),r(\s,n)}$, then
$\ve_n(T_{d(\s)}^*T_w\fu_\blam^+ T_{d(\t)})=0$.

Now assume that $\sigma_{n(\s)-1}\not\leq w_{l(\s,n),r(\s,n)}$, which implies that $w_{l(\s,n),r(\s,n)}$ commutes with $\gamma_{n-1,n(\s)}$.
 Using the assumption $n(\s)\leq n(\t)$,
 the relation (\ref{nondegbraidrela1}), (1) and (2) of Lemma \ref{keytools} and (\ref{propertyTLcomm}),
we deduce that: for each $1\leq m\leq n-n(\t)$ and $n(\t)\leq i_1<\dots<i_m\leq n-1$,
\begin{align}\label{nondegcompu9}
&T_{\gamma_{n-1,n(\s)}}T_{w_{l(\s,n),r(\s,n)}}
T_{\beta^{i_1,\dots,i_m}_{n(\t),n-1}}f_{i_1,\dots,i_m}T_{d(\t_{\downarrow\leq (n-1)})}\\
\notag =\ &T_{w_{l(\s,n),r(\s,n)}}
 T_{\gamma_{n-1,n(\s)}}T_{\beta^{i_1,\dots,i_m}_{n(\t),n-1}}f_{i_1,\dots,i_m}T_{d(\t_{\downarrow\leq (n-1)})}\\
\notag=\ &\mu_{T_{n-1}}(\pi^{(6,i_1,\dots,i_m)}_{\s,\t,w})+\sum_{j=1}^{\ell-1}h_j^{i_1,\dots,i_m} \mL_n^j
\end{align}
for some $\pi^{(6,i_1,\dots,i_m)}_{\s,\t,w}\in \HH_{\ell,n-1}\ot_{\HH_{\ell,n-2}}\HH_{\ell,n-1}$
and $h_j^{i_1,\dots,i_m}\in \HH_{\ell,n-1},\ 1\leq j\leq \ell-1$. So, once again, the value of the term (\ref{nondegcompu9}) under $\ve_n$ is zero by the cyclotomic Mackey decomposition (\ref{nondegMadecomequation}).

Therefore, it remains to consider the following term under the assumption that $\sigma_{n(\s)-1}\not\leq w_{l(\s,n),r(\s,n)}$. Noticing that in this case $w_{l(\s,n),r(\s,n)}$ commutes with $\gamma_{n-1,n(\s)}$, we get that
 \begin{align}\label{wlsnrsncommnondeg}
  &  T_{\gamma_{n-1,n(\s)}}T_{w_{l(\s,n),r(\s,n)}}T_{\beta_{n(\t),n-1}}
 \fu_\bnu^+ \Biggr(\prod_{i=l(\t,n)+1}^{\ell}(\mL_{n}-Q_i)\Biggr)T_{d(\t_{\downarrow\leq (n-1)})} \\
\notag=\ &T_{w_{l(\s,n),r(\s,n)}}
  T_{\gamma_{n-1,n(\s)}}T_{\beta_{n(\t),n-1}}\Biggr(\prod_{i=l(\t,n)+1}^{\ell}(\mL_{n}-Q_i)\Biggr)\fu_\bnu^+ T_{d(\t_{\downarrow\leq (n-1)})} .
\end{align}

If $n(\s)< n(\t)$, then using Lemma \ref{keytools} (1) and (\ref{propertyTLcomm})  we have $$\begin{aligned}
&T_{w_{l(\s,n),r(\s,n)}}T_{\gamma_{n-1,n(\s)}}T_{\beta_{n(\t),n-1}}\Biggr(\prod_{i=l(\t,n)+1}^{\ell}(\mL_{n}-Q_i)\Biggr)\\
=\ &T_{w_{l(\s,n),r(\s,n)}}T_{\beta_{n(\t)-1,n-2}}T_{n-1}\Biggr(\prod_{i=l(\t,n)+1}^{\ell}(\mL_{n}-Q_i)\Biggr)T_{\gamma_{n-2,n(\s)}}\\
=\ &\mu_{T_{n-1}}(\pi_{\s\t})+\sum_{j=1}^{\ell-1}h_j\mL_n^j
\end{aligned}
$$
for some $\pi_{\s\t}\in \HH_{\ell,n-1}\ot_{\HH_{\ell,n-2}}\HH_{\ell,n-1}$ and $h_j\in \HH_{\ell,n-1},\ 1\leq j\leq \ell-1$.
So, in this case, the value of (\ref{wlsnrsncommnondeg}) under $\ve_n$ is zero by the cyclotomic Mackey decomposition (\ref{nondegMadecomequation}).

If $n(\s)=n(\t)$, then by Lemma \ref{keytools} (3), we can find $\pi'_{\s\t} \in \mathscr{H}_{\ell,n-1} \otimes_{\mathscr{H}_{\ell,n-2}} \mathscr{H}_{\ell,n-1}$, such that
\begin{align}\label{keydecompositionnondeg}
T_{\gamma_{n-1,n(\s)}}T_{\beta_{n(\t),n-1}}=\mu_{T_{n-1}}(\pi'_{\s\t})+q^{n-n(\t)}.
\end{align}
Thus it follows from (\ref{nondegcompu5}), (\ref{wlsnrsncommnondeg}), (\ref{keydecompositionnondeg}) and (\ref{propertyTLcomm}) that in this case   $$\begin{aligned}
\ve_n(T_{d(\s)}^*T_w\fu_\blam^+ T_{d(\t)})
&=q^{n-n(\t)}\Biggr(\prod\limits_{i=l(\t,n)+1}^{\ell}(-Q_i)\Biggr)T_{d(\s_{\downarrow\leq (n-1)})}^*\Biggr(\prod_{a=1}^{l(\s,n)-1} T_{w_{a}} \Biggr)
\Biggr(\prod_{b=1}^{r(\s,n)-1} T_{w_{l(\s,n),b}}\Biggr)\\
&\qquad  \times \Biggr(\prod_{c=r(\s,n)+1}^{r_{l(\s,n)}} T_{(w_{l(\s,n),c})_{\downarrow}} \Biggr)\Biggr(\prod_{d=l(\s,n)+1}^{\ell_0}T_{(w_{d})_{\downarrow}} \Biggr)\fu_\bnu^+ T_{d(\t_{\downarrow\leq (n-1)})}.
\end{aligned}$$

Note that if $n(\s)=n(\t)$, then $\te{Shape}(\s_{\downarrow\leq (n-1)})=\te{Shape}(\t_{\downarrow\leq (n-1)})=\bnu. $
Thus in this case we deduce that
\begin{align}\label{Sbnu}
\Biggr\{\begin{matrix}
\(\prod_{a=1}^{l(\s,n)-1} w_{a} \)
\(\prod_{b=1}^{r(\s,n)} w_{l(\s,n),b}\)\\
  \times\(\prod_{c=r(\s,n)+1}^{r_{l(\s,n)}} (w_{l(\s,n),c})_{\downarrow} \)
  \(\prod_{d=l(\s,n)+1}^{\ell_0} (w_{d})_{\downarrow} \)
\end{matrix}
\,\Biggr|\, \begin{matrix}w\in\Sym_\blam,\\
 \sigma_{n(\s)-1}\not\leq w_{l(\s,n),r(\s,n)}
 \end{matrix}\Biggr\}
= \Sym_\bnu,\end{align}
hence
\begin{align}\label{nondegcompu11}
&\sum_{\substack{w\in\Sym_\blam\\ \sigma_{n(\s)-1}\not\leq w_{l(\s,n),r(\s,n)}}}\Biggr(\prod\limits_{a=1}^{l(\s,n)-1} T_{w_{a}} \Biggr)
\Biggr(\prod\limits_{b=1}^{r(\s,n)} T_{w_{l(\s,n),b}}\Biggr)
\Biggr(\prod\limits_{c=r(\s,n)+1}^{r_{l(\s,n)}} T_{(w_{l(\s,n),c})_{\downarrow}} \Biggr)\\
 \notag  &\qquad\qquad\qquad  \times \Biggr(\prod\limits_{d=l(\s,n)+1}^{\ell_0} T_{(w_{d})_{\downarrow}} \Biggr)
=\sum_{u\in \Sym_\bnu}T_u=x_\bnu.
\end{align}

Therefore, summarizing all the discussion above, we get that: if $n(\s)\neq n(\t),$ then $\ve_n(\fm_{\s\t})=0;$ if $n(\s)=n(\t)$, then
\begin{align*}
\ve_n(\fm_{\s\t})
&= \sum_{\substack{w\in\Sym_\blam\\ \sigma_{n(\s)-1}\not\leq w_{l(\s,n),r(\s,n)}}}
q^{n-n(\t)}\Biggr(\prod\limits_{i=l(\t,n)+1}^{\ell}(-Q_i)\Biggr)T_{d(\s_{\downarrow\leq (n-1)})}^*
\Biggr(\prod\limits_{a=1}^{l(\s,n)-1} T_{w_{a}} \Biggr)\\
&\qquad \qquad  \times\Biggr(\prod\limits_{b=1}^{r(\s,n)} T_{w_{l(\s,n),b}}\Biggr) \Biggr(\prod\limits_{c=r(\s,n)+1}^{r_{l(\s,n)}} T_{(w_{l(\s,n),c})_{\downarrow}} \Biggr)
  \Biggr(\prod\limits_{d=l(\s,n)+1}^{\ell_0} T_{(w_{d})_{\downarrow}} \Biggr) \fu_\bnu^+ T_{d(\t_{\downarrow\leq (n-1)})}\\
&= q^{n-n(\t)}\Biggr(\prod\limits_{i=l(\t,n)+1}^{\ell}(-Q_i)\Biggr) T_{d(\s_{\downarrow\leq (n-1)})}^*
x_\bnu \fu^+_\bnu T_{d(\t_{\downarrow\leq (n-1)})}\\
&= q^{n-n(\t)}\Biggr(\prod\limits_{i=l(\t,n)+1}^{\ell}(-Q_i)\Biggr)\fm_{\s_{\downarrow\leq (n-1)}\t_{\downarrow\leq (n-1)}}.
\end{align*}
This completes the proof of the proposition. \end{proof}

\begin{prop}\label{MainProp1}
Let $\blam\in\mathscr{P}_{\ell,n}$ and $\s,\t\in\te{Std}(\blam).$ Then
$$\ve_n(\fm_{\s\t})=\begin{cases}
q^{n-n(\t)}\bigg(\prod\limits_{i=l(\t,n)+1}^{\ell}(-Q_i)\bigg)\fm_{\s_{\downarrow\leq (n-1)}\t_{\downarrow\leq (n-1)}},
& \text{if\ \ $n(\s)=n(\t),$}\\
0, & \text{if\ \ $n(\s)\neq n(\t)$.}
\end{cases} $$
\end{prop}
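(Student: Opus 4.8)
The statement is a direct consequence of the two previous propositions, which together exhaust all possibilities for the pair $(n(\s),n(\t))$. The plan is to carry out the following case analysis. First, if at least one of $n(\s)=n$ or $n(\t)=n$ holds, then Proposition \ref{nondegvemst} applies and gives
$$\ve_n(\fm_{\s\t})=\begin{cases}\bigl(\prod_{i=l(\t,n)+1}^{\ell}(-Q_i)\bigr)\fm_{\s_{\downarrow\leq(n-1)}\t_{\downarrow\leq(n-1)}}, & n(\s)=n(\t),\\ 0, & n(\s)\neq n(\t).\end{cases}$$
In the case $n(\s)=n(\t)$ this forces $n(\t)=n$ as well (since $n(\s)=n$ or $n(\t)=n$ together with $n(\s)=n(\t)$ yields $n(\s)=n(\t)=n$), so $q^{n-n(\t)}=q^0=1$, and the displayed formula agrees with the claimed one. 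Second, if $n(\s)\neq n$ and $n(\t)\neq n$, then Proposition \ref{nondegvemst2} applies directly and gives exactly the claimed formula, with the factor $q^{n-n(\t)}$ now genuinely a (possibly nontrivial) power of $q$.

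Thus the only content of the proof is to observe that the two propositions cover disjoint hypotheses whose union is everything, and that in the overlap-free regions the formulas match — in particular that the exponent $n-n(\t)$ of $q$ specializes correctly to $0$ whenever $n(\t)=n$, so that both propositions can be stated uniformly. There is no real obstacle here: the work has all been done in Propositions \ref{nondegvemst} and \ref{nondegvemst2}. The proof is a two-line bookkeeping argument combining them.

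\begin{proof}
If $n(\s)=n$ or $n(\t)=n$, the claim follows from Proposition \ref{nondegvemst}, noting that if moreover $n(\s)=n(\t)$ then $n(\s)=n(\t)=n$, so that $q^{n-n(\t)}=1$ and the formula of Proposition \ref{nondegvemst} coincides with the one stated here. If $n(\s)\neq n$ and $n(\t)\neq n$, the claim is exactly Proposition \ref{nondegvemst2}. Since these two cases are exhaustive, the proof is complete.
\end{proof}
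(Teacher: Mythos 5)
Your proof is correct and is exactly the paper's argument: Proposition \ref{MainProp1} is obtained by combining Propositions \ref{nondegvemst} and \ref{nondegvemst2}, whose hypotheses are exhaustive and mutually exclusive. Your added remark that $n(\s)=n(\t)$ together with $n(\s)=n$ or $n(\t)=n$ forces $q^{n-n(\t)}=1$, reconciling the two formulas, is the only (implicit) bookkeeping the paper leaves to the reader.
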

\begin{proof}
This follows from Propositions \ref{nondegvemst} and \ref{nondegvemst2}.
\end{proof}

For any $\s,\t\in\Std(\blam)$, it is clear that $\s\neq \t$ if and only if there is an integer $1\leq k\leq n$
such that $k(\s_{\downarrow\leq k})\neq k(\t_{\downarrow\leq k})$. As a consequence,
we get the proof of Theorem \ref{mainthm1}.\medskip

\noindent
{\bf Proof of Theorem \ref{mainthm1}:} This follows from Lemma \ref{nondegtauoperation} and Proposition \ref{MainProp1}.
\hfill\qed

\subsection{An application}
In the rest of this section, we shall give an application of Proposition \ref{MainProp1} and Theorem \ref{mainthm1}. From now on until the end of this section, we shall assume that $R=K$ is a field and $\HH_{\ell,n}$ is semisimple over $K$.

Let $\blam\in\P_{\ell,n}$ and $\t\in\Std(\blam)$. For each $1\leq k\leq n$, we define the residue of $k$ in $\t$ to be
$$\res_{\t}(k)=q^{c-b}Q_{l}\te{\quad if\quad} \t^{-1}(k)=(b,c,l).$$
For each $1\leq k\leq n$, we define $C(k):=\left\{\res_\t(k)\mid \t \in \Std(\blam),\ \blam\in \P_{\ell, n}\right\}$.

\begin{dfn}\text{\rm (\cite{Mur92}, \cite[Definition 2.4]{Ma})}\label{dfn:Ft}
For any $\s,\t\in\Std(\blam)$ and $\blam\in\P_{\ell,n},$ we define
 $$
{F}_{\t}:=\prod\limits^n\limits_{k=1}\prod\limits_{\substack{c\in C(k)\\c\neq\res_{\t}(k)}}\frac{\mL_k-c}{\res_{\t}(k)-c}\in \HH_{\ell,n},
\quad \mf_{\s\t}:={F}_\s \fm_{\s\t}{F}_\t\in\HH_{\ell,n}.
$$
\end{dfn}
By \cite{Ma}, the set $\{\mf_{\s\t}\,|\, \s,\t\in\Std(\blam),\ \blam\in\P_{\ell,n}\}$ gives a $K$-basis of $\HH_{\ell,n}$, and is called the seminormal basis of $\HH_{\ell,n}$.

Given two nodes $x=(i,j,k)$ and $y=(a,b,c)$, we write $y\prec x$ if either $c>k$ or $c=k$ and $b<j$. Let $\t\in\Std(\blam)$, $\gamma=\t^{-1}(i)$ and $1\leq i\leq n$. Let $\mathscr{A}_{\t}(i)$ be the set of addable nodes for $\Shape(\t_{\downarrow\leq i})$ which are strictly lesser than $\gamma$ with respect to $\prec$. Let $\mathscr{R}_{\t}(i)$ be the set of removable nodes for $\Shape(\t_{\downarrow\leq(i-1)})$ which are strictly lesser than $\gamma$ with respect to $\prec$.

Let $\blam\in\P_{\ell,n}$ and $\t\in\te{Std}(\blam).$
As in \cite[(2.8)]{Ma}, we define
$$\gamma_\t:=q^{\ell(d(\t))+\alpha(\blam)}\prod_{i=1}^{n}\frac{\prod\limits_{\gamma\in \mathscr{A}_{\ft}(i)}\(\res_\ft(i)-\res(\gamma)\)}
{\prod\limits_{\alpha\in \mathscr{R}_{\ft}(i)}\(\res_\ft(i)-\res(\alpha)\)}.$$

\begin{lem}\text{\rm (\cite[2.5, 2.6, 2.11]{Ma})}\label{fsemi}
\begin{itemize}
  \item[(1)] For any $\s,\t\in\Std(\blam)$ and $\u,\v\in\Std(\bmu),$ where $\blam,\bmu\in\P_{\ell,n},$ we have $$
\mathfrak{f}_{\s\t}\mathfrak{f}_{\u\v}=\delta_{\t\u}\gamma_\t \mathfrak{f}_{\s\v}.
$$
  \item[(2)] Let $\blam\in\P_{\ell,n}$ and $\s,\t\in\Std(\blam)$. Then for each $1\leq k\leq n,$ $$
\mL_k \mathfrak{f}_{\s\t}=\res_\s(k)\mathfrak{f}_{\s\t},\quad  \mathfrak{f}_{\s\t}\mL_k=\res_\t(k)\mathfrak{f}_{\s\t}. $$
Moreover, $$
K\mff_{\s\t}=\bigl\{h\in\HH_{\ell,n} \bigm|\mL_kh=\res_\s(k)h,\ h\mL_k=\res_\t(k)h,\ \forall\,1\leq k\leq n\bigr\}.
$$
  \item[(3)] For any $\s,\t\in\Std(\P_{\ell,n}),$ we have $\s=\t$ if and only if $\res_\s(k)=\res_\t(k),\ \forall\,1\leq k\leq n$.
\end{itemize}
\end{lem}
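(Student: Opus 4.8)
This is part of the classical theory of seminormal forms for cyclotomic Hecke algebras (it is \cite[2.5, 2.6, 2.11]{Ma}), so I only indicate how it follows from the cellular structure of Lemma~\ref{cellular1} together with the semisimplicity hypothesis. The mechanism is that the commuting family $\mL_1,\dots,\mL_n$ acts diagonalisably on $\HH_{\ell,n}$, its joint eigenvalues being the residue sequences $(\res_\t(1),\dots,\res_\t(n))$ of standard tableaux, and the elements $F_\t$ are the associated Lagrange idempotents. I would prove (3) first, since it is exactly what makes these eigenvalue sequences separate the tableaux. For (3), one implication is trivial; for the converse, argue by induction on $n$. If $\res_\s(k)=\res_\t(k)$ for all $1\le k\le n$, then the restricted sequences also agree, so by induction $\s_{\downarrow\leq(n-1)}=\t_{\downarrow\leq(n-1)}=:\ft'$, with common shape $\bnu:=\Shape(\ft')$. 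Then $\s$ and $\t$ differ only in where $n$ is placed, namely at addable nodes of $\bnu$, and the residue of a node $(r,c,l)$ equals $q^{c-r}Q_l$; within one component the addable corners have strictly decreasing content $c-r$ as one reads downwards, while across two components $l\ne l'$ one would need $q^{(c-r)-(c'-r')}Q_l=Q_{l'}$ with $|(c-r)-(c'-r')|\le n-1$, which the semisimplicity of $\HH_{\ell,n}$ forbids. Hence $\s=\t$.

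For (2), the key input is the triangular behaviour of the Murphy basis under multiplication by the Jucys--Murphy operators: using $\fu_\blam^+$ and the commutation relations of Lemma~\ref{nondeg-L-comm}, one shows that $\fm_{\s\t}\mL_k-\res_\t(k)\fm_{\s\t}$ (and, dually, $\mL_k\fm_{\s\t}-\res_\s(k)\fm_{\s\t}$) is an $R$-combination of Murphy basis elements $\fm_{\u\v}$ with $(\u,\v)\rhd(\s,\t)$. Iterating these relations, the interpolation operator $F_\t$ annihilates the strictly more dominant terms and fixes the diagonal one, which yields $\mL_k\mf_{\s\t}=\res_\s(k)\mf_{\s\t}$, $\mf_{\s\t}\mL_k=\res_\t(k)\mf_{\s\t}$ and $\mf_{\s\t}\ne 0$. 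Combined with (3), this shows that $\HH_{\ell,n}$ is the direct sum, over pairs $(\s,\t)$ of standard tableaux of the same shape, of the joint eigenspace $\{h\mid \mL_kh=\res_\s(k)h,\ h\mL_k=\res_\t(k)h,\ \forall\,k\}$. Since the $\mf_{\s\t}$ are obtained from the Murphy basis by a unitriangular change of basis, they form a $K$-basis of $\HH_{\ell,n}$, so there are exactly $\dim_K\HH_{\ell,n}$ of them and each such eigenspace must be one-dimensional, spanned by the corresponding $\mf_{\s\t}$; this is the ``moreover'' clause.

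For (1), if $\t\ne\u$ then by (3) there is $k$ with $\res_\t(k)\ne\res_\u(k)$, and $\res_\t(k)\,\mf_{\s\t}\mf_{\u\v}=(\mf_{\s\t}\mL_k)\mf_{\u\v}=\mf_{\s\t}(\mL_k\mf_{\u\v})=\res_\u(k)\,\mf_{\s\t}\mf_{\u\v}$ forces $\mf_{\s\t}\mf_{\u\v}=0$. If $\t=\u$ (so $\bmu=\blam$), then $\mf_{\s\t}\mf_{\t\v}$ lies in the $(\res_\s,\res_\v)$-eigenspace, hence in $K\mf_{\s\v}$, so $\mf_{\s\t}\mf_{\t\v}=c(\s,\t,\v)\,\mf_{\s\v}$ for some scalar; an associativity argument together with the cellular multiplication rule $\fm_{\s\t}\fm_{\t\v}\equiv r_\t\,\fm_{\s\v}$ modulo strictly more dominant terms (with $r_\t$ independent of $\s$ and $\v$) shows that $c(\s,\t,\v)$ depends only on $\t$, and setting $\gamma_\t:=c(\t,\t,\t)$ gives $\mf_{\s\t}\mf_{\u\v}=\delta_{\t\u}\gamma_\t\,\mf_{\s\v}$. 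Finally, the closed formula for $\gamma_\t$ is obtained by induction on $n$: restricting along $\HH_{\ell,n-1}\hookrightarrow\HH_{\ell,n}$ relates $\gamma_\t$ to $\gamma_{\t_{\downarrow\leq(n-1)}}$ through the single interpolation factor of $F_\t$ at $k=n$, whose numerator contributes the factors $\res_\t(n)-\res(\gamma)$ over $\gamma\in\mathscr{A}_\t(n)$ and whose denominator contributes $\res_\t(n)-\res(\alpha)$ over $\alpha\in\mathscr{R}_\t(n)$, the powers of $q$ accumulating into $q^{\ell(d(\t))+\alpha(\blam)}$.

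The technical heart is the triangularity statement used in (2) --- establishing, with the correct diagonal coefficient $\res_\t(k)$, that $\fm_{\s\t}\mL_k$ reduces to $\res_\t(k)\fm_{\s\t}$ modulo strictly more dominant Murphy basis elements, and hence that the normalised $F_\t$ are mutually orthogonal idempotents summing to $1$. Once that is in hand, (3) feeds in to separate the joint eigenspaces and the rest of (1) and (2) is formal; the explicit evaluation of $\gamma_\t$ in the last step is the most computation-heavy point, but it is a routine recursion once the eigenspace picture is available.
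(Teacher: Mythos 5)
The paper does not prove this lemma at all --- it is quoted verbatim from Mathas \cite[2.5, 2.6, 2.11]{Ma} --- so there is no in-paper argument to compare against; your sketch is a correct outline of exactly the standard seminormal-form proof given in that reference (triangular action of the $\mL_k$ on the Murphy basis, separation of residue sequences under semisimplicity, interpolation idempotents $F_\t$, and the dimension count identifying the one-dimensional joint eigenspaces). Nothing in your outline is off; the only places needing the usual extra care if written out in full are the precise form of the triangularity statement (it holds modulo both more dominant tableaux and the cell ideal $\HH^{\rhd\blam}$) and the leading-coefficient computation showing that the scalar in $\mff_{\s\t}\mff_{\t\v}=c\,\mff_{\s\v}$ depends only on $\t$.
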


\begin{prop}\te{(}\cite[Proposition 2.6]{Ma}, \cite[Theorem 2.9]{HM12}\te{)}\label{nondegbasestranslationrela}
Let $\blam\in\P_{\ell,n}$ and $\s,\t\in\te{Std}(\blam).$ Then we have
$$\ff_{\s\t}=\fm_{\s\t}+\sum_{\substack{(\u,\v)\in\te{Std}^2(\P_{\ell,n})\\(\u,\v)\rhd (\s,\t)}}a_{\u\v}^{\s\t}\fm_{\u\v},\quad
\fm_{\s\t}=\ff_{\s\t}+\sum_{\substack{(\u,\v)\in\te{Std}^2(\P_{\ell,n})\\(\u,\v)\rhd (\s,\t)}}b_{\u\v}^{\s\t}\ff_{\u\v}, $$
where $a_{\u\v}^{\s\t},b_{\u\v}^{\s\t}\in K$ for each $(\u,\v)\in\te{Std}^2(\P_{\ell,n})$ with $(\u,\v)\rhd (\s,\t)$.
\end{prop}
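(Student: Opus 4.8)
The plan is to derive both identities from a single triangularity property of the Jucys--Murphy operators acting on the Murphy basis, together with the fact that each $F_\t$ is a polynomial in $\mL_1,\dots,\mL_n$ over $K$. Since $(\Std^2(\P_{\ell,n}),\unrhd)$ is a finite poset, the two displayed equations are equivalent: the matrix recording the first expansion is unitriangular with respect to $\unrhd$, hence invertible with a unitriangular inverse, and this yields the second. So it suffices to prove the first identity, i.e. that $\ff_{\s\t}-\fm_{\s\t}$ lies in $\Span_K\{\fm_{\u\v}\mid(\u,\v)\rhd(\s,\t)\}$.

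The key input is the triangular action of the $\mL_k$ on the Murphy basis established in \cite{DJM} and \cite{Ma}: for $\blam\in\P_{\ell,n}$, $\s,\t\in\Std(\blam)$ and $1\le k\le n$,
\begin{align*}
\mL_k\,\fm_{\s\t}&=\res_\s(k)\,\fm_{\s\t}+\sum_{(\u,\v)\rhd(\s,\t)}r_{\u\v}\,\fm_{\u\v},\\
\fm_{\s\t}\,\mL_k&=\res_\t(k)\,\fm_{\s\t}+\sum_{(\u,\v)\rhd(\s,\t)}r'_{\u\v}\,\fm_{\u\v},
\end{align*}
for some $r_{\u\v},r'_{\u\v}\in K$; the two formulas are adjoint to one another under $\ast$ in view of $\mL_k^\ast=\mL_k$ and (\ref{nondegmst*}). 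One proves this by starting from $\mL_k\fm_\blam$ --- using Lemma \ref{nondeg-L-comm} and the shape of $x_\blam=\sum_{w\in\Sym_\blam}T_w$ one checks that $\mL_k\fm_\blam\equiv\res_{\t^\blam}(k)\fm_\blam$ modulo strictly more dominant terms --- and then transporting the statement to $\fm_{\s\t}=T_{d(\s)}^\ast\fm_\blam T_{d(\t)}$ via the commutation rules of Lemma \ref{nondeg-L-comm}(4) together with the Garnir-type straightening relations for the Murphy basis.

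Granting this, I would finish as follows. Fix $(\s,\t)$ and set $V=\Span_K\{\fm_{\u\v}\mid(\u,\v)\unrhd(\s,\t)\}$ and $V^{\rhd}=\Span_K\{\fm_{\u\v}\mid(\u,\v)\rhd(\s,\t)\}$. The triangularity property says exactly that $V$ is stable under left and right multiplication by each $\mL_k$, and that on the one-dimensional quotient $V/V^{\rhd}$ (spanned by the image of $\fm_{\s\t}$) left multiplication by $\mL_k$ is the scalar $\res_\s(k)$ and right multiplication by $\mL_k$ is the scalar $\res_\t(k)$. Since $F_\s=\prod_{k=1}^{n}\prod_{c\in C(k),\,c\neq\res_\s(k)}(\mL_k-c)(\res_\s(k)-c)^{-1}$ is a polynomial in the $\mL_k$ (and the denominators are nonzero by semisimplicity), left multiplication by $F_\s$ preserves $V$ and acts on $V/V^{\rhd}$ by $\prod_k\prod_{c\neq\res_\s(k)}(\res_\s(k)-c)(\res_\s(k)-c)^{-1}=1$; similarly right multiplication by $F_\t$ preserves $V$ and is the identity on $V/V^{\rhd}$. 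Hence $\ff_{\s\t}=F_\s\fm_{\s\t}F_\t\in V$ with $\ff_{\s\t}\equiv\fm_{\s\t}\pmod{V^{\rhd}}$, which is the first identity; inverting the resulting unitriangular matrix gives the second. (The diagonal coefficient can also be obtained directly from Lemma \ref{fsemi}(2), since the $F_\u$ act as orthogonal idempotents on the seminormal basis.)

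The main obstacle is the triangularity property itself in the sharp form used above, where the error terms are indexed by pairs $(\u,\v)$ dominating $(\s,\t)$ in the \emph{tableau-pair} order, not merely by cells of strictly more dominant shape. Organising the contributions coming from shapes $\bmu\rhd\blam$ so that both of their tableaux dominate $\s$ and $\t$ respectively is the delicate combinatorial point; it is handled through the straightening relations for the Murphy basis in \cite{DJM} (see also \cite{Ma} and \cite{HM12}). Once that lemma is available, everything else is the formal bimodule computation above.
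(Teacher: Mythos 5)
The paper does not actually prove this proposition --- it is imported verbatim from \cite[Proposition 2.6]{Ma} and \cite[Theorem 2.9]{HM12} --- so the only question is whether your reconstruction is sound, and it is: it is essentially the argument of those references. Your reduction to the triangular action of the Jucys--Murphy elements on the Murphy basis, followed by the observation that left multiplication by $F_\s$ and right multiplication by $F_\t$ preserve the $K$-span of $\{\fm_{\u\v}\mid(\u,\v)\unrhd(\s,\t)\}$ and induce the identity on the one-dimensional quotient, is exactly how the cited proofs run, and passing from the first expansion to the second by inverting a unitriangular matrix is routine. The one genuine piece of content that you defer rather than prove is the \emph{sharp} triangularity statement, with error terms indexed by pairs $(\u,\v)\rhd(\s,\t)$ in the tableau-pair dominance order rather than merely by basis elements of strictly more dominant shape; you are right to single this out, because the weaker shape-level statement from \cite{DJM} really is insufficient --- a standard tableau of a more dominant shape need not dominate a given standard tableau of a less dominant shape (already for $\ell=1$, $n=4$, $\bmu=(2,2)\rhd\blam=(2,1,1)$ there is $\u\in\Std(\bmu)$ with $\Shape(\u_{\downarrow\leq 2})=(1,1)\not\unrhd(2)=\Shape((\t^{\blam})_{\downarrow\leq 2})$), so the contributions from $\check{H}^{\blam}$ must be reorganised before the unitriangularity claim makes sense. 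Since that sharp form is proved in \cite{Ma} and \cite{HM12}, deferring it is no worse than what the paper itself does in quoting the proposition without proof.
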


\begin{dfn} For any $\ell$-partition $\blam,$ we set $\alpha(\blam):=\sum_{s=1}^{\ell}\sum_{i\geq 1}(\lam_i^{(s)}-1)\lam_i^{(s)}/2$.
\end{dfn}

\begin{dfn}
For any $\blam\in\P_{\ell,n}$ and $\t\in\Std(\blam),$ we define
$$\begin{aligned}
&c_{\t,n}:=(-1)^{\ell-1}\frac{\gamma_{\ft}}{\gamma_{\ft_{\downarrow\leq(n-1)}}}
 \frac{\prod\limits_{\alpha\in \te{Rem}(\ft_{\downarrow\leq(n-1)})}\(\res_\ft(n)/\res(\alpha)-1\)}
{\prod\limits_{\gamma\in \te{Add}(\ft_{\downarrow\leq(n-1)})\backslash\{\ft^{-1}(n)\}}\(\res_\ft(n)/\res(\gamma)-1\)}\\
=\ &(-1)^{\ell-1}q^{n-n(\t)+c(\t,n)-1} \frac{\prod\limits_{\gamma\in \te{Add}(\ft_{\downarrow\leq(n-1)})\backslash\{\ft^{-1}(n)\}}\res(\gamma)}
{\prod\limits_{\alpha\in \te{Rem}(\ft_{\downarrow\leq(n-1)})}\res(\alpha)}
 \frac{\prod\limits_{\substack{\alpha\in \te{Rem}(\ft_{\downarrow\leq(n-1)})\\\alpha\not\prec\t^{-1}(n)}}\(\res_\ft(n)-\res(\alpha)\)}
{\prod\limits_{\substack{\gamma\in \te{Add}(\ft_{\downarrow\leq(n-1)})\backslash\{\ft^{-1}(n)\}\\
\gamma\not\prec\t^{-1}(n)}}\(\res_\ft(n)-\res(\gamma)\)}.
\end{aligned}$$
For each $\t\in \te{Std}(\P_{\ell,n}),$ we define $c_\t:=\prod_{k=1}^{n}c_{\t_{\downarrow\leq k},k}.$
\end{dfn}

\begin{prop}\label{nondegvefst}
Let $\blam\in\P_{\ell,n}$ and $\s,\t\in\te{Std}(\blam).$ Then:
\begin{itemize}
  \item[(1)] $\ve_{n}(\mff_{\fs\ft})=\delta_{n(\s),n(\t)}c_{\t,n}\mff_{\fs_{\downarrow\leq(n-1)}\ft_{\downarrow\leq(n-1)}};$
  \item[(2)] $\tau_R^{\te{MM}}(\mff_{\fs\ft})=\delta_{\s\t}c_\t.$
\end{itemize}
\end{prop}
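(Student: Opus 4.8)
The plan is to prove both parts simultaneously by exploiting the ``almost triangular'' relationship between the seminormal basis and the Murphy basis (Proposition \ref{nondegbasestranslationrela}) together with the multiplicative structure of the seminormal basis (Lemma \ref{fsemi}). For part (1), the key observation is that $\ve_n$ respects left and right multiplication by $\HH_{\ell,n-1}$ (Lemma \ref{veAst}), and in particular commutes with multiplication by $\mL_1,\dots,\mL_{n-1}$. Since $\mff_{\s\t}$ is, up to scalar, the unique element $h$ with $\mL_k h=\res_\s(k)h$ and $h\mL_k=\res_\t(k)h$ for $1\leq k\leq n$ (Lemma \ref{fsemi}(2)), I would first show that $\ve_n(\mff_{\s\t})$ lies in the $\res_{\s_{\downarrow\leq(n-1)}}$–$\res_{\t_{\downarrow\leq(n-1)}}$ simultaneous eigenspace inside $\HH_{\ell,n-1}$, hence equals $c\cdot\mff_{\s_{\downarrow\leq(n-1)}\t_{\downarrow\leq(n-1)}}$ for some scalar $c\in K$ depending on $\s,\t$ (and the scalar is $0$ unless $n(\s)=n(\t)$, since otherwise the shapes $\Shape(\s_{\downarrow\leq(n-1)})$ and $\Shape(\t_{\downarrow\leq(n-1)})$ differ and no such eigenvector exists).

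To pin down the scalar $c=c_{\t,n}$, I would use the transition matrix. Write $\mff_{\s\t}=\fm_{\s\t}+\sum_{(\u,\v)\rhd(\s,\t)}a_{\u\v}^{\s\t}\fm_{\u\v}$. Applying $\ve_n$ and Proposition \ref{MainProp1}, each term $\ve_n(\fm_{\u\v})$ is either $0$ or a scalar multiple of $\fm_{\u_{\downarrow\leq(n-1)}\v_{\downarrow\leq(n-1)}}$; comparing with the expansion of $c_{\t,n}\mff_{\s_{\downarrow\leq(n-1)}\t_{\downarrow\leq(n-1)}}$ in the Murphy basis (again Proposition \ref{nondegbasestranslationrela}) and reading off the leading coefficient on $\fm_{\s_{\downarrow\leq(n-1)}\t_{\downarrow\leq(n-1)}}$ gives $c=q^{n-n(\t)}\prod_{i=l(\t,n)+1}^{\ell}(-Q_i)$ directly from Proposition \ref{MainProp1}. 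The remaining work is purely computational: one must verify that this value equals the stated product formula for $c_{\t,n}$, i.e.\ that
$$q^{n-n(\t)}\prod_{i=l(\t,n)+1}^{\ell}(-Q_i)=(-1)^{\ell-1}\frac{\gamma_{\ft}}{\gamma_{\ft_{\downarrow\leq(n-1)}}}\cdot\frac{\prod_{\alpha\in\te{Rem}(\ft_{\downarrow\leq(n-1)})}(\res_\ft(n)/\res(\alpha)-1)}{\prod_{\gamma\in\te{Add}(\ft_{\downarrow\leq(n-1)})\setminus\{\ft^{-1}(n)\}}(\res_\ft(n)/\res(\gamma)-1)}.$$
This is a comparison of the ratio $\gamma_\ft/\gamma_{\ft_{\downarrow\leq(n-1)}}$ (which by the defining product formula for $\gamma_\t$ involves $q^{\ell(d(\t))-\ell(d(\t_{\downarrow\leq(n-1)}))+\alpha(\blam)-\alpha(\blam_{\downarrow\leq(n-1)})}$ times a ratio of residue differences over $\mathscr{A}_\ft(n)$ and $\mathscr{R}_\ft(n)$) against the explicit cyclotomic product; the powers of $q$ match because $\ell(d(\t))-\ell(d(\t_{\downarrow\leq(n-1)}))=n-n(\t)$ by \eqref{decompositionofdt}, and the residue-difference ratios telescope against the $\prod(-Q_i)$ after separating addable/removable nodes in level $l(\t,n)$ versus levels $>l(\t,n)$.

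For part (2), I would proceed by downward induction on $n$ using $\tau_R^{\te{MM}}=\ve_1\circ\cdots\circ\ve_n$ (Lemma \ref{nondegtauoperation}). By part (1), $\ve_n(\mff_{\s\t})=\delta_{n(\s),n(\t)}c_{\t,n}\mff_{\s_{\downarrow\leq(n-1)}\t_{\downarrow\leq(n-1)}}$; applying the inductive hypothesis $\tau_{n-1}^R(\mff_{\u\v})=\delta_{\u\v}c_\u$ to the restricted tableaux yields $\tau_R^{\te{MM}}(\mff_{\s\t})=\delta_{n(\s),n(\t)}\,\delta_{\s_{\downarrow\leq(n-1)},\t_{\downarrow\leq(n-1)}}\,c_{\t,n}c_{\t_{\downarrow\leq(n-1)}}$, and the two Kronecker deltas together are exactly $\delta_{\s\t}$ (a standard tableau is determined by its restriction plus the position of $n$), while $c_{\t,n}c_{\t_{\downarrow\leq(n-1)}}=\prod_{k=1}^n c_{\t_{\downarrow\leq k},k}=c_\t$ by definition. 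The main obstacle is the explicit identity in part (1) relating the ``clean'' answer $q^{n-n(\t)}\prod_{i=l(\t,n)+1}^{\ell}(-Q_i)$ to the residue-theoretic expression for $c_{\t,n}$; everything else is bookkeeping via the established lemmas. One subtlety to handle carefully is the case $n(\s)=n(\t)$ but $\s\neq\t$: here $\ve_n(\mff_{\s\t})$ is a nonzero multiple of $\mff_{\s_{\downarrow\leq(n-1)}\t_{\downarrow\leq(n-1)}}$ with $\s_{\downarrow\leq(n-1)}\neq\t_{\downarrow\leq(n-1)}$, so $\tau_{n-1}^R$ kills it by induction, consistent with $\delta_{\s\t}=0$.
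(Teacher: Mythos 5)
The first half of your argument for part (1) — using Lemma \ref{veAst} to see that $\ve_n(\mff_{\s\t})$ is a simultaneous $\mL_k$-eigenvector in $\HH_{\ell,n-1}$ with eigenvalues $\res_\s(k)$, $\res_\t(k)$ for $k\leq n-1$, hence is $0$ when $n(\s)\neq n(\t)$ and otherwise a scalar multiple of $\mff_{\s_{\downarrow\leq(n-1)}\t_{\downarrow\leq(n-1)}}$ — is correct and is exactly how the paper disposes of the case $n(\s)\neq n(\t)$. Your part (2) is also fine once part (1) is in hand. But your method for pinning down the scalar $c$ is wrong. When you expand $\mff_{\s\t}=\fm_{\s\t}+\sum_{(\u,\v)\rhd(\s,\t)}a^{\s\t}_{\u\v}\fm_{\u\v}$ and apply $\ve_n$, the ``leading coefficient'' of $\fm_{\s_{\downarrow\leq(n-1)}\t_{\downarrow\leq(n-1)}}$ is \emph{not} just $q^{n-n(\t)}\prod_{i=l(\t,n)+1}^{\ell}(-Q_i)$: any pair $(\u,\v)\rhd(\s,\t)$ with $n(\u)=n(\v)$ that agrees with $(\s,\t)$ on $\{1,\dots,n-1\}$ (i.e.\ differs only in which addable node receives $n$) restricts to the \emph{same} pair $(\s_{\downarrow\leq(n-1)},\t_{\downarrow\leq(n-1)})$, so $a^{\s\t}_{\u\v}\,q^{n-n(\u)}\prod_{i=l(\u,n)+1}^{\ell}(-Q_i)$ also lands on that Murphy basis element. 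The restriction map on pairs does not preserve strict dominance, so there is no triangularity to exploit here.

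Consequently the identity you propose to ``verify computationally,'' namely $c_{\t,n}=q^{n-n(\t)}\prod_{i=l(\t,n)+1}^{\ell}(-Q_i)$, is simply false. Take $\ell=1$, $n=2$, $\t=\t^{(1,1)}$: then $\mff_{\t\t}=\frac{q-T_1}{1+q}$, $\ve_2(\mff_{\t\t})=\frac{q}{1+q}=c_{\t,2}$, whereas $q^{n-n(\t)}\prod_{i=2}^{1}(-Q_i)=1$; here $a^{\t\t}_{\t^{(2)}\t^{(2)}}=-\frac{1}{1+q}\neq0$ supplies the missing contribution. Indeed, the paper's Corollary \ref{maincor11}(1) records precisely the discrepancy between these two quantities as a nontrivial linear relation on the transition coefficients — it would be vacuous if your identity held. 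The correct determination of the scalar has to come from the seminormal side itself: the paper computes $\ve_n(\mff_{\t\t})$ directly (following the argument of \cite[Proposition 3.7]{HLL}, which works with the spectral decomposition of $\mL_n$ and the $\gamma$-coefficients), and that computation is where the formula for $c_{\t,n}$ actually comes from. So the genuinely nontrivial step of part (1) is missing from your proposal, and the route you suggest for it cannot be repaired without, in effect, already knowing Proposition \ref{nondegvefst} or the values of the $a^{\s\t}_{\u\v}$.
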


\begin{proof}
For proving (1), first, we suppose that $n(\s)\neq n(\t)$. Then we deduce that
\begin{align}\label{shapenotequal}
\te{Shape}(\fs_{\downarrow\leq(n-1)})\neq \te{Shape}(\ft_{\downarrow\leq(n-1)}).
\end{align}
Now, for $1\leq k\leq n-1$, using Lemmas \ref{veAst} and \ref{fsemi}, we have
\begin{align}\label{eigenvaluesLk}
\mL_k\ve_n(\mff_{\fs\ft})=\ve_n(\mL_k\mff_{\fs\ft})=\res_\s(n)\ve_n(\mff_{\fs\ft}),\quad \ve_n(\mff_{\fs\ft})\mL_k=\ve_n(\mff_{\fs\ft}\mL_k)=\res_\t(n)\ve_n(\mff_{\fs\ft}).
\end{align}
Then combining (\ref{shapenotequal}), (\ref{eigenvaluesLk}) and Lemma \ref{fsemi}, we deduce that
$\ve_n(\mff_{\fs\ft})=0.$
On the other hand, if $n(\s)=n(\t)$, then the result follows from a similar argument as in the proof of \cite[Proposition 3.7]{HLL},
which gives the computation of $\ve_n(\mff_{\t\t}).$

The part (1) together with Lemma \ref{nondegtauoperation} imply the part (2). This completes the proof.
\end{proof}

The following corollaries give some equations  on the coefficients
which occur in the transition matrix between the Murphy basis elements and the seminormal basis elements of $\HH_{\ell,n}$.

\begin{cor}\label{maincor11}
Let $\blam\in\P_{\ell,n}$ and $\s,\t\in\te{Std}(\blam).$
\begin{enumerate}
  \item If $n(\s)= n(\t),$ then
for each $(\fp,\fq)\in \Std^2(\P_{\ell,n-1})$ with $(\fp,\fq)\rhd (\s_{\downarrow\leq (n-1)},\t_{\downarrow\leq (n-1)}),$ we have
$$\begin{aligned}
&q^{n-n(\t)}\Biggr(\prod\limits_{i=l(\t,n)+1}^{\ell}(-Q_i)\Biggr)+
\sum_{\substack{(\u,\v)\in\te{Std}^2(\P_{\ell,n})\\(\u,\v)\rhd (\s,\t)
\\\u_{\downarrow\leq (n-1)}=\s_{\downarrow\leq (n-1)}\\
\v_{\downarrow\leq (n-1)}=\t_{\downarrow\leq (n-1)}}}
a_{\u\v}^{\s\t}q^{n-n(\u)}\Biggr(\prod\limits_{i=l(\u,n)+1}^{\ell}(-Q_i)\Biggr)=c_{\t,n},\\
&\sum_{\substack{(\u,\v)\in\te{Std}^2(\P_{\ell,n})\\(\u,\v)\rhd (\s,\t)\\
\u_{\downarrow\leq (n-1)}=\fp\\
\v_{\downarrow\leq (n-1)}=\fq}} a_{\u\v}^{\s\t}q^{n-n(\u)}
\Biggr(\prod\limits_{i=l(\u,n)+1}^{\ell}(-Q_i)\Biggr)=c_{\t,n} a_{\fp\fq}^{\s_{\downarrow\leq (n-1)}\t_{\downarrow\leq (n-1)}}.
\end{aligned} $$
  \item If $n(\s)\neq n(\t),$ then for each $(\fp,\fq)\in \Std^2(\P_{\ell,n-1})$
  with $(\fp,\fq)\rhd (\s_{\downarrow\leq (n-1)},\t_{\downarrow\leq (n-1)}),$ we have
$$\sum_{\substack{(\u,\v)\in\te{Std}^2(\P_{\ell,n})\\(\u,\v)\rhd (\s,\t)\\
\u_{\downarrow\leq (n-1)}=\fp\\
\v_{\downarrow\leq (n-1)}=\fq}} a_{\u\v}^{\s\t}q^{n-n(\u)}
\Biggr(\prod\limits_{i=l(\u,n)+1}^{\ell}(-Q_i)\Biggr)=0. $$
\end{enumerate}
\end{cor}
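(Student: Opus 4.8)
The plan is to apply the $K$-linear descending map $\ve_n\colon \HH_{\ell,n}\to\HH_{\ell,n-1}$ to the first triangular identity of Proposition \ref{nondegbasestranslationrela},
\[
\ff_{\s\t}=\fm_{\s\t}+\sum_{(\u,\v)\rhd(\s,\t)}a^{\s\t}_{\u\v}\fm_{\u\v},
\]
and then to compare the coefficients of the two sides in the Murphy basis of $\HH_{\ell,n-1}$. By Proposition \ref{nondegvefst}(1) the left-hand side becomes $\delta_{n(\s),n(\t)}\,c_{\t,n}\,\ff_{\s_{\downarrow\leq(n-1)}\t_{\downarrow\leq(n-1)}}$, while by Proposition \ref{MainProp1} each $\ve_n(\fm_{\u\v})$ vanishes unless $n(\u)=n(\v)$, in which case it equals $q^{n-n(\u)}\bigl(\prod_{i=l(\u,n)+1}^{\ell}(-Q_i)\bigr)\fm_{\u_{\downarrow\leq(n-1)}\v_{\downarrow\leq(n-1)}}$. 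Hence
\begin{align*}
\delta_{n(\s),n(\t)}\,c_{\t,n}\,\ff_{\s_{\downarrow\leq(n-1)}\t_{\downarrow\leq(n-1)}}
&=\ve_n(\fm_{\s\t})\\
&\qquad+\sum_{\substack{(\u,\v)\rhd(\s,\t)\\ n(\u)=n(\v)}}a^{\s\t}_{\u\v}\,q^{n-n(\u)}\Bigl(\prod_{i=l(\u,n)+1}^{\ell}(-Q_i)\Bigr)\fm_{\u_{\downarrow\leq(n-1)}\v_{\downarrow\leq(n-1)}}.
\end{align*}

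The elementary point underlying the matching with the statement is that for $\u,\v$ of the same shape $\bmu$, the node $\u^{-1}(n)$ is the unique node of $[\bmu]$ not lying in $[\Shape(\u_{\downarrow\leq(n-1)})]$; therefore $n(\u)=n(\v)$ is equivalent to $\Shape(\u_{\downarrow\leq(n-1)})=\Shape(\v_{\downarrow\leq(n-1)})$, and in that case $l(\u,n)=l(\v,n)$ as well. In particular, once the pair $(\fp,\fq):=(\u_{\downarrow\leq(n-1)},\v_{\downarrow\leq(n-1)})$ of same-shape tableaux is fixed, the condition $n(\u)=n(\v)$ is automatic and the scalar $q^{n-n(\u)}\prod_{i=l(\u,n)+1}^{\ell}(-Q_i)$ is determined. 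Moreover $(\u,\v)\rhd(\s,\t)$ forces $\Shape(\u_{\downarrow\leq k})\unrhd\Shape(\s_{\downarrow\leq k})$ for all $k\leq n-1$, so that the surviving terms on the right form a $K$-linear combination of Murphy basis elements $\fm_{\fp\fq}$ of $\HH_{\ell,n-1}$ with $(\fp,\fq)\unrhd(\s_{\downarrow\leq(n-1)},\t_{\downarrow\leq(n-1)})$.

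It then remains to split into the two cases. If $n(\s)=n(\t)$, I would expand $\ff_{\s_{\downarrow\leq(n-1)}\t_{\downarrow\leq(n-1)}}$ on the left by Proposition \ref{nondegbasestranslationrela} inside $\HH_{\ell,n-1}$ and rewrite $\ve_n(\fm_{\s\t})$ on the right by Proposition \ref{MainProp1}; both sides are now Murphy-basis expansions in $\HH_{\ell,n-1}$, so equating the coefficient of $\fm_{\s_{\downarrow\leq(n-1)}\t_{\downarrow\leq(n-1)}}$ gives the first asserted equation, and equating the coefficient of $\fm_{\fp\fq}$ for a fixed $(\fp,\fq)\rhd(\s_{\downarrow\leq(n-1)},\t_{\downarrow\leq(n-1)})$ --- i.e.\ collecting on the right the terms with $(\u_{\downarrow\leq(n-1)},\v_{\downarrow\leq(n-1)})=(\fp,\fq)$ --- gives the second. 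If $n(\s)\neq n(\t)$, then $\ve_n(\ff_{\s\t})=0$ and $\ve_n(\fm_{\s\t})=0$, so the displayed identity reduces to $0=\sum_{(\u,\v)\rhd(\s,\t),\,n(\u)=n(\v)}a^{\s\t}_{\u\v}q^{n-n(\u)}\bigl(\prod_{i=l(\u,n)+1}^{\ell}(-Q_i)\bigr)\fm_{\u_{\downarrow\leq(n-1)}\v_{\downarrow\leq(n-1)}}$, and comparing the coefficient of each $\fm_{\fp\fq}$ gives the third equation; here no term has $\u_{\downarrow\leq(n-1)}=\s_{\downarrow\leq(n-1)}$ and $\v_{\downarrow\leq(n-1)}=\t_{\downarrow\leq(n-1)}$ together with $n(\u)=n(\v)$, because that would force $\Shape(\s_{\downarrow\leq(n-1)})=\Shape(\t_{\downarrow\leq(n-1)})$, contradicting $n(\s)\neq n(\t)$, which explains why the term $(\fp,\fq)=(\s_{\downarrow\leq(n-1)},\t_{\downarrow\leq(n-1)})$ is absent from part (2).

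The heavy lifting is already done by Propositions \ref{MainProp1} and \ref{nondegvefst}, so I do not anticipate a genuine obstacle; the only delicate point is the bookkeeping of summation ranges, namely checking that grouping the right-hand terms by the pair $(\u_{\downarrow\leq(n-1)},\v_{\downarrow\leq(n-1)})$ and matching with the Murphy-basis expansion on the left reproduces precisely the sums over $\{(\u,\v)\rhd(\s,\t)\mid \u_{\downarrow\leq(n-1)}=\fp,\ \v_{\downarrow\leq(n-1)}=\fq\}$ that appear in the statement.
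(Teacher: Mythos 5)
Your proposal is correct and follows essentially the same route as the paper: the authors likewise compute $\ve_n(\ff_{\s\t})$ in two ways, once via Proposition \ref{nondegbasestranslationrela} together with Proposition \ref{MainProp1}, and once via Proposition \ref{nondegvefst} followed by re-expanding $\ff_{\s_{\downarrow\leq(n-1)}\t_{\downarrow\leq(n-1)}}$ in the Murphy basis of $\HH_{\ell,n-1}$, and then compare coefficients. The bookkeeping you flag (grouping surviving terms by the pair $(\u_{\downarrow\leq(n-1)},\v_{\downarrow\leq(n-1)})$) is exactly how the paper extracts the three displayed identities.
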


\begin{proof}
First, using Propositions \ref{nondegbasestranslationrela} and \ref{MainProp1}, we have
\begin{align}\label{nondegcompu12}
\ve_n(\ff_{\s\t})
&=\ve_n\Big(\fm_{\s\t}+\sum_{\substack{(\u,\v)\in\te{Std}^2(\P_{\ell,n})\\(\u,\v)\rhd (\s,\t)}}a_{\u\v}^{\s\t}\fm_{\u\v}\Big)\\
\notag &=\delta_{n(\s),n(\t)} q^{n-n(\t)}\Biggr(\prod\limits_{i=l(\t,n)+1}^{\ell}(-Q_i)\Biggr)\fm_{\s_{\downarrow\leq (n-1)}\t_{\downarrow\leq (n-1)}}\\
\notag &\quad +\sum_{\substack{(\u,\v)\in\te{Std}^2(\P_{\ell,n})\\ n(\u)=n(\v)\\ (\u,\v)\rhd (\s,\t)}} a_{\u\v}^{\s\t}q^{n-n(\u)}
\Biggr(\prod\limits_{i=l(\u,n)+1}^{\ell}(-Q_i)\Biggr)\fm_{\u_{\downarrow\leq (n-1)}\v_{\downarrow\leq (n-1)}}.
\end{align}
On the other hand, by Propositions \ref{nondegvefst} and \ref{nondegbasestranslationrela}, we have
\begin{align}\label{nondegcompu13}
&\ve_n(\ff_{\s\t})=  c_{\t,n}\delta_{n(\s),n(\t)}\mff_{\fs_{\downarrow\leq(n-1)}\ft_{\downarrow\leq(n-1)}}\\
\notag =\ &\delta_{n(\s),n(\t)} c_{\t,n}\Big(\fm_{\s_{\downarrow\leq (n-1)}\t_{\downarrow\leq (n-1)}}
+\sum_{\substack{(\fp,\fq)\in\te{Std}^2(\P_{\ell,n-1})\\(\fp,\fq)\rhd (\s_{\downarrow\leq (n-1)},\t_{\downarrow\leq (n-1)})}}
a_{\fp\fq}^{\s_{\downarrow\leq (n-1)}\t_{\downarrow\leq (n-1)}}\fm_{\fp\fq}\Big).
\end{align}
Then the corollary follows from (\ref{nondegcompu12}) and (\ref{nondegcompu13}).
\end{proof}

\begin{cor}\label{maincor12}
Let $\blam\in\P_{\ell,n}$ and $\s,\t\in\te{Std}(\blam).$
\begin{enumerate}
  \item If $n(\s)= n(\t),$ then
for each $(\fp,\fq)\in \Std^2(\P_{\ell,n-1})$ with $(\fp,\fq)\rhd (\s_{\downarrow\leq (n-1)},\t_{\downarrow\leq (n-1)}),$ we have
$$\begin{aligned}
&c_{\t,n}+\sum_{\substack{(\u,\v)\in\te{Std}^2(\P_{\ell,n})\\(\u,\v)\rhd (\s,\t)
\\\u_{\downarrow\leq (n-1)}=\s_{\downarrow\leq (n-1)}\\
\v_{\downarrow\leq (n-1)}=\t_{\downarrow\leq (n-1)}}}
b_{\u\v}^{\s\t}c_{\v,n}
=q^{n-n(\t)}\Biggr(\prod\limits_{i=l(\t,n)+1}^{\ell}(-Q_i)\Biggr),\\
&\sum_{\substack{(\u,\v)\in\te{Std}^2(\P_{\ell,n})\\(\u,\v)\rhd (\s,\t)\\
\u_{\downarrow\leq (n-1)}=\fp\\
\v_{\downarrow\leq (n-1)}=\fq}} b_{\u\v}^{\s\t} c_{\v,n}=
q^{n-n(\t)}\Biggr(\prod\limits_{i=l(\t,n)+1}^{\ell}(-Q_i)\Biggr) b_{\fp\fq}^{\s_{\downarrow\leq (n-1)}\t_{\downarrow\leq (n-1)}}.
\end{aligned} $$
  \item If $n(\s)\neq n(\t),$ then for each $(\fp,\fq)\in \Std^2(\P_{\ell,n-1})$
  with $(\fp,\fq)\rhd (\s_{\downarrow\leq (n-1)},\t_{\downarrow\leq (n-1)}),$ we have
$$\sum_{\substack{(\u,\v)\in\te{Std}^2(\P_{\ell,n})\\(\u,\v)\rhd (\s,\t)\\
\u_{\downarrow\leq (n-1)}=\fp\\
\v_{\downarrow\leq (n-1)}=\fq}} b_{\u\v}^{\s\t} c_{\v,n}=0. $$
\end{enumerate}
\end{cor}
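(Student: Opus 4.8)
The plan is to imitate the proof of Corollary~\ref{maincor11}, but with the roles of the two bases interchanged: this time I would start from the expansion $\fm_{\s\t}=\ff_{\s\t}+\sum_{(\u,\v)\rhd(\s,\t)}b_{\u\v}^{\s\t}\ff_{\u\v}$ of Proposition~\ref{nondegbasestranslationrela} and compute $\ve_n(\fm_{\s\t})$ in two different ways, both expressed in the seminormal basis of $\HH_{\ell,n-1}$. On the one hand, applying $\ve_n$ to the above expansion and then Proposition~\ref{nondegvefst}(1) gives
$$\ve_n(\fm_{\s\t})=\delta_{n(\s),n(\t)}\,c_{\t,n}\,\ff_{\s_{\downarrow\leq (n-1)}\t_{\downarrow\leq (n-1)}}+\sum_{\substack{(\u,\v)\rhd(\s,\t)\\ n(\u)=n(\v)}}b_{\u\v}^{\s\t}\,c_{\v,n}\,\ff_{\u_{\downarrow\leq (n-1)}\v_{\downarrow\leq (n-1)}}.$$
On the other hand, Proposition~\ref{MainProp1} gives $\ve_n(\fm_{\s\t})=\delta_{n(\s),n(\t)}\,q^{n-n(\t)}\bigl(\prod_{i=l(\t,n)+1}^{\ell}(-Q_i)\bigr)\,\fm_{\s_{\downarrow\leq (n-1)}\t_{\downarrow\leq (n-1)}}$, and re-expanding the Murphy basis element on the right via Proposition~\ref{nondegbasestranslationrela} produces a second expression for $\ve_n(\fm_{\s\t})$ as a $K$-linear combination of the seminormal basis elements $\ff_{\fp\fq}$, $(\fp,\fq)\in\Std^2(\P_{\ell,n-1})$. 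I would then equate the two expressions and, since the seminormal basis is a $K$-basis of $\HH_{\ell,n-1}$, compare the coefficient of each $\ff_{\fp\fq}$.

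When $n(\s)=n(\t)$, comparing the coefficient of $\ff_{\s_{\downarrow\leq (n-1)}\t_{\downarrow\leq (n-1)}}$ yields the first displayed identity of part~(1) (the coefficient of that element in the second expression being the leading coefficient $q^{n-n(\t)}\bigl(\prod_{i}(-Q_i)\bigr)$ of the seminormal expansion, and in the first expression being $c_{\t,n}$ plus the contributions of those $(\u,\v)\rhd(\s,\t)$ with $\u_{\downarrow\leq (n-1)}=\s_{\downarrow\leq (n-1)}$ and $\v_{\downarrow\leq (n-1)}=\t_{\downarrow\leq (n-1)}$), while comparing the coefficient of $\ff_{\fp\fq}$ for $(\fp,\fq)\rhd(\s_{\downarrow\leq (n-1)},\t_{\downarrow\leq (n-1)})$ yields the second one. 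When $n(\s)\neq n(\t)$, the second expression vanishes and the first reduces to $\sum_{(\u,\v)\rhd(\s,\t),\,n(\u)=n(\v)}b_{\u\v}^{\s\t}c_{\v,n}\ff_{\u_{\downarrow\leq (n-1)}\v_{\downarrow\leq (n-1)}}$, so equating the coefficient of each $\ff_{\fp\fq}$ to $0$ gives part~(2).

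The only point needing care, exactly as in Corollary~\ref{maincor11}, is matching the index sets with those in the statement. One checks that if $(\u,\v)\rhd(\s,\t)$ with $\u_{\downarrow\leq (n-1)}=\fp$ and $\v_{\downarrow\leq (n-1)}=\fq$ for a pair $(\fp,\fq)\in\Std^2(\P_{\ell,n-1})$, then automatically $n(\u)=n(\v)$: indeed $\fp$ and $\fq$ share a common shape $\bmu\in\P_{\ell,n-1}$, so $\u^{-1}(n)=\v^{-1}(n)=[\Shape(\u)]\setminus[\bmu]$, and Definition~\ref{keydefn} then forces $n(\u)=n(\v)$. Hence the constraint $n(\u)=n(\v)$ may be dropped from the sums, which is why it does not appear in the corollary, and the terms killed by the Kronecker delta in Proposition~\ref{nondegvefst}(1) are precisely the ones absent from the corollary's sums. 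One also checks, straight from the definition of the dominance order on $\Std^2(\P_{\ell,n-1})$, that such $(\u,\v)$ force $(\fp,\fq)\unrhd(\s_{\downarrow\leq (n-1)},\t_{\downarrow\leq (n-1)})$, with equality possible only when $n(\s)=n(\t)$, so the two families of equations in parts~(1) and~(2) account for all coefficient comparisons. Beyond this bookkeeping there is no genuine obstacle: the entire substance sits in Propositions~\ref{MainProp1} and~\ref{nondegvefst}, and the corollary follows formally by matching coefficients.
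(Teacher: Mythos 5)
Your proposal is correct and follows essentially the same route as the paper: compute $\ve_n(\fm_{\s\t})$ once via Proposition \ref{nondegbasestranslationrela} followed by Proposition \ref{nondegvefst}(1), and once via Proposition \ref{MainProp1} followed by re-expanding $\fm_{\s_{\downarrow\leq(n-1)}\t_{\downarrow\leq(n-1)}}$ in the seminormal basis, then compare coefficients of the $\ff_{\fp\fq}$. Your additional bookkeeping (that $\u_{\downarrow\leq(n-1)}=\fp$ and $\v_{\downarrow\leq(n-1)}=\fq$ with $(\fp,\fq)\in\Std^2(\P_{\ell,n-1})$ forces $n(\u)=n(\v)$, so the Kronecker-delta constraint is redundant on each fixed coefficient) is accurate and merely makes explicit what the paper leaves implicit.
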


\begin{proof}
First, using Propositions \ref{nondegbasestranslationrela} and \ref{nondegvefst}, we have
\begin{align}\label{nondegcompu14}
\ve_n(\fm_{\s\t})
&=\ve_n\Big(\ff_{\s\t}+\sum_{\substack{(\u,\v)\in\te{Std}^2(\P_{\ell,n})\\(\u,\v)\rhd (\s,\t)}}b_{\u\v}^{\s\t}\ff_{\u\v}\Big)\\
\notag &=\delta_{n(\s),n(\t)} c_{\t,n}\ff_{\s_{\downarrow\leq (n-1)}\t_{\downarrow\leq (n-1)}}
+\sum_{\substack{(\u,\v)\in\te{Std}^2(\P_{\ell,n})\\ n(\u)=n(\v)\\ (\u,\v)\rhd (\s,\t)}}
b_{\u\v}^{\s\t}c_{\v,n}\ff_{\u_{\downarrow\leq (n-1)}\v_{\downarrow\leq (n-1)}}.
\end{align}
On the other hand, by Propositions \ref{MainProp1} and \ref{nondegbasestranslationrela}, we have
\begin{align}\label{nondegcompu15}
\ve_n(\fm_{\s\t}) &=\delta_{n(\s),n(\t)} q^{n-n(\t)}\Biggr(\prod\limits_{i=l(\t,n)+1}^{\ell}(-Q_i)\Biggr)\fm_{\s_{\downarrow\leq (n-1)}\t_{\downarrow\leq (n-1)}}\\
\notag &=\delta_{n(\s),n(\t)}q^{n-n(\t)}\Biggr(\prod\limits_{i=l(\t,n)+1}^{\ell}(-Q_i)\Biggr)\\
\notag &\quad\, \times \Big(\ff_{\s_{\downarrow\leq (n-1)}\t_{\downarrow\leq (n-1)}}
+\sum_{\substack{(\fp,\fq)\in\te{Std}^2(\P_{\ell,n-1})\\(\fp,\fq)\rhd (\s_{\downarrow\leq (n-1)},\t_{\downarrow\leq (n-1)})}}
b_{\fp\fq}^{\s_{\downarrow\leq (n-1)}\t_{\downarrow\leq (n-1)}}\ff_{\fp\fq}\Big).
\end{align}
Then the corollary follows from (\ref{nondegcompu14}) and (\ref{nondegcompu15}).
\end{proof}

For each $\t\in\Std(\P_{\ell,n})$, for convenience, we denote that
$$d_\t:=q^{\sum\limits_{k=1}^n(k-k(\t_{\downarrow\leq k}))}\Biggr(\prod\limits_{j=1}^n\prod\limits_{i=l(\t,j)+1}^{\ell}(-Q_i)\Biggr). $$
\begin{cor}\label{maincor13}
Let $\blam\in\P_{\ell,n}$ and $\s,\t\in\te{Std}(\blam).$ Then we have
\begin{align*}
\delta_{\s\t}d_\t+\sum_{\substack{\u\in\te{Std}(\P_{\ell,n})\\(\u,\u)\rhd (\s,\t)}}a_{\u\u}^{\s\t}d_\u=\delta_{\s\t} c_\t,\quad
\delta_{\s\t}c_\t+\sum_{\substack{\u\in\te{Std}(\P_{\ell,n})\\(\u,\u)\rhd (\s,\t)}}b_{\u\u}^{\s\t}c_\u=\delta_{\s\t} d_\t.
\end{align*}
\end{cor}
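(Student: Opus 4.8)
The plan is to apply the standard symmetrizing form $\tau_R^{\te{MM}}$ to both transition formulas of Proposition \ref{nondegbasestranslationrela} and then read off the coefficients. Observe first that, since $\te{Std}^2(\P_{\ell,n})$ consists of pairs of standard tableaux of the \emph{same} shape, Theorem \ref{mainthm1} can be rephrased as $\tau_R^{\te{MM}}(\fm_{\u\v})=\delta_{\u\v}d_\v$ for every $(\u,\v)\in\te{Std}^2(\P_{\ell,n})$, where $d_\v$ is the scalar defined just before the statement; likewise Proposition \ref{nondegvefst}(2) gives $\tau_R^{\te{MM}}(\ff_{\u\v})=\delta_{\u\v}c_\v$.

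For the first identity I would take $\ff_{\s\t}=\fm_{\s\t}+\sum_{(\u,\v)\rhd(\s,\t)}a_{\u\v}^{\s\t}\fm_{\u\v}$ and apply $\tau_R^{\te{MM}}$. The left-hand side equals $\delta_{\s\t}c_\t$ by Proposition \ref{nondegvefst}(2). On the right-hand side the leading term contributes $\delta_{\s\t}d_\t$, and in the sum each off-diagonal pair $(\u,\v)$ with $\u\ne\v$ contributes $0$ because $\tau_R^{\te{MM}}(\fm_{\u\v})=0$; the diagonal pairs contribute $a_{\u\u}^{\s\t}d_\u$, and the constraint $(\u,\v)\rhd(\s,\t)$ specialised to $\u=\v$ is precisely $(\u,\u)\rhd(\s,\t)$. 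This gives
$$\delta_{\s\t}d_\t+\sum_{\substack{\u\in\te{Std}(\P_{\ell,n})\\(\u,\u)\rhd(\s,\t)}}a_{\u\u}^{\s\t}d_\u=\delta_{\s\t}c_\t.$$

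For the second identity I would apply $\tau_R^{\te{MM}}$ to $\fm_{\s\t}=\ff_{\s\t}+\sum_{(\u,\v)\rhd(\s,\t)}b_{\u\v}^{\s\t}\ff_{\u\v}$ and run the same argument with the roles of $\fm$ and $\ff$ interchanged: the left-hand side is $\delta_{\s\t}d_\t$, and using $\tau_R^{\te{MM}}(\ff_{\u\v})=\delta_{\u\v}c_\v$ the right-hand side becomes $\delta_{\s\t}c_\t+\sum_{(\u,\u)\rhd(\s,\t)}b_{\u\u}^{\s\t}c_\u$, which is the desired second equation. I do not expect any real obstacle here; the only point requiring care is the bookkeeping that passes from sums over pairs $(\u,\v)$ to sums over diagonal pairs $(\u,\u)$ (with the dominance condition adjusted accordingly), together with the vanishing $\tau_R^{\te{MM}}(\fm_{\u\v})=\tau_R^{\te{MM}}(\ff_{\u\v})=0$ whenever $\u\ne\v$.
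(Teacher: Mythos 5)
Your proposal is correct and follows exactly the paper's own argument: apply $\tau_R^{\te{MM}}$ to the two transition formulas of Proposition \ref{nondegbasestranslationrela}, then use Theorem \ref{mainthm1} (i.e.\ $\tau_R^{\te{MM}}(\fm_{\u\v})=\delta_{\u\v}d_\v$) and Proposition \ref{nondegvefst}(2) (i.e.\ $\tau_R^{\te{MM}}(\ff_{\u\v})=\delta_{\u\v}c_\v$) to kill the off-diagonal terms and keep the diagonal ones. Nothing is missing.
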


\begin{proof}
Using Propositions \ref{nondegbasestranslationrela} and \ref{nondegvefst} and Theorem \ref{mainthm1}, we have
\begin{align*}
&\delta_{\s\t} c_\t=\tau_R^{\te{MM}}(\ff_{\s\t})
=\tau_R^{\te{MM}}\Big(\fm_{\s\t}+\sum_{\substack{(\u,\v)\in\te{Std}^2(\P_{\ell,n})\\(\u,\v)\rhd (\s,\t)}}a_{\u\v}^{\s\t}\fm_{\u\v}\Big)
=\delta_{\s\t}d_\t+\sum_{\substack{\u\in\te{Std}(\P_{\ell,n})\\(\u,\u)\rhd (\s,\t)}}a_{\u\u}^{\s\t}d_\u,\\
&\delta_{\s\t} d_\t=\tau_R^{\te{MM}}(\fm_{\s\t})
=\tau_R^{\te{MM}}\Big(\ff_{\s\t}+\sum_{\substack{(\u,\v)\in\te{Std}^2(\P_{\ell,n})\\(\u,\v)\rhd (\s,\t)}}b_{\u\v}^{\s\t}\ff_{\u\v}\Big)
=\delta_{\s\t}c_\t+\sum_{\substack{\u\in\te{Std}(\P_{\ell,n})\\(\u,\u)\rhd (\s,\t)}}b_{\u\u}^{\s\t}c_\u.
\end{align*}
This completes the proof.
\end{proof}

\bigskip
\section{Proof of Theorem \ref{mainthm2}}

In this section, we will give the proof of the second main result in this paper---Theorem \ref{mainthm2}. That is, we shall compute the explicit value of the standard symmetrizing form $\tau_{R}^{\rm{BK}}$ of the degenerate cyclotomic Hecke algebra $H_{\ell,n}$ on each Murphy basis element $m_{\s\t}$ of $H_{\ell,n}$.
 Throughout this section, unless otherwise stated, $R$ is an integral domain, and $\bu=(u_1,u_2,\dots,u_\ell)\in R^{\ell}$.

\subsection{Computing the explicit value of $\tau_R^{\rm{BK}}$ on $m_{\s\t}$}
In this subsection, we shall use the cyclotomic Mackey decomposition
to compute the explicit value of $\tau_R^{\rm{BK}}$ on each Murphy basis element $m_{\s\t}$.

\begin{prop}\label{degvemst} Let $\blam\in\mathscr{P}_{\ell,n}$ and $\s,\t\in\te{Std}(\blam)$. Suppose that either $n(\s)=n$ or $n(\t)=n$. Then
$$\e_n(m_{\s\t})=
\begin{cases}
m_{\s_{\downarrow\leq (n-1)}\t_{\downarrow\leq (n-1)}}, & \mbox{if\ \,}n(\s)=n(\t)\te{\ \,and\ \,}l(\t,n)=1,  \\
0, & \mbox{otherwise}.
\end{cases}$$
\end{prop}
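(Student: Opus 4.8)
The plan is to follow the proof of Proposition~\ref{nondegvemst} almost verbatim, performing the substitutions $T_i\mapsto s_i$, $\mL_j\mapsto L_j$, $Q_s\mapsto u_s$, $\mu_{T_{n-1}}\mapsto\mu_{s_{n-1}}$, $\ve_n\mapsto\e_n$, while keeping track of the one structural difference that matters: $\ve_n=p^{(n)}_0$ extracts the coefficient of $\mL_n^0$, whereas $\e_n=\rp^{(n)}_{\ell-1}$ extracts the coefficient of the \emph{top} power $L_n^{\ell-1}$. As before I would set $\ell_0$ to be the largest index with $\lam^{(\ell_0)}\neq\emptyset$ and split into the three cases $n(\s)=n=n(\t)$, $n(\s)\neq n=n(\t)$, and $n(\s)=n\neq n(\t)$; the third reduces to the second, since $\e_n(m_{\s\t})=\e_n((m_{\t\s})^{*})=\e_n(m_{\t\s})^{*}$ by Lemma~\ref{veAst2} and $(\ref{degmst*})$, and there $m_{\t\s}$ satisfies the hypotheses of the second case.

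The computational engine is unchanged. Expanding $m_{\s\t}=\sum_{w\in\Sym_\blam}d(\s)^{*}\,w\,u_\blam^{+}\,d(\t)$ and using $(\ref{decompositionofdt})$--$(\ref{decompositionofwlamr})$ together with Lemmas~\ref{keytoolsdeg} and~\ref{JucyMurphyCommutatorsdeg}, I bring each summand into the form $\mu_{s_{n-1}}(\text{tensor})+\sum_{j}h'_{j}\,p_{j}(L_n)\,h''_{j}$ with $h'_j,h''_j\in H_{\ell,n-1}$. The $\mu_{s_{n-1}}$-part lies in $H_{\ell,n-1}s_{n-1}H_{\ell,n-1}$ and dies under $\e_n$; and since $L_n$ commutes with every generator of $H_{\ell,n-1}$, each summand $h'_j\,p_j(L_n)\,h''_j$ contributes to $\e_n$ exactly the coefficient of $L_n^{\ell-1}$ in $p_j$, times $h'_jh''_j$. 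Thus everything reduces to bounding the degrees of the $p_j$. The decisive point---the degenerate analogue of $(\ref{easyobservation})$---is that because $s_i^{2}=1$ one has $s_{n-1}(L_{n-1}-u)=(L_n-u)s_{n-1}-1$ and $s_{n-1}(L_n-u)=(L_{n-1}-u)s_{n-1}+1$ (Lemma~\ref{deg-L-comm3}(1)), so moving an $s_{n-1}$ past one of these factors \emph{drops} the $L_n$-degree by one in the error term, rather than raising it as the factor $(q-1)\mL_n$ did in the nondegenerate commutator. Concretely, once $n(\t)=n$ we have $l(\t,n)=\ell_0$ and $u_\blam^{+}=u_{\blam_{\downarrow\leq(n-1)}}^{+}\prod_{i=l(\t,n)+1}^{\ell}(L_n-u_i)$, this product being monic of degree $\ell-l(\t,n)$; the active $s_{n-1}$ (when present: coming from $\gamma_{n-1,n(\s)}\subseteq d(\s)^{*}$ when $n(\s)\neq n$, or from $\beta_{b_{\ell_0,r_{\ell_0}},n-1}\subseteq w$ when $\sigma_{n-1}\leq w$) only ever meets these $\ell-l(\t,n)$ factors $(L_n-u_i)$ (the factors $(L_{n-1}-u_s)$ inside $u_{\blam_{\downarrow\leq(n-1)}}^{+}$ stay to its right throughout), so every error polynomial $p_j$ arising has degree $\leq\ell-l(\t,n)-1\leq\ell-2$.

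Putting this together: in the cases $n(\s)\neq n(\t)$ every summand either carries an $s_{n-1}$ or has error polynomial of degree $\leq\ell-2<\ell-1$, so $\e_n(m_{\s\t})=0$; in the case $n(\s)=n=n(\t)$ only the summands with $\sigma_{n-1}\not\leq w$ survive (for these $w\in\Sym_{\blam_{\downarrow\leq(n-1)}}$, and no $s_{n-1}$ or error term is produced), and on such a summand $\e_n$ multiplies $d(\s_{\downarrow\leq(n-1)})^{*}\,w\,u_{\blam_{\downarrow\leq(n-1)}}^{+}\,d(\t_{\downarrow\leq(n-1)})$ by the coefficient of $L_n^{\ell-1}$ in $\prod_{i=l(\t,n)+1}^{\ell}(L_n-u_i)$, namely $\delta_{l(\t,n),1}$. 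Summing over $w$ and using $\{w\in\Sym_\blam:\sigma_{n-1}\not\leq w\}=\Sym_{\blam_{\downarrow\leq(n-1)}}$ turns the surviving sum into $\delta_{l(\t,n),1}\,d(\s_{\downarrow\leq(n-1)})^{*}\,{\rm x}_{\blam_{\downarrow\leq(n-1)}}\,u_{\blam_{\downarrow\leq(n-1)}}^{+}\,d(\t_{\downarrow\leq(n-1)})=\delta_{l(\t,n),1}\,m_{\s_{\downarrow\leq(n-1)}\t_{\downarrow\leq(n-1)}}$; since the hypothesis $n(\s)=n$ or $n(\t)=n$ makes the condition $n(\s)=n(\t)$ equivalent to $n(\s)=n=n(\t)$, this is exactly the claimed formula.

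The step I expect to be the main obstacle is the careful $L_n$-degree bookkeeping in the subcase $\sigma_{n-1}\leq w$ of $n(\s)=n=n(\t)$ and throughout the case $n(\s)\neq n=n(\t)$: one must check that, after using Lemma~\ref{keytoolsdeg}(1) to merge the $s_{n-1}$ coming from $\gamma_{n-1,n(\s)}$ with the $s_{n-1}$ coming from $\beta_{b_{\ell_0,r_{\ell_0}},n-1}$ into a single active $s_{n-1}$, and Lemma~\ref{JucyMurphyCommutatorsdeg} to commute the remaining $\beta$'s past monomials in the $L_r$'s, every resulting term either genuinely retains an $s_{n-1}$, hence lies in $H_{\ell,n-1}s_{n-1}H_{\ell,n-1}$, or has error polynomial of degree $\leq\ell-2$. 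This is the exact analogue of the manipulations producing $\rho^{(1)},\rho^{(2)},\rho^{(3)}$ in Proposition~\ref{nondegvemst}, but carried out ``at the top'' of the $L_n$-grading rather than ``at the bottom''. One minor bonus is that the degenerate analogue of Lemma~\ref{keytools}(3) is trivial, since $\gamma_{n-1,k}\beta_{k,n-1}=1$ in $H_{\ell,n}$ (the product telescopes by $s_i^{2}=1$), so no extra power of $q$ intervenes---consistent with the absence of any $q$-power in Theorem~\ref{mainthm2}.
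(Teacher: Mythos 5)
Your proposal is correct and follows essentially the same route as the paper's proof: the same three-case split with Case 3 reduced to Case 2 via the $\ast$-involution, the same separation of $w\in\Sym_\blam$ according to whether $\sigma_{n-1}\leq w$, and the same decisive degree bound (the degenerate analogue of the paper's observation that the $L_n$-degree of the error terms lies in $[0,\ell-2]$, so they vanish under $\e_n=\rp^{(n)}_{\ell-1}$), with the surviving contribution being the coefficient $\delta_{l(\t,n),1}$ of $L_n^{\ell-1}$ in the monic polynomial $\prod_{i=l(\t,n)+1}^{\ell}(L_n-u_i)$. The only cosmetic difference is that you phrase the key estimate as "moving $s_{n-1}$ past a factor drops the $L_n$-degree by one," whereas the paper records it as the static degree bound (\ref{easyobservationdeg}); these are the same fact.
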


\begin{proof} We use $\ell_0$ to denote the largest integer such that $1\leq \ell_0\leq\ell$ and $\lam^{(\ell_0)}\neq\emptyset$. We prove the proposition by dividing it into three cases.

\smallskip\noindent
{\it Case 1.} Suppose $n(\s)=n=n(\t)$. Then we have $d(\s)=d(\s_{\downarrow\leq (n-1)})$, $d(\t)=d(\t_{\downarrow\leq (n-1)})$, $l(\t,n)=\ell_0$, and $\te{Shape}(\s_{\downarrow\leq (n-1)})=\te{Shape}(\t_{\downarrow\leq (n-1)})=\blam_{\downarrow\leq (n-1)}.$
First we let $w\in\Sym_\blam$ with $\sigma_{n-1}\leq w$.
Recalling (\ref{decompositionofwlamr}), we have $w_{\blam,{\ell_0},r_{\ell_0}}=\beta_{b_{{\ell_0},r_{\ell_0}},n-1}\cdot w_{\blam,{\ell_0},r_{\ell_0}}^{(1)}$.
Thus using (\ref{decompositionofdt})-(\ref{decompositionofwlamr}), (\ref{deg-L-comm1}) and Lemma \ref{deg-L-comm3} (1)
 we deduce that
\begin{align*}
&d(\s)^*w u_\blam^+ d(\t)\\
=\ &d(\s_{\downarrow\leq (n-1)})^* \Biggr(\prod_{i=1}^{\ell_0-1} w_{i}\Biggr)\Biggr(\prod_{j=1}^{r_{\ell_0}-1}w_{\ell_0,j}\Biggr)
\beta_{b_{\ell_0,r_{\ell_0}},n-1}w_{\ell_0,r_{\ell_0}}^{(1)}
\Biggr(\prod_{s=2}^{\ell}\prod_{k=1}^{\fa_s}(L_k-u_s)\Biggr) d(\t_{\downarrow\leq (n-1)})\\
=\ &\varpi_{\s,\t,w}^{(1,1)} s_{n-1}\varpi^{(1,2)}_{\s,\t,w} + \rp^{(1)}_{\s,\t,w},
\end{align*}
where
$$\begin{aligned}
\varpi_{\s,\t,w}^{(1,1)}&=
d(\s_{\downarrow\leq (n-1)})^* \Biggr(\prod\limits_{i=1}^{\ell_0-1} w_{i}\Biggr)\Biggr(\prod\limits_{j=1}^{r_{\ell_0}-1}w_{\ell_0,j}\Biggr)
 \beta_{b_{\ell_0,r_{\ell_0}},n-2}\Biggr(\prod\limits_{s=\ell_0+1}^{\ell}(L_{n-1}-u_s)\Biggr)\in H_{\ell,n-1},\\
\varpi_{\s,\t,w}^{(1,2)}&=
w_{\ell_0,r_{\ell_0}}^{(1)}
\Biggr(\prod\limits_{s=2}^{\ell}\prod\limits_{\substack{1\leq k\leq \fa_s\\k\neq n}}(L_k-u_s)\Biggr) d(\t_{\downarrow\leq (n-1)})\in H_{\ell,n-1},\end{aligned}$$ and  $$\begin{aligned}
\rp^{(1)}_{\s,\t,w}&=
\begin{cases} \begin{matrix}
d(\s_{\downarrow\leq (n-1)})^* \bigg(\prod\limits_{i=1}^{\ell_0-1} w_{i}\bigg)\bigg(\prod\limits_{j=1}^{r_{\ell_0}-1}w_{\ell_0,j}\bigg)
\beta_{b_{\ell_0,r_{\ell_0}},n-2}\\
 \times \bigg(\sum\limits_{i=\ell_0+1}^{\ell}
\bigg( \prod\limits_{\ell_0+1\leq s<i}(L_{n-1}-u_s)
\prod\limits_{i< t\leq \ell}(L_{n}-u_t)\bigg)\bigg)\\
\times w_{\ell_0,r_{\ell_0}}^{(1)} \bigg(\prod\limits_{s=2}^{\ell}\prod\limits_{\substack{1\leq k\leq \fa_s\\k\neq n}}(L_k-u_s)\bigg)
 d(\t_{\downarrow\leq (n-1)}),\end{matrix} & \mbox{if\ \ } \ell_0<\ell,\\
  0, & \mbox{if\ \ } \ell_0=\ell.
\end{cases}
\end{aligned}$$
Then, noticing that
\begin{align}\label{easyobservationdeg}
\forall\,\ell_0+1\leq i\leq \ell,\ \te{the degree of\ }L_n\te{ in }\prod\limits_{i< t\leq {\ell}}(L_{n}-u_t) \te{\ lies in\ }[0,{\ell}-2],
\end{align}
we see that in this case
\begin{equation}\label{Case11Vedegn}
\e_n\bigl({d(\s)}^*w u_\blam^+ {d(\t)}\bigr)=\rp_{\ell-1}^{(n)}(\rp^{(1)}_{\s,\t,w})=0.
\end{equation}
On the other hand, for each $w\in\Sym_\blam$ with $\sigma_{n-1}\not\leq w$, using (\ref{deg-L-comm1}) and Lemma \ref{deg-L-comm3} (1) we have
\begin{align}\label{Case12Vedegn}
d(\s)^*w u_\blam^+ d(\t)&=d(\s_{\downarrow\leq (n-1)})^*w
\Biggr(\prod_{s=2}^{\ell}\prod_{\substack{1\leq k\leq \fa_s\\ k\neq n}}(L_k-u_s)\Biggr)d(\t_{\downarrow\leq (n-1)})
\prod\limits_{i=l(\t,n)+1}^{\ell}(L_{n}-u_i)\\
\notag &=d(\s_{\downarrow\leq (n-1)})^*w  u_{\blam_{\downarrow\leq (n-1)}}^+d(\t_{\downarrow\leq (n-1)})
\prod\limits_{i=l(\t,n)+1}^{\ell}(L_{n}-u_i).
\end{align}
Therefore, combining (\ref{Case11Vedegn}) and (\ref{Case12Vedegn}), we get that
\begin{align*}
\e_n(m_{\s\t})&=\rp_{\ell-1}^{(n)}(m_{\s\t})
=\sum_{\substack{w\in\Sym_\blam\\ \sigma_{n-1}\not\leq w}}\big(\delta_{l(\t,n),1}
d(\s_{\downarrow\leq (n-1)})^*w  u_{\blam_{\downarrow\leq (n-1)}}^+d(\t_{\downarrow\leq (n-1)}) \big)\\
\notag&=\delta_{l(\t,n),1} d(\s_{\downarrow\leq (n-1)})^*\bigg(\sum_{w\in\Sym_{\blam_{\downarrow\leq (n-1)}}}w\bigg)  u_{\blam_{\downarrow\leq (n-1)}}^+d(\t_{\downarrow\leq (n-1)}) \\
\notag &=\delta_{l(\t,n),1} d(\s_{\downarrow\leq (n-1)})^*
{\rm{x}}_{\blam_{\downarrow\leq (n-1)}}  u_{\blam_{\downarrow\leq (n-1)}}^+d(\t_{\downarrow\leq (n-1)}) \\
\notag&=\delta_{l(\t,n),1} m_{\s_{\downarrow\leq (n-1)}\t_{\downarrow\leq (n-1)}}.
\end{align*}

\smallskip\noindent
{\it Case 2.} Suppose $n(\s)\neq n=n(\t)$. Then $d(\s)\neq d(\s_{\downarrow\leq (n-1)})$ and $d(\t)=d(\t_{\downarrow\leq (n-1)}).$
Then for each $w\in\Sym_\blam$ such that $\sigma_{n-1}\not\leq w,$
using (\ref{decompositionofdt}), (\ref{deg-L-comm1}) and Lemma \ref{deg-L-comm3} (1) we deduce that
\begin{align}\label{degcompu3}
d(\s)^*w u_\blam^+ d(\t)
&=d(\s_{\downarrow\leq (n-1)})^*\gamma_{n-1,n(\s)} w
\Biggr(\prod_{s=2}^{\ell}\prod_{k=1}^{\fa_s}(L_k-u_s)\Biggr)d(\t_{\downarrow\leq (n-1)})\\
\notag &=\varpi_{\s,\t,w}^{(2,1)} s_{n-1}\varpi^{(2,2)}_{\s,\t,w} + \rp^{(2)}_{\s,\t,w},
\end{align}
where
$$\begin{aligned}
\varpi_{\s,\t,w}^{(2,1)}&=
d(\s_{\downarrow\leq (n-1)})^*
\Biggr(\prod\limits_{s=\ell_0+1}^{\ell}(L_{n-1}-u_s)\Biggr)\in H_{\ell,n-1},\\
\varpi_{\s,\t,w}^{(2,2)}&=\gamma_{n-2,n(\s)}w
\Biggr(\prod\limits_{s=2}^{\ell}\prod\limits_{\substack{1\leq k\leq \fa_s\\k\neq n}}(L_k-u_s)\Biggr) d(\t_{\downarrow\leq (n-1)})\in H_{\ell,n-1},\end{aligned}$$ and  $$\begin{aligned}
\rp^{(2)}_{\s,\t,w}&=
\begin{cases}\begin{matrix}
d(\s_{\downarrow\leq (n-1)})^* \bigg(\sum\limits_{i=\ell_0+1}^{\ell}
\bigg( \prod\limits_{\ell_0+1\leq s<i}(L_{n-1}-u_s)
\prod\limits_{i< t\leq \ell}(L_{n}-u_t)\bigg)\bigg) \\
\times \gamma_{n-2,n(\s)}w\bigg(\prod\limits_{s=2}^{\ell}\prod\limits_{\substack{1\leq k\leq \fa_s\\k\neq n}}(L_k-u_s)\bigg)d(\t_{\downarrow\leq (n-1)}),\end{matrix} & \mbox{if\ \ } \ell_0<\ell,\\
  0, & \mbox{if\ \ } \ell_0=\ell.
\end{cases}
\end{aligned}$$
As in Case 1, using (\ref{easyobservationdeg}) we see that in this case,
\begin{equation}\label{Case2Vendeg1}
\e_n\bigl({d(\s)}^*w u_\blam^+ {d(\t)}\bigr)=\rp_{\ell-1}^{(n)}(\rp^{(2)}_{\s,\t,w})=0.
\end{equation}
On the other hand, for each $w\in\Sym_\blam$ such that $\sigma_{n-1} \leq w,$
using (\ref{decompositionofdt})-(\ref{decompositionofwlamr}), (\ref{degbraidrela1}), (\ref{deg-L-comm1}), Lemma \ref{keytoolsdeg} (1) and Lemma \ref{deg-L-comm3} (1)
and noticing that $b_{\ell_0,r_{\ell_0}}\geq a_{\ell_0,r_{\ell_0}-1}+1>n(\s)$, we have
\begin{align*}
d(\s)^*w u_\blam^+ d(\t)
& =  d(\s_{\downarrow\leq (n-1)})^*\gamma_{n-1,n(\s)}
\Biggr(\prod_{i=1}^{\ell_0-1} w_{i}\Biggr)\Biggr(\prod_{j=1}^{r_{\ell_0}-1}w_{\ell_0,j}\Biggr)
\beta_{b_{\ell_0,r_{\ell_0},n-1}}w_{\ell_0,r_{\ell_0}}^{(1)} \\
\notag&\quad\ \times \Biggr(\prod_{s=2}^{\ell}\prod_{k=1}^{\fa_s}(L_k-Q_s)\Biggr)d(\t_{\downarrow\leq (n-1)})\\
&= d(\s_{\downarrow\leq (n-1)})^*\gamma_{n-1,n(\s)}\beta_{b_{\ell_0,r_{\ell_0},n-1}}
\Biggr(\prod_{i=1}^{\ell_0-1} w_{i}\Biggr)\Biggr(\prod_{j=1}^{r_{\ell_0}-1}w_{\ell_0,j}\Biggr)w_{\ell_0,r_{\ell_0}}^{(1)}\\
\notag&\quad\ \times
\Biggr(\prod_{s=2}^{\ell}\prod_{k=1}^{\fa_s}(L_k-u_s)\Biggr)d(\t_{\downarrow\leq (n-1)})\\
&= d(\s_{\downarrow\leq (n-1)})^*\beta_{b_{\ell_0,r_{\ell_0}}-1,n-2}s_{n-1}\gamma_{n-2,n(\s)}
\Biggr(\prod_{i=1}^{\ell_0-1} w_{i}\Biggr)\Biggr(\prod_{j=1}^{r_{\ell_0}-1}w_{\ell_0,j}\Biggr) w_{\ell_0,r_{\ell_0}}^{(1)}\\
\notag&\quad\ \times
\Biggr(\prod_{s=2}^{\ell}\prod_{k=1}^{\fa_s}(L_k-u_s)\Biggr)d(\t_{\downarrow\leq (n-1)})\\
&= \varpi_{\s,\t,w}^{(3,1)} s_{n-1}\varpi^{(3,2)}_{\s,\t,w} + \rp^{(3)}_{\s,\t,w},
\end{align*}
where
$$\begin{aligned}
\varpi_{\s,\t,w}^{(3,1)}&=
d(\s_{\downarrow\leq (n-1)})^*\beta_{b_{\ell_0,r_{\ell_0}}-1,n-2}
\Biggr(\prod\limits_{s=l(\blam,n)+1}^{\ell}(L_{n-1}-u_s)\Biggr)\in H_{\ell,n-1},\\
\varpi_{\s,\t,w}^{(3,2)}&=\gamma_{n-2,n(\s)}
\Biggr(\prod_{i=1}^{\ell_0-1} w_{i}\Biggr)\Biggr(\prod_{j=1}^{r_{\ell_0}-1}w_{\ell_0,j}\Biggr) w_{\ell_0,r_{\ell_0}}^{(1)}
\Biggr(\prod\limits_{s=2}^{\ell}\prod\limits_{\substack{1\leq k\leq \fa_s\\k\neq n}}(L_k-u_s)\Biggr) d(\t_{\downarrow\leq (n-1)})\in H_{\ell,n-1},\end{aligned}$$ and $$\begin{aligned}
\rp^{(3)}_{\s,\t,w} =
\begin{cases}\begin{matrix}
d(\s_{\downarrow\leq (n-1)})^* \beta_{b_{\ell_0,r_{\ell_0}}-1,n-2}
 \bigg(\sum\limits_{i=\ell_0+1}^{\ell}\bigg( \prod\limits_{\ell_0+1\leq s<i}(L_{n-1}-u_s)\\
\times \prod\limits_{i< t\leq \ell}(L_{n}-u_t)\bigg)\bigg)
\gamma_{n-2,n(\s)}\bigg(\prod\limits_{i=1}^{\ell_0-1} w_{i}\bigg)\bigg(\prod\limits_{j=1}^{r_{\ell_0}-1}w_{\ell_0,j}\bigg) w_{\ell_0,r_{\ell_0}}^{(1)}\\
\times \bigg(\prod\limits_{s=2}^{\ell}\prod\limits_{\substack{1\leq k\leq \fa_s\\k\neq n}}(L_k-u_s)\bigg)d(\t_{\downarrow\leq (n-1)}),\end{matrix} & \mbox{if\ \ }\ell_0<\ell,\\
  0, & \mbox{if\ \ } \ell_0=\ell.
\end{cases}
\end{aligned}$$
Then still using (\ref{easyobservationdeg}) we see that in this case, \begin{equation}\label{Case2Vendeg2}
\e_n\bigr(d(\s)^*w u_\blam^+ d(\t)\bigr)=\rp_{\ell-1}^{(n)}(\rp^{(3)}_{\s,\t,w})=0.
\end{equation}
Therefore, combining (\ref{Case2Vendeg1}) and (\ref{Case2Vendeg2}), we get
$\e_n(m_{\s\t}) =0.$

\smallskip\noindent
{\it Case 3.} Suppose that $n(\s)=n\neq n(\t)$. Then it follows from Lemma \ref{veAst2}, (\ref{degmst*}) and the result of Case 2 that
$$\e_n(m_{\s\t})=\e_n(m_{\s\t}^*)=\e_n(m_{\t\s})=0.$$
 This completes the proof of the proposition.
\end{proof}

\begin{prop}\label{degvemst2} Let $\blam\in\mathscr{P}_{\ell,n}$ and $\s,\t\in\te{Std}(\blam)$. Suppose that $n(\s)\neq n\neq n(\t)$. Then
$$\e_n(m_{\s\t})=
\begin{cases}
m_{\s_{\downarrow\leq (n-1)}\t_{\downarrow\leq (n-1)}}, & \mbox{if\ \,}n(\s)=n(\t)\te{\ \,and\ \,}l(\t,n)=1,  \\
0, & \mbox{otherwise}.
\end{cases}$$\end{prop}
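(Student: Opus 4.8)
The plan is to reproduce the argument of Proposition \ref{nondegvemst2} line by line, replacing $T_i$ by $s_i$, $\mL_j$ by $L_j$, $Q_s$ by $u_s$, $\mu_{T_{n-1}}$ by $\mu_{s_{n-1}}$, the descending map $\ve_n=p_0^{(n)}$ by $\e_n=\rp^{(n)}_{\ell-1}$, and invoking Lemmas \ref{veAst2}, \ref{keytoolsdeg}, \ref{JucyMurphyCommutatorsdeg} and \ref{deg-L-comm3} in place of their non-degenerate counterparts. Since the hypothesis $n(\s)\neq n\neq n(\t)$ forces $\sigma_{n-1}\leq d(\s)$ and $\sigma_{n-1}\leq d(\t)$, and since $m_{\s\t}^*=m_{\t\s}$ by (\ref{degmst*}) while $\e_n(h^*)=\e_n(h)^*$ by Lemma \ref{veAst2}, we may assume $n(\s)\leq n(\t)$; note that when $n(\s)=n(\t)$ one has $\s^{-1}(n)=\t^{-1}(n)$, whence $l(\s,n)=l(\t,n)$, $\Shape(\s_{\downarrow\leq(n-1)})=\Shape(\t_{\downarrow\leq(n-1)})=:\bnu$, and the asserted formula is symmetric in $\s,\t$.

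Two modifications are needed relative to the non-degenerate case. First, because $\e_n$ now extracts the coefficient of $L_n^{\ell-1}$ in the Mackey decomposition (\ref{degMadecomequation}), the role played there by (\ref{propertyTLcomm}) is now taken over by the statement
$$s_{n-1}L_n^k\in H_{\ell,n-1}s_{n-1}H_{\ell,n-1}+\sum_{j=0}^{\ell-2}H_{\ell,n-1}L_n^j\qquad(0\leq k\leq\ell-1),$$
which follows by an easy induction on $k$ from $s_{n-1}L_n=L_{n-1}s_{n-1}+1$ (Lemma \ref{deg-L-comm3} (1)). Since $L_n$ commutes with $H_{\ell,n-1}$, this shows that anything in $H_{\ell,n-1}s_{n-1}H_{\ell,n-1}+\sum_{j=0}^{\ell-2}H_{\ell,n-1}L_n^j$, multiplied on either side by elements of $H_{\ell,n-1}$ and by powers $L_n^a$ with $a\leq\ell-1$, stays in that subspace and so is annihilated by $\e_n$. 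Secondly, when $n(\s)=n(\t)$ the product $\gamma_{n-1,n(\s)}\beta_{n(\t),n-1}=\gamma_{n-1,n(\t)}\beta_{n(\t),n-1}$ equals \emph{exactly} $1$ in $H_{\ell,n}$, because $\gamma_{n-1,n(\t)}=\beta_{n(\t),n-1}^{-1}$ in $\Sym_n$ and there is no deformation of this relation; this is where the factor $q^{n-n(\t)}$ of Proposition \ref{nondegvemst2} disappears, and why no analogue of Lemma \ref{keytools} (3) is required.

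With these inputs the computation proceeds exactly as before. For $w\in\Sym_\blam$ one expands $d(\s)^*\,w\,u_\blam^+\,d(\t)$ via (\ref{decompositionofdt})--(\ref{decompositionofwlamr}) and Lemma \ref{keytoolsdeg} (1) to obtain the degenerate analogue of (\ref{nondegcompu5}); as the outer factors lie in $H_{\ell,n-1}$, Lemma \ref{veAst2} reduces everything to the $\e_n$-value of $\gamma_{n-1,n(\s)}\,w_{l(\s,n),r(\s,n)}\,u_\blam^+\,\beta_{n(\t),n-1}\,d(\t_{\downarrow\leq(n-1)})$, and Lemma \ref{JucyMurphyCommutatorsdeg} rewrites $u_\blam^+\beta_{n(\t),n-1}$ as $\beta_{n(\t),n-1}\,u_\bnu^+\prod_{j=l(\t,n)+1}^{\ell}(L_n-u_j)$ plus terms supported on the $s_{n-1}$-part with $L_n$-degree $\leq\ell-1$. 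If $\sigma_{n(\s)-1}\leq w_{l(\s,n),r(\s,n)}$, write $w_{l(\s,n),r(\s,n)}=w^{(2)}_{l(\s,n),r(\s,n)}\gamma_{n(\s)-1,c}$ with $c:=c_{l(\s,n),r(\s,n)}\leq n(\s)-1<n(\t)$ and use Lemma \ref{keytoolsdeg} (1), (2) to land everything in $H_{\ell,n-1}s_{n-1}H_{\ell,n-1}+\sum_{j\leq\ell-2}H_{\ell,n-1}L_n^j$, so $\e_n=0$. If $\sigma_{n(\s)-1}\not\leq w_{l(\s,n),r(\s,n)}$, then $w_{l(\s,n),r(\s,n)}$ commutes with $\gamma_{n-1,n(\s)}$; when moreover $n(\s)<n(\t)$, Lemma \ref{keytoolsdeg} (1) gives $\gamma_{n-1,n(\s)}\beta_{n(\t),n-1}=\beta_{n(\t)-1,n-2}\,s_{n-1}\,\gamma_{n-2,n(\s)}$, which again lands in the $s_{n-1}$-part, so $\e_n=0$; in particular $\e_n(m_{\s\t})=0$ whenever $n(\s)\neq n(\t)$. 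Finally, when $n(\s)=n(\t)$ and $\sigma_{n(\s)-1}\not\leq w_{l(\s,n),r(\s,n)}$, the identity $\gamma_{n-1,n(\s)}\beta_{n(\t),n-1}=1$ leaves the surviving term in the shape $(\text{element of }H_{\ell,n-1})\cdot\prod_{j=l(\t,n)+1}^{\ell}(L_n-u_j)$, a polynomial in $L_n$ of degree $\ell-l(\t,n)$ whose leading coefficient is that element; hence $\e_n=\rp^{(n)}_{\ell-1}$ returns that element when $l(\t,n)=1$ and $0$ otherwise. Summing over such $w$ and using the degenerate analogue of (\ref{Sbnu})--(\ref{nondegcompu11}) (the relevant products exhaust $\Sym_\bnu$, so their sum is ${\rm{x}}_\bnu$) yields $\e_n(m_{\s\t})=d(\s_{\downarrow\leq(n-1)})^*\,{\rm{x}}_\bnu\,u_\bnu^+\,d(\t_{\downarrow\leq(n-1)})=m_{\s_{\downarrow\leq(n-1)}\t_{\downarrow\leq(n-1)}}$ when $n(\s)=n(\t)$ and $l(\t,n)=1$, and $0$ otherwise, as claimed.

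The only genuine work, as in Proposition \ref{nondegvemst2}, is the bookkeeping: one must check that every term other than the distinguished one really lies in $H_{\ell,n-1}s_{n-1}H_{\ell,n-1}+\sum_{j=0}^{\ell-2}H_{\ell,n-1}L_n^j$, i.e. that no spurious $L_n^{\ell-1}$ is produced while commuting Jucys--Murphy polynomials past the $s$-letters. The displayed degenerate analogue of (\ref{propertyTLcomm}) is precisely what controls this, and with it in hand the proof is a faithful transcription of the non-degenerate one.
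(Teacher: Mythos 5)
Your proposal is correct and follows essentially the same route as the paper's own proof: the same reduction via Lemma \ref{veAst2} and (\ref{degmst*}) to $n(\s)\leq n(\t)$, the same case analysis on $\sigma_{n(\s)-1}\leq w_{l(\s,n),r(\s,n)}$, the same use of Lemma \ref{JucyMurphyCommutatorsdeg} and the degenerate analogue (\ref{propertysLcomm}) of (\ref{propertyTLcomm}), and the same key observations that $\gamma_{n-1,n(\t)}\beta_{n(\t),n-1}=1$ exactly (so no $q$-power appears) and that $\rp^{(n)}_{\ell-1}$ applied to $\prod_{j=l(\t,n)+1}^{\ell}(L_n-u_j)$ produces the factor $\delta_{l(\t,n),1}$. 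No gaps.
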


\begin{proof} We use $\ell_0$ to denote the largest integer such that $1\leq \ell_0\leq\ell$ and $\lam^{(\ell_0)}\neq\emptyset$.
By assumption, $\sigma_{n-1} \leq d(\s)$ and $\sigma_{n-1} \leq d(\t)$. In view of Lemma \ref{veAst2} and (\ref{degmst*}), we may assume that $n(\s)\leq n(\t)$.

For each $w\in\Sym_\blam$, using
(\ref{decompositionofdt})-(\ref{decompositionofwlamr}) and Lemma \ref{keytoolsdeg} (1) we deduce that
\begin{align}\label{degcompu5}
&d(\s)^*wu_\blam^+ d(\t)\\
\notag =\ &d(\s_{\downarrow\leq (n-1)})^*\gamma_{n-1,n(\s)} \Biggr(\prod_{j=1}^{\ell_0}w_{j}\Biggr)
\Biggr(\prod_{s=2}^{\ell}\prod_{i=1}^{\fa_s}(L_i-u_s)\Biggr)
\beta_{n(\t),n-1} d(\t_{\downarrow\leq (n-1)})\\
\notag =\ &d(\s_{\downarrow\leq (n-1)})^*\Biggr(\prod_{a=1}^{l(\s,n)-1} w_{a}\Biggr)
\Biggr(\prod_{b=1}^{r(\s,n)-1} w_{l(\s,n),b}\Biggr)
\Biggr(\prod_{c=r(\s,n)+1}^{r_{l(\s,n)}} (w_{l(\s,n),c})_{\downarrow} \Biggr)\\
\notag &\times \Biggr(\prod_{d=l(\s,n)+1}^{\ell_0} (w_{d})_{\downarrow} \Biggr)
 \gamma_{n-1,n(\s)}w_{l(\s,n),r(\s,n)} \Biggr(\prod_{s=2}^{\ell}\prod_{i=1}^{\fa_s}(L_i-u_s)\Biggr)\beta_{n(\t),n-1}d(\t_{\downarrow\leq (n-1)}).
\end{align}
We want to compute $\e_n({d(\s)}^*wu_\blam^+ {d(\t)})$. Since both the elements $$
d(\s_{\downarrow\leq (n-1)})^*\Biggr(\prod_{a=1}^{l(\s,n)-1} {w_{a}} \Biggr)
\Biggr(\prod_{b=1}^{r(\s,n)-1} {w_{l(\s,n),b}}\Biggr)
\Biggr(\prod_{c=r(\s,n)+1}^{r_{l(\s,n)}}{(w_{l(\s,n),c})_{\downarrow}} \Biggr)
 \Biggr(\prod_{d=l(\s,n)+1}^{\ell_0} {(w_{d})_{\downarrow}} \Biggr)$$
and ${d(\t_{\downarrow\leq (n-1)})}$ lie in $H_{\ell,n-1}$, in view of Lemma \ref{veAst2}, it suffices to consider the cyclotomic Mackey decomposition of \begin{equation*}\label{mackey111deg}
\gamma_{n-1,n(\s)}{w_{l(\s,n),r(\s,n)}}  \Biggr(\prod_{s=2}^{\ell}\prod_{i=1}^{\fa_s}(L_i-u_s)\Biggr){\beta_{n(\t),n-1}}.
\end{equation*}
and its value under $\e_n.$

Denote that $\bnu:=\te{Shape}(\t_{\downarrow\leq (n-1)})$.
By definition, $$
u_{\bnu}^+=\Biggr(\prod_{s=2}^{l(\t,n)}\prod_{i=1}^{\mathfrak{a}_s}(L_i-u_s)\Biggr)
\Biggr(\prod_{k=l(\t,n)+1}^{\ell}\prod_{j=1}^{\mathfrak{a}_s-1}(L_j-u_k)\Biggr)=\prod_{s=2}^{\ell}\prod_{\substack{1\leq i\leq\fa_s\\ i\neq n(\t)}}\(L_{\beta_{n(\t),n-1}^{-1}(i)}-u_s\).
 $$
Thus applying Lemma \ref{JucyMurphyCommutatorsdeg}, we get that
\begin{align*}
&\Biggr(\prod_{s=2}^{\ell}\prod_{i=1}^{\fa_s}(L_i-u_s)\Biggr)\beta_{n(\t),n-1}\\
=\ &\beta_{n(\t),n-1}\prod_{s=2}^{\ell}\prod_{i=1}^{\fa_s}\(L_{\beta_{n(\t),n-1}^{-1}(i)}-u_s\)
+\sum_{\substack{1\leq m\leq n-k\\k\leq i_1<i_2<\dots<i_m\leq n-1}} \beta^{i_1,\dots,i_m}_{k,n-1}f_{i_1,\dots,i_m}\\
=\ &\beta_{n(\t),n-1}\prod_{s=2}^{\ell}\prod_{\substack{1\leq i\leq\fa_s\\ i\neq n(\t)}}\(L_{\beta_{n(\t),n-1}^{-1}(i)}-u_s\)
\Biggr(\prod_{j=l(\t,n)+1}^{\ell}(L_{n}-u_j)\Biggr)\\
&+\sum_{\substack{1\leq m\leq n-k\\k\leq i_1<i_2<\dots<i_m\leq n-1}} \beta^{i_1,\dots,i_m}_{k,n-1}f_{i_1,\dots,i_m}\\
=\ &\beta_{n(\t),n-1}u_\bnu^+\Biggr(\prod_{i=l(\t,n)+1}^{\ell}(L_{n}-u_i)\Biggr)
+\sum_{\substack{1\leq m\leq n-k\\k\leq i_1<i_2<\dots<i_m\leq n-1}} \beta^{i_1,\dots,i_m}_{k,n-1}f_{i_1,\dots,i_m},
\end{align*}
where $f_{i_1,\dots,i_m}=\sum_{a=0}^{\ell-1}f_{i_1,\dots,i_m}^{(a)}L^a_n$
for some $f_{i_1,\dots,i_m}^{(a)}\in R[L_1,\dots,L_{n-1}]$.

Suppose that $\sigma_{n(\s)-1} \leq w_{l(\s,n),r(\s,n)}$. Note that Lemma \ref{deg-L-comm3} implies that
\begin{align}\label{propertysLcomm}
s_{n-1}L_n^k\in H_{\ell,n-1}s_{n-1}H_{\ell,n-1}+\sum_{j=0}^{\ell-2}H_{\ell,n-1}L_n^{j},\ \ \forall\,0\leq k\leq \ell-1.
\end{align}
Noticing that $c_{l(\s,n),r(\s,n)}\leq n(\s)-1< n(\t),$
and using (\ref{degbraidrela1}), (\ref{deg-L-comm1}), Lemma \ref{keytoolsdeg} (1) and (\ref{propertysLcomm}),
we deduce that: for each $1\leq m\leq n-n(\t)$ and $n(\t)\leq i_1<\dots<i_m\leq n-1$,
\begin{align}\label{degcompu7}
&\quad\,{\gamma_{n-1,n(\s)}}{w_{l(\s,n),r(\s,n)}}
{\beta^{i_1,\dots,i_m}_{n(\t),n-1}}f_{i_1,\dots,i_m}\\
\notag & ={\gamma_{n-1,n(\s)}}{w_{l(\s,n),r(\s,n)}^{(2)}}{\gamma_{n(\s)-1,c_{l(\s,n),r(\s,n)}}}
{\beta^{i_1,\dots,i_m}_{n(\t),n-1}}f_{i_1,\dots,i_m}\\
\notag & ={w_{l(\s,n),r(\s,n)}^{(2)}}
{\gamma_{n-1,c_{l(\s,n),r(\s,n)}}}{\beta^{i_1,\dots,i_m}_{n(\t),n-1}}
f_{i_1,\dots,i_m} \\
\notag & ={w_{l(\s,n),r(\s,n)}^{(2)}}
{(\beta^{i_1,\dots,i_m}_{n(\t),n-1})_{\downarrow}}{\gamma_{n-1,c_{l(\s,n),r(\s,n)}}}
f_{i_1,\dots,i_m}\\
\notag &= \mu_{s_{n-1}}(\varpi^{(4,i_1,\dots,i_m)}_{\s,\t,w})+\sum_{j=0}^{\ell-2}g_j^{i_1,\dots,i_m} L_n^j
\end{align}
for some $\varpi^{(4,i_1,\dots,i_m)}_{\s,\t,w}\in H_{\ell,n-1}\ot_{H_{\ell,n-2}}H_{\ell,n-1}$
and $g_j^{i_1,\dots,i_m}\in H_{\ell,n-1},\ 0\leq j\leq \ell-2$. Similarly, \begin{align}\label{degcompu8}
&\quad\, {\gamma_{n-1,n(\s)}}{w_{l(\s,n),r(\s,n)}}{\beta_{n(\t),n-1}}u_\bnu^+ \Biggr(\prod_{i=l(\t,n)+1}^{\ell}(L_{n}-u_i)\Biggr) \\
\notag &={\gamma_{n-1,n(\s)}}{w_{l(\s,n),r(\s,n)}^{(2)}}{\gamma_{n(\s)-1,c_{l(\s,n),r(\s,n)}}}{\beta_{n(\t),n-1}}u_\bnu^+ \Biggr(\prod_{i=l(\t,n)+1}^{\ell}(L_{n}-u_i)\Biggr)\\
 \notag &= {w_{l(\s,n),r(\s,n)}^{(2)}}
 {\gamma_{n-1,c_{l(\s,n),r(\s,n)}}}{\beta_{n(\t),n-1}}u_\bnu^+ \Biggr(\prod_{i=l(\t,n)+1}^{\ell}(L_{n}-u_i)\Biggr)\\
\notag &= {w_{l(\s,n),r(\s,n)}^{(2)}}{\beta_{n(\t)-1,n-2}}
 {\gamma_{n-1,c_{l(\s,n),r(\s,n)}}}u_\bnu^+ \Biggr(\prod_{i=l(\t,n)+1}^{\ell}(L_{n}-u_i)\Biggr)\\
\notag &={w_{l(\s,n),r(\s,n)}^{(2)}}{\beta_{n(\t)-1,n-2}}
s_{n-1}\Biggr(\prod_{i=l(\t,n)+1}^{\ell}(L_{n}-u_i)\Biggr)\gamma_{n-2,c_{l(\s,n),r(\s,n)}}u_\bnu^+\\
\notag &= \mu_{s_{n-1}}(\varpi^{(5)}_{\s,\t,w})+\sum_{j=0}^{\ell-2}g_j  L_n^j
\end{align}
for some $\varpi^{(5)}_{\s,\t,w}\in H_{\ell,n-1}\ot_{H_{\ell,n-2}}H_{\ell,n-1}$
and $g_j \in H_{\ell,n-1},\ 0\leq j\leq \ell-2$.
For (\ref{degcompu7}) and (\ref{degcompu8}), their values under $\e_n$ are both zero by the cyclotomic Mackey decomposition (\ref{degMadecomequation}). Therefore, if $\sigma_{n(\s)-1} \leq w_{l(\s,n),r(\s,n)}$, then $\e_n({d(\s)}^*wu_\blam^+ {d(\t)})=0$.

Now assume that $\sigma_{n(\s)-1}\not\leq w_{l(\s,n),r(\s,n)}$, which implies that $w_{l(\s,n),r(\s,n)}$ commutes with $\gamma_{n-1,n(\s)}$.
Then using the assumption $n(\s)\leq n(\t)$,
Lemma \ref{keytoolsdeg}, (\ref{deg-L-comm1}) and (\ref{propertysLcomm}),
we deduce that: for each $1\leq m\leq n-n(\t)$ and $n(\t)\leq i_1<\dots<i_m\leq n-1$,
\begin{align}\label{degcompu9}
&{\gamma_{n-1,n(\s)}}{w_{l(\s,n),r(\s,n)}}
{\beta^{i_1,\dots,i_m}_{n(\t),n-1}}f_{i_1,\dots,i_m}{d(\t_{\downarrow\leq (n-1)})}\\
\notag =\ &w_{l(\s,n),r(\s,n)}
 {\gamma_{n-1,n(\s)}}{\beta^{i_1,\dots,i_m}_{n(\t),n-1}}f_{i_1,\dots,i_m}{d(\t_{\downarrow\leq (n-1)})}\\
\notag =\ &\mu_{s_{n-1}}(\varpi^{(6,i_1,\dots,i_m)}_{\s,\t,w})+\sum_{j=0}^{\ell-2}h_j^{i_1,\dots,i_m} L_n^j
\end{align}
for some $\varpi^{(6,i_1,\dots,i_m)}_{\s,\t,w}\in H_{\ell,n-1}\ot_{H_{\ell,n-2}}H_{\ell,n-1}$
and $h_j^{i_1,\dots,i_m}\in H_{\ell,n-1},\ 0\leq j\leq \ell-2$. So, once again, the value of this term under $\e_n$ is zero by the cyclotomic Mackey decomposition (\ref{degMadecomequation}).

Therefore, it remains to consider the following term under the assumption that $\sigma_{n(\s)-1}\not\leq w_{l(\s,n),r(\s,n)}$. Noticing that in this case $w_{l(\s,n),r(\s,n)}$ commutes with ${\gamma_{n-1,n(\s)}}$, we get that
 \begin{align}\label{wlsnrsncommdeg}
  & {\gamma_{n-1,n(\s)}}{w_{l(\s,n),r(\s,n)}}{\beta_{n(\t),n-1}}
   u_\bnu^+ \Biggr(\prod_{i=l(\t,n)+1}^{\ell}(L_{n}-u_i)\Biggr){d(\t_{\downarrow\leq (n-1)})} \\
\notag=\ &{w_{l(\s,n),r(\s,n)}}
   {\gamma_{n-1,n(\s)}}{\beta_{n(\t),n-1}}\Biggr(\prod_{i=l(\t,n)+1}^{\ell}(L_{n}-u_i)\Biggr)u_\bnu^+ {d(\t_{\downarrow\leq (n-1)})} .
\end{align}

If $n(\s)< n(\t)$, then using Lemma \ref{keytoolsdeg} (1), we have $$\begin{aligned}
&{w_{l(\s,n),r(\s,n)}}{\gamma_{n-1,n(\s)}}{\beta_{n(\t),n-1}}\Biggr(\prod_{i=l(\t,n)+1}^{\ell}(L_{n}-u_i)\Biggr)\\
=\ &{w_{l(\s,n),r(\s,n)}}{\beta_{n(\t)-1,n-2}}s_{n-1}\Biggr(\prod_{i=l(\t,n)+1}^{\ell}(L_{n}-u_i)\Biggr){\gamma_{n-2,n(\s)}}\\
=\ &\mu_{s_{n-1}}(\varpi_{\s\t})+\sum_{j=0}^{\ell-2}h_jL_n^j
\end{aligned}
$$
for some $\varpi_{\s\t}\in H_{\ell,n-1}\ot_{H_{\ell,n-2}}H_{\ell,n-1}$ and $h_j\in H_{\ell,n-1},\ 0\leq j\leq \ell-2$. So, in this case,
the value of (\ref{wlsnrsncommdeg}) under $\e_n$ is zero by the cyclotomic Mackey decomposition (\ref{degMadecomequation}).

If $n(\s)=n(\t)$, the relation (\ref{degquadraticrela}) implies that
${\gamma_{n-1,n(\s)}}{\beta_{n(\t),n-1}}=1$. Then it follows from this, (\ref{degcompu5}) and (\ref{wlsnrsncommdeg}) that in this case $$\begin{aligned}
\e_n({d(\s)}^*wu_\blam^+ {d(\t)})
&=\delta_{l(\t,n),1} {d(\s_{\downarrow\leq (n-1)})}^*\Biggr(\prod_{a=1}^{l(\s,n)-1} {w_{a}} \Biggr)
\Biggr(\prod_{b=1}^{r(\s,n)} {w_{l(\s,n),b}}\Biggr)\\
&\qquad   \times \Biggr(\prod_{c=r(\s,n)+1}^{r_{l(\s,n)}} {(w_{l(\s,n),c})_{\downarrow}} \Biggr)
\Biggr(\prod_{d=l(\s,n)+1}^{\ell_0}{(w_{d})_{\downarrow}} \Biggr)u_\bnu^+ {d(\t_{\downarrow\leq (n-1)})} .
\end{aligned}$$

Note that if $n(\s)=n(\t)$, then $\te{Shape}(\s_{\downarrow\leq (n-1)})=\te{Shape}(\t_{\downarrow\leq (n-1)})=\bnu. $
Hence in this case we have (\ref{Sbnu}) and
\begin{align}\label{degcompu11}
&\sum_{\substack{w\in\Sym_\blam\\ \sigma_{n(\s)-1}\not\leq w_{l(\s,n),r(\s,n)}}}\Biggr(\prod\limits_{a=1}^{l(\s,n)-1} {w_{a}}\Biggr)
\Biggr(\prod\limits_{b=1}^{r(\s,n)} {w_{l(\s,n),b}}\Biggr)
\Biggr(\prod\limits_{c=r(\s,n)+1}^{r_{l(\s,n)}} {(w_{l(\s,n),c})_{\downarrow}} \Biggr)\\
\notag &\qquad \qquad \qquad \times \Biggr(\prod\limits_{d=l(\s,n)+1}^{\ell_0} {(w_{d})_{\downarrow}} \Biggr)
 = \sum_{u\in \Sym_\bnu}u={\rm{x}}_\bnu.
\end{align}
Therefore, summarizing all the discussion above, we get that: if $n(\s)\neq n(\t),$ then $\e_n(m_{\s\t})=0;$ if $n(\s)=n(\t)$, then
\begin{align*}
 \e_n(m_{\s\t})
&=\delta_{l(\t,n),1}\sum_{\substack{w\in\Sym_\blam\\ \sigma_{n(\s)-1}\not\leq w_{l(\s,n),r(\s,n)}}}
{d(\s_{\downarrow\leq (n-1)})}^*
\Biggr(\prod\limits_{a=1}^{l(\s,n)-1} {w_{a}} \Biggr)\Biggr(\prod\limits_{b=1}^{r(\s,n)} {w_{l(\s,n),b}}\Biggr)\\
&\qquad \qquad \qquad \times \Biggr(\prod\limits_{c=r(\s,n)+1}^{r_{l(\s,n)}} {(w_{l(\s,n),c})_{\downarrow}} \Biggr)
  \Biggr(\prod\limits_{d=l(\s,n)+1}^{\ell_0} {(w_{d})_{\downarrow}} \Biggr) u_\bnu^+ {d(\t_{\downarrow\leq (n-1)})}\\
&=\delta_{l(\t,n),1}{d(\s_{\downarrow\leq (n-1)})}^*
{\rm{x}}_\bnu u^+_\bnu {d(\t_{\downarrow\leq (n-1)})}\\
& =\delta_{l(\t,n),1} m_{\s_{\downarrow\leq (n-1)}\t_{\downarrow\leq (n-1)}}.
\end{align*}
This completes the proof of the proposition.
\end{proof}

\begin{prop}\label{degvemst3}
Let $\blam\in\mathscr{P}_{\ell,n}$ and $\s,\t\in\te{Std}(\blam).$ Then
$$\e_n(m_{\s\t})=\begin{cases}  m_{\s_{\downarrow\leq (n-1)}\t_{\downarrow\leq (n-1)}}, &\text{if\ \,}n(\s)=n(\t)\te{\ \,and\ \,}l(\t,n)=1,\\
0, &\text{otherwise.}
\end{cases}$$
\end{prop}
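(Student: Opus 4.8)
The plan is to recognise that Proposition~\ref{degvemst3} is simply the amalgamation of Propositions~\ref{degvemst} and~\ref{degvemst2}, so that no fresh computation is needed. First I would observe that, for $\s,\t\in\Std(\blam)$, the hypotheses of these two propositions are mutually exclusive and jointly exhaustive: either at least one of $n(\s),n(\t)$ equals $n$ --- which is exactly the setting of Proposition~\ref{degvemst} --- or else $n(\s)\neq n$ and $n(\t)\neq n$, which is the setting of Proposition~\ref{degvemst2}. It then suffices to notice that the two propositions return the same value: in both, $\e_n(m_{\s\t})$ equals $m_{\s_{\downarrow\leq (n-1)}\t_{\downarrow\leq (n-1)}}$ precisely when $n(\s)=n(\t)$ and $l(\t,n)=1$, and is $0$ otherwise. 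Concatenating the two cases gives the asserted formula.

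This mirrors the way Proposition~\ref{MainProp1} was deduced from Propositions~\ref{nondegvemst} and~\ref{nondegvemst2} in the non-degenerate setting, and the reasoning is the same: since the condition ``$n(\s)=n(\t)$ and $l(\t,n)=1$'' is common to both constituent statements, there is no competition between formulae to reconcile, and the right-hand sides simply coincide on the two overlapping halves of the case split.

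I do not expect any real obstacle here, because all the substantive work has already been carried out in the proofs of Propositions~\ref{degvemst} and~\ref{degvemst2}: expanding $d(\s)^*\,{\rm{x}}_{\blam}u_\blam^+\,d(\t)$ through the cyclotomic Mackey decomposition~(\ref{degMadecomequation}), commuting the braid factors $\beta_{k,n-1}$ and $\gamma_{n-1,k}$ past the Jucys--Murphy elements via Lemmas~\ref{keytoolsdeg} and~\ref{JucyMurphyCommutatorsdeg}, and controlling the degree of $L_n$ with~(\ref{easyobservationdeg}) and~(\ref{propertysLcomm}) so that, after applying $\e_n$ (which extracts the coefficient of $L_n^{\ell-1}$), only the expected term $\delta_{l(\t,n),1}\,m_{\s_{\downarrow\leq (n-1)}\t_{\downarrow\leq (n-1)}}$ remains when $n(\s)=n(\t)$. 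Consequently the present proposition is a one-line consequence of the two preceding ones.
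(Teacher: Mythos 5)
Your proposal is correct and coincides with the paper's own argument: the paper proves Proposition \ref{degvemst3} exactly by combining Propositions \ref{degvemst} and \ref{degvemst2}, whose hypotheses partition all cases and whose conclusions agree. Nothing further is required.
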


\begin{proof}
This follows from Propositions \ref{degvemst} and \ref{degvemst2}.
\end{proof}

As a consequence, we get the proof of Theorem \ref{mainthm2}.\medskip

\noindent
{\bf Proof of Theorem \ref{mainthm2}:} This follows from Lemma \ref{degtoperation} and Proposition \ref{degvemst3}.
\hfill\qed

\subsection{An application}

In the rest of this section, we shall give an application of Proposition \ref{degvemst3} and Theorem \ref{mainthm2}. From now on until the end of this section, we shall assume that $R=K$ is a field and $H_{\ell,n}$ is semisimple over $K$.

Let $\blam\in\P_{\ell,n}$ and $\t\in\Std(\blam)$. For each $1\leq k\leq n$, we define the content of $k$ in $\t$ to be
$$c_{\t}(k)=(c-b)\cdot 1_K+u_{l}\te{\quad if\quad} \t^{-1}(k)=(b,c,l).$$
For each $1\leq k\leq n$, we define $\mathrm{C}(k):=\left\{c_\t(k)\mid \t \in \Std(\blam),\ \blam\in \P_{\ell, n}\right\}$.

\begin{dfn}\text{\rm (\cite{Mur92}, \cite[Definition 6.7]{AMR})}\label{dfn:Ft2}
For any $\s,\t\in\Std(\blam)$ and $\blam\in\P_{\ell,n}$, we define
$$\rF_{\t}:=\prod\limits^n\limits_{k=1}\prod\limits_{\substack{c\in \mathrm{C}(k)\\c\neq c_{\t}(k)}}\frac{L_k-c}{c_{\t}(k)-c}\in H_{\ell,n},\quad
f_{\s\t}:=\rF_\s m_{\s\t}\rF_\t\in H_{\ell,n}.
$$
\end{dfn}
By \cite{AMR}, the set $\{f_{\s\t}\mid \s,\t\in\Std(\blam),\ \blam\in\P_{\ell,n}\}$ gives a $K$-basis of $H_{\ell,n}$, and is called the seminormal basis of $H_{\ell,n}$. For $\t\in\te{Std}(\P_{\ell,n}),$ following \cite[Theorem 3.7]{Zhao}, we define
$$r_\t:=\prod_{i=1}^{n}\frac{\prod\limits_{\gamma\in \mathscr{A}_{\ft}(i)}\(c_\ft(i)-c(\gamma)\)}
{\prod\limits_{\alpha\in \mathscr{R}_{\ft}(i)}\(c_\ft(i)-c(\alpha)\)}.$$

\begin{lem}\text{\rm (\cite[Lemma 3.8, Proposition 3.14]{zdk})}\label{fsemi2}
\begin{itemize}
  \item[(1)] For any $\s,\t\in\Std(\blam)$ and $\u,\v\in\Std(\bmu),$ where $\blam,\bmu\in\P_{\ell,n},$ we have $$
{f}_{\s\t}{f}_{\u\v}=\delta_{\t\u}r_\t {f}_{\s\v}.
$$
  \item[(2)] Let $\blam\in\P_{\ell,n}$ and $\s,\t\in\Std(\blam)$. Then for each $1\leq k\leq n,$ $$
L_k {f}_{\s\t}=c_\s(k){f}_{\s\t},\quad  {f}_{\s\t}L_k=c_\t(k){f}_{\s\t}. $$
Moreover, $$
Kf_{\s\t}=\bigl\{h\in H_{\ell,n}\bigm|L_kh=c_\s(k)h,\ hL_k=c_\t(k)h,\ \forall\,1\leq k\leq n\bigr\}.
$$
  \item[(3)] For any $\s,\t\in\Std(\P_{\ell,n}),$ we have $\s=\t$ if and only if $c_\s(k)=c_\t(k),\ \forall\,1\leq k\leq n$.
\end{itemize}
\end{lem}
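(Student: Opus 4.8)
This is the degenerate counterpart of Lemma \ref{fsemi} (the non-degenerate case, due to Mathas \cite{Ma}), so the plan is to run the analogous argument for $H_{\ell,n}$, treating the three parts in the order (3), (2), (1), each feeding the next. The input I would quote from \cite{AMR} is twofold: with respect to the Murphy cellular basis of Lemma \ref{cellular2}, each $L_k$ acts on the cell module $\Delta(\blam)$ (with its Murphy basis $\{m_\t\mid\t\in\Std(\blam)\}$) by a matrix triangular for the dominance order on $\Std(\blam)$ whose diagonal entry at $\t$ is $c_\t(k)$; and, $H_{\ell,n}$ being semisimple, each $L_k$ acts diagonalisably on $H_{\ell,n}$ with spectrum inside $\mathrm{C}(k)$. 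Granting these, part (3) is purely combinatorial. Given $\s,\t\in\Std(\P_{\ell,n})$ with $c_\s(k)=c_\t(k)$ for all $1\le k\le n$, I induct on $n$ (noting $H_{\ell,n-1}$ is again semisimple): the inductive hypothesis yields $\s_{\downarrow\leq n-1}=\t_{\downarrow\leq n-1}$, so $\s$ and $\t$ share a shape $\bnu$ on $\{1,\dots,n-1\}$ and $\s^{-1}(n),\t^{-1}(n)$ are both addable nodes of $\bnu$. Semisimplicity of $H_{\ell,n}$ forces the separation condition on $u_1,\dots,u_\ell$, under which distinct addable nodes of $\bnu$ have distinct contents — two in one component lie on different diagonals, while two in different components $a\ne b$ would require $u_a-u_b\in\mathbb{Z}\cdot 1_K$ of absolute value $<n$, which is excluded — so $c_\s(n)=c_\t(n)$ forces $\s^{-1}(n)=\t^{-1}(n)$ and $\s=\t$. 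The converse is trivial.

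Using part (3) together with the triangular action, the commuting operators $L_1,\dots,L_n$ are simultaneously diagonalisable with joint eigenvalues exactly the content sequences of standard tableaux, each joint eigenspace inside $\Delta(\blam)$ being one-dimensional. Hence $\rF_\t$ is the joint spectral projection onto the eigenspace for $(c_\t(1),\dots,c_\t(n))$; in particular $\rF_\t^2=\rF_\t$, $\rF_\s\rF_\t=0$ for $\s\neq\t$ by part (3), and $\sum_{\t\in\Std(\blam)}\rF_\t$ is the identity on $\Delta(\blam)$. Part (2) is then immediate: $L_kf_{\s\t}=L_k\rF_\s m_{\s\t}\rF_\t=c_\s(k)f_{\s\t}$, and applying the anti-involution $\ast$ — which fixes each $L_k$, hence $\rF_\t$, and sends $m_{\s\t}$ to $m_{\t\s}$ by (\ref{degmst*}), so that $f_{\s\t}^\ast=f_{\t\s}$ — gives $f_{\s\t}L_k=c_\t(k)f_{\s\t}$. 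For the ``moreover'' clause, left and right multiplication by the $L_k$ split $H_{\ell,n}$ into joint eigenspaces indexed by pairs of content sequences, i.e. by pairs of standard tableaux of a common shape via part (3); since $\{f_{\s\t}\}$ is a $K$-basis of $H_{\ell,n}$ by \cite{AMR} with $f_{\s\t}$ lying in the $(c_\s,c_\t)$-eigenspace, that eigenspace must be exactly $Kf_{\s\t}$.

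Finally, for part (1): if $\t\neq\u$ then $\rF_\t\rF_\u=0$ by part (3), so $f_{\s\t}f_{\u\v}=\rF_\s m_{\s\t}\rF_\t\rF_\u m_{\u\v}\rF_\v=0$. If $\t=\u$, then $f_{\s\t}f_{\t\v}$ lies in the $(c_\s,c_\v)$-joint eigenspace, so equals a scalar multiple of $f_{\s\v}$ by part (2). The remaining, and principal, task is to identify this scalar as the quantity $r_\t$ defined above, independently of $\s$ and $\v$: following \cite{Ma} in the non-degenerate case (and \cite{Zhao,zdk} in the degenerate case), I would set $e_\t:=r_\t^{-1}f_{\t\t}$, verify that the $e_\t$ form a complete set of pairwise orthogonal idempotents (orthogonality from part (3), completeness from $\sum_\t\rF_\t=1$ on each cell module together with a dimension count), and then compute $f_{\t\t}^2=r_\t f_{\t\t}$ by a recursion for $f_{\t\t}$ along the tower $H_{\ell,1}\subset H_{\ell,2}\subset\cdots\subset H_{\ell,n}$, which reproduces the product formula for $r_\t$. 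I expect that final computation — the degenerate seminormal-form/Gram-determinant calculation — together with establishing the triangularity input from \cite{AMR}, to be where the real work lies; everything else is formal linear algebra once separation (part (3)) is in hand.
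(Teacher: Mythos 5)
The paper does not prove this lemma at all --- it is quoted directly from Zhao \cite{zdk} as the degenerate analogue of Lemma \ref{fsemi} --- so there is no internal argument to compare against. Your reconstruction is the standard seminormal-basis argument from those references and is sound in outline: the separation condition forced by semisimplicity yields (3), the interpretation of $\rF_\t$ as a joint spectral projection (granting the triangularity of the $L_k$-action from \cite{AMR}) yields (2) and the orthogonality in (1), and the one genuinely computational step --- identifying the structure constant as $r_\t$ --- is correctly isolated and deferred to the recursion along the tower $H_{\ell,1}\subset\cdots\subset H_{\ell,n}$, exactly as in \cite{Ma} and \cite{zdk}.
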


\begin{prop}\te{(}\cite[Proposition 3.14]{zdk}\te{)}\label{degbasestranslationrela}
Let $\blam\in\P_{\ell,n}$ and $\s,\t\in\te{Std}(\blam).$ Then we have
$$f_{\s\t}=m_{\s\t}+\sum_{\substack{(\u,\v)\in\te{Std}^2(\P_{\ell,n})\\(\u,\v)\rhd (\s,\t)}}c_{\u\v}^{\s\t}m_{\u\v},\quad
m_{\s\t}=f_{\s\t}+\sum_{\substack{(\u,\v)\in\te{Std}^2(\P_{\ell,n})\\(\u,\v)\rhd (\s,\t)}}d_{\u\v}^{\s\t}f_{\u\v}, $$
where $c_{\u\v}^{\s\t},d_{\u\v}^{\s\t}\in K$ for each $(\u,\v)\in\te{Std}^2(\P_{\ell,n})$ with $(\u,\v)\rhd (\s,\t)$.
\end{prop}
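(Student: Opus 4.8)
The plan is to adapt the proof of the non-degenerate counterpart Proposition~\ref{nondegbasestranslationrela} essentially word for word; the only structural input needed is that $(H_{\ell,n},\{m_{\s\t}\})$ is a cellular algebra equipped with a separating family of Jucys--Murphy elements $L_1,\dots,L_n$ in the sense of Mathas. Concretely, the first step is to record the triangularity of the $L_k$-action on the Murphy basis: for all $\blam\in\P_{\ell,n}$, $\s,\t\in\Std(\blam)$ and $1\le k\le n$,
\begin{align*}
L_k\,m_{\s\t}&=c_\s(k)\,m_{\s\t}+\sum_{\substack{(\u,\v)\in\Std^2(\P_{\ell,n})\\(\u,\v)\rhd(\s,\t)}}r^{(k)}_{\u\v}\,m_{\u\v},\\
m_{\s\t}\,L_k&=c_\t(k)\,m_{\s\t}+\sum_{\substack{(\u,\v)\in\Std^2(\P_{\ell,n})\\(\u,\v)\rhd(\s,\t)}}\widetilde r^{\,(k)}_{\u\v}\,m_{\u\v},
\end{align*}
for suitable scalars in $K$. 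This is proved by writing $m_{\s\t}=d(\s)^*m_\blam d(\t)$, observing that $u_\blam^+$ is itself a polynomial in $L_1,\dots,L_n$, and commuting $L_k$ rightward past $d(\s)^*$ (resp. leftward past $d(\t)$) via Lemma~\ref{deg-L-comm3}: the correction $+1$ in $s_iL_{i+1}=L_is_i+1$ yields at each move a summand attached to a strictly shorter permutation, and straightening such terms into the Murphy basis involves only bitableaux strictly dominating $(\s,\t)$. This is exactly the content of the relevant results in \cite{AMR}; granted it, the rest is the general machinery used by \cite{Ma} and \cite{HM12} in the non-degenerate case.

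Assuming the triangularity, the proof runs as follows. Since $R=K$ is a field and $H_{\ell,n}$ is semisimple, Lemma~\ref{fsemi2}(3) supplies the separation property $c_\s(k)=c_\t(k)\ (\forall\,k)\iff\s=\t$; hence the denominators $c_\t(k)-c$, $c\in\mathrm C(k)\setminus\{c_\t(k)\}$, are nonzero and $\rF_\t=\prod_{k=1}^n\prod_{c\ne c_\t(k)}\frac{L_k-c}{c_\t(k)-c}\in H_{\ell,n}$ is well defined. Iterating the right-handed triangularity relation over the powers of each $L_k$, then over the commuting factors $k=1,\dots,n$ of $\rF_\t$, and using that $\prod_{c\ne c_\t(k)}\frac{c_\t(k)-c}{c_\t(k)-c}=1$, gives
$$m_{\s\t}\,\rF_\t=m_{\s\t}+\sum_{\substack{(\u,\v)\in\Std^2(\P_{\ell,n})\\(\u,\v)\rhd(\s,\t)}}\ast\,m_{\u\v}.$$
Symmetrically, for every $(\u,\v)$ the left-handed relation expresses $\rF_\s m_{\u\v}$ as a scalar multiple of $m_{\u\v}$ plus a $K$-combination of $m_{\u'\v'}$ with $(\u',\v')\rhd(\u,\v)$, the scalar being $1$ when $(\u,\v)=(\s,\t)$. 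Applying $\rF_\s$ to the displayed expansion of $m_{\s\t}\rF_\t$ therefore yields $f_{\s\t}=\rF_\s m_{\s\t}\rF_\t=m_{\s\t}+\sum_{(\u,\v)\rhd(\s,\t)}c_{\u\v}^{\s\t}m_{\u\v}$, which is the first identity (the coefficient of $m_{\s\t}$ is exactly $1$ because dominance is a partial order, so no strictly more dominant term can contribute to $m_{\s\t}$). The second identity is then purely formal: with respect to any total order on $\Std^2(\P_{\ell,n})$ refining the dominance order, the first identity says the transition matrix from $\{m_{\s\t}\}$ to the $K$-basis $\{f_{\u\v}\}$ (a basis by \cite{AMR}) is unitriangular, so its inverse is unitriangular as well, giving the claimed expansion of $m_{\s\t}$ in the $f_{\u\v}$.

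The only genuinely nontrivial point is the first step: the triangularity of $L_k$ on the Murphy basis with respect to the \emph{pairwise} dominance order on $\Std^2(\P_{\ell,n})$ --- i.e. that in every correction term \emph{both} the row and the column tableau move weakly upward in dominance, not merely the underlying shape. Establishing this from scratch requires the straightening combinatorics of $H_{\ell,n}$; once it is granted (from \cite{AMR}), the well-definedness of $\rF_\t$, the iteration, and the triangular inversion are all routine, and the argument is identical to the non-degenerate case.
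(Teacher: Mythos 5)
The paper itself gives no proof of this proposition: it is imported verbatim from \cite[Proposition 3.14]{zdk}, exactly as its non-degenerate twin, Proposition \ref{nondegbasestranslationrela}, is imported from \cite{Ma} and \cite{HM12}. Your argument is, in substance, the standard proof underlying those references, and it is correct. Once one grants the triangularity $m_{\s\t}L_k=c_\t(k)\,m_{\s\t}+\sum_{(\u,\v)\rhd(\s,\t)}r_{\u\v}m_{\u\v}$ (and its left-handed analogue), the operators $\rF_\t$ act unitriangularly on the Murphy basis with diagonal entry $1$ at $(\s,\t)$, so $f_{\s\t}=\rF_\s m_{\s\t}\rF_\t$ has the stated unitriangular expansion, and the second identity follows by inverting a unitriangular matrix with respect to any total refinement of $\unrhd$ on $\Std^2(\P_{\ell,n})$. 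You correctly isolate the one substantive input --- that the correction terms are indexed by pairs strictly dominating $(\s,\t)$ in the dominance order on pairs of tableaux, not merely by more dominant shapes --- and you are right that this is supplied by the straightening results of \cite{AMR} (the degenerate analogue of the James--Mathas triangularity); since the present paper likewise takes the whole proposition on faith from \cite{zdk}, deferring this input to \cite{AMR} is entirely in keeping with the level of detail here. One small correction: the well-definedness of $\rF_\t$ does not require the separation property of Lemma \ref{fsemi2}(3), since the product in Definition \ref{dfn:Ft2} runs only over $c\in\mathrm{C}(k)$ with $c\neq c_\t(k)$, so the denominators $c_\t(k)-c$ are nonzero by construction; separation is needed only later, for the orthogonality relations among the $f_{\s\t}$. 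This does not affect the validity of your argument.
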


\begin{dfn}
For any $\blam\in\P_{\ell,n}$ and $\t\in\Std(\blam)$, we define
$$\begin{aligned}
{\rm{c}}_{\t,n}&:=\frac{r_{\ft}}{r_{\ft_{\downarrow\leq(n-1)}}}
 \frac{\prod\limits_{\alpha\in \te{Rem}(\ft_{\downarrow\leq(n-1)})}\(c_\ft(n)-c(\alpha)\)}
{\prod\limits_{\gamma\in \te{Add}(\ft_{\downarrow\leq(n-1)})\backslash\{\ft^{-1}(n)\}}\(c_\ft(n)-c(\gamma)\)}=\frac{\prod\limits_{\substack{\alpha\in \te{Rem}(\ft_{\downarrow\leq(n-1)})\\\alpha\not\prec\t^{-1}(n)}}\(c_\ft(n)-c(\alpha)\)}
{\prod\limits_{\substack{\gamma\in \te{Add}(\ft_{\downarrow\leq(n-1)})\backslash\{\ft^{-1}(n)\}\\
\gamma\not\prec\t^{-1}(n)}}\(c_\ft(n)-c(\gamma)\)}.
\end{aligned}$$
For each $\t\in \te{Std}(\P_{\ell,n}),$ we define ${\rm{c}}_\t:=\prod_{k=1}^{n}{\rm{c}}_{\t_{\downarrow\leq k},k}.$
\end{dfn}

\begin{prop}\label{degvefst3}
Let $\blam\in\P_{\ell,n}$ and $\s,\t\in\te{Std}(\blam).$ Then:
\begin{itemize}
  \item[(1)] $\e_{n}(f_{\fs\ft})=\delta_{n(\s),n(\t)}{\rm{c}}_{\t,n}f_{\fs_{\downarrow\leq(n-1)}\ft_{\downarrow\leq(n-1)}};$
  \item[(2)]  $\tau_R^{\te{BK}}(f_{\fs\ft})=\delta_{\s\t}{\rm{c}}_\t.$
\end{itemize}
\end{prop}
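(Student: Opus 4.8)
The plan is to follow the proof of the non-degenerate analogue Proposition \ref{nondegvefst}, running an induction on $n$ (the cases $n\leq 1$ being trivial) in which part (1) carries all the content and part (2) is a formal corollary. Throughout write $\s':=\s_{\downarrow\leq(n-1)}$ and $\t':=\t_{\downarrow\leq(n-1)}$, and note first that $\te{Shape}(\s')$ and $\te{Shape}(\t')$ are obtained from $\blam$ by deleting the nodes $(\ft^\blam)^{-1}(n(\s))$ and $(\ft^\blam)^{-1}(n(\t))$ respectively; since $\ft^\blam$ is a bijection these two shapes coincide exactly when $n(\s)=n(\t)$.

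For part (1) I would first do the eigenvalue bookkeeping. By Lemma \ref{veAst2}, $\e_n(f_{\fs\ft})\in H_{\ell,n-1}$, and for $1\leq k\leq n-1$, using Lemma \ref{fsemi2}(2), $L_k\e_n(f_{\fs\ft})=c_\s(k)\e_n(f_{\fs\ft})$ and $\e_n(f_{\fs\ft})L_k=c_\t(k)\e_n(f_{\fs\ft})$, where $c_\s(k)=c_{\s'}(k)$ and $c_\t(k)=c_{\t'}(k)$. Hence, by parts (2) and (3) of Lemma \ref{fsemi2}, $\e_n(f_{\fs\ft})$ vanishes unless $\s'$ and $\t'$ have the same shape, in which case it is a scalar multiple of $f_{\s'\t'}$. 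Therefore $\e_n(f_{\fs\ft})=0$ when $n(\s)\neq n(\t)$, while when $n(\s)=n(\t)$ we get $\e_n(f_{\fs\ft})=\lambda_{\fs\ft}\,f_{\s'\t'}$ for some $\lambda_{\fs\ft}\in K$, with $\s',\t'\in\Std(\bnu)$ and $\bnu:=\te{Shape}(\s')=\te{Shape}(\t')$. All that then remains is to identify $\lambda_{\fs\ft}$ with ${\rm{c}}_{\t,n}$.

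This identification is the main obstacle. I would reduce it to the diagonal case $\fs=\ft$: applying the anti-automorphism $\ast$ (using $f_{\fs\ft}^\ast=f_{\t\s}$ and Lemma \ref{veAst2}) gives $\lambda_{\fs\ft}=\lambda_{\t\s}$, and both $\lambda_{\fs\ft}$ and ${\rm{c}}_{\t,n}$ depend only on $\bnu$ together with the added node $(\ft^\blam)^{-1}(n(\t))$ — for ${\rm{c}}_{\t,n}$ this is visible from the product formulas defining $r_\t/r_{\t'}$ and the node sums, and for $\lambda_{\fs\ft}$ from the semisimple matrix-unit picture — so it suffices to compute $\e_n(f_{\t\t})$. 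For that I would adapt the computation of $\ve_n(\mff_{\t\t})$ from the proof of \cite[Proposition 3.7]{HLL}, now using the degenerate seminormal basis of \cite{zdk}: write $\rF_\t=\rF_{\t'}\,g(L_n)$ with $g$ the polynomial satisfying $g(c_\t(n))=1$ and vanishing at the other admissible contents of $n$, substitute this into $f_{\t\t}=\rF_\t m_\blam\rF_\t$, push the result through the cyclotomic Mackey decomposition (\ref{degMadecomequation}), and read off the coefficient of $L_n^{\ell-1}$ — precisely what $\e_n=\rp^{(n)}_{\ell-1}$ extracts. The combinatorics of addable and removable nodes controlling this coefficient is the same as in the non-degenerate setting; the one difference is that $\e_n$ now selects the top power $L_n^{\ell-1}$ rather than the constant term $L_n^0$, which is exactly why ${\rm{c}}_{\t,n}$ carries neither the overall sign nor the power of $q$ present in the non-degenerate scalar $c_{\t,n}$. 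Carrying this bookkeeping through faithfully, so that it lands on the displayed formula for ${\rm{c}}_{\t,n}$, is where the real work lies.

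Part (2) is then immediate. By Lemma \ref{degtoperation}, $\tau_R^{\te{BK}}=\epsilon_1\circ\cdots\circ\epsilon_{n-1}\circ\e_n$, so applying $\epsilon_1\circ\cdots\circ\epsilon_{n-1}$ to part (1) and invoking the inductive hypothesis for $H_{\ell,n-1}$ gives $\tau_R^{\te{BK}}(f_{\fs\ft})=\delta_{n(\s),n(\t)}\,{\rm{c}}_{\t,n}\,t_{n-1}^R(f_{\s'\t'})=\delta_{n(\s),n(\t)}\,{\rm{c}}_{\t,n}\,\delta_{\s'\t'}\,{\rm{c}}_{\t'}$. Since $\s=\t$ if and only if $n(\s)=n(\t)$ and $\s'=\t'$, and ${\rm{c}}_{\t,n}{\rm{c}}_{\t'}={\rm{c}}_\t$ by the definition of ${\rm{c}}_\t$, this equals $\delta_{\s\t}\,{\rm{c}}_\t$, closing the induction.
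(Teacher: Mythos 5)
Your proposal is correct and follows essentially the same route as the paper: vanishing for $n(\s)\neq n(\t)$ via the $L_k$-eigenvalue characterization of $Kf_{\s\t}$ (Lemmas \ref{veAst2} and \ref{fsemi2}), identification of the scalar by the degenerate analogue of the computation of $\e_n(f_{\t\t})$ in \cite[Proposition 4.7]{HLL}, and part (2) by composing with $\epsilon_1\circ\cdots\circ\epsilon_{n-1}$ via Lemma \ref{degtoperation}. The only difference is cosmetic: you first reduce the scalar computation to the diagonal case $\s=\t$ (which does require the branching/matrix-unit argument you only sketch), whereas the paper runs the HLL-type argument directly for all $\s,\t$ with $n(\s)=n(\t)$; both versions defer the same explicit computation to \cite{HLL}.
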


\begin{proof}
For (1), if $n(\s)\neq n(\t)$, then the result follows from Lemmas \ref{veAst2} and \ref{fsemi2}, and a similar argument as in the proof of Lemma \ref{nondegvefst}.
On the other hand, if $n(\s)= n(\t)$, then the part (1) follows from a similar argument as in the proof of \cite[Proposition 4.7]{HLL},
which gives the computation of $\e_n(f_{\t\t})$.

The part (1) together with Lemma \ref{degtoperation} imply the part (2). This completes the proof of this proposition.
\end{proof}

The following corollaries give some equations  on the coefficients
which occur in the transition matrix between the Murphy basis elements and the seminormal basis elements of $H_{\ell,n}$.

\begin{cor}
Let $\blam\in\P_{\ell,n}$ and $\s,\t\in\te{Std}(\blam).$
\begin{enumerate}
  \item If $n(\s)= n(\t),$ then
for each $(\fp,\fq)\in \Std^2(\P_{\ell,n-1})$ with $(\fp,\fq)\rhd (\s_{\downarrow\leq (n-1)},\t_{\downarrow\leq (n-1)}),$ we have
$$\begin{aligned}
\delta_{l(\t,n),1}+
\sum_{\substack{(\u,\v)\in\te{Std}^2(\P_{\ell,n})\\(\u,\v)\rhd (\s,\t)
\\\u_{\downarrow\leq (n-1)}=\s_{\downarrow\leq (n-1)}\\
\v_{\downarrow\leq (n-1)}=\t_{\downarrow\leq (n-1)}}}
\delta_{l(\u,n),1}c_{\u\v}^{\s\t}={\rm{c}}_{\t,n},\quad
\sum_{\substack{(\u,\v)\in\te{Std}^2(\P_{\ell,n})\\(\u,\v)\rhd (\s,\t)\\
\u_{\downarrow\leq (n-1)}=\fp\\
\v_{\downarrow\leq (n-1)}=\fq}} \delta_{l(\u,n),1}c_{\u\v}^{\s\t}={\rm{c}}_{\t,n} c_{\fp\fq}^{\s_{\downarrow\leq (n-1)}\t_{\downarrow\leq (n-1)}}.
\end{aligned} $$
  \item If $n(\s)\neq n(\t),$ then for each $(\fp,\fq)\in \Std^2(\P_{\ell,n-1})$
  with $(\fp,\fq)\rhd (\s_{\downarrow\leq (n-1)},\t_{\downarrow\leq (n-1)}),$ we have
$$\sum_{\substack{(\u,\v)\in\te{Std}^2(\P_{\ell,n})\\(\u,\v)\rhd (\s,\t)\\
\u_{\downarrow\leq (n-1)}=\fp\\
\v_{\downarrow\leq (n-1)}=\fq}} \delta_{l(\u,n),1}c_{\u\v}^{\s\t}=0. $$
\end{enumerate}
\end{cor}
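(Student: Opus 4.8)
The plan is to follow the argument of Corollary \ref{maincor11} essentially verbatim, replacing each non-degenerate ingredient by its degenerate analogue. Concretely, I would apply the descending map $\e_n$ to both sides of the first expansion in Proposition \ref{degbasestranslationrela}, namely $f_{\s\t}=m_{\s\t}+\sum_{(\u,\v)\rhd(\s,\t)}c_{\u\v}^{\s\t}m_{\u\v}$, and compute the two sides independently. For the right-hand side, Proposition \ref{degvemst3} gives $\e_n(m_{\u\v})=\delta_{n(\u),n(\v)}\delta_{l(\u,n),1}m_{\u_{\downarrow\leq(n-1)}\v_{\downarrow\leq(n-1)}}$, so that
\begin{align*}
\e_n(f_{\s\t})=\delta_{n(\s),n(\t)}\delta_{l(\t,n),1}m_{\s_{\downarrow\leq(n-1)}\t_{\downarrow\leq(n-1)}}
+\sum_{\substack{(\u,\v)\rhd(\s,\t)\\ n(\u)=n(\v)}}\delta_{l(\u,n),1}c_{\u\v}^{\s\t}\,m_{\u_{\downarrow\leq(n-1)}\v_{\downarrow\leq(n-1)}}.
\end{align*}
For the left-hand side, Proposition \ref{degvefst3}(1) gives $\e_n(f_{\s\t})=\delta_{n(\s),n(\t)}{\rm{c}}_{\t,n}f_{\s_{\downarrow\leq(n-1)}\t_{\downarrow\leq(n-1)}}$, and re-expanding $f_{\s_{\downarrow\leq(n-1)}\t_{\downarrow\leq(n-1)}}$ into the Murphy basis via Proposition \ref{degbasestranslationrela} once more yields
\begin{align*}
\e_n(f_{\s\t})=\delta_{n(\s),n(\t)}{\rm{c}}_{\t,n}\Big(m_{\s_{\downarrow\leq(n-1)}\t_{\downarrow\leq(n-1)}}
+\sum_{\substack{(\fp,\fq)\in\Std^2(\P_{\ell,n-1})\\(\fp,\fq)\rhd(\s_{\downarrow\leq(n-1)},\t_{\downarrow\leq(n-1)})}}c_{\fp\fq}^{\s_{\downarrow\leq(n-1)}\t_{\downarrow\leq(n-1)}}\,m_{\fp\fq}\Big).
\end{align*}

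Next I would compare the coefficients of the Murphy basis elements of $H_{\ell,n-1}$, which are linearly independent by Lemma \ref{cellular2}. In the case $n(\s)=n(\t)$: equating the coefficient of $m_{\s_{\downarrow\leq(n-1)}\t_{\downarrow\leq(n-1)}}$ on the two sides gives the first identity of part (1), and equating the coefficient of $m_{\fp\fq}$ for a general $(\fp,\fq)\rhd(\s_{\downarrow\leq(n-1)},\t_{\downarrow\leq(n-1)})$ gives the second identity of part (1). In the case $n(\s)\neq n(\t)$: the factor $\delta_{n(\s),n(\t)}$ makes the left-hand side vanish identically, so the coefficient of each $m_{\fp\fq}$ on the right-hand side must be zero, which is exactly part (2).

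The one point requiring a small remark—and the closest thing to an obstacle—is the bookkeeping of the summation indices: I would observe that if $\u_{\downarrow\leq(n-1)}=\fp$, $\v_{\downarrow\leq(n-1)}=\fq$ with $(\fp,\fq)\in\Std^2(\P_{\ell,n-1})$, then $\u$ and $\v$ share a common shape $\bnu$ and $\bnu\setminus\{\u^{-1}(n)\}=\Shape(\fp)=\Shape(\fq)=\bnu\setminus\{\v^{-1}(n)\}$, which forces $\u^{-1}(n)=\v^{-1}(n)$, i.e.\ $n(\u)=n(\v)$. Thus the constraint $n(\u)=n(\v)$ appearing in the two displays above is automatic once $\u_{\downarrow\leq(n-1)}$ and $\v_{\downarrow\leq(n-1)}$ are prescribed, so the summation ranges in the statement of the corollary (which do not mention it) coincide with those above, and the compatibility of the dominance order with truncation (recalled before Lemma \ref{cellular1}) makes the remaining matching of index sets routine. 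With this observation the comparison of coefficients is identical to the one carried out in the proof of Corollary \ref{maincor11}, and the corollary follows.
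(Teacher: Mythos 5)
Your proposal is correct and follows essentially the same route as the paper, which proves this corollary by applying $\e_n$ to both expansions in Proposition \ref{degbasestranslationrela}, invoking Propositions \ref{degvemst3} and \ref{degvefst3}, and comparing coefficients exactly as in Corollary \ref{maincor11}. Your additional observation that $n(\u)=n(\v)$ is forced once $\u_{\downarrow\leq (n-1)}$ and $\v_{\downarrow\leq (n-1)}$ are prescribed with a common shape is a correct and worthwhile clarification of why the summation ranges match.
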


\begin{proof}
This corollary follows from Propositions \ref{degvemst3}, \ref{degbasestranslationrela} and \ref{degvefst3},
and a similar argument as in the proof of Corollary \ref{maincor11}.
\end{proof}

\begin{cor}
Let $\blam\in\P_{\ell,n}$ and $\s,\t\in\te{Std}(\blam).$
\begin{enumerate}
  \item If $n(\s)= n(\t),$ then
for each $(\fp,\fq)\in \Std^2(\P_{\ell,n-1})$ with $(\fp,\fq)\rhd (\s_{\downarrow\leq (n-1)},\t_{\downarrow\leq (n-1)}),$ we have
$$\begin{aligned}
&{\rm{c}}_{\t,n}+\sum_{\substack{(\u,\v)\in\te{Std}^2(\P_{\ell,n})\\(\u,\v)\rhd (\s,\t)
\\\u_{\downarrow\leq (n-1)}=\s_{\downarrow\leq (n-1)}\\
\v_{\downarrow\leq (n-1)}=\t_{\downarrow\leq (n-1)}}}
d_{\u\v}^{\s\t}{\rm{c}}_{\v,n}
=\delta_{l(\t,n),1},\quad
\sum_{\substack{(\u,\v)\in\te{Std}^2(\P_{\ell,n})\\(\u,\v)\rhd (\s,\t)\\
\u_{\downarrow\leq (n-1)}=\fp\\
\v_{\downarrow\leq (n-1)}=\fq}} d_{\u\v}^{\s\t} {\rm{c}}_{\v,n}=
\delta_{l(\t,n),1} d_{\fp\fq}^{\s_{\downarrow\leq (n-1)}\t_{\downarrow\leq (n-1)}}.
\end{aligned} $$
  \item If $n(\s)\neq n(\t),$ then for each $(\fp,\fq)\in \Std^2(\P_{\ell,n-1})$
  with $(\fp,\fq)\rhd (\s_{\downarrow\leq (n-1)},\t_{\downarrow\leq (n-1)}),$ we have
$$\sum_{\substack{(\u,\v)\in\te{Std}^2(\P_{\ell,n})\\(\u,\v)\rhd (\s,\t)\\
\u_{\downarrow\leq (n-1)}=\fp\\
\v_{\downarrow\leq (n-1)}=\fq}} d_{\u\v}^{\s\t} {\rm{c}}_{\v,n}=0. $$
\end{enumerate}
\end{cor}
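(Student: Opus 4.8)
The plan is to follow verbatim the proof of Corollary~\ref{maincor12}, replacing the non-degenerate ingredients by their degenerate counterparts. Concretely, I would compute $\e_n(m_{\s\t})$ in two different ways and then compare coefficients in the seminormal basis $\{f_{\u\v}\}$ of $H_{\ell,n-1}$, which is a $K$-basis since $H_{\ell,n-1}$ is semisimple over $K$.

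For the first computation, expand $m_{\s\t}$ in the seminormal basis via Proposition~\ref{degbasestranslationrela} and apply $\e_n$ term by term, using Proposition~\ref{degvefst3}(1):
\begin{align*}
\e_n(m_{\s\t})
&=\e_n\Big(f_{\s\t}+\sum_{\substack{(\u,\v)\in\te{Std}^2(\P_{\ell,n})\\(\u,\v)\rhd (\s,\t)}}d_{\u\v}^{\s\t}f_{\u\v}\Big)\\
&=\delta_{n(\s),n(\t)}{\rm{c}}_{\t,n}f_{\s_{\downarrow\leq (n-1)}\t_{\downarrow\leq (n-1)}}
+\sum_{\substack{(\u,\v)\in\te{Std}^2(\P_{\ell,n})\\ n(\u)=n(\v)\\ (\u,\v)\rhd (\s,\t)}}d_{\u\v}^{\s\t}{\rm{c}}_{\v,n}f_{\u_{\downarrow\leq (n-1)}\v_{\downarrow\leq (n-1)}},
\end{align*}
where only the $(\u,\v)$ with $n(\u)=n(\v)$ survive, since $\e_n(f_{\u\v})=0$ once $n(\u)\neq n(\v)$ (then $\te{Shape}(\u_{\downarrow\leq (n-1)})\neq\te{Shape}(\v_{\downarrow\leq (n-1)})$, as in Proposition~\ref{degvefst3}(1)). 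For the second computation, apply $\e_n$ to $m_{\s\t}$ directly via Proposition~\ref{degvemst3} and then re-expand the result through Proposition~\ref{degbasestranslationrela}:
\begin{align*}
\e_n(m_{\s\t})
&=\delta_{n(\s),n(\t)}\delta_{l(\t,n),1}\,m_{\s_{\downarrow\leq (n-1)}\t_{\downarrow\leq (n-1)}}\\
&=\delta_{n(\s),n(\t)}\delta_{l(\t,n),1}\Big(f_{\s_{\downarrow\leq (n-1)}\t_{\downarrow\leq (n-1)}}+\sum_{\substack{(\fp,\fq)\in\te{Std}^2(\P_{\ell,n-1})\\(\fp,\fq)\rhd (\s_{\downarrow\leq (n-1)},\t_{\downarrow\leq (n-1)})}}d_{\fp\fq}^{\s_{\downarrow\leq (n-1)}\t_{\downarrow\leq (n-1)}}f_{\fp\fq}\Big).
\end{align*}

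Finally I would equate the two expressions and read off coefficients, using the linear independence of $\{f_{\u\v}\}$. When $n(\s)=n(\t)$, comparing the coefficient of $f_{\s_{\downarrow\leq (n-1)}\t_{\downarrow\leq (n-1)}}$ yields the first identity of part~(1) and comparing the coefficient of $f_{\fp\fq}$ for $(\fp,\fq)\rhd(\s_{\downarrow\leq (n-1)},\t_{\downarrow\leq (n-1)})$ yields the second; when $n(\s)\neq n(\t)$ the second computation equals $0$, so every coefficient occurring in the first computation must vanish, which is part~(2). The one bookkeeping point requiring attention — and it is the only place any care is needed — is that the index constraints in the statement automatically force $n(\u)=n(\v)$ on the terms that actually contribute: if $\u_{\downarrow\leq (n-1)}=\fp$ and $\v_{\downarrow\leq (n-1)}=\fq$ with $(\fp,\fq)\in\te{Std}^2(\P_{\ell,n-1})$ (so $\fp,\fq$ have equal shape) and $\te{Shape}(\u)=\te{Shape}(\v)$, then $\u^{-1}(n)=\v^{-1}(n)$, i.e. $n(\u)=n(\v)$; likewise for the pair $(\s_{\downarrow\leq (n-1)},\t_{\downarrow\leq (n-1)})$ once $n(\s)=n(\t)$ is assumed. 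Hence the sums written in the statement (which omit the condition $n(\u)=n(\v)$) coincide exactly with the surviving sums of the first computation, and the comparison goes through. There is no genuine obstacle: the argument is purely formal given Propositions~\ref{degvemst3}, \ref{degbasestranslationrela} and \ref{degvefst3}, being the verbatim analogue of the proof of Corollary~\ref{maincor12} with the scalar $q^{n-n(\t)}\prod_{i=l(\t,n)+1}^{\ell}(-Q_i)$ replaced by $\delta_{l(\t,n),1}$ and the non-degenerate $c$-data of $\HH_{\ell,n}$ replaced by the degenerate ${\rm{c}}$-data of $H_{\ell,n}$.
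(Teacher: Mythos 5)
Your proposal is correct and follows essentially the same route as the paper: the authors likewise prove this corollary by computing $\e_n(m_{\s\t})$ in two ways via Propositions \ref{degvemst3}, \ref{degbasestranslationrela} and \ref{degvefst3} and comparing coefficients in the seminormal basis, exactly as in the proof of Corollary \ref{maincor12}. Your bookkeeping remark that the shape constraints force $n(\u)=n(\v)$ on the contributing terms is also the right (and only) point needing care.
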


\begin{proof}
This corollary follows from Propositions \ref{degvemst3}, \ref{degbasestranslationrela} and \ref{degvefst3},
and a similar argument as in the proof of Corollary \ref{maincor12}.
\end{proof}

\begin{cor}
Let $\blam\in\P_{\ell,n}$ and $\s,\t\in\te{Std}(\blam).$ Then we have
\begin{align*}
\delta_{\s\t}\Biggr(\prod_{i=1}^{n}\delta_{l(\t,i),1}\Biggr)
+\sum_{\substack{\u\in\te{Std}(\P_{\ell,n})\\(\u,\u)\rhd (\s,\t)\\ l(\u,j)=1,\, \forall\,1\leq j\leq n}}
c_{\u\u}^{\s\t}=\delta_{\s\t} \mathrm{c}_\t,\quad
\delta_{\s\t}\mathrm{c}_\t+\sum_{\substack{\u\in\te{Std}(\P_{\ell,n})\\(\u,\u)\rhd (\s,\t)}}d_{\u\u}^{\s\t}\mathrm{c}_\u=\delta_{\s\t}\Biggr(\prod_{i=1}^{n}\delta_{l(\t,i),1}\Biggr).
\end{align*}
\end{cor}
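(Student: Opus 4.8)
The plan is to run the argument of Corollary~\ref{maincor13} word for word, with the non-degenerate objects replaced by their degenerate counterparts. Concretely, I would apply the symmetrizing form $\tau_R^{\te{BK}}$ to each of the two transition identities in Proposition~\ref{degbasestranslationrela}. For the first identity $f_{\s\t}=m_{\s\t}+\sum_{(\u,\v)\rhd(\s,\t)}c_{\u\v}^{\s\t}m_{\u\v}$, the left-hand side evaluates to $\delta_{\s\t}\mathrm{c}_\t$ by Proposition~\ref{degvefst3}(2), while Theorem~\ref{mainthm2} gives $\tau_R^{\te{BK}}(m_{\u\v})=\delta_{\u\v}\prod_{i=1}^{n}\delta_{l(\v,i),1}$ for each term on the right. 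For the second identity $m_{\s\t}=f_{\s\t}+\sum_{(\u,\v)\rhd(\s,\t)}d_{\u\v}^{\s\t}f_{\u\v}$, Theorem~\ref{mainthm2} evaluates the left-hand side to $\delta_{\s\t}\prod_{i=1}^{n}\delta_{l(\t,i),1}$ and Proposition~\ref{degvefst3}(2) gives $\tau_R^{\te{BK}}(f_{\u\v})=\delta_{\u\v}\mathrm{c}_\v$ for each term on the right.

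The second step is purely combinatorial bookkeeping. In each resulting sum the factor $\delta_{\u\v}$ forces $\u=\v$, so the sums over $\te{Std}^2(\P_{\ell,n})$ indexed by $(\u,\v)\rhd(\s,\t)$ collapse to sums over those $\u\in\te{Std}(\P_{\ell,n})$ with $(\u,\u)\rhd(\s,\t)$. In the identity arising from the first transition relation the additional factor $\prod_{i=1}^{n}\delta_{l(\u,i),1}$ (obtained from $\prod_{i=1}^{n}\delta_{l(\v,i),1}$ after setting $\v=\u$) further restricts the index set to those $\u$ with $l(\u,j)=1$ for all $1\le j\le n$, which is exactly the range appearing in the statement. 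Transposing the $m_{\s\t}$- (resp.\ $f_{\s\t}$-) term to the other side in the two identities then yields precisely the two displayed equations.

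I do not expect a genuine obstacle: all the inputs ($\te{Theorem}$~\ref{mainthm2}, Proposition~\ref{degvefst3}, Proposition~\ref{degbasestranslationrela}) are already established, and the computation is literally the degenerate shadow of Corollary~\ref{maincor13}. The one point that needs a moment of care is attaching the product of Kronecker deltas to the correct tableau ($l(\v,i)$ before the specialization $\u=\v$, hence $l(\u,j)=1$ in the final index condition of the first equation) rather than conflating it with $\t$; this is the reason the constraint sets in the two equations are genuinely different, and I would make that bookkeeping explicit in the write-up.
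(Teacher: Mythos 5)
Your proposal is correct and follows exactly the paper's route: apply $\tau_R^{\te{BK}}$ to both transition identities of Proposition \ref{degbasestranslationrela}, evaluate the terms via Theorem \ref{mainthm2} and Proposition \ref{degvefst3}(2), and collapse the double sums using the Kronecker deltas, just as in the proof of Corollary \ref{maincor13}. Your extra remark about attaching the product $\prod_{i=1}^{n}\delta_{l(\u,i),1}$ to the summation index (rather than to $\t$) is exactly the right bookkeeping point and explains the asymmetry between the two index sets.
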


\begin{proof}
This follows from Propositions \ref{degbasestranslationrela} and \ref{degvefst3} and Theorem \ref{mainthm2},
and a similar argument as in the proof of Corollary \ref{maincor13}.
\end{proof}

\bigskip
\bigskip


\begin{thebibliography}{2}
\bibitem{AK1}
{\sc S.~Ariki and K.~Koike}, {\em A {H}ecke algebra of {$({\bf {Z}}/r{\bf
  {Z}})\wr{\mathfrak {S}}\sb n$} and construction of its irreducible
  representations}, Adv. Math., {\bf 106} (1994), 216--243.

\bibitem{AMR} {\sc S.~Ariki, A.~Mathas and H.~Rui}, {\em Cyclotomic Nazarov-Wenzl algebras}, in: Special Issue in Honour of George Lusztig, Nagoya Math. J., {\bf 182} (2006), 47--134.


\bibitem{BK08} {\sc J.~Brundan and A.~Kleshchev}, {\em Schur-Weyl duality for higher levels},  Selecta Mathematica, {\bf 14}(1) (2008), 1--57.

\bibitem{BM:cyc}
{\sc M.~Brou{\'e} and G.~Malle}, {\em Zyklotomische {H}eckealgebren}, in: Repr{\'e}sentations Unipotentes G{\'e}n{\'e}riques et Blocs des Groupes R{\'e}ductifs Finis,
  Ast\'erisque., {\bf 212} (1993), 119--189.

\bibitem{C} {\sc I.V.~Cherednik}, {\em A new interpretation of Gelfand-Tzetlin bases}, Duke Math. J., {\bf 54}(2) (1987), 563--577.


\bibitem{DJM} {\sc R.~Dipper, G.D.~James and A.~Mathas}, {\em Cyclotomic q-Schur algebras},  Math. Zeit., {\bf 229}(3)(1998), 385--416.

\bibitem{GL} {\sc J.J.~Graham and G.I.~Lehrer}, {\em Cellular algebras}, Invent. Math., {\bf 123} (1996), 1--34.


\bibitem{HHL}
{\sc Z.~He, J.~Hu and H.~Lin}, {\em Trace forms on the cyclotomic Hecke algebras and cocenters of the cyclotomic Schur algebras}, J. Pure Appl. Algebra, {\bf 227} (2023), 107281.


\bibitem{HLL}
{\sc J.~Hu, H.~Li and S.~Li},  {\em New formulae for the Schur elements of the cyclotomic Hecke algebras of type $G(\ell,1,n)$}, J. Pure Appl. Algebra, {\bf 229} (2025), 108090.

\bibitem{HM12}
{\sc J.~Hu and A.~Mathas}, {\em Graded induction for specht modules}, Int. Math. Res. Not., {\bf 2012}(6), (2012), 1230--1263.

\bibitem{HS25}
{\sc J.~Hu and L.~Shi}, {\em On the cocenter of the cyclotomic Hecke algebra of type $G(r,1,n)$}, preprint, arXiv:221107069, 2025.

 \bibitem{K}
{\sc A.~Kleshchev}, {\em Linear and Projective Representations of Symmetric Groups}, Cambridge University
Press, Cambridge (2005).

\bibitem{MM} {\sc G.~Malle and A.~Mathas},  {\em Symmetric cyclotomic Hecke algebras}, {J. Algebra,} {\bf 205} (1998), 275--293.

\bibitem{Ma} {\sc A.~Mathas},  {\em Matrix units and generic degrees for the Ariki-Koike algebras}, {J. Algebra,} {\bf 281} (2004), 695--730.

\bibitem{Mur92} {\sc G. Murphy}, {\em On the representation theory of the symmetric groups and associated Hecke algebras}, J. Algebra, {\bf 152} (1992), 492--513.


\bibitem{Mur95} {\sc G. Murphy}, {\em The representations of Hecke algebras of type $A_n$}, J. Algebra, {\bf 173} (1995), 97--121.

\bibitem{zdk}
{\sc D.~Zhao}, {\em Matrix units and Schur elements for the degenerate cyclotomic Hecke algebras}, preprint, arXiv:1110.1735.


\bibitem{Zhao}
{\sc D.~Zhao}, {\em Schur elements of the degenerate cyclotomic Hecke algebras}, Israel J. Math., {\bf 205}, (2015), 485--507.

\end{thebibliography}
\end{document}